\renewcommand{\iff}{if and only if }
\newcommand{\st}{such that }
\newcommand{\RMod}{R\hbox{{\rm -Mod}}}
\newcommand{\RoneMod}{R_1\hbox{{\rm -Mod}}}
\newcommand{\ModR}{\hbox{{\rm Mod-}}R}
\newcommand{\Ch}[1]{\mathrm{C}(#1)}
\newcommand{\Chac}[1]{\mathrm{C_{ac}}(#1)}
\newcommand{\Chtac}[1]{\mathrm{C_{tac}}(#1)}
\newcommand{\ChR}{\mathrm{Ch(\it R)}}
\newcommand{\Cogen}{\mathrm{Cogen}}
\newcommand{\FL}{\mathcal{FL}}
\newcommand{\PGF}{\mathcal{PGF}}
\newcommand{\EC}{\mathcal{EC}}
\newcommand{\RE}{\mathcal{RE}}
\newcommand{\Z}{\mathbb{Z}}
\newcommand{\Q}{\mathbb{Q}}
\newcommand{\Rl}{\mathbb{R}}
\DeclareMathOperator{\Hom}{Hom}
\DeclareMathOperator{\End}{End}
\DeclareMathOperator{\Ext}{Ext}
\DeclareMathOperator{\Tor}{Tor}
\DeclareMathOperator{\Ker}{Ker}
\DeclareMathOperator{\Img}{Im}
\DeclareMathOperator{\Coker}{Coker}
\DeclareMathOperator{\pres}{pres}
\DeclareMathOperator{\cf}{cf}
\newcommand{\op}{\mathrm{op}}
\newcommand{\chge}[1]{\begin{color}{red}#1\end{color}}
\newcommand{\out}[1]{\begin{color}{red}\sout{#1}\end{color}}
\renewcommand{\chge}[1]{#1}
\renewcommand{\out}[1]{}
\newcommand{\A}{\mathcal{A}}
\newcommand{\B}{\mathcal{B}}
\newcommand{\C}{\mathcal{C}}
\newcommand{\D}{\mathcal{D}}
\newcommand{\clS}{\mathcal{S}}
\theoremstyle{plain}
\newtheorem{thm}{Theorem}[section]
\newtheorem{prop}[thm]{Proposition}
\newtheorem{lem}[thm]{Lemma}
\newtheorem{cor}[thm]{Corollary}
\newtheorem{constr}[thm]{Construction}
\newtheorem{obser}[thm]{Observation}
\theoremstyle{definition}
\newtheorem{defn}[thm]{Definition}
\newtheorem{exm}[thm]{Example}
\theoremstyle{remark}
\newtheorem*{rem}{Remark}
\begin{document}
\title[$\Sigma$-cotorsion and Gorenstein projective modules]%
{Singular compactness and definability for $\Sigma$-cotorsion and Gorenstein modules}

\author{\textsc{Jan \v Saroch}}
\address{Charles University, Faculty of Mathematics and Physics, Department of Algebra \\ 
Sokolovsk\'{a} 83, 186 75 Praha~8, Czech Republic}
\email{saroch@karlin.mff.cuni.cz}

\author{\textsc{Jan \v S\v tov\'\i\v cek}}
\address{Charles University, Faculty of Mathematics and Physics, Department of Algebra \\ 
Sokolovsk\'{a} 83, 186 75 Praha~8, Czech Republic}
\email{stovicek@karlin.mff.cuni.cz}
 
\keywords{\chge{($\Sigma$)-cotorsion module, projectively coresolved Gorenstein flat module, Gorenstein injective module, complete cotorsion pair, singular compactness, infinite combinatorics}}

\thanks{\chge{The research of the authors has been supported by grant GA\v CR 17-23112S}}

\subjclass[2010]{\chge{16E30 (primary), 16B70, 03E75 (secondary)}}
\date{\today}

\begin{abstract} We introduce a general version of singular compactness theorem which makes it possible to show that being a $\Sigma$-cotorsion module is a property of the complete theory of the module. As an application of the powerful tools developed along the way, we give a new description of Gorenstein flat modules which implies that, regardless of the ring, the class of all Gorenstein flat modules forms the left-hand class of a perfect cotorsion pair. We also prove the dual result for Gorenstein injective modules.\end{abstract} 

\maketitle
\tableofcontents
\vspace{4ex}

\section*{Introduction}

\chge{
The aim of the paper is to establish new structural and approximation results about two types of homologically defined (and at least in the first case very well known) classes of modules:

\begin{enumerate}
\item Gorenstein flat and Gorenstein injective modules and
\item $\Sigma$-cotorsion modules.
\end{enumerate}  

What these seemingly distant classes of modules have in common is the rather non-obvious fact that one can learn deep facts about their structure using infinite combinatorics and set-theoretically flavored homological tools. This is despite the fact that the statements of the main results (Theorems~\ref{t:sigmacot}, \ref{t:GF} and \ref{t:GI} and their corollaries) are of purely module-theoretic and homological nature, and can be explained without any set theory. It is their proofs where infinite combinatorics plays crucial role, and the key ingredient brought by this paper is a new version of Shelah's singular compactness theorem for direct systems which do not necessarily consist of monomorphisms.

\smallskip

Gorenstein homological algebra, which is a version of relative homological algebra with roots on one hand in commutative algebra (and especially the celebrated Auslander--Buchsbaum formula) and on the other hand in modular representation theory of finite groups, has been developed for almost half a century; an interested reader may find a more detailed overview in the introduction of~\cite{XWC}. Our main result here is that, for \emph{any} ring, the classes of Gorenstein injective and Gorenstein flat modules sit in complete cotorsion pairs, the class of Gorenstein injective modules is enveloping and the class of Gorenstein flat modules is covering. In particular, any ring is GF-closed in the sense of~\cite{B}.

This contribution is perhaps best explained in the context of the new impetus which Gorenstein homological algebra recently got from the study of abelian model structures \cite{G-overview} and which allowed to import homotopical theoretic techniques. Since it was not known in general whether the standard classes of Gorenstein flat or injective modules had good approximation properties, Bravo, Hovey and Gillespie~\cite{BGH} were led to introduce a modification of the definitions of these classes, to ensure the existence of the required approximations in this way. Our results can thus be summarized as that this change was not necessary: the classes of Gorenstein flat and injective modules have good approximation properties and induce abelian model structure on their own for every ring, regardless of how daunting the ring is. The model structures of \cite{BGH} can then be recovered as a localization (Bousfield localization at the level of model categories or triangulated localization at the level of their homotopy categories) of the model structures arising from the standard classes.

\smallskip

The class of $\Sigma$-cotorsion modules, on the other hand, was studied~\cite{BS_sigma-cot,G-AH_sigma-rings,G-AH_model-th} in an attempt to generalize model theoretic methods for modules to arbitrary additive finitely accessible category (in the terminology of~\cite{AR}; they are also known under the term locally finitely presented additive categories \cite{CB}). Every finitely accessible additive category is equivalent to the category $\mathcal{FL}$ of flat modules over a ring $R$ (possibly non-unital, but with enough idempotents) and admits a natural (pure) exact structure inherited from $\ModR$. Moreover, as a consequence of the solution to the Flat Cover Conjecture~\cite{BEE}, this exact structure has enough injective objects, which are precisely the flat and cotorsion $R$-modules. The main theme of~\cite{G-AH_survey,G-AH_indec} is that there are even enough indecomposable flat cotorsion modules in order to cogenerate $\mathcal{FL}$, so that one can go on and define the Ziegler spectrum for $\mathcal{FL}$ (at least as a set, the topology still has not been defined in general at the time of writing this paper).

A $\Sigma$-cotorsion module is one whose every direct sum of copies is cotorsion. Thus $\Sigma$-cotorsion modules generalize classical $\Sigma$-pure-injective modules, which are well behaved and characterized by chain conditions on definable subgroups.

Our main result here is that $\Sigma$-cotorsion modules are also characterized by a~version of chain conditions, but these are way more complicated. As a consequence, if $C$ is a $\Sigma$-cotorsion module, then any module in the smallest definable class (= first-order axiomatizable and closed under direct sums and summands) containing $C$ is $\Sigma$-cotorsion as well. This, in particular, shows that being $\Sigma$-cotorsion is not a property of a particular module, but rather of its first-order theory in the language of modules over a given ring. The notion of $\Sigma$-cotorsion module is therefore one where homological algebra, model theory and infinite combinatorics meet each other in a fascinating way.

\smallskip

As already mentioned, our results are based on a collection of methods involving homological algebra and infinite combinatorics (stationarity and Mittag-Leffler condition, almost-freeness, singular compactness), which have been thoroughly studied by several authors in the last two decades. We use these to treat the following general questions for a class $\B\subseteq\ModR$:

\begin{enumerate}
\item[(a)] Given a module $M$ such that $\Ext^1_R(M,\B)=0$, when can we write $M = \varinjlim M_i$, where $\Ext^1_R(M_i,\B)=0$ and all the $M_i$ are $\kappa$-presented for some fixed cardinal $\kappa$, independent of $M$?
\item[(b)] Conversely, suppose that $M = \varinjlim M_i$, where $\Ext^1_R(M_i,\B)=0$. When can we conclude that $\Ext^1_R(M,\B)=0$?
\end{enumerate}

Our general strategy to understand classes of modules of the form $\Ker\Ext^1_R(-,\B)$ is first to give a positive answer to (a), using non-trivial closure properties of $\B$ (e.g.\ under direct limits or direct sums). In the best cases we can reach $\kappa=\aleph_0$, which reduces our questions to countably presented modules, which are in general very well understood. Then we use (b) for a possibly larger class $\B'\supseteq\B$.

\smallskip

The paper is organized as follows. We first collect the essential tools of infinite combinatorics in homological algebra in Section~\ref{sec:tools}, including the novel Theorems~\ref{t:main1} and~\ref{t:regular_filt}.

In section Section~\ref{sec:sigmacot} we prove Theorem~\ref{t:sigmacot}, which says that $\Sigma$-cotorsionness is a~property of a first-order theory, and study the corresponding intricate chain conditions (Definition~\ref{d:S_C}) which generalize previously known special cases for countable rings~\cite[Theorem 12]{G-AH_model-th} and non-discrete valuation domains~\cite[Theorem 3.8]{BS_sigma-cot}.

In Section~\ref{sec:Gor}, we introduce a new class of projectively coresolved Gorenstein flat modules, which turns out to be a part of a complete hereditary cotorsion pair (Theorems~\ref{t:pgf} and~\ref{t:Kaplansky}). This allows us to prove that also Gorenstein flat modules are a part of a complete cotorsion pair (Theorem~\ref{t:GF} and Corollary~\ref{c:GF}) and to define two new Quillen equivalent abelian model structures.

In Section~\ref{sec:GI} we prove that also Gorenstein injectives sit in a complete cotorsion pair (Theorem~\ref{t:GI}) and can be used to define an abelian model structure, whose existence was previously known only for particular cases of rings \cite[Theorem~7.12]{Kr}.

Finally, the last Sections~\ref{sec:singset} and~\ref{sec:singmod} are devoted to giving a proof for our new version of singular compactness (which we use in the form of Lemma~\ref{l:singmod}). The paper is concluded by appendix which describes standard but somewhat technical and not trivial operations on direct systems, which we constantly use.
}

\section{Relative projectivity, injectivity and general tools}
\label{sec:tools}

Unless stated otherwise, by a module, we mean a right $R$-module where $R$ is an associative unital ring. We denote the class of all modules by $\ModR$ and the class of all left $R$-modules by $\RMod$. In fact, our techniques work equally well for modules over small preadditive categories (in the sense of~\cite[Appendix B]{JL}) or, equivalently, unitary $R$-modules $M$ (i.e. satisfying $MR = M$) over (associative) rings with enough idempotents (see \cite[\S I.3]{Hap} for precise definitions).


\chge{A \emph{cotorsion pair} is a pair $\mathfrak{C} = (\A,\B)$ of classes of modules such that $\A^\perp = \B$ and $\A = {^\perp\B}$. Cotorsion pairs are most useful if they are \emph{complete}, i.e.\ for each $M\in\ModR$ there exist short exact sequences
\[
0\longrightarrow B^M\longrightarrow A^M \overset{p}\longrightarrow M \longrightarrow 0
\qquad\textrm{and}\qquad
0\longrightarrow M \overset{i}\longrightarrow B_M \longrightarrow A_M \longrightarrow 0
\]
with $A^M,A_M \in \A$ and $B^M,B_M \in \B$. The map $p$ is called a \emph{special $\A$-precover} of $M$ while $i$ is called a \emph{special $\B$-preenvelope} of $M$. A key observation~\cite[Theorem~6.11(b)]{GT} is that any cotorsion pair generated by a set $\clS$ of modules, i.e.\ such that $\B=\clS^\perp$, is complete. In practice, naturally arising cotorsion pairs are usually proved to have this property.

A cotorsion pair $\mathfrak{C} = (\A,\B)$ is \emph{hereditary} if, $\Ext^n_R(A,B)=0$ for each $A\in\A$, $B\in\B$ and $n\ge 1$. Equivalently, one might require that $\A$ be closed under kernels of epimorphisms, or that $\B$ be closed under cokernels of monomorphisms, \cite[Lemma~5.24]{GT}.}

We say that $\mathcal D\subseteq\ModR$ is a \emph{definable class} if $\mathcal D$ is closed under products, direct limits and pure submodules.
For a class $\mathcal C\subseteq\ModR$, we denote by $\Cogen(\mathcal C)$ the class of modules cogenerated by $\mathcal C$, i.e.\ the class of all submodules of products of modules from $\mathcal C$. Analogously, we denote by $\Cogen_*(\mathcal C)$ the class of all pure submodules of products of modules from $\mathcal C$, and by $\bar{\mathcal C}$ the \emph{definable closure} of the class $\mathcal C$, i.e. the smallest definable class containing $\mathcal C$. Furthermore, we use the notations $\mathcal C^\perp = \bigcap _{C\in\mathcal C}\Ker\Ext_R^1(C,-)$ and ${}^\perp\mathcal C = \bigcap _{C\in\mathcal C}\Ker\Ext_R^1(-,C)$. \out{For instance, $\EC = \FL^\perp$ is the class of all (Enochs) cotorsion modules.} If $\mathcal C = \{C\}$, we write just $\Cogen(C)$, $\Cogen_*(C)$, $\bar C$, $C^\perp$ or ${}^\perp C$, respectively.

Every definable class is closed under pure-epimorphic images by \cite[Theorem~3.4.8]{P2}, and the definable closure $\bar{\mathcal C}$ of a class $\mathcal C$ can be constructed, for instance, by closing $\mathcal C$ under products, then under pure submodules and finally under pure-epimorphic images. Note also that, for any definable class $\mathcal D$, there is by \cite[Corollary 5.3.52]{P2} a pure-injective module $C\in\mathcal D$ such that $\Cogen_*(C) = \mathcal D$. The module $C$ is called an \emph{elementary cogenerator} of the definable class $\mathcal D$.

Further, given a right (left, resp.) $R$-module $M$, $M^c$ stands for the \emph{character module of $M$}, i.e.\ the left (right, resp.) $R$-module $\Hom_{\Z}(M,\Q/\Z)$. Recall that if $\mathcal C$ is closed under products and direct limits and $M\in\mathcal C$, then $M^{cc}\in\mathcal C$ (cf.\ \cite[Lemma~5.3]{S}, this is because $M^{cc}$ is elementarily equivalent to $M$, so it purely embeds into an ultrapower of $M$, and hence is a summand there).

Finally, for a regular uncountable cardinal $\lambda$, we call a directed system $\mathcal M = (M_i,f_{ji}\colon M_i\to M_j \mid i< j\in I)$ of modules \emph{$\lambda$-continuous}, provided that the poset $(I,\leq)$ has got suprema of all chains of length $<\lambda$ and, for any such chain $J\subseteq I$, we have $M_{\sup J} = \varinjlim_{j\in J} M_j$. It is easy to see that $\mathcal M$ is then $\lambda$-directed.
\chge{A well-ordered direct system $(M_\alpha,f_{\beta\alpha}\colon M_\alpha\to M_\beta \mid \alpha<\beta<\sigma)$ is called a \emph{filtration} of a~module $M$ if all the maps in the system are inclusions, $M_0=0$, $M_{\theta} = \varinjlim_{\alpha<\theta} M_\alpha$ for each limit ordinal $\theta<\sigma$ and $M=\varinjlim_{\alpha<\sigma} M_\alpha$.}

\smallskip

The following definition contains notions which are fundamental in this paper.

\begin{defn} Let $R$ be a ring and $M,N\in\ModR$. We say that a homomorphism $f\colon M\to N$ is \emph{$\mathcal C$-injective} if $\Hom_R(f,C)$ is surjective for all $C\in\mathcal C$. Moreover, for an uncountable regular cardinal $\lambda$, we say that a module $M$ is \emph{almost $(\mathcal C,\lambda)$-projective}, if $M$ is the direct limit of a $\lambda$-continuous directed system consisting of $<\lambda$-presented modules from ${}^\perp\mathcal C$. If moreover all the colimit maps are $\mathcal C$-injective, then we call the module $M$ \emph{$(\mathcal C,\lambda)$-projective}. If $\mathcal C = \{C\}$, we write just $C$-injective and (almost) $(C,\lambda)$-projective, respectively.
\end{defn}

\begin{rem} If $M$ is $<\lambda$-presented, then (almost) $(\mathcal C,\lambda)$-projectivity of $M$ amounts to $M\in {}^\perp\mathcal C$.
\end{rem}

Let us start with an easy observation.

\begin{lem} \label{l:cogen} If $f\colon M\to N$ is a $\mathcal C$-injective map and $D\in\Cogen(\mathcal C)$ a module with $\Ext_R^1(\Coker(f),D)= 0$, then $f$ is $D$-injective.
\end{lem}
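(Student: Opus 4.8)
The plan is a diagram chase exploiting the defining property of $\mathcal C$-injectivity together with the hypothesis $\Ext_R^1(\Coker(f),D)=0$. First I would reduce to the case of a product. Since $D\in\Cogen(\mathcal C)$, we may fix an embedding $\iota\colon D\hookrightarrow P$ where $P=\prod_{k}C_k$ with all $C_k\in\mathcal C$. Because $\Hom_R(f,P)=\prod_k\Hom_R(f,C_k)$ is a product of surjections, it is surjective; thus $f$ is $P$-injective. Write $\pi\colon P\to P/D$ for the projection and $p\colon N\to\Coker(f)$ for the canonical epimorphism, so that $\Ker p=\Img(f)$.

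Now let $g\colon M\to D$ be arbitrary. Using $P$-injectivity of $f$, lift $\iota g\colon M\to P$ to some $h\colon N\to P$ with $hf=\iota g$. The composite $\pi h\colon N\to P/D$ vanishes on $\Img(f)$: for $m\in M$ we have $h(f(m))=\iota(g(m))\in D=\Ker\pi$. Hence $\pi h$ factors through $p$, say $\pi h=\bar h\circ p$ for a (unique) $\bar h\colon\Coker(f)\to P/D$. Applying $\Hom_R(\Coker(f),-)$ to the short exact sequence $0\to D\xrightarrow{\iota}P\xrightarrow{\pi}P/D\to 0$ and invoking $\Ext_R^1(\Coker(f),D)=0$, the map $\Hom_R(\Coker(f),\pi)$ is surjective, so there is $k\colon\Coker(f)\to P$ with $\pi k=\bar h$.

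Finally I would set $h'\colon=h-kp\colon N\to P$. Then $\pi h'=\pi h-\bar h p=0$, so $h'$ corestricts to a homomorphism $N\to D$, which I still denote by $h'$. Since $pf=0$, we get $\iota h'f=hf=\iota g$, and as $\iota$ is monic, $h'f=g$. Thus every $g\colon M\to D$ extends along $f$, i.e.\ $f$ is $D$-injective. There is no real obstacle here; the only points needing care are that $f$ need not be injective, so one works with $\Img(f)$ and $\Coker(f)$ instead of $M$ (and uses $pf=0$), and that the corrected map $h'=h-kp$ genuinely lands in $D$ — which is precisely where the vanishing of $\Ext_R^1(\Coker(f),D)$ is used.
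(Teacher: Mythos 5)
Your proof is correct. It is organized somewhat differently from the paper's: where you embed $D$ into a product $P$ of modules from $\mathcal C$, lift $\iota g$ over $f$ to $h\colon N\to P$, and then use the vanishing of $\Ext^1_R(\Coker(f),D)$ applied to the sequence $0\to D\to P\to P/D\to 0$ to correct $h$ so that it lands in $D$, the paper instead observes that $\mathcal C$-injectivity forces $\Ker(f)\subseteq\Ker(h)$ for every $h\colon M\to C$ with $C\in\mathcal C$ (and hence for every $h\colon M\to D$, again by cogeneration), so that any $g\colon M\to D$ factors through $\Img(f)$; then the Ext vanishing applied to $0\to\Img(f)\to N\to\Coker(f)\to 0$ extends that factorization to $N$. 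Both arguments rest on precisely the same two ingredients --- the cogeneration of $D$ by $\mathcal C$ and the vanishing of $\Ext^1_R(\Coker(f),D)$ --- but the paper's route is shorter and a touch more conceptual (a kernel containment plus a single Ext sequence), whereas yours is a more explicit lifting-and-correcting chase. Either is a complete proof, and the care you take around $f$ possibly not being monic (working with $\Img(f)$, $\Coker(f)$, and using $pf=0$) is exactly the point that makes the argument go through in full generality.
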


\begin{proof} By our assumptions, $\Ker(f)\subseteq\Ker(h)$ for any $h\in\Hom_R(M,C)$ where $C\in\mathcal C$. It immediately follows that the same holds for any $h\in\Hom_R(M,D)$ since $D\in\Cogen(\mathcal C)$. From the hypothesis on $\Coker(f)$, we see that the inclusion $\Img(f)\subseteq N$ is $D$-injective, hence $f$ is $D$-injective as well.
\end{proof}

We are interested in when $\Ext^1_R(\varinjlim M_i,C) = 0$ holds true for a direct system of modules $(M_i\mid i \in I)$. The following lemma gives us a tool to handle this situation in a special case.

\begin{lem} \label{lem:left_dir_lim}
Let $R$ be a ring, $C$ be a module, and $(M_i, f_{ji} \mid i<j \in I)$ be a directed system of modules \st $\Ext^1_R(M_i,C) = 0$ for each $i \in I$. Then the following are equivalent:
\begin{enumerate}
\item $\Ext^1_R(\varinjlim M_i,C) = 0$.
\item For each family $(g_{ji}\colon M_i \to C \mid i<j)$ of morphisms \st 
$$
g_{ki} = g_{ji} + g_{kj} f_{ji} \qquad
\textrm{for each $i,j,k \in I$ with $i<j<k$},
$$
there is a family $(g_i\colon M_i \to C \mid i \in I)$ of morphisms \st
$$
g_i = g_{ji} + g_j f_{ji} \qquad
\textrm{for each $i,j \in I$ with $i<j$}.
$$
\end{enumerate}
\end{lem}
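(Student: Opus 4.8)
The plan is to prove the equivalence by translating, in both directions, between short exact sequences $0\to C\to E\to M\to 0$ with $M=\varinjlim M_i$ and families $(g_{ji})$ as in (2), using directed colimits of \emph{split} short exact sequences. Throughout I use that directed colimits are exact in $\ModR$, and that $\Ext^1_R(M,C)=0$ is equivalent to every short exact sequence $0\to C\to E\to M\to 0$ being split.

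For (2)$\Rightarrow$(1), I would take an arbitrary extension $0\to C\xrightarrow{\mu}E\xrightarrow{\pi}M\to 0$ and pull it back along the colimit maps $f_i\colon M_i\to M$, obtaining short exact sequences $0\to C\xrightarrow{\mu_i}E_i\xrightarrow{\pi_i}M_i\to 0$ with $E_i=E\times_M M_i$; these are split since $\Ext^1_R(M_i,C)=0$, so fix retractions $r_i\colon E_i\to C$. The $f_{ji}$ induce maps $h_{ji}\colon E_i\to E_j$ making $(E_i,h_{ji})$ a directed system of morphisms of these sequences, with $\varinjlim E_i=E$. Since $(r_i-r_jh_{ji})\mu_i=0$, the homomorphism $r_i-r_jh_{ji}$ factors uniquely as $g_{ji}\pi_i$ for some $g_{ji}\colon M_i\to C$, and a short computation using $h_{kj}h_{ji}=h_{ki}$ shows $(g_{ji}\mid i<j)$ satisfies the cocycle identity. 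Feeding this into (2) gives morphisms $g_i\colon M_i\to C$ with $g_i=g_{ji}+g_jf_{ji}$; then $s_i:=r_i-g_i\pi_i$ satisfies $s_i\mu_i=\mathrm{id}_C$, and $s_jh_{ji}=s_i$ because $r_i-g_{ji}\pi_i=r_jh_{ji}$. So the $s_i$ glue to a retraction $E\to C$, the extension splits, and as it was arbitrary, $\Ext^1_R(M,C)=0$.

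For (1)$\Rightarrow$(2), given a family $(g_{ji})$ with the cocycle property I would build a directed system realizing it: set $E_i:=M_i\oplus C$ with $\mu_i(c)=(0,c)$, $\pi_i(x,c)=x$, and, for $i<j$, $h_{ji}(x,c):=(f_{ji}(x),\,c-g_{ji}(x))$. The cocycle identity is exactly what yields $h_{kj}h_{ji}=h_{ki}$, so $(E_i,h_{ji})$ is a directed system of split short exact sequences $0\to C\to E_i\to M_i\to 0$; its colimit is a short exact sequence $0\to C\to E\to M\to 0$, split by (1). A retraction $E\to C$ restricts along the colimit maps to compatible retractions $s_i\colon E_i\to C$, and then the homomorphisms $g_i\colon M_i\to C$, $g_i(x):=-s_i(x,0)$, satisfy $g_i=g_{ji}+g_jf_{ji}$: this falls out of the relation $s_i=s_jh_{ji}$ together with $s_j(0,c)=c$.

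The closest thing to an obstacle is not conceptual but bookkeeping: getting the signs right and handling the identifications $\varinjlim C=C$, $\varinjlim M_i=M$, and $\varinjlim(E\times_M M_i)=E$ (the last because directed colimits commute with pullbacks in $\ModR$). As a cross-check, and an alternative route, one can observe that, since $\Ext^1_R(M_i,C)=0$ for all $i$, applying $\Hom_R(-,C)$ to the standard bar-type resolution $\cdots\to\bigoplus_{i<j<k}M_i\to\bigoplus_{i<j}M_i\to\bigoplus_i M_i\to M\to 0$ of the directed colimit identifies $\Ext^1_R(M,C)$ with the cohomology at the middle term of $\prod_i\Hom_R(M_i,C)\to\prod_{i<j}\Hom_R(M_i,C)\to\prod_{i<j<k}\Hom_R(M_i,C)$, where the differentials send $(g_i)\mapsto(g_i-g_jf_{ji})_{i<j}$ and $(g_{ji})\mapsto(g_{ki}-g_{ji}-g_{kj}f_{ji})_{i<j<k}$; condition (2) is then exactly the vanishing of this first cohomology.
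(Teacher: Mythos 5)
Your main argument is correct, and it takes a genuinely different route from the paper. The paper simply invokes the bar-type free(-ish) resolution $\cdots\to\bigoplus_{i_0<i_1}M_{i_0}\to\bigoplus_{i_0}M_{i_0}\to\varinjlim M_i\to 0$, applies $\Hom_R(-,C)$, and reads off the equivalence from the vanishing of $\Ext^1$ at the direct-sum term and the explicit formulas for the two differentials; this is exactly what you sketch in your last paragraph as a ``cross-check,'' and that paragraph is essentially the paper's entire proof. Your main proof, by contrast, constructs the correspondence between $1$-cocycles and splittings by hand: for $(2)\Rightarrow(1)$ you pull back an arbitrary extension along the colimit cocone, extract a cocycle $g_{ji}$ from the failure of chosen retractions $r_i$ to be compatible, and correct the $r_i$ by the coboundary $g_i\pi_i$ to obtain a compatible cone of retractions; for $(1)\Rightarrow(2)$ you build the directed system of trivial extensions $M_i\oplus C$ with twisted transition maps $h_{ji}(x,c)=(f_{ji}(x),\,c-g_{ji}(x))$, whose associativity is exactly the cocycle identity, and read off a coboundary from a splitting of the colimit sequence (using exactness of filtered colimits, and that they commute with the pullbacks/kernels involved). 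Both routes work; the paper's is shorter and more mechanical once one accepts the resolution, while yours makes the extension-theoretic content of condition $(2)$ transparent. The sign conventions and identifications you flag are indeed the only places where care is needed, and your computations handle them correctly.
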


\begin{proof}
It is well known that there is the following exact sequence for $\varinjlim M_i$:
$$
\dots \overset{\delta_2}\to
\bigoplus_{i_0<i_1<i_2} M_{i_0 i_1 i_2} \overset{\delta_1}\to
\bigoplus_{i_0<i_1} M_{i_0 i_1} \overset{\delta_0}\to
\bigoplus_{i_0 \in I} M_{i_0} \to
\varinjlim M_i \to 0 $$
where $M_{i_0 i_1 \dots i_n} = M_{i_0}$ for all $i_0<i_1< \dots <i_n$ in $I$ and
\begin{align*}
\big( \delta_0 \restriction M_{ij} \big)(x) &= \big( x,-f_{ji}(x) \big) \in M_i \times M_j \\
\big( \delta_1 \restriction M_{ijk} \big)(x) &= \big( x, -x, -f_{ji}(x) \big) \in M_{ik} \times M_{ij} \times M_{jk}
\end{align*}
If we apply the functor $\Hom_R(-,C)$ to that long exact sequence, we get in general a complex. Since $\Ext^1_R(\bigoplus M_{i_0},C) = 0$, we deduce that $\Ext^1_R(\varinjlim M_i,C) = 0$ \iff this complex is exact at $\Hom_R(\bigoplus_{i_0<i_1} M_{i_0 i_1},C)$. However, we have:
\begin{align*}
\Hom_R(\delta^0,C) \big((g_i)_{i}\big) &= (g_i - g_j f_{ji})_{i<j} \\
\Hom_R(\delta^1,C) \big((g_{ji})_{i<j}\big) &= (g_{ki} - g_{ji} - g_{kj} f_{ji})_{i<j<k}
\end{align*}
Hence, the exactness condition translates precisely to the condition (2) of the statement.
\end{proof}

As a fruitful corollary, we obtain the following generalization of what is referred to as the Eklof Lemma in \cite[Lemma 6.2]{GT}.

\begin{lem} \label{l:eklof} Let $R$ be a ring. Let $\mathcal C\subseteq\ModR$, $\sigma$ be a limit ordinal and $\mathcal M = (M_\alpha,f_{\beta\alpha}\colon M_\alpha\to~ M_\beta \mid \alpha<\beta\leq\sigma)$ be a continuous well-ordered direct system of modules such that $M_{\alpha}\in {}^\perp \mathcal C$ and $f_{\alpha+1,\alpha}$ is $\mathcal C$-injective for all $\alpha<\sigma$. Then $M_\sigma\in {}^\perp\mathcal C$.
\end{lem}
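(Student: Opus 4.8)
The statement is an Eklof-type lemma for $\mathcal{C}$-injective transfinite compositions, and the natural strategy is a transfinite induction along $\sigma$ whose inductive step at each ordinal is controlled by Lemma~\ref{lem:left_dir_lim}. Fix $C \in \mathcal{C}$; it suffices to show $\Ext^1_R(M_\sigma, C) = 0$, and since $\mathcal{C}$ is arbitrary this gives $M_\sigma \in {}^\perp\mathcal{C}$. Since $M_\sigma = \varinjlim_{\alpha < \sigma} M_\alpha$ (continuity at $\sigma$) and each $M_\alpha \in {}^\perp\mathcal{C}$, in particular $\Ext^1_R(M_\alpha, C) = 0$ for all $\alpha < \sigma$, we are exactly in the situation of Lemma~\ref{lem:left_dir_lim} applied to the directed system $(M_\alpha, f_{\beta\alpha} \mid \alpha < \beta < \sigma)$ with limit $M_\sigma$. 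So we must verify condition (2): given a coherent family $(g_{\beta\alpha}\colon M_\alpha \to C \mid \alpha < \beta < \sigma)$ satisfying the cocycle identity $g_{\gamma\alpha} = g_{\beta\alpha} + g_{\gamma\beta}f_{\beta\alpha}$, produce $(g_\alpha\colon M_\alpha \to C \mid \alpha < \sigma)$ with $g_\alpha = g_{\beta\alpha} + g_\beta f_{\beta\alpha}$ for all $\alpha < \beta$.

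I would construct the $g_\alpha$ by transfinite recursion on $\alpha < \sigma$, maintaining the compatibility condition $g_\alpha = g_{\beta\alpha} + g_\beta f_{\beta\alpha}$ for all $\alpha < \beta$ already constructed (equivalently, for consecutive steps it suffices, by the cocycle identity, to ensure $g_\alpha = g_{\alpha+1,\alpha} + g_{\alpha+1}f_{\alpha+1,\alpha}$, from which the general relation follows by induction). Start with $g_0 = 0$ (or anything; $M_0$ plays no special role, but typically one takes $M_0 = 0$). At a successor ordinal $\alpha+1$: we have $g_\alpha\colon M_\alpha \to C$ in hand and want $g_{\alpha+1}\colon M_{\alpha+1}\to C$ with $g_\alpha = g_{\alpha+1,\alpha} + g_{\alpha+1}f_{\alpha+1,\alpha}$, i.e.\ $g_{\alpha+1}\circ f_{\alpha+1,\alpha} = g_\alpha - g_{\alpha+1,\alpha}$. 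The right-hand side is a map $M_\alpha \to C$; since $f_{\alpha+1,\alpha}$ is $\mathcal{C}$-injective, the map $\Hom_R(f_{\alpha+1,\alpha}, C)$ is surjective, so such a $g_{\alpha+1}$ exists. One must check the newly chosen $g_{\alpha+1}$ is compatible with all earlier $g_\beta$, $\beta \le \alpha$: this is a short computation using the recursively-maintained relations and the cocycle identity for the $g_{\beta\alpha}$.

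At a limit ordinal $\theta < \sigma$: here continuity of $\mathcal{M}$ gives $M_\theta = \varinjlim_{\alpha < \theta} M_\alpha$, so a map $M_\theta \to C$ is exactly a compatible family of maps $M_\alpha \to C$; I claim the already-constructed $(g_\alpha\colon M_\alpha \to C \mid \alpha < \theta)$ is such a compatible family with respect to the colimit cocone $(f_{\theta\alpha})$, \emph{provided} one twists correctly. The subtlety is that the $g_\alpha$ are not literally compatible along $f_{\beta\alpha}$ (we have $g_\alpha = g_{\beta\alpha} + g_\beta f_{\beta\alpha}$, not $g_\alpha = g_\beta f_{\beta\alpha}$); so what one actually does is fix the map $g_{\theta\alpha}\colon M_\alpha \to C$ from the given family and observe that $g_\alpha - g_{\theta\alpha}$ \emph{is} compatible: for $\alpha < \beta < \theta$, using the cocycle identity $g_{\theta\alpha} = g_{\beta\alpha} + g_{\theta\beta}f_{\beta\alpha}$ and the relation $g_\alpha = g_{\beta\alpha} + g_\beta f_{\beta\alpha}$, one computes $(g_\alpha - g_{\theta\alpha}) = (g_\beta - g_{\theta\beta})\circ f_{\beta\alpha}$. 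Hence the family $(g_\alpha - g_{\theta\alpha})_{\alpha < \theta}$ factors through a unique $h\colon M_\theta \to C$, and we set $g_\theta := h + (\text{the map induced by } g_{\theta\alpha})$ — more precisely $g_\theta$ is characterized by $g_\theta f_{\theta\alpha} = g_\alpha$ would be wrong; rather one sets $g_\theta$ so that $g_\alpha = g_{\theta\alpha} + g_\theta f_{\theta\alpha}$, i.e.\ $g_\theta f_{\theta\alpha} = g_\alpha - g_{\theta\alpha}$, which is exactly $h$ composed with $f_{\theta\alpha}$; so $g_\theta = h$ works. After constructing all $g_\alpha$, $\alpha < \sigma$, condition (2) of Lemma~\ref{lem:left_dir_lim} holds, so $\Ext^1_R(M_\sigma, C) = 0$, completing the proof.

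**Main obstacle.** The only genuinely delicate point is the limit step: getting the bookkeeping right so that the "twisted" family $(g_\alpha - g_{\theta\alpha})_{\alpha<\theta}$ is honestly compatible along the transition maps $f_{\beta\alpha}$ and hence descends to $M_\theta = \varinjlim_{\alpha<\theta}M_\alpha$, and then checking the resulting $g_\theta$ satisfies the required relation with \emph{every} $g_\alpha$ (not just cofinally many) — this last check again reduces to the cocycle identity. The successor step is essentially immediate from the definition of $\mathcal{C}$-injectivity, and the reduction to Lemma~\ref{lem:left_dir_lim} is formal. One should also note $M_\sigma \in {}^\perp\mathcal{C}$ requires this for all $C \in \mathcal{C}$ simultaneously, but since the whole argument is carried out for a fixed but arbitrary $C$, there is no difficulty there.
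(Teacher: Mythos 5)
Your proposal is correct and follows the same route as the paper: reduce to condition (2) of Lemma~\ref{lem:left_dir_lim}, then build the $g_\alpha$ by transfinite recursion, lifting along $f_{\alpha+1,\alpha}$ via $\mathcal C$-injectivity at successors and, at limits $\theta$, observing that the twisted family $(g_\alpha - g_{\theta\alpha})_{\alpha<\theta}$ forms a cocone over the continuous system so that it induces $g_\theta$ on $M_\theta=\varinjlim_{\alpha<\theta}M_\alpha$. The reformulation $g_\alpha - g_{\gamma\alpha} = (g_\beta - g_{\gamma\beta})f_{\beta\alpha}$ you identify as the crux is exactly the device used in the paper.
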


\begin{proof} Pick any $C\in\mathcal C$ and suppose that, as in the condition $(2)$ of Lemma~\ref{lem:left_dir_lim}, we have a collection of maps $(g_{\beta\alpha})_{\alpha<\beta}$ with $g_{\beta\alpha}\colon M_\alpha \to C$ and $g_{\gamma\alpha} = g_{\beta\alpha} + g_{\gamma\beta} f_{\beta\alpha}$ whenever $\alpha<\beta<\gamma<\sigma$. We need to find $g_\alpha\colon M_\alpha \to C$ \st
\[ g_\alpha = g_{\beta\alpha} + g_\beta f_{\beta\alpha} \qquad
\textrm{for each $\alpha<\beta<\sigma$.}
\]
By substracting $g_{\gamma\alpha} = g_{\beta\alpha} + g_{\gamma\beta} f_{\beta\alpha}$, this can be equivalently reformulated to
\[ g_\alpha - g_{\gamma\alpha} = (g_\beta - g_{\gamma\beta}) f_{\beta\alpha} \qquad
\textrm{for each $\alpha<\beta<\gamma<\sigma$.}
\]

We can construct such $g_\alpha$ by transfinite induction on $\alpha<\sigma$. The initial morphism $g_0$ can be chosen arbitrarily. If $\beta=\alpha+1$ is a successor ordinal, we take $g_\beta$ as a lift of $g_\alpha - g_{\beta\alpha}\colon M_\alpha \to C$ over $f_{\beta\alpha}\colon M_\alpha \to M_\beta$, using the $\mathcal C$-injectivity of $f_{\beta\alpha}$. Finally, if $\alpha$ is a limit ordinal, we use the continuity of the direct system and the fact that $(g_{\delta} - g_{\alpha\delta} \mid \delta<\alpha)$ is a cocone of the direct system $(M_\delta \mid \delta<\alpha)$, and take $g_\alpha\colon M_\alpha \to C$ as the colimit map corresponding to this cocone.
\end{proof}

\begin{rem}It follows from the lemma above that, given an uncountable regular cardinal $\kappa$, a $\kappa$-presented module which is $(\mathcal C,\kappa)$-projective belongs to ${}^\perp\mathcal C$. Indeed, if $\mathcal M = (M_i,f_{ji}:M_i\to M_j \mid i< j\in I)$ is any $\kappa$-continuous directed system of $<\kappa$-presented modules whose direct limit is $M$, it has a continuous well-ordered subchain $\mathcal M' = (M_{i_\alpha} \mid \alpha<\kappa)$ with the same direct limit. If $\mathcal M$ witnesses the $(\mathcal C,\kappa)$-projectivity of $M$, so does $\mathcal M'$ and we can use the lemma.

If, on the other hand, $\kappa$ is singular, we have Lemma~\ref{l:singmod} instead. We will use the lemma here, but postpone its fairly technical proof to Sections~\ref{sec:singset} and~\ref{sec:singmod} for the sake of better readability.
\end{rem}

Recall from~\cite{AH} that, for $\mathcal Q\subseteq\RMod$, a module $M$ is \emph{$\mathcal Q$-Mittag-Leffler}, if the canonical morphism $\rho\colon M\otimes_R\prod_{i\in I} Q_i \to \prod_{i\in I}(M\otimes _R Q_i)$ is injective for any subset $\{Q_i \mid i\in I\}$ of $\mathcal Q$.

Further, given a module $M$ and $\mathcal C\subseteq\ModR$, we say that $M$ is \emph{$\mathcal{C}$-stationary} provided that for some (equivalently any) directed system $\mathcal F = (F_i,f_{ji}\colon F_i\to F_j \mid i<j\in I)$ consisting of finitely presented modules with $\varinjlim\mathcal F = M$, the corresponding inverse system $\Hom_R(\mathcal F,C)$ of abelian groups satisfies the Mittag-Leffler condition for each $C\in\mathcal C$. This means that, for any $C\in\mathcal C$ and $i\in I$, there exists $j\in I$, $j\geq i$, such that, for all $g\in\Hom_R(F_j,C)$, we have $gf_{ji}\in\Img(\Hom_R(f_{ki},C))$ for any $i\leq k\in I$. Moreover, if we denote by $f_i: F_i\to M$ the canonical colimit map, we say that $M$ is \emph{strict $\mathcal C$-stationary} if we have even $gf_{ji}\in\Img(\Hom_R(f_i,C))$. These two concepts coincide for $C$ (locally) pure-injective. See \cite[Section~2]{H} for this result and other ones relating the notions of (strict) stationary and Mittag-Leffler module.

\smallskip

One can use the following lemma to present a large $C$-stationary module as the direct limit of a $\lambda$-continuous directed system consisting of small $C$-stationary modules. Compare it with \cite[Theorem 2.6]{HT}. Here we denote, for a set $I$ and a~cardinal number $\lambda$, by $[I]^{<\lambda}$ the set of all subsets of $I$ of cardinality $<\lambda$.

\begin{lem} \label{l:Cext} Let $C$ be a module, and $M$ be a $C$-stationary module. Then for each uncountable regular cardinal $\lambda$, there exists a $\lambda$-continuous directed system $\mathcal L$ consisting of $<\lambda$-presented $C$-stationary modules \st $M=\varinjlim \mathcal L$. Moreover, for every $L$ from the system $\mathcal L$ and a pure-injective module $D\in\bar C$, the canonical colimit map $L\to M$ is $D$-injective.
\end{lem}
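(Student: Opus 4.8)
The plan is to present $M$ as a colimit $\varinjlim_{i\in I}F_i$ of finitely presented modules (with colimit maps $f_i\colon F_i\to M$), then run a closure argument over $[I]^{<\lambda}$ to carve out the system $\mathcal L$, and finally deduce the last assertion from algebraic compactness of a suitable pure-injective. The first move is the reduction that unlocks everything: since $M$ is $C$-stationary, the results of \cite[Section~2]{H} relating stationarity and definable classes give that $M$ is $\bar C$-stationary; in particular $M$ is $C_0$-stationary for the elementary cogenerator $C_0\in\bar C$ of the definable class $\bar C$ (which exists by \cite[Corollary~5.3.52]{P2}), and $C_0$ being pure-injective, even \emph{strict} $C_0$-stationary. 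I would then fix three functions on $I$: a map $w\colon I\to I$ with $w(i)\ge i$ witnessing the Mittag-Leffler condition of $\Hom_R((F_i)_{i},C)$ at $i$ (i.e.\ $g f_{w(i),i}\in\Img(\Hom_R(f_{ki},C))$ for all $i\le k$ and all $g\in\Hom_R(F_{w(i)},C)$); a map $w_0\colon I\to I$ with $w_0(i)\ge i$ witnessing strict $C_0$-stationarity at $i$ (i.e.\ $g f_{w_0(i),i}\in\Img(\Hom_R(f_i,C_0))$ for all $g\in\Hom_R(F_{w_0(i)},C_0)$); and $d\colon I\times I\to I$ with $d(i,j)\ge i,j$. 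Using the single module $C_0$ here is precisely what makes the construction work uniformly for all pure-injective $D\in\bar C$ at once.

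Next, let $\mathcal P$ consist of all $S\in[I]^{<\lambda}$ closed under $w$, $w_0$ and $d$. Since $\lambda$ is regular and uncountable, $\mathcal P$ is closed under unions of chains of length $<\lambda$, is $\lambda$-directed, and is cofinal in $[I]^{<\lambda}$ (so $\bigcup\mathcal P=I$) --- a routine closure computation. For $S\in\mathcal P$, closure under $d$ makes $S$ a directed subposet of $I$, so I set $L_S:=\varinjlim_{i\in S}F_i$, a $<\lambda$-presented module, with colimit maps $f^S_i\colon F_i\to L_S$ and the canonical map $\ell_S\colon L_S\to M$ satisfying $\ell_S f^S_i=f_i$. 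Each $L_S$ is $C$-stationary: for $i\in S$ the index $w(i)$ lies in $S$ and still witnesses the Mittag-Leffler condition for the $S$-indexed system, since restricting $k$ to $S$ only shrinks the relevant images. For $S\subseteq T$ in $\mathcal P$ there is a canonical map $\ell_{TS}\colon L_S\to L_T$ with $\ell_T\ell_{TS}=\ell_S$, and, choosing these coherently as in the appendix, one obtains a directed system $\mathcal L=(L_S,\ell_{TS}\mid S\subseteq T\in\mathcal P)$. Interchanging colimits and using $\bigcup\mathcal P=I$ gives $\varinjlim\mathcal L=\varinjlim_{i\in I}F_i=M$, and $\lambda$-continuity of $\mathcal L$ follows the same way, since the union of a chain of length $<\lambda$ in $\mathcal P$ again belongs to $\mathcal P$.

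For the ``moreover'' clause, fix a pure-injective $D\in\bar C$. As $D\in\bar C=\Cogen_*(C_0)$, there is a pure monomorphism $D\hookrightarrow C_0^{\kappa}$, which splits by pure-injectivity of $D$; since $\Hom_R(-,C_0^{\kappa})\cong\Hom_R(-,C_0)^{\kappa}$ and $D$ is a direct summand of $C_0^{\kappa}$, any $C_0$-injective map is automatically $D$-injective, so it suffices to show that every $\ell_S$ is $C_0$-injective. Given $h\colon L_S\to C_0$, put $h_i:=h f^S_i$ for $i\in S$; these form a cocone over $(F_i)_{i\in S}$. For each $i\in S$ we have $w_0(i)\in S$, so $h_i=h_{w_0(i)}f_{w_0(i),i}\in\Img(\Hom_R(f_i,C_0))$ by the choice of $w_0$: each $h_i$ extends along $f_i$. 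Now view ``a homomorphism $\phi\colon M\to C_0$ with $\phi f_i=h_i$ for all $i\in S$'' as a system of $R$-linear equations with parameters in $C_0$ (using a presentation of $M$). Any finite subsystem is solvable: pick $i^{*}\in S$ above the finitely many indices involved and any $\psi\colon M\to C_0$ with $\psi f_{i^{*}}=h_{i^{*}}$; compatibility of the cocone forces $\psi f_i=h_i$ for those indices. As $C_0$ is pure-injective --- equivalently, algebraically compact --- the whole system has a solution $\phi\colon M\to C_0$, and $\phi\ell_S=h$ because the two maps agree after precomposing with each of the jointly epic $f^S_i$. Hence $\ell_S$ is $C_0$-injective, and therefore $D$-injective.

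The main obstacle is exactly the opening reduction: propagating $C$-stationarity to the whole definable closure $\bar C$, hence to its elementary cogenerator $C_0$ --- this is what legitimizes closing under the single function $w_0$ independently of $D$, and it is here that I rely on \cite[Section~2]{H}. Note that the tempting shortcut via Lemma~\ref{l:cogen} (deducing $D$-injectivity from $C$-injectivity) is not available, because a pure-injective module in $\bar C$ need not be cogenerated by $C$ itself; the algebraic-compactness argument is precisely what circumvents this. The only remaining point is bookkeeping, namely a coherent choice of the transition maps $\ell_{TS}$ making the $L_S$ into a genuine direct system, which is handled by the operations on direct systems collected in the appendix.
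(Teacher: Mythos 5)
Your proof is correct, and the construction of $\mathcal L$ is essentially the one the paper uses: present $M=\varinjlim_{i\in I}F_i$ with $F_i$ finitely presented, pass to an elementary cogenerator $C_0$ of $\bar C$ (which, being pure-injective, upgrades $C_0$-stationarity to \emph{strict} $C_0$-stationarity), and carve out $\mathcal L$ by taking direct limits over subsets $S\in[I]^{<\lambda}$ closed under directedness and the stationarity-witnessing functions. Where you genuinely diverge is in proving that the colimit maps $\ell_S\colon L_S\to M$ are $D$-injective. The paper factorizes $\Hom_R(\ell_S,D)$ as $\iota\circ\theta$ through $(\Img\ell_S)^*$, shows $\iota$ is an isomorphism via the kernel-comparison, and then deduces surjectivity of $\theta$ from Auslander's isomorphism $\varprojlim\Ext^1_R(\Coker f_i,D)\cong\Ext^1_R(\Coker\ell_S,D)$ for pure-injective $D$, closing with a diagram chase. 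You instead reduce to $C_0$-injectivity (since $D$ is a direct summand of some $C_0^\kappa$), observe that strict $C_0$-stationarity makes each $h_i=h f^S_i$ liftable along $f_i$, and then read ``$\phi\colon M\to C_0$ with $\phi f_i=h_i$ for all $i\in S$'' as a system of $R$-linear equations over $C_0$; finite solvability comes from directedness of $S$, and the full solution from algebraic compactness of the pure-injective $C_0$. Both proofs hinge on pure-injectivity, but yours uses its model-theoretic incarnation (algebraic compactness) head-on and so bypasses the $\Ext^1$/inverse-limit machinery entirely, which is arguably more elementary; the paper's version packages the same compactness into Auslander's lemma, which keeps the argument inside the homological toolbox used elsewhere in the paper. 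One small stylistic difference: the paper assumes w.l.o.g.\ that $C$ itself is the elementary cogenerator (legitimate, since $\bar C=\bar{C_0}$), so it needs only one closure function $\sigma$; you keep $C$ and $C_0$ separate and close under two functions $w,w_0$, which is equivalent and slightly more explicit about what each closure is for.
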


\begin{proof} Consider a direct system $\mathcal F = (F_i,f_{ji}\colon F_i\to F_j \mid i< j\in I)$ consisting of finitely presented modules \st $M = \varinjlim _{i\in I}F_i$,
and denote by $f_i\colon F_i\to M$ the canonical colimit maps. We can w.l.o.g.\ assume that $(I,\leq)$ does not have the largest element and, using \cite[Corollary 2.10, Theorem 2.11]{H}, that $C$ is an elementary cogenerator of $\bar C$. Thus each pure-injective $D\in\bar C$ is a direct summand in a product of copies of $C$.

Since $M$ is strict $C$-stationary, we can define a map $\sigma\colon I\to I$ so that for each $i\in~\!\! I$, any cardinal $\kappa$ and each $g\in\Hom_R(F_{\sigma(i)},C^\kappa)$, we have $gf_{\sigma(i)i}\in\Img(\Hom_R(f_i,C^\kappa))$. Further, we fix a map $\delta\colon I^2\to I$ satisfying $i,j\leq\delta(i,j)$. For each $X\in[I]^{<\lambda}$, we construct the set $\tilde X$ as the union of a chain of the sets $X_1=X$, $X_{2n}=X_{2n-1}\cup\delta (X_{2n-1}^2)$, and $X_{2n+1}=X_{2n}\cup\sigma (X_{2n})$ for $1\leq n<\omega$.

Then $\tilde X$ is a directed subposet of $(I,\leq)$ closed under $\sigma$ and $\delta$. Moreover, $|\tilde X|<\lambda$ since $\lambda$ is uncountable. We put $\mathcal L = (\varinjlim _{i\in\tilde X}F_i \mid X\in [I]^{<\lambda})$ where we take the canonical colimit factorization maps as morphisms. It is easy to see that $\mathcal L$ is $\lambda$-continuous and that it consists of $<\lambda$-presented modules.

Pick any $L=\varinjlim _{i\in\tilde X}F_i\in\mathcal L$ and, for all $i\in\tilde X$, denote by $g_i\colon F_i\to L$ the colimit maps. Finally, let $g\colon L\to M$ be the canonical factorization, so that $f_i = g g_i$.

By the construction, $L$ is strict $C$-stationary. Let $D\in\bar C$ be arbitrary pure-injective and let $(-)^*$ denote the functor $\Hom_R(-,D)$. It remains to show that the map $g^*\colon M^* \to L^*$ is surjective. Let us write $g^*\colon M^*\overset{\theta}{\to} (\Img(g))^*\overset{\iota}{\to} L^*$.

First notice that for each $h\colon L\to D$, we have $\Ker(g)\subseteq \Ker(h)$: indeed, if $y\not\in\Ker(h)$, then $hg_i(x)\neq 0$ for some $i\in\tilde X$ where $g_i(x)=y$. However, by the construction, $hg_i\in\Img(\Hom_R(f_i,D))$, and so $f_i(x)=gg_i(x)\neq 0$ as well. Hence $y\not\in\Ker(g)$. It follows that $\iota$ is an isomorphism and it suffices to prove the surjectivity of $\theta$. 

To this end, consider the short exact sequence $0\to \Img(g)\to M\to\Coker(g)\to 0$ as the direct limit of a direct system $\mathcal D$ of short exact sequences $0\to \Img(f_i)\to M\to\Coker(f_i)\to 0$, where $i$ runs through $\tilde X$. Since $D$ is pure-injective, we have 
$$\varprojlim\Ext^1_R(\Coker(f_i),D)\cong\Ext^1_R(\Coker(g),D)$$
\noindent by a classic result of Auslander (see e.g.\ \cite[Lemma 6.28]{GT}). 

Applying the functor $(-)^*$ to the direct system $\mathcal D$, we obtain for each $i\in\tilde X$ the following commutative diagram with exact rows where $\iota_i\colon\Img(f_i) \to \Img(g)$ denotes the inclusion:
$$\begin{CD}
	M^* @>{\theta}>>	(\Img(g))^* @>{\eta}>> \Ext^1_R (\Coker(g),D) @. 	\\
	@|			@V{\iota_i ^*}VV	 @V{\varepsilon_i}VV \\
	M^* @>{\iota_i^*\theta}>>	(\Img(f_i))^* @>{\eta_i}>> \Ext^1_R (\Coker(f_i),D). @. \\
\end{CD}$$
We will prove that $\eta$ is the zero map. By the previous paragraph, this amounts to showing that $\varepsilon_i\eta =0$ for all $i\in\tilde X$. However, $\varepsilon_i\eta=\eta_i \iota_i^*$, and $\Img(\iota_i^*)\subseteq\Img(\iota_i^*\theta) = \Ker(\eta_i)$, by the construction of~$L$. Hence $\eta_i \iota_i^* = 0$ for each $i\in\tilde X$ and $\theta$ is an epimorphism. 
\end{proof}

\chge{
\begin{rem}
Notice that the same proof would apply if we allow the $L$ from the statement of Lemma~\ref{l:Cext} to be the direct limit of a directed subsystem of $\mathcal L$.
\end{rem}
}

\begin{defn} \label{d:inducesys} Let $\mathcal S = (M_i,f_{ji}:M_i\to M_j \mid i<j\in I)$ be a directed system of modules. For each $\eta$ regular uncountable, we denote by $\mathcal S^\eta$ the directed system of modules consisting of direct limits of directed subsystems of $\mathcal S$ of cardinality $<\eta$ and canonical factorization maps between them. So if $g_{lk}: N_k \to N_l$ is a morphism in $\mathcal S^\eta$, then $N_k = \varinjlim \mathcal D_k, N_l = \varinjlim \mathcal D_l$ where $\mathcal D_k, \mathcal D_l$ are directed subsystems of $\mathcal S$ of cardinality $<\eta$ and $\mathcal D_k\subseteq \mathcal D_l$.
\end{defn}

\chge{
\begin{rem}
The latter definition is often used in conjunction with Observation~\ref{o:observ}. The typical use is as follows. If $\clS$ is $\aleph_1$-continuous and $\D\in\clS^\eta$, then $\D$ is often not $\aleph_1$-continuous itself. Observation~\ref{o:observ} allows us to find an $\aleph_1$-continuous direct system $\D'$ such that $\D\subseteq\D'\subseteq\clS$ and $\varinjlim\D=\varinjlim\D'$. Thus, for instance, if $\clS$ witnesses almost $(\C,\aleph_1)$-projectivity, so does $\D'$.
\end{rem}
}

In the lemma below, we use the notion of a filter-closed class from \cite{SS}. Note that a class is filter-closed, for instance, provided that it is closed under direct products and either direct limits of monomorphisms, or pure submodules. On the other hand, any filter-closed class is closed under taking arbitrary direct products and direct sums.

\begin{lem} \label{l:filter-closed} Let $\mathcal B$ be a filter-closed class of modules and $M\in {}^\perp\mathcal B$ an almost $(\mathcal B,\aleph_1)$-projective module. Then $M$ is $(\Cogen_*(\mathcal B),\lambda)$-projective for any regular uncountable cardinal $\lambda$. In particular, $M\in {}^\perp\Cogen_*(\mathcal B)$.
\end{lem}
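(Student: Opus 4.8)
The plan is to combine Lemma~\ref{l:Cext} with the Eklof-type Lemma~\ref{l:eklof}, using the hypothesis that $\mathcal{B}$ is filter-closed to get enough control on character modules. First I would reduce the problem: since $M$ is almost $(\mathcal{B},\aleph_1)$-projective, it is the direct limit of an $\aleph_1$-continuous directed system $\mathcal{M} = (M_i, f_{ji} \mid i<j\in I)$ of countably presented modules from ${}^\perp\mathcal{B}$. The goal is to produce, for a given regular uncountable $\lambda$, a $\lambda$-continuous directed system of $<\lambda$-presented modules from ${}^\perp\Cogen_*(\mathcal{B})$ with direct limit $M$ and all colimit maps $\Cogen_*(\mathcal{B})$-injective; the ``in particular'' clause then follows from the Remark after Lemma~\ref{l:eklof} together with Lemma~\ref{l:singmod} (to cover singular cardinals in a filtration refinement), or directly from Lemma~\ref{l:eklof} applied to a well-ordered cofinal subchain of length $\lambda$.

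The key point is to identify $\Cogen_*(\mathcal{B})$ with a definable class to which Lemma~\ref{l:Cext} applies, and to check that $M$ is $C$-stationary for a suitable cogenerator $C$. Here is where filter-closedness enters: a filter-closed $\mathcal{B}$ is closed under products and direct sums, and one checks that $M\in{}^\perp\mathcal{B}$ together with the fact that $M$ is built from countably presented modules forces $M$ to be $\mathcal{B}$-stationary — the Mittag-Leffler condition on the inverse systems $\Hom_R(\mathcal{F},B)$ should follow because any failure of it would, via a standard ``telescope'' argument producing a countably presented subobject, contradict either $\Ext^1_R(M_i,B)=0$ or the filter-closedness of $\mathcal{B}$ (which ensures $B^{(\omega)}\in\mathcal{B}$ and lets us detect non-stationarity by an extension that does not split). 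Then I would pass to a pure-injective elementary cogenerator $C$ of the definable closure $\overline{\mathcal{B}}$, note that $\Cogen_*(\overline{\mathcal{B}}) = \overline{\mathcal{B}} = \Cogen_*(C)$, and apply Lemma~\ref{l:Cext} with this $C$: it yields a $\lambda$-continuous directed system $\mathcal{L}$ of $<\lambda$-presented $C$-stationary modules with $M = \varinjlim\mathcal{L}$ such that every colimit map $L\to M$ is $D$-injective for every pure-injective $D\in\overline{\mathcal{B}}$, in particular $C$-injective, hence $\Cogen_*(\mathcal{B})$-injective by Lemma~\ref{l:cogen} once we know $\Coker(L\to M)$ has the right $\Ext$-vanishing.

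It remains to check that the modules $L$ in the system $\mathcal{L}$ actually lie in ${}^\perp\Cogen_*(\mathcal{B})$, not merely that they are $C$-stationary. For this I would observe that each such $L$ is itself almost $(\mathcal{B},\aleph_1)$-projective: by the Remark following Definition~\ref{d:inducesys}, any $L$ arising as a direct limit of a $<\lambda$-sized directed subsystem of an $\aleph_1$-continuous system sits inside an $\aleph_1$-continuous refinement $\mathcal{D}'\subseteq\mathcal{M}$ with the same colimit, so $L$ is a direct limit of countably presented modules from ${}^\perp\mathcal{B}$ along an $\aleph_1$-continuous system; moreover one arranges (refining the closure operations in the proof of Lemma~\ref{l:Cext}, as the subsequent Remark permits) that $L\in{}^\perp\mathcal{B}$. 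Then an induction on $\lambda$ using Lemma~\ref{l:eklof} along a well-ordered cofinal subchain of $\mathcal{L}$, whose successor maps are $C$-injective hence $\Cogen_*(\mathcal{B})$-injective (again via Lemma~\ref{l:cogen}), gives $L\in{}^\perp\Cogen_*(\mathcal{B})$ for $<\lambda$-presented $L$, and finally for $M$ itself. The main obstacle I expect is precisely this bootstrapping step — verifying that the small direct limits $L$ inherit membership in ${}^\perp\mathcal{B}$ (and the $C$-stationarity needed to feed back into Lemma~\ref{l:Cext}) uniformly, so that the argument closes; the filter-closedness of $\mathcal{B}$ is what makes the $\Ext$-vanishing descend to these subsystems, and getting the continuity/cofinality bookkeeping right between the $\aleph_1$-continuous source system and the $\lambda$-continuous target system is the delicate part.
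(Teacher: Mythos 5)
Your plan routes through Lemma~\ref{l:Cext}, which you cannot invoke without first knowing that $M$ is $C$-stationary for a pure-injective elementary cogenerator $C$ of $\Cogen_*(\mathcal B)$. Under the hypotheses of Lemma~\ref{l:filter-closed} this is not available: the mechanism used elsewhere in the paper (\cite[Lemma~4.2]{S}, invoked in the proof of Theorem~\ref{t:main1}) requires $M\in{}^\perp C^{(I)}$ for all sets $I$, hence effectively requires $C\in\mathcal B$; but here $\mathcal B$ is only filter-closed and $C$ lies in $\Cogen_*(\mathcal B)$, the very class to which we are trying to show $M$ is orthogonal. Your ``telescope argument'' sketch does not substitute for this, and $\mathcal B$-stationarity of the countably presented pieces $M_i$ does not pass to the $\aleph_1$-continuous direct limit. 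Moreover, even granting stationarity, Lemma~\ref{l:Cext} only yields colimit maps $L\to M$ that are $D$-injective for pure-injective $D\in\bar C$; upgrading to $\Cogen_*(\mathcal B)$-injectivity via Lemma~\ref{l:cogen} requires $\Coker(L\to M)\in{}^\perp\Cogen_*(\mathcal B)$, which is exactly the bootstrapping you flag but leave open. The argument is therefore circular at its entry point, and what you borrow is really the method of Theorem~\ref{t:main1}, a result that is proved \emph{using} Lemma~\ref{l:filter-closed}, not the other way round.

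The paper's proof avoids stationarity of $M$ entirely. It runs a double induction --- on $\kappa=\pres(M)$, and inside that on regular uncountable $\lambda\leq\kappa$ --- constructing directed systems $\mathcal C_\lambda$ of $<\lambda$-presented modules in ${}^\perp\Cogen_*(\mathcal B)$ that witness $(\mathcal B,\lambda)$-projectivity of $M$. The device for extracting $\mathcal B$-injective colimit maps is \cite[Lemma~2.3]{SS} (not Lemma~\ref{l:Cext}), and the heart of the construction is showing that modules from ${}^\perp\Cogen_*(\mathcal B)$ occur cofinally, which requires a three-way case analysis on $\lambda$: successor of a regular, successor of a singular, and weakly inaccessible. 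The singular cases call on Lemma~\ref{l:singmod}, and this is where filter-closedness of $\Cogen_*(\mathcal B)$ is actually used. Only once all the $\mathcal C_\lambda$ are in hand are the connecting maps upgraded from $\mathcal B$-injective to $\Cogen_*(\mathcal B)$-injective, by computing cokernels via Construction~\ref{concoker}, feeding them into the inductive hypothesis, and applying Lemma~\ref{l:cogen}. Your sketch does correctly anticipate the role of Lemma~\ref{l:cogen} and the bootstrapping obstacle, but the nested induction and the case analysis --- which are what make the argument close --- are absent, and they cannot be replaced by a single application of Lemma~\ref{l:Cext}.
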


\begin{proof} Put $\mathcal D = \Cogen_*(\mathcal B)$ and $\mathcal A = {}^\perp\mathcal D$. Note that $\mathcal D$ is necessarily filter-closed. Let $\mathcal T$ be a directed system witnessing that $M$ is almost $(\mathcal B,\aleph_1)$-projective. Let $\kappa$ be the least infinite cardinal such that $M$ is $\kappa$-presented. We proceed by induction on $\kappa$.

For $\kappa = \aleph_0$, we have to show only that $M\in \mathcal A$. However, this follows immediately from \cite[Proposition 4.3]{S}.

Now, let $\kappa$ be uncountable.
By induction on $\aleph_0<\lambda\leq\kappa$, $\lambda$ regular, we prove that there exists a directed system $\mathcal C_\lambda$ consisting of modules from $\mathcal A$ and witnessing that $M$ is $(\mathcal B,\lambda)$-projective. For $\lambda = \aleph_1$, \cite[Proposition 4.3]{S} gives us that $\mathcal T$ witnesses almost $(\mathcal D,\aleph_1)$-projectivity of~$M$, which in fact means none other than that $\mathcal T$ consists of modules from $\mathcal A$. Using \cite[Lemma 2.3]{SS}, we obtain a~directed subsystem $\mathcal C_{\aleph_1}$ of $\mathcal T$ witnessing $(\mathcal B,\aleph_1)$-projectivity of $M$.

Let $\lambda>\aleph_1$. Consider the system $\mathcal T^\lambda$ (see Definition~\ref{d:inducesys}). By Observation~\ref{o:observ}, the modules from $\mathcal T^\lambda$ are almost $(\mathcal B,\aleph_1)$-projective. As before, \cite[Lemma 2.3]{SS} provides us with a $\lambda$-continuous directed subsystem $\mathcal C$ of $\mathcal T^\lambda$ \st $\varinjlim\mathcal C = M$ and all the colimit maps from modules in $\mathcal C$ to $M$ are $\mathcal B$-injective. We are going to show that $\mathcal C$ contains a $\lambda$-continuous directed subsystem consisting of modules from $\mathcal A$. In fact, it is enough to show that modules from $\mathcal A$ occur cofinally in the directed system~$\mathcal C$. Indeed, suppose we are given a well-ordered subsystem $\mathcal S$ of $\mathcal C$ of cardinality $<\lambda$ consisting of modules from~$\mathcal A$. Since all the connecting maps in $\mathcal S$ are $\mathcal B$-injective, it follows from Lemma~\ref{l:eklof} that $\varinjlim\mathcal S$ belongs to ${}^\perp\mathcal B$, and hence to $\mathcal A$ by the inductive hypothesis for $\kappa$ (we have $\lambda\leq\kappa$ and $\varinjlim \mathcal S$ is $<\lambda$-presented). We distinguish the following three cases.

\smallskip

Case 1: $\lambda = \eta^+$ for $\eta$ regular. Consider the system $\mathcal C_\eta^\lambda$ which comes from the application of the construction in Definition~\ref{d:inducesys} to the system $\mathcal C_\eta$, which witnesses the $(\mathcal B,\eta)$-projectivity of $M$. Notice in particular that each direct subsystem $\mathcal S\subseteq \mathcal C_\eta$ of cardinality $\leq\eta$ either has a supremum in $\mathcal C_\eta$ (if the cardinality of $\mathcal S$ is $<\eta$) or has a cofinal well-ordered subsystem $\mathcal S'$ which is indexed by $\eta$. In the latter case, we can replace $\mathcal S$ by $\mathcal S'$ and assume w.l.o.g.\ that $\mathcal S$ is continuous (since $\mathcal C_\eta$ is $\eta$-continuous). As above, it follows from Lemma~\ref{l:eklof} and the inductive hypothesis that all elements of $\mathcal C_\eta^\lambda$ belong to $\mathcal A$.

Finally, we can, by Construction~\ref{conmerge}, intersect the system $\mathcal C$ above with the system $\mathcal C_\eta^\lambda$. The resulting $\lambda$-continuous common subsystem $\mathcal C_\lambda$ has all the properties which we require.

\smallskip

Case 2: $\lambda = \eta^+$ for $\eta$ singular. Set $\mathcal M_\lambda = \mathcal C$ and, for all regular uncountable $\theta<\eta$, let us denote by $\mathcal M_\theta$ the system $\mathcal C_\theta^\lambda$. The set $\{\mathcal M_\theta \mid \aleph_0<\theta = \cf(\theta)\leq\lambda\}$ has cardinality $\leq\eta$ which allows us to use Construction~\ref{conmerge} to intersect all the systems into one which we denote by $\mathcal C_\lambda$. Then objects of $\mathcal C_\lambda$ are in ${}^\perp \mathcal B$ by Lemma~\ref{l:singmod} (see also the remark below Lemma~\ref{l:eklof}) since each object in $\mathcal C_\lambda$ is an $\eta$-presented module which is almost $(\mathcal D,\theta)$-projective for all regular uncountable $\theta<\eta$. Thus, the objects in $\mathcal C_\lambda$ are also in $\mathcal A$ by the inductive hypothesis and the maps in $\mathcal C_\lambda$ are $\mathcal B$-injective since such are the maps in $\mathcal C$.

\smallskip

Case 3: $\lambda$ is a limit regular cardinal. We can denote $\mathcal C = (M_i,f_{ji}\colon M_i\to M_j\mid i< j\in I)$, and for each $i\in I$, let $f_i\colon M_i \to M$ be the canonical colimit map. We construct an increasing countable subposet of $(I,\leq)$ as follows: 

Pick any $i_0\in I$. Assume that $i_n$ is defined and that $M_{i_n}$ is $\eta$-presented for $\aleph_0<\eta<\lambda$. We pick a module $N_n$ from the system $\mathcal C_{\eta^+}$ and a factorization $g_n\colon M_{i_n}\to N_n$ of $f_{i_n}$ through the $\mathcal B$-injective canonical colimit map $h_n\colon N_n\to M$. Subsequently, we pick $i_{n+1}\in I, i_n<i_{n+1}$, and $g_n^\prime\colon N_n\to M_{i_{n+1}}$ in such a way that $h_n = f_{i_{n+1}}g_n^\prime$ and $f_{i_{n+1}i_n} = g_n^\prime g_n$.

The module $M_{i_\omega}=\varinjlim _{n<\omega}M_{i_n}$ is a $<\lambda$-presented member of the system $\mathcal C$. On the other hand $M_{i_\omega}$ is the direct limit of the directed system

$$N_0\overset{g_1g_0^\prime}{\longrightarrow} N_1\overset{g_2g_1^\prime}{\longrightarrow} N_2 \overset{g_3g_2^\prime}{\longrightarrow}\dotsb$$
consisting of modules from $\mathcal A$ and $\mathcal B$-injective connecting maps. It follows that $M_{i_\omega}$ belongs to ${}^\perp\mathcal B$ by Lemma~\ref{l:eklof}, hence $M_{i_\omega}\in\mathcal A$ by the inductive hypothesis.

Since $i_0\in I$ was arbitrary, we have shown that modules from $\mathcal A$ occur in the system $\mathcal C$ cofinally, whence there is a subsystem $\mathcal C_\lambda$ of $\mathcal C$ consisting of modules from $\mathcal A$ and witnessing $(\mathcal B,\lambda)$-projectivity of $M$.

\smallskip

We have finished the construction of the systems $\mathcal C_\lambda$. Notice that each module in $\mathcal C_\lambda$ is almost $(\mathcal B,\aleph_1)$-projective since $\mathcal C_\lambda\subseteq \mathcal T^\lambda$. It also belongs to ${}^\perp\mathcal B$ and so it is even $(\mathcal B,\aleph_1)$-projective by \cite[Lemma 2.3]{SS}. Further, since any morphism $f\colon M_i \to M_j$ in $\mathcal C_\lambda$ is $\mathcal B$-injective and $M_j\in{}^\perp\mathcal B$, the cokernel of $f$ belongs to ${}^\perp\mathcal B$. Moreover, if we apply Construction~\ref{concoker} to $f\colon M_i \to M_j$ and the direct systems $\mathcal M$ and $\mathcal N$ which witness that $M_i$ and $M_j$ are $(\mathcal B,\aleph_1)$-projective, respectively, one directly checks that the maps $u_k$ from the conclusion are $\mathcal B$-injective. Hence $\Coker(f)$ is an almost $(\mathcal B,\aleph_1)$-projective $<\lambda$-presented module in ${}^\perp\mathcal B$ and, by the induction hypothesis, $\Coker(f)\in\mathcal A = {}^\perp \mathcal D$. By Lemma~\ref{l:cogen}, the morphisms in $\mathcal C_\lambda$ are even $\mathcal D$-injective.

Now, if $\kappa$ is regular, $\mathcal C_\kappa$ contains a well-ordered directed subsystem $\mathcal S$ \st $\varinjlim\mathcal S = M$. Using Lemma~\ref{l:eklof}, we deduce that $M\in\mathcal A$. If, on the other hand, $\kappa$ is singular, we use that $\mathcal D$ is filter-closed and apply Lemma~\ref{l:singmod} to show that $M\in\mathcal A$.
Finally, we can use \cite[Lemma 2.3]{SS} once more to pick, for each $\lambda\leq\kappa$ regular uncountable, from the system $\mathcal C_\lambda$ a subsystem witnessing that $M$ is $(\mathcal D,\lambda)$-projective.
\end{proof}

Our first theorem constitutes a partial converse of the remark after Lemma~\ref{l:eklof}.

\begin{thm}\label{t:main1} Let $\mathcal B\subseteq\ModR$ be a class closed under direct limits and products.
Then each module from ${}^\perp\mathcal B$ is $(\bar{\mathcal B},\lambda)$-projective for any $\lambda$ regular uncountable. Consequently, ${}^\perp\mathcal B = {}^\perp\bar{\mathcal B}$.
\end{thm}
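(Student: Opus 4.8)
The plan is to reduce the theorem to Lemma~\ref{l:filter-closed} applied to the definable class $\bar{\mathcal B}$ itself. Since $\bar{\mathcal B}$ is definable it is closed under products and pure submodules, so $\Cogen_*(\bar{\mathcal B})=\bar{\mathcal B}$, and in particular $\bar{\mathcal B}$ is filter-closed. Hence the moment we know that every $M\in{}^\perp\mathcal B$ lies in ${}^\perp\bar{\mathcal B}$ and is almost $(\bar{\mathcal B},\aleph_1)$-projective, Lemma~\ref{l:filter-closed} gives at once that $M$ is $(\Cogen_*(\bar{\mathcal B}),\lambda)=(\bar{\mathcal B},\lambda)$-projective for every regular uncountable $\lambda$, and the equality ${}^\perp\mathcal B={}^\perp\bar{\mathcal B}$ follows because the reverse inclusion is trivial. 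So the whole content is the two assertions: (i) ${}^\perp\mathcal B\subseteq{}^\perp\bar{\mathcal B}$, and (ii) every module from ${}^\perp\bar{\mathcal B}$ is almost $(\bar{\mathcal B},\aleph_1)$-projective.

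The common engine for both is Lemma~\ref{l:Cext}. Fix a pure-injective elementary cogenerator $C$ of $\bar{\mathcal B}$, so that $\Cogen_*(C)=\bar{\mathcal B}=\bar C$. Given $M\in{}^\perp\mathcal B$, I would first use that $\mathcal B$ is closed under direct limits, so that $M$ is $\mathcal B$-stationary by~\cite{S}; as stationarity is unchanged under passing to the definable closure (\cite{H}), $M$ is $\bar{\mathcal B}$-stationary, hence strict $C$-stationary because $C$ is pure-injective. Lemma~\ref{l:Cext}, applied with $C$ and a prescribed regular uncountable cardinal $\mu$, then writes $M=\varinjlim\mathcal L$ for a $\mu$-continuous directed system $\mathcal L$ of $<\mu$-presented $C$-stationary modules all of whose colimit maps are $D$-injective for every pure-injective $D\in\bar{\mathcal B}$, in particular $C$-injective. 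For (ii) one takes $\mu=\aleph_1$; it then remains only to see that every countably presented module occurring in $\mathcal L$ lies in ${}^\perp\bar{\mathcal B}$, which is precisely the countably presented instance of (i). Thus (ii) follows once (i) is proved.

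For (i) I would induct on the least cardinal $\kappa$ with $M$ being $\kappa$-presented. The base case $\kappa=\aleph_0$ is \cite[Proposition 4.3]{S}, which for countably presented modules and $\mathcal B$ closed under direct limits identifies ${}^\perp\mathcal B$ with ${}^\perp\bar{\mathcal B}$. For uncountable $\kappa$, take $M\in{}^\perp\mathcal B$ that is $\kappa$-presented and apply the construction above with $\mu=\kappa$, obtaining $M=\varinjlim\mathcal L$ with $\mathcal L$ $\kappa$-continuous, all terms $<\kappa$-presented and $C$-stationary, and all connecting maps $C$-injective. Every term $L$ of $\mathcal L$ is the direct limit of a (closed) directed subsystem of a fixed finitely presented presentation of $M$, so $L\in{}^\perp\mathcal B$ by~\cite{S}, whence $L\in{}^\perp\bar{\mathcal B}$ by the induction hypothesis; the cokernels of the connecting maps are again $<\kappa$-presented $C$-stationary modules in ${}^\perp\mathcal B$ (using Construction~\ref{concoker} together with the $C$-injectivity of the maps), hence also in ${}^\perp\bar{\mathcal B}$. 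Now Lemma~\ref{l:cogen} upgrades each connecting map from $C$-injective to $\bar{\mathcal B}$-injective, since every $B\in\bar{\mathcal B}$ embeds purely into a pure-injective $D\in\bar{\mathcal B}$ and so lies in $\Cogen(C)$, while the cokernel's membership in ${}^\perp\bar{\mathcal B}$ supplies the required $\Ext^1_R$-vanishing. Finally, picking a cofinal continuous well-ordered subchain of $\mathcal L$ and invoking the Eklof Lemma~\ref{l:eklof} when $\kappa$ is regular, respectively Lemma~\ref{l:singmod} when $\kappa$ is singular, yields $M\in{}^\perp\bar{\mathcal B}$.

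The hard part is the claim that the small pieces produced by Lemma~\ref{l:Cext}, and the cokernels of the connecting maps, already belong to ${}^\perp\mathcal B$: this is exactly what makes the inductive hypothesis applicable, and it rests on the structural description of ${}^\perp\mathcal B$ for direct-limit-closed $\mathcal B$ from~\cite{S} rather than on anything elementary; the singular case genuinely needs Lemma~\ref{l:singmod}. The remaining steps — merging the various $\kappa$-continuous subsystems via the appendix constructions and the routine verifications that the cokernels stay $C$-stationary — are technical but not conceptually problematic.
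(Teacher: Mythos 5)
Your overall framing is close to the paper's: both reduce to Lemma~\ref{l:filter-closed} via Lemma~\ref{l:Cext} and an elementary cogenerator $C$, and both rely on \cite[Proposition~4.3]{S} to handle the countably presented case. However, there is a genuine gap in your inductive step for~(i), and fixing it is precisely where the real idea of the proof lives.

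The gap. After applying Lemma~\ref{l:Cext} to $M$ with $\mu=\kappa$, you obtain a $\kappa$-continuous system $\mathcal L$ of $<\kappa$-presented \emph{$C$-stationary} modules with colimit maps that are $C$-injective. You then assert that each term $L$ of $\mathcal L$ lies in ${}^\perp\mathcal B$ ``by~\cite{S}'' and that the cokernels of the connecting maps do too. This does not follow. Lemma~\ref{l:Cext} only gives $C$-stationarity of the $L$'s; it does \emph{not} give $\Ext^1_R(L,C)=0$. The $C$-injectivity of the colimit map $g\colon L\to M$ (with $\Ext^1_R(M,C)=0$) controls $\Ext^1_R(\Coker(g),C)$, not $\Ext^1_R(L,C)$. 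Indeed, nothing forces a $C$-stationary $<\kappa$-presented piece of a module in ${}^\perp\mathcal B$ to be in ${}^\perp C$, and without that you can neither invoke \cite[Proposition~4.3]{S} in the base case nor apply your inductive hypothesis, which requires $L\in{}^\perp\mathcal B$ as input. The reference to Construction~\ref{concoker} does not help either: the conclusion that a cokernel lands in ${}^\perp C$ requires the \emph{codomain} of the map to already be in ${}^\perp C$, which is exactly what you have not established — the argument becomes circular.

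The paper's way around this. The paper does not apply Lemma~\ref{l:Cext} directly to $M$ to produce the $\perp$-pieces. It instead fixes a projective presentation $0\to N\to P\to M\to 0$, observes that $N$ is strict $C$-stationary (using \cite[Lemma~4.4]{S} and closure of $\mathcal B$ under products and direct limits, so that $(C^{(I)})^{cc}\in\mathcal B$), applies Lemma~\ref{l:Cext} to $N$, and matches the resulting system with a system of countably generated direct summands of $P$ via Construction~\ref{concoker}. The small pieces of $M$ are then cokernels of $C$-injective maps $u_k\colon L_k\to S_k$ whose codomains $S_k$ are projective — hence in ${}^\perp C$. That is exactly what makes the cokernels land in ${}^\perp C$. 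Intersecting with the system obtained by applying Lemma~\ref{l:Cext} directly to $M$ restores $C$-stationarity, and \cite[Proposition~4.3]{S} then gives that the countably presented pieces are in ${}^\perp\mathcal B$. This establishes almost $(\mathcal B,\aleph_1)$-projectivity of $M$, and Lemma~\ref{l:filter-closed} is then applied to the filter-closed class $\mathcal B$ itself (not to $\bar{\mathcal B}$), so the only hypothesis needed beyond the given $M\in{}^\perp\mathcal B$ is the almost-projectivity just established. Your variant, which invokes Lemma~\ref{l:filter-closed} with $\bar{\mathcal B}$, requires the strictly stronger pair of hypotheses $M\in{}^\perp\bar{\mathcal B}$ and almost $(\bar{\mathcal B},\aleph_1)$-projectivity, which is precisely what you cannot yet prove. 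In short: the projective-presentation-and-cokernel trick is the missing ingredient, and without it the transfinite induction does not start.
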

\begin{proof} By the assumption on $\mathcal B$ (see \cite[Lemma 5.3]{S}), an elementary cogenerator $C$ of $\bar{\mathcal B}$ is contained in $\mathcal B$. In particular, $\bar{\mathcal B} = \Cogen_*(\mathcal B)$. Our result will follow from Lemma~\ref{l:filter-closed} once we show that each $M\in {}^\perp\mathcal B$ is almost $(\mathcal B,\aleph_1)$-projective.


\smallskip

To this end, note that $M$ is strict $C$-stationary by \cite[Lemma~4.2]{S}.
If we fix a~short exact sequence $$0\longrightarrow N\overset{f}{\longrightarrow} P\overset{}{\longrightarrow}M\longrightarrow 0$$ with $P$ projective, then, since $(C^{(I)})^{cc}\in\mathcal B$ for all sets $I$ and since $P$ is strict $C$-stationary because it is projective, $N$ is strict $C$-stationary as well by \cite[Lemma~4.4]{S}. 

Let $\mathcal L$ be an $\aleph_1$-continuous directed system provided by Lemma~\ref{l:Cext} for $N$, and let $\mathcal S$ be the directed system consisting of all countably generated direct summands of $P$ and inclusions. By Construction~\ref{concoker}, we obtain an $\aleph_1$-continuous directed system $\mathcal K$, consisting of the cokernels of the morphisms $u_k$ from the construction, such that $\varinjlim\mathcal K = M$. Notice that modules from $\mathcal K$ are in ${}^\perp C$. Indeed, $f$ is $C$-injective since $C\in\mathcal B$, and each colimit map from the directed system $\mathcal L$ is $C$-injective by Lemma~\ref{l:Cext}, so each $u_k$ is $C$-injective and the claim follows.
An application of Lemma~\ref{l:Cext} to $M$ provides us, on the other hand, with an $\aleph_1$-continuous directed system $\mathcal K^\prime$ whose all objects are $C$-stationary. If we intersect the two systems using Construction~\ref{conmerge}, we obtain an $\aleph_1$-continuous directed system consisting of countably presented modules which are $C$-stationary and in ${}^\perp C$, hence also in ${}^\perp\mathcal B$ by \cite[Proposition~4.3]{S}. This direct system witnesses that $M$ is almost $(\mathcal B,\aleph_1)$-projective.
\end{proof}


We can also deduce the following crucial result which generalizes~\cite[Theorem~8.17]{GT}. For the first time, it appeared in an unpublished manuscript \cite{St}.

\begin{thm} \label{t:regular_filt}
Let $\theta\leq\kappa$ be uncountable cardinals, $\theta$ regular. Let $C, M$ be modules and $(M_\alpha, f_{\beta\alpha}\mid \alpha <\beta\leq\theta)$ be a $\theta$-continuous directed system \st $M=M_\theta$ and all $M_\alpha$ are $<\theta$-generated modules for $\alpha<\theta$. Suppose that $\Ext^1_R(M_\alpha,C^{(\kappa)}) = 0$ for all $\alpha<\theta$. Then the following conditions are equivalent:
\begin{enumerate}
\item $\Ext^1_R(M,C^{(\kappa)}) = 0$.
\item There is a closed unbounded subset $X \subseteq \theta$ \st $f_{\beta\alpha}$ is $C^{(\kappa)}$-injective for all $\alpha, \beta \in X$, $\alpha<\beta$.
\end{enumerate}
\end{thm}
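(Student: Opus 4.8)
The implication $(2)\Rightarrow(1)$ is the easy direction and should be dealt with first. Given a closed unbounded set $X\subseteq\theta$ with $f_{\beta\alpha}$ being $C^{(\kappa)}$-injective for $\alpha<\beta$ in $X$, the subsystem $(M_\alpha\mid \alpha\in X)$ is still $\theta$-continuous (a closed cofinal subset of a $\theta$-continuous chain), has the same direct limit $M$, each $M_\alpha$ lies in ${}^\perp C^{(\kappa)}$ by hypothesis, and the connecting maps along $X$ are $C^{(\kappa)}$-injective. Re-indexing $X$ by $\theta$ and passing to the continuous well-ordered chain, Lemma~\ref{l:eklof} (applied with $\mathcal C=\{C^{(\kappa)}\}$) yields $M\in{}^\perp C^{(\kappa)}$, i.e.\ $\Ext^1_R(M,C^{(\kappa)})=0$.

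The substance is in $(1)\Rightarrow(2)$, and here the plan is to run the standard ``$\Gamma$-invariant / stationary set'' argument behind \cite[Theorem~8.17]{GT}, suitably adapted to the fact that the $f_{\beta\alpha}$ need not be monomorphisms. First I would reduce to the case $\theta=\kappa=\theta$ regular by fixing $C$ and writing $E=C^{(\kappa)}$ throughout, so that the hypothesis is just $M_\alpha\in{}^\perp E$ for $\alpha<\theta$ and $M=M_\theta\in{}^\perp E$. Using $\Ext^1_R(M,E)=0$, I would apply Lemma~\ref{lem:left_dir_lim} to the chain $(M_\alpha,f_{\beta\alpha}\mid\alpha<\beta<\theta)$: every $2$-cocycle $(g_{\beta\alpha})$ splits, i.e.\ can be written as $g_{\beta\alpha}=g_\alpha-g_\beta f_{\beta\alpha}$ (sign conventions as in that lemma) for a suitable family $(g_\alpha\colon M_\alpha\to E)$. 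The point is to feed into this a ``test cocycle'' built so that the obstruction to splitting locally, on an interval $[\alpha,\beta]$, is exactly the failure of $f_{\beta\alpha}$ to be $E$-injective. Concretely: suppose, for contradiction, that the set $S=\{\alpha<\theta\mid f_{\beta\alpha}$ is not $E$-injective for all $\beta>\alpha$ in $X$, for every club $X\}$ — more precisely, suppose that no club $X$ works, i.e.\ the set $S$ of ``bad'' $\alpha$ (those $\alpha$ for which $f_{\beta\alpha}$ fails $E$-injectivity for cofinally many $\beta$) is stationary in $\theta$. For each such $\alpha$ choose $\beta(\alpha)>\alpha$ and a map $h_\alpha\colon M_\alpha\to E$ not extending along $f_{\beta(\alpha),\alpha}$; by the regularity of $\theta$ and Fodor's lemma one can thin $S$ to a stationary set on which $\beta(\alpha)$ is ``tame'' (e.g.\ $\beta(\alpha)$ below the next element of $S$), and assemble the $h_\alpha$ into a cocycle $(g_{\beta\alpha})$ whose would-be splitting $(g_\alpha)$ would force, on each bad interval, an extension of $h_\alpha$ over $f_{\beta(\alpha),\alpha}$ — contradiction. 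Hence $S$ is non-stationary, its complement contains a club $X'$, and intersecting with the club of limit points gives the desired $X$.

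The main obstacle I anticipate is precisely the bookkeeping forced by the non-injectivity of the $f_{\beta\alpha}$: in the classical filtration case one works with an increasing union $M=\bigcup M_\alpha$ and the relevant $\Ext$ obstruction lives in $\varprojlim{}^1$, which one controls via genuine subobjects and quotients $M_\beta/M_\alpha$. Here $M_\beta/\!\operatorname{Img}f_{\beta\alpha}$ is replaced by $\Coker f_{\beta\alpha}$, which need not be $<\theta$-presented in any transparent way and need not fit into a filtration of $M$. The remedy is to stay on the cochain/cocycle side throughout (Lemma~\ref{lem:left_dir_lim}) rather than passing to subquotients, and to exploit that $E=C^{(\kappa)}$ with $\kappa\geq\theta$, so that a map from any $<\theta$-generated module into $E$ factors through a ``$<\theta$-sized piece'' $C^{(\mu)}$ with $\mu<\kappa$, leaving enough room to diagonalize the stationarily-many chosen maps $h_\alpha$ into disjoint coordinate blocks of $C^{(\kappa)}$. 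Verifying that this diagonalization produces a genuine $2$-cocycle (the cocycle identity $g_{\gamma\alpha}=g_{\beta\alpha}+g_{\gamma\beta}f_{\beta\alpha}$) compatible with the chosen obstructions, and that a global splitting would contradict non-extendability on a stationary set, is the technical heart; once that is in place, Fodor's lemma and the definition of club do the rest.
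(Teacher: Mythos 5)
Your proposal follows essentially the same route as the paper: both directions are handled the same way, and for $(1)\Rightarrow(2)$ the key idea in the paper is exactly the ``diagonalization'' you describe, building a cocycle $g_{\beta\alpha}\colon M_\alpha\to D^{(\beta)}$ (with $D=C^{(\kappa)}$) whose successor steps shove the obstruction maps $h_\alpha$ into fresh coordinate blocks, using Lemma~\ref{lem:left_dir_lim} at limits, and then deriving a contradiction by projecting onto the $\lambda$th block for a suitable $\lambda$ in the intersection of a club (determined via the $<\theta$-generated hypothesis) with the assumed stationary set. One minor technical remark: Fodor's lemma does not directly apply since $\alpha\mapsto\beta(\alpha)$ is not regressive; the paper instead achieves the same tameness by first restricting along a club so that non-$D$-injectivity is always witnessed at the immediate successor step.
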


\begin{rem} For the implication $(2)\Longrightarrow(1)$, we do not need the assumption that the modules $M_\alpha$ are $<\theta$-generated for $\alpha<\theta$.
\end{rem}

\begin{proof}
(2) $\implies$ (1). We can w.l.o.g.\ assume that $f_{\beta\alpha}$ is $C^{(\kappa)}$-injective for each $\alpha<\beta<\theta$ and use Lemma~\ref{l:eklof}.

(1) $\implies$ (2). Put $D = C^{(\kappa)}$. Possibly restricting ourselves just to indices in some closed unbounded subset of $\theta$, we can always assume that whenever $f_{\beta\alpha}$ is not $D$-injective for some $\alpha<\beta$, then already $f_{\alpha+1,\alpha}$ was not $D$-injective.

Suppose now for contradiction that the set
$$ E = \{ \alpha < \theta \mid f_{\alpha+1,\alpha} \textrm{ is not }D\textrm{-injective} \} $$
is stationary in $\theta$; that is, it intersects every closed unbounded subset of $\theta$. Fix $h_\alpha \in \Hom_R(M_\alpha,D)$ \st $h_\alpha$ does not factorize through $f_{\alpha+1,\alpha}$ for each $\alpha \in E$. Put $h_\alpha=0$ for $\alpha \in \theta \setminus E$.

We will inductively construct homomorphisms $g_{\beta\alpha}: M_\alpha \to D^{(\beta)}$ for all $\alpha<\beta\leq\theta$ \st
\begin{enumerate}
\item[(a)] $g_{\alpha+1,\alpha}$ is the composition of $h_\alpha$ with the $\alpha$th canonical inclusion $D \to D^{(\alpha+1)}$ (in particular, $\Img(g_{\alpha+1,\alpha})\cap D^{(\alpha)} = \{0\}$), and
\item[(b)] $g_{\gamma\alpha} = g_{\beta\alpha} + g_{\gamma\beta} f_{\beta\alpha}$ for all $\alpha<\beta<\gamma\leq\theta$.
\end{enumerate}
For $\beta = 1$, we just put $g_{10} = h_0$. Suppose we have constructed $g_{\beta\alpha}$ for all $\alpha<\beta<\gamma$ for some $\gamma\leq\theta$. If $\gamma=\delta+1$ for some $\delta$, put $g_{\gamma\delta} = h_\delta$ and $g_{\gamma\alpha} = g_{\delta\alpha} + h_\delta f_{\delta\alpha}$ for each $\alpha<\delta$. If $\gamma$ is a limit ordinal, define $g_{\gamma\alpha}$ as the maps $g_\alpha$ given by Lemma~\ref{lem:left_dir_lim}, condition (2). It is straightforward to check that the maps defined in this way satisfy the required conditions.

Put
$$ X = \{\lambda<\theta \mid \Img g_{\theta\alpha} \subseteq D^{(\lambda)} \textrm{ for each } \alpha<\lambda \} $$
It is easy to check that $X$ is closed unbounded in $\theta$. Hence, the set $X'$ consisting of the limit ordinals in $X$ is closed unbounded too, and there is some $\lambda \in X' \cap E$.

Denote by $\pi$ the $\lambda$th canonical projection $D^{(\theta)} \to D$. First we show that $\pi g_{\theta\lambda} =~0$. Choose an arbitrary $x \in M_\lambda$. Since the $M_\lambda = \varinjlim_{\mu<\lambda} M_\mu$ by continuity of the direct system, there is $\alpha<\lambda$ and $y \in M_\alpha$ \st $x = f_{\lambda\alpha}(y)$. We have the equality:
$$ \pi g_{\theta\alpha}(y) = \pi g_{\lambda\alpha}(y) + \pi g_{\theta\lambda} f_{\lambda\alpha}(y) $$
But $\pi g_{\theta\alpha}(y) = 0$ since $\lambda \in X$ and $\pi g_{\lambda\alpha}(y) = 0$ by definition of $g_{\lambda\alpha}$. Hence $0 = \pi g_{\theta\lambda} f_{\lambda\alpha}(y) = \pi g_{\theta\lambda}(x)$. The claim follows since $x \in M_\lambda$ was arbitrary.

On the other hand, we know that $g_{\theta\lambda} = g_{\lambda+1,\lambda} + g_{\theta,\lambda+1} f_{\lambda+1,\lambda}$. Composing this with $\pi$, we get:
$$
0 = \pi g_{\theta\lambda} =
\pi g_{\lambda+1,\lambda} + \pi g_{\theta,\lambda+1} f_{\lambda+1,\lambda} =
h_\lambda + \pi g_{\theta,\lambda+1} f_{\lambda+1,\lambda}
$$
But this implies that $h_\lambda$ factorizes through $f_{\lambda+1,\lambda}$, a contradiction to the choice of $h_\lambda$ for $\lambda \in E$.

Hence, $E$ is not stationary. Therefore, we can choose a closed unbounded subset $X \subseteq \theta$ \st $X \cap E = \varnothing$ and (2) follows.
\end{proof}

\section{\texorpdfstring{$\Sigma$-cotorsion modules and $C$-stationarity}{Sigma-cotorsion modules and C-stationarity}}
\label{sec:sigmacot}

\chge{Let $\FL$ denote the class of all flat $R$-modules and $\EC = \FL^\perp$. The modules in $\EC$ are called \emph{(Enochs) cotorsion modules}. They generalize pure-injective modules and have been studied from that perspective in the series of papers~\cite{G-AH_lc-rings,G-AH_survey,G-AH_indec}, especially regarding their direct sum decomposition properties.
Among cotorsion modules, one may specialize to \emph{$\Sigma$-cotorsion modules}, i.e.\ those whose every sum of copies is cotorsion.
This is an intriguing class of modules at the boundary of homological algebra, model theory and set theory. These modules are far more complicated than $\Sigma$-pure-injective ones and have been studied in \cite{BS_sigma-cot,G-AH_sigma-rings,G-AH_model-th}.

In this section, we prove that being $\Sigma$-cotorsion is a property of the first-order theory of a module rather than the individual module alone, and give an analysis of the resulting theory, extending jointly the descriptions in~\cite[Theorem 12]{G-AH_model-th} (for countable rings) and in~\cite[Theorem 3.8]{BS_sigma-cot} (for non-discrete valuation domains) to general rings.

We start with the lemma below, which} has two alternatives, the `almost' and the `full' one, as indicated by brackets. In its proof, we use the notation from Definition~\ref{d:inducesys}.

\begin{lem} \label{l:dirsysmerge} Let $\nu<\mu$ be infinite cardinals and $F$ an $[$almost$]$ $(\mathcal C,\lambda)$-projective module for each regular $\lambda>\nu$ with $\mu^+\geq\lambda$. Then we can choose the system $\mathcal S_{\mu^+}$ witnessing $[$almost$]$ $(\mathcal C,\mu^+)$-projectivity of $F$ in such a way that each module from $\mathcal S_{\mu^+}$ be $[$almost$]$ $(\mathcal C,\eta)$-projective for any regular $\eta>\nu$.
\end{lem}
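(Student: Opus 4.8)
The plan is to produce $\mathcal S_{\mu^+}$ as a common refinement, in the sense of Construction~\ref{conmerge}, of a given witnessing system for $[$almost$]$ $(\mathcal C,\mu^+)$-projectivity of $F$ together with the ``coarsenings'' $\mathcal S_\eta^{\mu^+}$ (notation of Definition~\ref{d:inducesys}) of given witnessing systems $\mathcal S_\eta$ for $[$almost$]$ $(\mathcal C,\eta)$-projectivity. First I would observe that the conclusion is automatic for every regular $\eta>\mu$: any module $N$ occurring in a system witnessing $[$almost$]$ $(\mathcal C,\mu^+)$-projectivity is $<\mu^+$-presented, hence $<\eta$-presented, and lies in ${}^\perp\mathcal C$ (in the `almost' case by the definition of such a system, in the `full' case for the same reason), so it is $[$almost$]$ $(\mathcal C,\eta)$-projective by the remark that for $<\eta$-presented modules this property is nothing but membership in ${}^\perp\mathcal C$. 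Hence it suffices to arrange that every module of $\mathcal S_{\mu^+}$ be $[$almost$]$ $(\mathcal C,\eta)$-projective for the at most $\mu$ many regular cardinals $\eta$ with $\nu<\eta\leq\mu$.

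Fix such an $\eta$ and a system $\mathcal S_\eta=(M_i,f_{ji}\mid i<j\in I)$ witnessing $[$almost$]$ $(\mathcal C,\eta)$-projectivity of $F$, and form $\mathcal S_\eta^{\mu^+}$; it is $\mu^+$-continuous with direct limit $F$, and all its modules are $<\mu^+$-presented. The key per-system step is to check that every module $N$ of $\mathcal S_\eta^{\mu^+}$ is again $[$almost$]$ $(\mathcal C,\eta)$-projective. Indeed, write $N=\varinjlim\mathcal D$ for a directed subsystem $\mathcal D\subseteq\mathcal S_\eta$ of cardinality $<\mu^+$; by Observation~\ref{o:observ} there is an $\eta$-continuous directed system $\mathcal D'$ with $\mathcal D\subseteq\mathcal D'\subseteq\mathcal S_\eta$ and $\varinjlim\mathcal D'=N$. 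Its modules are among those of $\mathcal S_\eta$, hence $<\eta$-presented and in ${}^\perp\mathcal C$; and in the `full' case each colimit map $M_i\to N$ of $\mathcal D'$ is $\mathcal C$-injective, since the composite $M_i\to N\to F$ is the $\mathcal C$-injective colimit map of $\mathcal S_\eta$, so for every $C\in\mathcal C$ the left factor $\Hom_R(M_i\to N,C)$ of the surjective composite $\Hom_R(M_i\to F,C)=\Hom_R(M_i\to N,C)\circ\Hom_R(N\to F,C)$ is surjective. Thus $\mathcal D'$ witnesses $[$almost$]$ $(\mathcal C,\eta)$-projectivity of $N$.

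Now let $\mathcal T$ be a system witnessing $[$almost$]$ $(\mathcal C,\mu^+)$-projectivity of $F$. The family consisting of $\mathcal T$ and of the systems $\mathcal S_\eta^{\mu^+}$, for all regular $\eta$ with $\nu<\eta\leq\mu$, has cardinality $\leq\mu<\mu^+$ and consists of $\mu^+$-continuous directed systems with direct limit $F$, so Construction~\ref{conmerge} yields a $\mu^+$-continuous common subsystem $\mathcal S_{\mu^+}$ with $\varinjlim\mathcal S_{\mu^+}=F$, every module of which occurs, up to isomorphism, as a module of each system in the family. Since each module of $\mathcal S_{\mu^+}$ occurs as a module of $\mathcal T$, it is $<\mu^+$-presented and lies in ${}^\perp\mathcal C$, hence $[$almost$]$ $(\mathcal C,\eta)$-projective for all regular $\eta>\mu$ by the first paragraph; and since it occurs as a module of each $\mathcal S_\eta^{\mu^+}$, it is $[$almost$]$ $(\mathcal C,\eta)$-projective for all regular $\eta$ with $\nu<\eta\leq\mu$ by the second paragraph. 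Finally $\mathcal S_{\mu^+}$ itself witnesses $[$almost$]$ $(\mathcal C,\mu^+)$-projectivity of $F$: it is $\mu^+$-continuous with the right direct limit and its modules are $<\mu^+$-presented members of ${}^\perp\mathcal C$, while in the `full' case each colimit map $N\to F$ coincides with the $\mathcal C$-injective colimit map of $\mathcal T$, whence each connecting map $N\to N'$ of $\mathcal S_{\mu^+}$ is $\mathcal C$-injective as well, again because $\Hom_R(N\to N',C)$ is the left factor of the surjective composite $\Hom_R(N\to F,C)=\Hom_R(N\to N',C)\circ\Hom_R(N'\to F,C)$.

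I expect the only genuine work to lie in the bookkeeping packaged in Construction~\ref{conmerge}: one must intersect all $\leq\mu$ systems of the family into a single $\mu^+$-continuous system whose modules simultaneously occur in each of them, while tracking the connecting and colimit maps carefully enough that their $\mathcal C$-injectivity is inherited in the `full' case. Passing through $\mathcal S_\eta^{\mu^+}$ rather than using $\mathcal S_\eta$ directly is exactly what makes the per-module conclusion of the second paragraph survive this intersection, with Observation~\ref{o:observ} repairing the loss of $\eta$-continuity; everything else is a routine combination of Lemma~\ref{l:eklof} and the elementary remarks following the definition of $[$almost$]$ $(\mathcal C,\lambda)$-projectivity.
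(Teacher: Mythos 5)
Your proof is correct and takes essentially the same route as the paper's: split into $\eta>\mu$ (automatic, since the modules are $<\eta$-presented members of ${}^\perp\mathcal C$) and $\nu<\eta\leq\mu$, then intersect $\mathcal T_{\mu^+}$ with the coarsenings $\mathcal S_\eta^{\mu^+}$ via Construction~\ref{conmerge}, repairing $\eta$-continuity with Observation~\ref{o:observ}. The paper's own proof is just a terser rendering of exactly this argument; your expansion of the per-system step (including the verification of $\mathcal C$-injectivity in the `full' case) fills in the details the paper leaves implicit.
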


\begin{proof} By our assumption, there are directed systems $\mathcal T_\lambda$ witnessing that $F$ is [almost] $(\mathcal C,\lambda)$-projective. The modules in $\mathcal T_{\mu^+}$ are $(\mathcal C,\eta)$-projective for any regular $\eta>\mu$ since they belong to ${}^\perp\mathcal C$. We obtain $\mathcal S_{\mu^+}$ by \chge{intersecting} $\mathcal T_{\mu^+}$ with all the systems $\mathcal T_\eta^{\mu^+}$ where $\nu<\eta=\cf(\eta)\leq\mu$ using Construction~\ref{conmerge}; note that, in this case, we have $|\{\eta \mid \nu<\eta=\cf(\eta)\leq\mu\}|<\mu^+$. The rest follows from Observation~\ref{o:observ} \chge{(see also the Remark after Definition~\ref{d:inducesys})}.
\end{proof}


\begin{prop}\label{p:main2} Let $\kappa$ be a nonzero cardinal, $F,C\in\ModR$ with $F$ $\kappa$-presented. Assume that $F$ is almost $(C^{(\kappa)},\lambda)$-projective for each $\lambda$ regular uncountable. Then $F$ is $(\Cogen_*(C),\lambda)$-projective for each regular $\lambda>\aleph_0$, and so $F\in {}^\perp\Cogen_*(C)$.
\end{prop}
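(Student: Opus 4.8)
The plan is to reduce Proposition~\ref{p:main2} to Lemma~\ref{l:filter-closed} by applying the latter to a suitable filter-closed class built from $C$. The first step is to observe that for a single module $C$, the class $\mathcal B = \{(C^{(\kappa)})^{cc}\}$ — or, more robustly, $\mathcal B = \mathrm{Prod}(C^{(\kappa)})$ closed under products — is not quite what we want, since Lemma~\ref{l:filter-closed} needs $\mathcal B$ filter-closed and needs $M\in{}^\perp\mathcal B$. So the right choice is to set $\mathcal B$ to be the smallest class containing $C^{(\kappa)}$ that is closed under products and direct limits; this is filter-closed (being closed under products and direct limits of monomorphisms, in fact under direct limits outright), and by the discussion after Theorem~\ref{t:main1} we have $\Cogen_*(\mathcal B) = \bar{\mathcal B}$. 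The point to check is that $F\in{}^\perp\mathcal B$: this is where the hypothesis "almost $(C^{(\kappa)},\lambda)$-projective for each regular uncountable $\lambda$" enters, via the remark after Lemma~\ref{l:eklof} (for $\lambda$ regular, a $\lambda$-presented $(\mathcal B,\lambda)$-projective module lies in ${}^\perp\mathcal B$, after first upgrading "almost" to "full" using \cite[Lemma 2.3]{SS}) together with Lemma~\ref{l:singmod} if $\kappa$ turns out to be singular.

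More precisely, I would first handle the case $\kappa$ regular (or $\aleph_0$): here $F$ is $\kappa$-presented and almost $(C^{(\kappa)},\kappa)$-projective, and since $C^{(\kappa)}\in{}^\perp$... wait, that is not automatic — actually what we need is just that the witnessing $\kappa$-continuous system has a continuous well-ordered cofinal subchain of length $\kappa$ consisting of $<\kappa$-presented modules from ${}^\perp C^{(\kappa)}$, whose connecting maps can be made $C^{(\kappa)}$-injective by \cite[Lemma 2.3]{SS}; then Lemma~\ref{l:eklof} gives $F\in{}^\perp C^{(\kappa)}$. For $\kappa$ singular I would invoke Lemma~\ref{l:singmod} (the singular analogue flagged in the remark after Lemma~\ref{l:eklof}), using that $\mathcal B$ — hence ${}^\perp\mathcal B$'s relevant hypotheses — behaves well under the singular compactness machinery, to conclude likewise that $F\in{}^\perp C^{(\kappa)}$, and in fact $F\in{}^\perp\mathcal B$ since $\mathcal B\subseteq\overline{\{C^{(\kappa)}\}}$ and ${}^\perp C^{(\kappa)} = {}^\perp\overline{\{C^{(\kappa)}\}}$ by Theorem~\ref{t:main1} applied to $\mathcal B_0$ = closure of $C^{(\kappa)}$ under direct limits and products. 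Actually the cleanest route: let $\mathcal B$ be the closure of $\{C^{(\kappa)}\}$ under products and direct limits; Theorem~\ref{t:main1} gives ${}^\perp\mathcal B = {}^\perp C^{(\kappa)}$, and $F\in{}^\perp C^{(\kappa)}$ by the previous sentence.

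With $F\in{}^\perp\mathcal B$ and $F$ almost $(\mathcal B,\aleph_1)$-projective (the given $\lambda=\aleph_1$ witness works, since the $<\aleph_1$-presented modules from ${}^\perp C^{(\kappa)}$ lie in ${}^\perp\mathcal B = {}^\perp C^{(\kappa)}$), Lemma~\ref{l:filter-closed} applies verbatim and yields that $F$ is $(\Cogen_*(\mathcal B),\lambda)$-projective for every regular uncountable $\lambda$. Since $C\in\Cogen_*(C^{(\kappa)})$ trivially (it is a direct summand of $C^{(\kappa)}$), we have $\Cogen_*(C)\subseteq\Cogen_*(\mathcal B) = \bar{\mathcal B}$, hence $(\Cogen_*(\mathcal B),\lambda)$-projective implies $(\Cogen_*(C),\lambda)$-projective once we note that the witnessing system's modules, lying in ${}^\perp\Cogen_*(\mathcal B)\subseteq{}^\perp\Cogen_*(C)$, still have the required property and the colimit maps, being $\Cogen_*(\mathcal B)$-injective, are a fortiori $\Cogen_*(C)$-injective. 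The final clause $F\in{}^\perp\Cogen_*(C)$ then follows from the remark after Lemma~\ref{l:eklof} (reducing the $\lambda$-continuous witness to a well-ordered continuous chain of length $\lambda$ when $\kappa<\lambda$, or from $F$ being $<\lambda$-presented for $\lambda$ large regular) — alternatively directly from Lemma~\ref{l:filter-closed}'s conclusion "$M\in{}^\perp\Cogen_*(\mathcal B)$" and $\Cogen_*(C)\subseteq\Cogen_*(\mathcal B)$.

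The main obstacle I anticipate is the bookkeeping around singular $\kappa$: showing $F\in{}^\perp C^{(\kappa)}$ requires the singular-compactness Lemma~\ref{l:singmod}, and one must verify its hypotheses (the relevant class being filter-closed, and the almost-projectivity data assembling correctly across all regular $\theta<\kappa$) — this is exactly the kind of technical point that Lemma~\ref{l:dirsysmerge} is designed to facilitate, by letting us choose the witnessing systems coherently so that each module in the $\mu^+$-level system is itself almost $(C^{(\kappa)},\eta)$-projective for all smaller regular $\eta$. So in the singular case I would first apply Lemma~\ref{l:dirsysmerge} to line up the witnesses, then feed the result into Lemma~\ref{l:singmod}. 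The regular case is routine by comparison.
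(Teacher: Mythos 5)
There is a genuine gap that makes the proposed argument circular. You set $\mathcal B$ to be the closure of $\{C^{(\kappa)}\}$ under products and direct limits and then assert that Theorem~\ref{t:main1} gives ${}^\perp\mathcal B = {}^\perp C^{(\kappa)}$. It does not. Theorem~\ref{t:main1}, applied to your $\mathcal B$, gives ${}^\perp\mathcal B = {}^\perp\bar{\mathcal B}$; it says nothing about comparing ${}^\perp\mathcal B$ to ${}^\perp$ of the single module $C^{(\kappa)}$. The inclusion ${}^\perp\mathcal B \subseteq {}^\perp C^{(\kappa)}$ is trivial (since $C^{(\kappa)}\in\mathcal B$), but the reverse inclusion ${}^\perp C^{(\kappa)}\subseteq{}^\perp\mathcal B$ is essentially the content of the proposition you are trying to prove. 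So the route you call ``the cleanest'' is assuming what needs to be shown. The same issue undermines your verification that the $\aleph_1$-witness system consists of modules in ${}^\perp\mathcal B$: that the countably presented witnesses in ${}^\perp C^{(\kappa)}$ already lie in ${}^\perp\Cogen_*(C)$ is a nontrivial fact (handled in the paper via~\cite[Proposition~2.7]{SS}), and the analogous fact for $<\kappa$-presented witnesses is precisely the inductive hypothesis.

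What the paper actually does is a genuine induction on $\kappa$. The base case $\kappa=\aleph_0$ is \cite[Proposition~2.7]{SS}. For $\kappa>\aleph_0$, after using Lemma~\ref{l:dirsysmerge} to coherently align the witness systems, the inductive hypothesis upgrades the $<\kappa$-presented members of $\mathcal S_\lambda$ from being in ${}^\perp C^{(\kappa)}$ to being $(\Cogen_*(C),\eta)$-projective. The crucial remaining step is showing $F\in{}^\perp\Cogen_*(C)$ itself, and this requires a nontrivial case split on $\kappa$: singular (Lemma~\ref{l:singmod}); successor (take $\mathcal S_\kappa$ well ordered, apply Theorem~\ref{t:regular_filt} to upgrade to full $(\mathcal C,\kappa)$-projectivity, use Construction~\ref{concoker} to show the cokernels $\Coker(f_{\alpha+1,\alpha})$ are almost $(\mathcal C,\lambda)$-projective for all $\lambda$, invoke the inductive hypothesis on them, deduce $\mathcal B$-injectivity via Lemma~\ref{l:cogen}, and finish with Lemma~\ref{l:eklof}); and weakly inaccessible limit (a separate diagonal argument to find a cofinal subsystem of $\mathcal S_\kappa$ of modules that are $(\mathcal B,\lambda)$-projective for all $\lambda$). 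None of Theorem~\ref{t:regular_filt}, Construction~\ref{concoker}, or the cokernel-based step to establish $\mathcal B$-injectivity of the maps in $\mathcal S_\kappa$ appears in your sketch, yet these are what make the inductive step go through. Only after $F\in{}^\perp\mathcal B$ and the $\aleph_1$-witness are both in hand does Lemma~\ref{l:filter-closed} apply. Your instinct to reduce to Lemma~\ref{l:filter-closed} is correct, but the reduction to its hypotheses carries almost all of the difficulty, and it is precisely this reduction that the citation of Theorem~\ref{t:main1} silently replaces with the conclusion.
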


\begin{proof} Put $\mathcal C = \{C^{(\kappa)}\}$ and $\mathcal B = \Cogen_*(C)$. We work by induction on $\kappa$. If $\kappa$ is finite, our assumption says that $\Ext^1_R(F,C) = 0$ and $F$ is finitely presented. Consequently, $F\in {}^\perp\mathcal B$.

If $\kappa = \aleph_0$, we have $\Ext^1_R(F,C^{(\aleph_0)})= 0$ and our result follows from \cite[Proposition 2.7]{SS}. Note that for $F$ countably presented, the $(\mathcal B,\lambda)$-projectivity amounts to $F\in {}^\perp\mathcal B$.

Let $\kappa>\aleph_0$ and, for each regular uncountable $\lambda$, let $\mathcal S_\lambda$ denote a directed system of modules witnessing that $F$ is almost $(\mathcal C,\lambda)$-projective. Notice that $\mathcal S_{\aleph_1}$ even witnesses that $F$ is almost $(\mathcal B,\aleph_1)$-projective by \cite[Proposition 2.7]{SS}. Furthermore, using Lemma~\ref{l:dirsysmerge} with $\nu = \aleph_0$, we can w.l.o.g.\ assume that, if $\lambda$ is a~successor cardinal, all the modules in $\mathcal S_\lambda$ are almost $(\mathcal C,\eta)$-projective for all regular uncountable~$\eta$.
Hence they are $(\mathcal B,\eta)$-projective for all regular uncountable $\eta$ by the inductive hypothesis, provided they are $<\kappa$-presented. In particular, $\mathcal S_\lambda$ witnesses almost $(\mathcal B,\lambda)$-projectivity of $F$ for all infinite successor cardinals $\lambda\leq\kappa$.

To conclude our proof, we \chge{will} show that $F\in {}^\perp\mathcal B$ and use Lemma~\ref{l:filter-closed}. \chge{We remind the reader that $F\in{^\perp\C}$ since we assume that $F$ is almost $(C^{(\kappa)},\kappa^+)$-projective. We discuss several cases depending on $\kappa$:

\smallskip

Case 1:	If $\kappa$ is singular, we get $F\in {}^\perp\mathcal B$ by Lemma~\ref{l:singmod}.

\smallskip

Case 2: Suppose that $\kappa=\mu^+$ is successor cardinal.
Then the system $\mathcal S_\kappa$ can be taken w.l.o.g.\ well-ordered, so $\mathcal S_\kappa = (F_\alpha,f_{\beta\alpha}:F_\alpha\to F_\beta\mid \alpha<\beta<\kappa)$ where each $F_\alpha$ is $<\kappa$-presented. 
As was explained above, we can assume that $\mathcal S_\kappa$ consists of modules which are $(\mathcal B,\lambda)$-projective for all regular uncountable cardinals $\lambda$ (in particular $F_\alpha\in{^\perp\B}$).
Finally, by applying Theorem~\ref{t:regular_filt}, we can also assume that $\mathcal S_\kappa$ witnesses $(\mathcal C,\kappa)$-projectivity.

We fix any $\lambda>\aleph_0$ regular and, for each $\alpha<\kappa$, let $\mathcal T_\alpha$ denote a system witnessing that $F_\alpha$ is $(\mathcal B,\lambda)$-projective. If we put $\mathcal M = \mathcal T_\alpha$ and $\mathcal N = \mathcal T_{\alpha +1}$, Construction~\ref{concoker} will provide us with the system $\mathcal K$ which is easily seen to witness almost $(\mathcal C,\lambda)$-projectivity of $\Coker(f_{\alpha+1,\alpha})$: just use the properties of $\mathcal M$ and $\mathcal N$ together with the $\mathcal C$-injectivity of $f_{\alpha+1,\alpha}$. Since $\lambda$ was arbitrary, we can use the inductive hypothesis to deduce that $\Coker(f_{\alpha+1,\alpha})\in {}^\perp\mathcal B$. Subsequently, the map $f_{\alpha+1,\alpha}$ is $\mathcal B$-injective by Lemma~\ref{l:cogen} for every $\alpha<\kappa$, and Lemma~\ref{l:eklof} yields $F\in {}^\perp\mathcal B$.

\smallskip

Case 3: Let $\kappa$ be a weakly inaccessible (i.e.\ uncountable, regular and limit) cardinal.
As in the previous case, $\mathcal S_\kappa$ can be taken well-ordered, of the form $\mathcal S_\kappa = (F_\alpha,f_{\beta\alpha}:F_\alpha\to F_\beta\mid \alpha<\beta<\kappa)$ where each $F_\alpha$ is $<\kappa$-presented.
Possibly by intersecting $\mathcal S_\kappa$ with $\mathcal S_{\aleph_1}^\kappa$, we can also w.l.o.g.\ assume that $\mathcal S_\kappa$ consists of almost $(\mathcal B,\aleph_1)$-projective modules.

We next show that $\mathcal S_\kappa$ contains a cofinal subsystem consisting of modules which are $(\mathcal B,\lambda)$-projective for each regular uncountable~$\lambda$. To this end, suppose that $M_0$ is an arbitrary module from $\mathcal S_\kappa$ (i.e.\ $M_0 = F_\alpha$ for some $\alpha<\kappa$). Then $M_0$ is $\kappa_0$-presented for an infinite cardinal $\kappa_0<\kappa$. Let $\mathcal R_0$ be the directed system obtained by intersecting $\mathcal S_\kappa$ with the systems $\mathcal S_\lambda^\kappa$ where $\lambda$ runs through the uncountable regular cardinals $\leq\kappa_0^+$ (see Construction~\ref{conmerge}). We continue recursively: once $M_n$, $\kappa_n$ and $\mathcal R_n$ are defined, we find a $\kappa_{n+1}$-presented module $M_{n+1}\in\mathcal R_n$ above $M_n$ where $\kappa_n< \kappa_{n+1}< \kappa$; finally, we let $\mathcal R_{n+1}$ be the intersection of $\mathcal R_n$ with the systems $\mathcal S_\lambda^\kappa$ where $\kappa_n<\lambda = \cf(\lambda)\leq\kappa_{n+1}^+$. Then $M = \varinjlim_{n<\omega}M_n$ belongs to $\mathcal R_n$ for each $n<\omega$, hence $M$ is almost $(\mathcal C,\lambda)$-projective for each $\lambda$ regular uncountable. It follows from the inductive hypothesis that $M$ is $(\mathcal B,\lambda)$-projective for each $\lambda>\aleph_0$ regular.

To summarize so far, we can w.l.o.g.\ assume that $F_\alpha$ is $(\mathcal B,\lambda)$-projective (for each $\lambda>\aleph_0$ regular) whenever $\alpha$ is a non-limit ordinal. Put $F_\kappa = F$ and suppose, for the sake of contradiction, that there exists a limit ordinal $\delta\leq\kappa$ \st $F_\delta$ is not $(\mathcal B,\eta)$-projective for some regular uncountable $\eta$. Take the least such $\delta$.

Similarly to Case 2, we fix a regular cardinal $\lambda>\aleph_0$ and, for each $\alpha<\delta$, denote by $\mathcal T_\alpha$ a system witnessing that $F_\alpha$ is $(\mathcal B,\lambda)$-projective. If we put $\mathcal M = \mathcal T_\alpha$ and $\mathcal N = \mathcal T_{\alpha +1}$, Construction~\ref{concoker} will provide us with the system $\mathcal K$ which witnesses almost $(\mathcal C,\lambda)$-projectivity of $\Coker(f_{\alpha+1,\alpha})$. As before, we can use the inductive hypothesis to deduce that $\Coker(f_{\alpha+1,\alpha})\in {}^\perp\mathcal B$, so that the map $f_{\alpha+1,\alpha}$ is $\mathcal B$-injective by Lemma~\ref{l:cogen} for every $\alpha<\delta$, and Lemma~\ref{l:eklof} yields $F_\delta\in {}^\perp\mathcal B$. Lemma~\ref{l:filter-closed} then gives us the desired contradiction.
}
\end{proof}

\begin{rem} Going through the proof, we can see that, for $\kappa$ singular, the same conclusion holds assuming only that $F$ is almost $(C^{(\kappa)},\lambda)$-projective for all regular uncountable $\lambda<\kappa$.
\end{rem}

\chge{Now we can prove the model-theoretic nature of the $\Sigma$-cotorsion property.

\begin{thm} \label{t:sigmacot} If $C$ is a $\Sigma$-cotorsion module, then every module from the definable closure of $\{C\}$ is $(\Sigma$-$)$cotorsion.
\end{thm}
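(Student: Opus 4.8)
The plan is to reduce the statement to the single inclusion $\bar C\subseteq\EC$, prove this by applying Proposition~\ref{p:main2} to flat modules, and then propagate cotorsionness along pure epimorphisms using that $(\FL,\EC)$ is a hereditary cotorsion pair.

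\smallskip

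First, note that $\bar C$, being a definable class, is closed under arbitrary direct sums, since $D^{(I)}$ is the directed union of the finite subsums $D^{(J)}=D^{J}$ ($J$ a finite subset of $I$), which lie in $\bar C$. Consequently, once we know $\bar C\subseteq\EC$, then for every $D\in\bar C$ and every set $I$ we get $D^{(I)}\in\bar C\subseteq\EC$; that is, $D^{(I)}$ is cotorsion, which is exactly the assertion that $D$ is $\Sigma$-cotorsion (hence in particular cotorsion). So everything comes down to proving $\bar C\subseteq\EC$, equivalently $\FL\subseteq{}^\perp\bar C$.

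\smallskip

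The second and main step is to establish the weaker inclusion $\Cogen_*(C)\subseteq\EC$, i.e.\ $\FL\subseteq{}^\perp\Cogen_*(C)$. Let $F$ be a flat module and let $\kappa$ be the least cardinal such that $F$ is $\kappa$-presented. I claim $F$ is almost $(C^{(\kappa)},\lambda)$-projective for each regular uncountable $\lambda$. Indeed, write $F=\varinjlim\mathcal S$ for a directed system $\mathcal S$ of finitely presented (projective) modules, and pass to the induced system $\mathcal S^{\lambda}$ of Definition~\ref{d:inducesys}: its objects are direct limits of $<\lambda$-sized directed subsystems of $\mathcal S$, hence $<\lambda$-presented flat modules, and regularity of $\lambda$ makes $\mathcal S^{\lambda}$ $\lambda$-continuous (the union of a chain of length $<\lambda$ of $<\lambda$-sized directed subsystems is again a $<\lambda$-sized directed subsystem) with $\varinjlim\mathcal S^{\lambda}=F$; if needed, one may first replace $\mathcal S^{\lambda}$ by a $\lambda$-continuous subsystem with the same colimit via Observation~\ref{o:observ}. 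It is precisely here that the hypothesis enters: since $C$ is $\Sigma$-cotorsion, $C^{(\kappa)}$ is cotorsion, so every flat module — in particular every object of $\mathcal S^{\lambda}$ — lies in ${}^\perp\{C^{(\kappa)}\}$, and thus $\mathcal S^{\lambda}$ witnesses almost $(C^{(\kappa)},\lambda)$-projectivity of $F$. Proposition~\ref{p:main2} now yields $F\in{}^\perp\Cogen_*(C)$; as every flat module is $\kappa$-presented for some $\kappa$, we conclude $\FL\subseteq{}^\perp\Cogen_*(C)$, i.e.\ $\Cogen_*(C)\subseteq\EC$.

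\smallskip

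Finally, I would upgrade $\Cogen_*(C)\subseteq\EC$ to $\bar C\subseteq\EC$. Recall that $\bar C$ consists of the pure-epimorphic images of the modules in $\Cogen_*(C)$ (a pure-epimorphic image of a pure-epimorphic image being again of this form), so every $D\in\bar C$ sits in a pure-exact sequence $0\to K\to D'\to D\to 0$ with $D'\in\Cogen_*(C)$ and $K$ a pure submodule of $D'$. Then $K$ is a pure submodule of a product of copies of $C$ as well, so $K\in\Cogen_*(C)\subseteq\EC$. Since $\FL$ is closed under kernels of epimorphisms, the cotorsion pair $(\FL,\EC)$ is hereditary, whence $\Ext^{2}_{R}(F,K)=0$ for every flat $F$; together with $\Ext^{1}_{R}(F,D')=0$ and the long exact sequence obtained by applying $\Hom_{R}(F,-)$ to the above sequence, this gives $\Ext^{1}_{R}(F,D)=0$. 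Hence $D\in\EC$, and combined with the first paragraph this proves the theorem. The only genuinely deep input is Proposition~\ref{p:main2} (and the singular compactness machinery behind it); the point requiring care in the write-up is verifying that the small modules arising at limit stages of the $\lambda$-continuous system — which are $<\lambda$-presented \emph{flat} modules, not projectives — do belong to ${}^\perp\{C^{(\kappa)}\}$, and this is exactly what $\Sigma$-cotorsionness of $C$ buys us.
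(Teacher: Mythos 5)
Your proof is correct and follows essentially the same route as the paper's: apply Proposition~\ref{p:main2} to a flat module $F$ (using that every flat module is a $\lambda$-continuous direct limit of $<\lambda$-presented flat modules, all of which lie in ${}^\perp\{C^{(\kappa)}\}$ because $C^{(\kappa)}$ is cotorsion), obtaining $F\in{}^\perp\Cogen_*(C)$, and then pass from $\Cogen_*(C)$ to $\bar C$ using the resolving/hereditary property of $\FL$. You have merely written out explicitly two points the paper treats tersely --- the closure of $\bar C$ under direct sums to get ``$\Sigma$-'' for free, and the dimension shift across a pure-exact sequence $0\to K\to D'\to D\to 0$ with $K, D'\in\Cogen_*(C)$.
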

}

\begin{proof} Let $F$ be an arbitrary flat module. Then $F$ is a $\lambda$-continuous direct limit of $<\lambda$-presented flat modules, hence it is almost $(C^{(\kappa)},\lambda)$-projective for any $\kappa$ and regular $\lambda>\aleph_0$. By Proposition~\ref{p:main2}, we have $F\in {}^\perp\Cogen_*(C)$.
Since the class $\FL$ of all flat modules is resolving, we get even $\mathcal{FL}\subseteq {}^\perp\bar C$ (every $M\in\bar C$ is a~pure-epimorphic image of a module from $\Cogen_*(C)$).
\end{proof}

\begin{cor} \label{c:pure-split} Let $\kappa = |R|+\aleph_0$, $F$ be a flat $\Sigma$-cotorsion module and $\mu$ a cardinal. Then any pure submodule of $F^{(\mu)}$ splits. In particular, $F$ is a direct sum of $\kappa$-presented modules with local endomorphism rings.
\end{cor}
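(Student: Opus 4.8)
The plan is to prove the two assertions separately, using in both cases the elementary observation that a flat cotorsion module is a direct summand of any flat module into which it embeds purely: if $M$ is flat and cotorsion and embeds purely in a flat module $N$, then $N/M$ is flat, hence $\Ext^1_R(N/M,M)=0$ and the sequence $0\to M\to N\to N/M\to 0$ splits.

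For the first assertion, let $N$ be a pure submodule of $F^{(\mu)}$. Definable classes are closed under direct sums, so $F^{(\mu)}$ lies in the definable closure $\bar F$, and therefore so does its pure submodule $N$; by Theorem~\ref{t:sigmacot}, $N$ is cotorsion. Since $F^{(\mu)}$ is flat, its pure quotient $F^{(\mu)}/N$ is flat as well, so $\Ext^1_R(F^{(\mu)}/N,N)=0$ and the pure-exact sequence $0\to N\to F^{(\mu)}\to F^{(\mu)}/N\to 0$ splits.

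For the second assertion, I would first upgrade the splitting property to $\Sigma$-pure-injectivity of $F$. The double character module $M^{cc}$ of a flat module $M$ is again flat (the character module of the injective module $M^{c}$ is flat) and is pure-injective, and $M$ embeds purely into it. Applying the observation above to the module $F^{(\aleph_0)}$, which is flat and, since $F$ is $\Sigma$-cotorsion, cotorsion, we find that $F^{(\aleph_0)}$ is a direct summand of the pure-injective module $(F^{(\aleph_0)})^{cc}$, hence is itself pure-injective. By the classical theorem of Zimmermann, a module $M$ with $M^{(\aleph_0)}$ pure-injective is $\Sigma$-pure-injective; thus $F$ is $\Sigma$-pure-injective. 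The structure theory of $\Sigma$-pure-injective modules then provides a decomposition $F=\bigoplus_{i\in I}M_i$ in which every $\End_R(M_i)$ is local. Each $M_i$, being a direct summand of $F$, is flat, and is an indecomposable $\Sigma$-pure-injective module; since in a totally transcendental class the pure-injective hull of a finitely generated module has cardinality at most $|R|+\aleph_0=\kappa$, each $M_i$ is $\kappa$-generated, and a $\kappa$-generated flat module is $\kappa$-presented. Hence $F$ is a direct sum of $\kappa$-presented modules with local endomorphism rings.

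The point that demands the most care, and the main obstacle, is this last step: it combines two classical but non-trivial facts — Zimmermann's criterion for $\Sigma$-pure-injectivity and the Azumaya-type decomposition of $\Sigma$-pure-injective modules into indecomposables with local endomorphism rings — and it is the flatness of $F$, through the fact that $\kappa$-generated flat modules are $\kappa$-presented, that sharpens the general cardinality bound for indecomposable pure-injectives into the asserted $\kappa$-presentedness. One should also check the harmless details that pure quotients of flat modules are flat and that direct summands of pure-injective modules are pure-injective.
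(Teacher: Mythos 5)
Your proof of the first assertion (that pure submodules of $F^{(\mu)}$ split) is correct and essentially identical to the paper's: the pure submodule lies in $\bar F$, hence is cotorsion by Theorem~\ref{t:sigmacot}, and the quotient is flat, so the extension is trivial.

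Your proof of the second assertion, however, has a genuine gap, and the strategy you chose (upgrading to $\Sigma$-pure-injectivity) is too strong to work over an arbitrary ring. The gap is the parenthetical claim that ``the character module of the injective module $M^c$ is flat.'' This is false in general: for an injective left $R$-module $N$, the character module $N^c$ is a flat right $R$-module if and only if $N^{cc}$ satisfies the Baer criterion with respect to finitely generated left ideals, and this fails over non-left-coherent rings (indeed, over left coherent rings the class of flat right modules is definable, so $M^{cc}\in\overline{\{M\}}\subseteq\FL$ for flat $M$; without coherence, $M^{cc}$ need only lie in the definable closure $\overline{R_R}$, the class of ``level'' modules, which strictly contains $\FL$ in general). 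With $(F^{(\aleph_0)})^{cc}$ not known to be flat, you cannot conclude that $(F^{(\aleph_0)})^{cc}/F^{(\aleph_0)}$ is flat, so the splitting argument collapses. Worse, the target conclusion (that a flat $\Sigma$-cotorsion module is $\Sigma$-pure-injective) is not expected to hold over arbitrary rings; flat cotorsion modules are the injective objects of the exact category $(\FL,\text{pure})$, but this is strictly weaker than being pure-injective in $\ModR$ when $R$ is not coherent, and the whole point of the paper's treatment of $\Sigma$-cotorsion modules is that they form a genuinely wider and subtler class than $\Sigma$-pure-injective ones.

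The paper instead deduces the decomposition directly from the first assertion by citing \cite[Theorem 1.1]{AS} (Angeleri H\"ugel--Saor\'\i n, \emph{Modules with perfect decompositions}): a module for which every pure submodule of every direct sum of its copies splits admits a perfect decomposition into ($\kappa$-presented) summands with local endomorphism rings. That theorem does not pass through $\Sigma$-pure-injectivity at all, which is exactly what makes it applicable here. (Your final cardinality estimate via ``totally transcendental classes'' would also need more care, but that issue is moot given the gap above.)
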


\begin{proof} Let $P$ be a pure submodule in $F^{(\mu)}$. Using Theorem~\ref{t:sigmacot}, we get that $P$ is cotorsion. Consequently $P$ splits in $F^{(\mu)}$ since $F^{(\mu)}/P$ is flat. The rest follows from \cite[Theorem 1.1]{AS}.
\end{proof}

The point in the following rather technical recursive definition is that $\mathfrak S_C(M)$ holds \iff a certain set of first-order sentences, determined by the module $M$, holds in $C$. We can look at $\mathfrak S_C$ as a sort of `hereditary' $C$-stationarity.

\begin{defn} \label{d:S_C} For $M\in\ModR$, we denote by $\pres(M)$ the least cardinal $\theta$ such that $M$ is $\theta$-presented.
For a module $C$, we recursively define a property $\mathfrak S_C$ of a~module $M$ by stating that $\mathfrak S_C(M)$ holds \iff
\smallskip
\begin{enumerate}
	\item $M$ is $C$-stationary and $\pres(M)\leq\aleph_0$, or
	\item $\theta = \pres(M)$ is uncountable regular and there exists a $\theta$-continuous well-ordered directed system $\mathcal M = (M_\alpha,f_{\beta\alpha}:M_\alpha\to M_\beta\mid \alpha<\beta<\theta)$ \st
	\begin{enumerate}
	\item $M_0= 0$ and $\pres(M_\alpha)<\theta$ for each $\alpha<\theta$,
	\item $\varinjlim\mathcal M = M$,
	\item $\mathfrak S_C(\Coker(f_{\beta\alpha}))$ holds for each $\alpha<\beta<\theta$, or
\end{enumerate} 
  \item $\theta = \pres(M)$ is singular and, for each \chge{infinite successor cardinal} $\lambda<\theta$, $M$ is the direct limit of a $\lambda$-continuous directed system consisting of $<\lambda$-presented modules satisfying $\mathfrak S_C$.
\end{enumerate}
\end{defn}

\begin{rem} Notice that $\mathfrak S_C$ implies $\mathfrak S_J$ whenever $J$ lies in the definable closure of $C$ since $C$-stationarity yields $J$-stationarity in this case. Moreover, it follows by \cite[Proposition 2.2]{HT} that $\mathfrak S_C(M)$ implies that $M$ is $C$-stationary for any $M\in\ModR$.
\end{rem}

We have the following interesting characterization of pure-projectivity.

\begin{prop} \label{p:pproj} Let $C$ be an elementary cogenerator of $\ModR$. Then $\mathfrak S_C(M)$ holds \iff $M$ is pure-projective.
\end{prop}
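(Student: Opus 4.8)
The plan is to prove both implications by induction on $\pres(M)$, using the recursive structure of $\mathfrak S_C$ and the fact that $C$ being an elementary cogenerator of the whole category $\ModR$ means that $C$-stationarity of $M$ is equivalent to $M$ being Mittag-Leffler (with respect to $\RMod$), and moreover strict $C$-stationarity coincides with $C$-stationarity here since $C$ is pure-injective. For the base case $\pres(M)\leq\aleph_0$: a countably presented module is $C$-stationary (equivalently Mittag-Leffler) \iff it is pure-projective; this is the classical characterization of countably presented Mittag-Leffler modules as pure-projective (they are countable direct limits of finitely presented modules with split, hence essentially stabilizing, transition maps), so clause (1) of Definition~\ref{d:S_C} is literally the statement for $\pres(M)\leq\aleph_0$. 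Recall also that pure-projective modules are exactly the direct summands of direct sums of finitely presented modules, and that a direct sum $\bigoplus_{\alpha<\theta} N_\alpha$ of pure-projectives is pure-projective, with the obvious $\theta$-continuous filtration by partial sums.

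For the direction $\mathfrak S_C(M)\Rightarrow M$ pure-projective, I would argue: if $\theta=\pres(M)$ is uncountable regular and $\mathcal M=(M_\alpha,f_{\beta\alpha}\mid\alpha<\beta<\theta)$ witnesses clause (2), then by the inductive hypothesis each $\Coker(f_{\alpha+1,\alpha})$ is pure-projective (it is $<\theta$-presented and satisfies $\mathfrak S_C$), hence by the Remark after Definition~\ref{d:S_C} it is $C$-stationary, i.e.\ the epimorphisms $f_{\alpha+1,\alpha}$ are $C$-injective (one needs here that pure-projectivity of the cokernel together with $\Ext$ against the pure-injective $C$ makes the quotient map $C$-injective — this is essentially Lemma~\ref{l:cogen} combined with $\Ext^1_R(\text{pure-projective},C)=0$). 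Since $C$ is an elementary cogenerator of all of $\ModR$, $C$-injective just means pure, so $M_\alpha$ is a pure submodule of $M$ and $M/M_\alpha$ is the direct limit of the $\Coker(f_{\beta\alpha})$; one checks $M\cong\bigoplus_{\alpha<\theta}\Coker(f_{\alpha+1,\alpha})$ up to the usual splitting argument (each short exact sequence $0\to M_\alpha\to M_{\alpha+1}\to\Coker(f_{\alpha+1,\alpha})\to 0$ is pure with pure-projective cokernel, hence splits), making $M$ a direct sum of pure-projectives, hence pure-projective. For clause (3) with $\theta$ singular: $M$ is a direct limit of $<\lambda$-presented modules satisfying $\mathfrak S_C$, which by induction are pure-projective; but being pure-projective is detected by the singular compactness machinery — here I would invoke Lemma~\ref{l:singmod} (the singular-compactness lemma), or rather the general principle behind Proposition~\ref{p:main2}, applied to the class $\mathcal B$ associated to $C$, noting that the class of pure-projective modules, while not itself closed under direct limits, is captured via $^\perp\Cogen_*(C)=\,^\perp\ModR$-type conditions. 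The cleanest route is: $\mathfrak S_C(M)$ implies $M$ is $C$-stationary (Remark after Definition~\ref{d:S_C}), hence $M$ is Mittag-Leffler; and a Mittag-Leffler module that is additionally a direct limit of pure-projectives (which every module satisfying $\mathfrak S_C$ in the singular case is, by induction and the structure of clause (3)) is pure-projective — one can get this from the characterization of pure-projective modules as Mittag-Leffler modules admitting a suitable filtration, together with Proposition~\ref{p:main2}.

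Conversely, if $M$ is pure-projective, write $M$ as a direct summand of $\bigoplus_{i\in I} F_i$ with each $F_i$ finitely presented; I would first reduce to $M=\bigoplus_{i\in I}F_i$ (a direct summand of a module satisfying $\mathfrak S_C$ satisfies $\mathfrak S_C$ — this needs a small separate check, filtering a complement alongside), then filter $\bigoplus_{i\in I}F_i$ by partial sums indexed by an appropriate $\theta$-continuous system with $<\theta$-presented cokernels that are again finite direct sums of the $F_i$, and apply clause (2) inductively (in the singular case, pass to the successor cardinals $\lambda<\theta$ as in clause (3), using that $\bigoplus_{i\in I}F_i$ is visibly a $\lambda$-continuous direct limit of its $<\lambda$-presented sub-sums). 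The base case $\pres(M)\leq\aleph_0$ is again the classical fact that countably presented pure-projectives are $C$-stationary. The main obstacle I anticipate is the singular case of the forward direction $\mathfrak S_C(M)\Rightarrow$ pure-projective: clause (3) only gives direct limit presentations at successor cardinals below $\theta$, and one must genuinely use the singular compactness theorem (Lemma~\ref{l:singmod}) or Proposition~\ref{p:main2} to patch these together and conclude pure-projectivity globally, rather than merely $C$-stationarity; verifying that the hypotheses of that machinery apply to the (non-direct-limit-closed) target class of pure-projective modules, via the translation to $^\perp\Cogen_*(C)$ with $C$ an elementary cogenerator of $\ModR$, is the delicate point.
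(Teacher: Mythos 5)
Your overall plan—induction on $\pres(M)$ through the three clauses of Definition~\ref{d:S_C}—matches the paper's, but there are two genuine gaps in the forward direction.

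First, in the regular uncountable case you treat the system $\mathcal M = (M_\alpha, f_{\beta\alpha} \mid \alpha<\beta<\theta)$ from clause (2) as if the $f_{\beta\alpha}$ were inclusions of pure submodules. They are not: Definition~\ref{d:S_C}(2) asks only for an arbitrary $\theta$-continuous well-ordered directed system, so the $f_{\beta\alpha}$ need not even be monomorphisms. Your inference that ``$M_\alpha$ is a pure submodule of $M$'' and that the short exact sequences $0\to M_\alpha\to M_{\alpha+1}\to\Coker(f_{\alpha+1,\alpha})\to 0$ split therefore does not go through. The paper repairs exactly this: from the fact that $M$ is $C$-stationary (the Remark after Definition~\ref{d:S_C}) it applies Lemma~\ref{l:Cext}, which produces a $\kappa$-continuous directed system $\mathcal L$ of $<\kappa$-presented \emph{pure submodules} of $M$ with inclusions, and then intersects $\mathcal M$ with $\mathcal L$ via Construction~\ref{conmerge}. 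The result is a genuine filtration by pure submodules whose consecutive factors still satisfy $\mathfrak S_C$; only then does your splitting argument apply.

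Second, in the singular case you correctly flag a difficulty with routing through Lemma~\ref{l:singmod} or Proposition~\ref{p:main2} (the class of pure-projectives is not closed under direct limits, so that machinery does not apply to it directly), but you leave the resolution vague. The paper does not use Lemma~\ref{l:singmod} here at all: after again passing via Lemma~\ref{l:Cext} to $\lambda$-continuous systems of pure submodules for each successor $\lambda<\theta$, whose terms by induction are \emph{direct sums of countably presented pure-projective modules}, it invokes the \emph{classical} Shelah singular compactness theorem (cited as \cite[\S 2 II, Theorems 3.1 and 4.1]{E}) to conclude that $M$ itself is such a direct sum. That is a different and more elementary mechanism than the $\Cogen_*(C)$-based one you propose.

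For the converse, your route (reduce to $\bigoplus_i F_i$ with $F_i$ finitely presented, and worry separately about direct summands) would work but is clumsier than the paper's, which simply cites the Kaplansky-type decomposition \cite[Theorem~26.1]{AF}: every pure-projective module is a direct sum of \emph{countably presented} pure-projective modules. Filtering by partial sums then hands you clause (2) or (3) immediately, with no separate complement-filtration argument.
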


\begin{proof} First, notice that $C$-stationary means Mittag-Leffler. The if part easily follows from the fact that every pure-projective module decomposes as a direct sum of countably presented pure-projective (equivalently countably presented Mittag-Leffler) modules \chge{by \cite[Theorem 26.1]{AF}}.

The only-if part goes by induction on $\kappa = \pres(M)$. For $\kappa\leq\aleph_0$, $C$-stationarity of $M$ amounts to $M$ being pure-projective.

If $\kappa$ is regular uncountable, we consider the system $\mathcal M$ from the definition of~$\mathfrak S_C$. By the remark above, $M$ is $C$-stationary. From Lemma~\ref{l:Cext}, we get a~$\kappa$-continuous directed system $\mathcal L$ consisting of $<\kappa$-presented pure submodules of $M$ and inclusions \st $\varinjlim\mathcal L = M$. \chge{Intersecting} $\mathcal M$ and $\mathcal L$ together by Construction~\ref{conmerge}, we obtain a~filtration $(M_\alpha \mid \alpha<\kappa)$ consisting of $<\kappa$-presented pure submodules of $M$ such that $\mathfrak S_C(M_{\alpha+1}/M_\alpha)$ for each $\alpha<\kappa$. However, by the inductive hypothesis, $M_{\alpha+1}/M_\alpha$ is pure-projective, whence the filtration splits and $M\cong \bigoplus _{\alpha<\kappa} M_{\alpha+1}/M_\alpha$.

If $\kappa$ is singular, we use (again) the definition of $\mathfrak S_C$ together with Lemma~\ref{l:Cext} to obtain, for each $\lambda<\kappa$ infinite successor cardinal, a $\lambda$-continous directed system $\mathcal N$ consisting of $<\lambda$-presented pure submodules of $M$ satisfying $\mathfrak S_C$. Using the inductive hypothesis, we see that the system $\mathcal N$, in fact, consists of (direct sums of countably presented) pure-projective modules. From the classical Shelah's singular compactness theorem \cite[\S 2 II, Theorems 3.1 and 4.1]{E}, we conclude that $M$ is as well a direct sum of countably presented pure-projective modules.
\end{proof}

In the statement of the proposition below, $PE(C)$ denotes the pure-injective envelope of $C$.

\begin{prop} \label{p:stationary} Let $\kappa$ be an infinite cardinal and $M,C\in\ModR$ where $M$ is $\kappa$-presented. Consider the following conditions:
\begin{enumerate}
	\item $M$ is $(\Cogen_*(C),\lambda)$-projective for all regular $\lambda>\aleph_0$;
	\item $M$ is almost $(C^{(\kappa)},\lambda)$-projective for all regular $\lambda>\aleph_0$;
	\item $\mathfrak S_C(M)$ holds.
\end{enumerate}
Then $(1)\Longleftrightarrow (2)\Longrightarrow (3)$. If $M$ is almost $(PE(C),\aleph_1)$-projective, then $(3)\Longrightarrow (1)$.
\end{prop}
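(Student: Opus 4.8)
The plan is to treat the four implications separately. That $(2)\Rightarrow(1)$ is exactly Proposition~\ref{p:main2}. For $(1)\Rightarrow(2)$: $C^{(\kappa)}$ is a pure submodule of $C^\kappa$, hence lies in $\Cogen_*(C)$, so ${}^\perp\Cogen_*(C)\subseteq{}^\perp\{C^{(\kappa)}\}$ and every $\Cogen_*(C)$-injective homomorphism is $C^{(\kappa)}$-injective; thus any direct system witnessing $(\Cogen_*(C),\lambda)$-projectivity of $M$ witnesses, a fortiori, almost $(C^{(\kappa)},\lambda)$-projectivity. The two remaining implications are transfinite inductions on $\pres(M)$, the one for $(2)\Rightarrow(3)$ following the recursion in Definition~\ref{d:S_C}, the one for $(3)\Rightarrow(1)$ reducing to Lemma~\ref{l:filter-closed} once $M\in{}^\perp PE(C)$ has been secured.

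For $(2)\Rightarrow(3)$ we induct on $\kappa=\pres(M)$ (we may assume $\kappa=\pres(M)$, as $C^{(\pres M)}$ is a summand of $C^{(\kappa)}$). If $\pres(M)\le\aleph_0$, then $(2)$ gives $\Ext^1_R(M,C^{(\aleph_0)})=0$, so $M$ is $C$-stationary by \cite[Proposition~2.7]{SS} (trivially so if $M$ is finitely presented) and $\mathfrak S_C(M)$ holds by clause~(1). If $\kappa$ is uncountable, we use $(1)$ together with Lemma~\ref{l:dirsysmerge} (with $\nu=\aleph_0$) in the successor case — or the cofinal-extraction argument of Case~3 in the proof of Proposition~\ref{p:main2} in the weakly inaccessible case — to write $M$ as the direct limit of a system witnessing $(\Cogen_*(C),\kappa)$-projectivity every module of which is $(\Cogen_*(C),\eta)$-projective for all regular uncountable $\eta$; passing to a continuous well-ordered cofinal subsystem $(M_\alpha,f_{\beta\alpha}\mid\alpha<\beta<\kappa)$ with $M_0=0$ and $\pres(M_\alpha)<\kappa$, its connecting maps are $\Cogen_*(C)$-injective and, by Construction~\ref{concoker} applied to $f_{\beta\alpha}$ and systems witnessing $(\Cogen_*(C),\lambda)$-projectivity of $M_\alpha$ and $M_\beta$, each $\Coker(f_{\beta\alpha})$ is almost $(\Cogen_*(C),\lambda)$-projective, hence almost $(C^{(\pres)},\lambda)$-projective, for every regular $\lambda>\aleph_0$; the induction hypothesis then yields $\mathfrak S_C(\Coker(f_{\beta\alpha}))$, so clause~(2) is satisfied. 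If $\kappa$ is singular, for every infinite successor cardinal $\lambda<\kappa$ we use $(1)$ and Lemma~\ref{l:dirsysmerge} to write $M$ as a $\lambda$-continuous limit of $<\lambda$-presented modules that are $(\Cogen_*(C),\eta)$-projective for all regular uncountable $\eta$, hence satisfy $\mathfrak S_C$ by the induction hypothesis; so clause~(3) holds.

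For $(3)\Rightarrow(1)$ we assume additionally that $M$ is almost $(PE(C),\aleph_1)$-projective and show first that $M\in{}^\perp PE(C)$. By the remark after Definition~\ref{d:S_C}, $\mathfrak S_C(M)$ makes $M$ $C$-stationary, so Lemma~\ref{l:Cext} yields an $\aleph_1$-continuous directed system $\mathcal L$ of countably presented $C$-stationary modules with $\varinjlim\mathcal L=M$ whose colimit maps are $PE(C)$-injective (as $PE(C)$ is a pure-injective module lying in $\bar C$). Intersecting $\mathcal L$ via Construction~\ref{conmerge} with an $\aleph_1$-continuous system $\mathcal T$ of countably presented modules from ${}^\perp PE(C)$ witnessing almost $(PE(C),\aleph_1)$-projectivity of $M$, one obtains an $\aleph_1$-continuous system each of whose modules $P$ is a countable direct limit $\varinjlim_n T_n$ of modules from ${}^\perp PE(C)$ and, by the remark after Lemma~\ref{l:Cext}, is $C$-stationary (hence $PE(C)$-stationary) with $PE(C)$-injective colimit map; since $\Ext^1_R(T_n,PE(C))=0$, the Milnor exact sequence gives $\Ext^1_R(P,PE(C))\cong\varprojlim^1_n\Hom_R(T_n,PE(C))$, which vanishes because $PE(C)$-stationarity of $P$ makes this countable inverse system Mittag-Leffler (\cite[Section~2]{H}). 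Thus the system witnesses $(PE(C),\aleph_1)$-projectivity of $M$, and a standard extraction of a continuous well-ordered chain with $PE(C)$-injective connecting maps (as in the remark after Lemma~\ref{l:eklof}, invoking Lemma~\ref{l:eklof} itself at limit stages) gives $M\in{}^\perp PE(C)$. Now every $D\in\Cogen_*(C)$ is a direct summand of a product of copies of $PE(C)$ — the product of the pure monomorphisms $C\hookrightarrow PE(C)$ is a pure monomorphism, which splits since the product of the pure-injective modules $PE(C)$ is pure-injective — so $\Ext^1_R(M,PE(C))=0$ forces $M\in{}^\perp\Cogen_*(C)$; as $\Cogen_*(C)$ is filter-closed and $M$ is almost $(\Cogen_*(C),\aleph_1)$-projective (the system just constructed works, since ${}^\perp PE(C)\subseteq{}^\perp\Cogen_*(C)$), Lemma~\ref{l:filter-closed} shows $M$ is $(\Cogen_*(\Cogen_*(C)),\lambda)$-projective for all regular uncountable $\lambda$; finally $\Cogen_*(\Cogen_*(C))=\Cogen_*(C)$, which is $(1)$.

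The main obstacle is the cardinal bookkeeping. In $(2)\Rightarrow(3)$ one needs clause~(2) of Definition~\ref{d:S_C} for \emph{all} pairs $\alpha<\beta<\theta$, not merely successive ones, which forces the witnessing system to be prepared so that every cokernel $\Coker(f_{\beta\alpha})$ — and not just the successive subquotients — inherits almost projectivity at every regular level; this is precisely what Lemma~\ref{l:dirsysmerge} and the inaccessible-case extraction are for. In $(3)\Rightarrow(1)$ the delicate point is to see that a naive transfinite induction along the recursion of $\mathfrak S_C$ cannot propagate the almost $(PE(C),\aleph_1)$-projectivity hypothesis to the subquotients, and that the correct move is instead to isolate the single statement $M\in{}^\perp PE(C)$ and let Lemma~\ref{l:filter-closed} do the rest; pure-injectivity of $PE(C)$ is used crucially, both to kill the $\varprojlim^1$ term via the Milnor sequence and to realise $\Cogen_*(C)$ inside $\Prod(PE(C))$.
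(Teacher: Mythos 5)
Your treatment of $(1)\Leftrightarrow(2)$ and of $(2)\Rightarrow(3)$ is essentially the paper's (after converting $(2)$ to $(1)$ and using Lemma~\ref{l:dirsysmerge} and Construction~\ref{concoker} as the paper does), and I see nothing wrong there, apart from some unnecessary detours such as the Milnor $\varprojlim^1$ argument --- once you intersect $\mathcal L$ and $\mathcal T$ via Construction~\ref{conmerge}, the modules of the resulting common subsystem are, by the very construction, isomorphic to modules of $\mathcal T$ and hence already lie in ${}^\perp PE(C)$ with no extra work.

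The argument for $(3)\Rightarrow(1)$, however, has a genuine gap at its crux. You first establish $M\in{}^\perp PE(C)$ and then try to upgrade this to $M\in{}^\perp\Cogen_*(C)$ by asserting that every $D\in\Cogen_*(C)$ is a direct summand of a product of copies of $PE(C)$, because ``the product of the pure monomorphisms $C\hookrightarrow PE(C)$ is a pure monomorphism, which splits since the product of the pure-injective modules $PE(C)$ is pure-injective.'' This is false: a pure monomorphism into a pure-injective module splits \emph{only if} its domain is itself pure-injective. In general $C^\kappa$ is not pure-injective, so $C^\kappa\hookrightarrow PE(C)^\kappa$ does not split, and a pure submodule $D$ of $C^\kappa$ certainly need not be a summand of $PE(C)^\kappa$. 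Consequently the inclusion ${}^\perp PE(C)\subseteq{}^\perp\Cogen_*(C)$ you invoke (both explicitly and via ``the system just constructed works'') is false in general. Concretely, take $R=\Z$ and $C=\Z$, so $PE(C)=\widehat{\Z}$. Then $\Q$ is flat, hence cotorsion-orthogonal to the cotorsion module $\widehat{\Z}$, so $\Q\in{}^\perp PE(\Z)$; but $\Ext^1_\Z(\Q,\Z)\neq0$, and $\Z\in\Cogen_*(\Z)$, so $\Q\notin{}^\perp\Cogen_*(\Z)$.

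This is not a cosmetic issue --- it undermines the overall strategy. You remark that ``the correct move is instead to isolate the single statement $M\in{}^\perp PE(C)$ and let Lemma~\ref{l:filter-closed} do the rest,'' but Lemma~\ref{l:filter-closed} takes as hypothesis $M\in{}^\perp\mathcal B$ for the \emph{same} class $\mathcal B=\Cogen_*(C)$ that it outputs projectivity over; so you need $M\in{}^\perp\Cogen_*(C)$ as input, and that is precisely what is in question. The paper's actual proof of $(3)\Rightarrow(1)$ is forced into a transfinite induction on $\pres(M)$ which establishes $\Ext^1_R(M,\Cogen_*(C))=0$ directly at every step: at the regular step, the inductive hypothesis is applied to the cokernels $\Coker(f_{\beta\alpha})$ to place them in ${}^\perp\Cogen_*(C)$, Lemma~\ref{l:cogen} then upgrades the $J$-injectivity of the connecting maps to $\Cogen_*(C)$-injectivity, and Lemma~\ref{l:eklof} closes the loop; the countably presented base case passing from $J$ to $\Cogen_*(C)$ is handled by the nontrivial \cite[Proposition~4.3]{S}. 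That passage from ${}^\perp PE(C)$ to ${}^\perp\Cogen_*(C)$ is available only for countably presented modules (with stationarity), and must be propagated by induction rather than deduced for $M$ in one shot.
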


\begin{proof} 
The implication $(1)\Longrightarrow (2)$ is trivial. The converse one is exactly Proposition~\ref{p:main2}.

$(1)\Longrightarrow (3)$. By induction on $\kappa$. If $\kappa = \aleph_0$, it follows by \cite[Lemma 4.2]{S}. Let $\kappa$ be uncountable. Using Lemma~\ref{l:dirsysmerge} and the inductive hypothesis, we can assume that, for all \chge{uncountable cardinal successors} $\lambda\leq\kappa$, there is a system $\mathcal S_\lambda$ consisting of modules satisfying $\mathfrak S_C$ such that $\mathcal S_\lambda$ witnesses \out{almost} $(\Cogen_*(C),\lambda)$-projectivity of $M$. If $\kappa$ is singular, we \chge{obtain $\mathfrak S_C(M)$ by the very definition}.

\out{Since we have $M\in {}^\perp\Cogen_*(C)$ by $(1)$, we can assume that $\mathcal S_\kappa$ actually witnesses that $M$ is $(\Cogen_*(C),\lambda)$-projective; just use {\cite[Lemma 2.3]{SS}}.}

\chge{Assume therefore that $\kappa$ is uncountable regular.}
As in the proof of Proposition~\ref{p:main2}, we can w.l.o.g. suppose that $\mathcal S_\kappa = (F_\alpha,f_{\beta\alpha}:F_\alpha\to F_\beta\mid \alpha<\beta<\kappa)$ and, using Construction~\ref{concoker}, we deduce that $\Coker(f_{\beta\alpha})$ is almost $(\Cogen_*(C),\lambda)$-projective for each regular $\lambda>\aleph_0$ and ordinals $\alpha<\beta<\kappa$. Hence $\mathfrak S_C(\Coker(f_{\beta\alpha}))$ holds by the inductive hypothesis, and subsequently $\mathfrak S_C(M)$ holds as well.

\smallskip

$(3)\Longrightarrow (1)$. Put $J = PE(C)$. Then $\Cogen_*(C) \subseteq \Cogen_*(J)$, and $\mathfrak S_J(M)$ holds by Remark. By induction on $\pres(M)$, we prove that $\Ext _R^1(M,\Cogen_*(C)) = 0$ whenever $\mathfrak S_J(M)$ and there is a directed system $\mathcal T$ witnessing that $M$ is almost $(J,\aleph_1)$-projective. If $M$ is countably presented, we can use \cite[Proposition~4.3]{S}.

Otherwise, \chge{since $M$ is $J$-stationary by Remark, we can use Lemma~\ref{l:Cext} to} obtain a system $\mathcal S$, consisting of $J$-stationary modules, such that $\mathcal S$ witnesses the $(J,\aleph_1)$-projectivity of $M$ (if $\mathcal S\not\subseteq {}^\perp J$, just \chge{intersect} $\mathcal S$ and $\mathcal T$ using Construction~\ref{conmerge}). By \cite[Proposition 4.3]{S}, $\mathcal S$ witnesses also that $M$ is almost $(\Cogen_*(J),\aleph_1)$-projective. Furthermore, using the pure-injectivity of~$J$ \chge{(so that ${^\perp J}$ is closed under direct limits by \cite[Theorem 6.19]{GT}), it follows from the remark after Lemma~\ref{l:Cext}} that the system $\mathcal S^\lambda$ from Definition~\ref{d:inducesys} witnesses the $(J,\lambda)$-projectivity of~$M$ for each regular $\lambda>\aleph_0$.

If $\pres(M)$ is singular, we can w.l.o.g. assume that $\mathcal S^\lambda$ consists of modules satisfying $\mathfrak S_C$ for all \chge{successor cardinals} $\aleph_0<\lambda<\pres(M)$; just use Construction~\ref{conmerge} and $\mathfrak S_C(M)$.
\chge{Since each $N\in\mathcal S^\lambda$ also satisfies $\mathfrak S_J$ and, thanks to Observation~\ref{o:observ}, is almost $(J,\aleph_1)$-projective, we have $\Ext^1_R(N,\Cogen_*(C))=0$ by the inductive hypothesis and, consequently, $\mathcal S^\lambda$ witnesses almost $(\Cogen_*(C),\lambda)$-projectivity of $M$.} Whence we can use Lemma~\ref{l:singmod} to conclude that $\Ext _R^1(M,\Cogen_*(C)) = 0$.

Now let $\theta = \pres(M)$ be regular uncountable. Take $\mathcal M$ from the definition of $\mathfrak S_C(M)$ and \chge{intersect} it with $\mathcal S^\theta$ using Construction~\ref{conmerge} (we can assume that $M_0 = 0$ is in the resulting system). Let $\alpha<\beta<\theta$ be arbitrary. Then $\Coker(f_{\beta\alpha})\in {}^\perp J$, since $f_{\beta\alpha}$ is $J$-injective and $M_\beta\in {}^\perp J$. We apply Construction~\ref{concoker} with $\lambda = \aleph_1$ to build, from $\mathcal S$, an $\aleph_1$-continuous directed system $\mathcal T^\prime$ consisting of countably presented modules from ${}^\perp J$ such that $\varinjlim\mathcal T^\prime = \Coker(f_{\beta\alpha})$. \chge{Since this shows that $\Coker(f_{\beta\alpha})$ is almost $(J,\aleph_1)$-projective, and also $\mathfrak S_J(f_{\beta\alpha})$ by the choice of $\mathcal M$, we may use the inductive hypothesis to obtain} that $\Coker(f_{\beta\alpha})\in {}^\perp\Cogen_*(C)$. In particular $M_\beta\in {}^\perp\Cogen_*(C)$ (which is the case $\alpha = 0$). Lemma~\ref{l:cogen} gives that $f_{\beta\alpha}$ is $\Cogen_*(C)$-injective. Since $\alpha,\beta$ were chosen arbitrarily, we conclude that $\Ext _R^1(M,\Cogen_*(C)) = 0$ using Lemma~\ref{l:eklof}.

\chge{To summarize, starting with an almost $(J,\aleph_1)$-projective module $M$ satisfying (3), we have shown that $M\in{^\perp\B}$ and is almost $(\B,\aleph_1)$-projective for $\mathcal B = \Cogen_*(C)$.
Then (1) follows by Lemma~\ref{l:filter-closed}}
\end{proof}

\begin{cor} Let $R$ be a ring, $C$ be an $R$-module and $\kappa = |R|+\aleph_0$. Then $C$ is $\Sigma$-cotorsion \iff $C^{(\kappa)}$ is cotorsion, \iff $\mathfrak S_C(M)$ holds for each ($\kappa$-presented) flat $R$-module $M$.
\end{cor}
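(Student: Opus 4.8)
The plan is to run the cycle $(i)\Rightarrow(ii)\Rightarrow(iii)\Rightarrow(i)$, where $(iii)$ is first taken in its $\kappa$-presented form; the version quantified over all flat modules then follows for free. The implication $(i)\Rightarrow(ii)$ is immediate, as $C^{(\kappa)}$ is one of the direct sums of copies of $C$. For $(ii)\Rightarrow(iii)$, let $M$ be a $\kappa$-presented flat module. For each regular uncountable $\lambda$, $M$ is the direct limit of a $\lambda$-continuous directed system of $<\lambda$-presented flat modules, and these belong to ${}^\perp C^{(\kappa)}$ by $(ii)$; hence $M$ is almost $(C^{(\kappa)},\lambda)$-projective for every regular uncountable $\lambda$, and Proposition~\ref{p:stationary}, implication $(2)\Rightarrow(3)$, yields $\mathfrak S_C(M)$. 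Running the same argument with $C^{(\pres(M))}$ — which is cotorsion because $C$ is $\Sigma$-cotorsion — in place of $C^{(\kappa)}$ shows that $(i)$ in fact forces $\mathfrak S_C(M)$ for \emph{every} flat module $M$; this trivially restricts to the $\kappa$-presented version, so once the cycle is closed all four formulations coincide.

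It remains to prove $(iii)\Rightarrow(i)$. Fix a cardinal $\mu$; we must show that $C^{(\mu)}$ is cotorsion. Put $J=PE(C^{(\mu)})$; being pure-injective, $J$ is cotorsion, so $\Ext^1_R(F,J)=0$ for every flat $F$. The flat cotorsion pair $(\FL,\EC)$ is generated by a representative set $\clS$ of $\le\kappa$-presented flat modules — the classical fact that every flat module is a transfinite extension of $\le\kappa$-presented flat modules for $\kappa=|R|+\aleph_0$, see \cite{GT} — so it suffices to check $\Ext^1_R(S,C^{(\mu)})=0$ for each $S\in\clS$. Fix such an $S$. By hypothesis $\mathfrak S_C(S)$ holds; since $\bar C$ is closed under direct sums we have $C^{(\mu)}\in\bar C$, and therefore $\mathfrak S_{C^{(\mu)}}(S)$ holds by the Remark after Definition~\ref{d:S_C}. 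Moreover, writing $S$ as an $\aleph_1$-continuous direct limit of countably presented flat modules — all lying in ${}^\perp J$ because $J$ is cotorsion — shows $S$ to be almost $(J,\aleph_1)$-projective. Hence the implication $(3)\Rightarrow(1)$ of Proposition~\ref{p:stationary}, applied with $C^{(\mu)}$ in the role of $C$, gives that $S$ is $(\Cogen_*(C^{(\mu)}),\lambda)$-projective for every regular $\lambda>\aleph_0$; choosing $\lambda=\kappa^+$ and using that $S$ is $<\lambda$-presented, we conclude $S\in{}^\perp\Cogen_*(C^{(\mu)})$, so in particular $\Ext^1_R(S,C^{(\mu)})=0$. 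Thus $C^{(\mu)}$ is cotorsion for every $\mu$, i.e.\ $C$ is $\Sigma$-cotorsion.

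The heart of the matter — and where the machinery of the previous sections is genuinely exploited — is this last step: to pass from the property $\mathfrak S_C$ of the \emph{small} flat modules to cotorsionness of the \emph{large} module $C^{(\mu)}$, one transfers $\mathfrak S_C$ to $\mathfrak S_{C^{(\mu)}}$ along the definable closure and feeds it, together with the easily verified almost-$(PE(C^{(\mu)}),\aleph_1)$-projectivity, into the hard direction $(3)\Rightarrow(1)$ of Proposition~\ref{p:stationary}, where pure-injectivity of $PE(C^{(\mu)})$ powers the singular-compactness-type induction. A secondary, entirely standard, ingredient is the reduction to $\kappa$-presented flat test modules via the $\kappa$-generatedness of $(\FL,\EC)$; alternatively one can first prove, by induction on the presentation rank — writing a flat module as a continuous filtration by pure (hence flat-quotient) submodules of smaller rank when the rank is regular, and as a suitable $\lambda$-continuous direct limit of smaller flat modules when it is singular — that $\mathfrak S_C$ for all $\kappa$-presented flat modules already entails $\mathfrak S_C$ for all flat modules, and then apply $(3)\Rightarrow(1)$ of Proposition~\ref{p:stationary} directly to an arbitrary flat test module.
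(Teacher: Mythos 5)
Your proof is correct and follows essentially the same route as the paper: reduce to $\kappa$-presented flat test modules via the standard filtration/transfinite-extension argument, observe that flat modules are automatically almost $(PE(C),\aleph_1)$-projective, and feed both directions into Proposition~\ref{p:stationary}. The only (harmless) difference is the detour in the $(iii)\Rightarrow(i)$ step where you substitute $C^{(\mu)}$ for $C$ in Proposition~\ref{p:stationary} and transfer $\mathfrak S_C$ to $\mathfrak S_{C^{(\mu)}}$ via the Remark; this is avoidable, since applying the proposition directly with base module $C$ already yields $S\in{}^\perp\Cogen_*(C)$, and $C^{(\mu)}\in\Cogen_*(C)$ (as a pure submodule of $C^{\mu}$), so $\Ext^1_R(S,C^{(\mu)})=0$ follows at once.
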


\begin{proof} Apply Proposition~\ref{p:stationary} for each $\kappa$-presented flat module $M$, and use the fact that, under our assumptions, each flat module has a filtration with consecutive factors flat and $\kappa$-presented. Notice also that each flat module is almost $(PE(C),\aleph_1)$-projective since it is the direct limit of an $\aleph_1$-continuous directed system consisting of flat modules.
\end{proof}

\begin{exm} The implication $(3)\Longrightarrow (2)$ in Proposition~\ref{p:stationary} does not hold without the additional assumption. To see this, let $R$ be an $\aleph_0$-noetherian ring and $C$ be a $\Sigma$-pure injective $R$-module which is not injective (e.g. $R = \Z$ and $C$ is nonzero finite). By Baer injectivity test, there exists a (cyclic) countably presented module~$M$ such that $\Ext_R^1(M,C)\neq 0$. However, $\mathfrak S_C(M)$ follows from the $\Sigma$-pure injectivity of $C$, cf.\,\cite[Lemma~5.1]{S}. Thus $(3)$ holds and $(2)$ does not hold true.
\end{exm}

\begin{exm} The condition $(2)$ in Proposition~\ref{p:stationary} does not imply $M\in {}^\perp\bar C$. Indeed, let $R$ be a ring which is not right coherent and $C$ an injective cogenerator of $\ModR$. Then $\bar C$ contains a module $D$ which is not absolutely pure. Let $M$ be a finitely presented module such that $\Ext_R^1(M,D)\neq 0$, whence $M\not\in {}^\perp\bar C$. However, we see that the condition $(2)$ from Proposition~\ref{p:stationary} holds true since $C^{(\kappa)}$ is absolutely pure for any $\kappa$ (and so $\Ext_R^1(M,C^{(\kappa)}) = 0$).
\end{exm}

\section{Projectively coresolved Gorenstein flat modules}
\label{sec:Gor}

\chge{In the next two sections we study implications of our set-theoretical tools to Gorenstein homological algebra. The highlight of this section is the fact that, for any ring, Gorenstein flat covers always exist. We also construct new model structures on the category of modules over any ring which refines the Gorenstein AC-projective model structure from~\cite{BGH} (in that the model structure from \cite{BGH} is a Bousfield localization of ours).
	
\smallskip}

By a \emph{projectively coresolved Gorenstein flat module}, or a \emph{$PGF$-module} for short, we mean a syzygy module in an acyclic complex

\begin{equation}\label{eq:pgf}
\dotsb \to P^{-1}\to P^0\to P^{1}\to P^{2}\to\dotsb
\end{equation}
consisting of projective modules which remains exact after tensoring by arbitrary injective left $R$-module. 
We denote the class of all such modules by $\PGF$. 
For a comparison, recall that a module is \emph{Gorenstein projective} if it is a syzygy in an acyclic complex \eqref{eq:pgf} consisting of projective modules which remains exact after applying the functor $\Hom_R(-,P)$ for arbitrary projective module $P$.

Finally, a module is \emph{Gorenstein flat} if it is a syzygy in an acyclic complex \eqref{eq:pgf} consisting of flat modules which remains exact after tensoring by arbitrary injective left $R$-module. We denote the classes of Gorenstein projective and Gorenstein flat modules by $\mathcal{GP}$, $\mathcal{GF}$ respectively.

\smallskip

We are going to apply results from the two previous sections to prove a general statement which yields $\PGF\subseteq {}^\perp \overline{R_R}$. Consequently, each module from $\PGF$ is Gorenstein projective. On the other hand, each Gorenstein AC-projective module in the sense of Bravo, Gillespie and Hovey (cf. \cite{BGH}) is a PGF-module. Moreover, if $R$ is left coherent, then $\PGF$ is precisely the class of all Gorenstein AC-projective modules since $\overline{R_R}$ coincides with the class of all level (or equivalently flat) right $R$-modules in this case.


\smallskip

The next lemma is folklore.

\begin{lem}\label{l:red} Let $P^\bullet$ be a complex. If we sum up all its shifts, we get a $1$-periodic complex which induces an exact sequence $\chge{0 \longrightarrow H \longrightarrow} M \longrightarrow \bigoplus _{n\in \Z}P^n \longrightarrow M \longrightarrow 0$ \st \chge{$H\cong \bigoplus_{n\in\Z} H^n(P^\bullet)$ and} $M\cong \bigoplus_{n\in\Z} P^n/B^n(P^\bullet)$ where $B^n(P^\bullet)$ denotes the $n$th coboundary module of $P^\bullet$ \chge{ and $H^n(P^\bullet)$ the $n$th cohomology module}.

In particular, if $K\in\PGF$, then $K$ is a direct summand in a module $M$ such that $M\cong P/M$ with $P$ projective and $\Tor_1^R(M,I) = 0$ for each injective left $R$-module~$I$.
\end{lem}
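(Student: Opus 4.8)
The plan is to verify the two assertions of Lemma~\ref{l:red} in order, treating the ``folklore'' first part by an explicit construction and then deriving the consequence for $\PGF$-modules.

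First I would set up the $1$-periodic complex. Given $P^\bullet = (\dots \to P^{n-1}\overset{d^{n-1}}{\to} P^n \overset{d^n}{\to} P^{n+1}\to\dots)$, form $Q = \bigoplus_{n\in\Z}P^\bullet[n]$, i.e.\ in each (co)homological degree the term is $\bigoplus_{n\in\Z}P^n$ and the differential $\partial$ acts as $\bigoplus_n d^n$ but shifted so that $\partial$ no longer depends on the degree; concretely $Q^k = \bigoplus_{n\in\Z}P^n$ for every $k$ with a single fixed differential $\partial\colon\bigoplus_n P^n\to\bigoplus_n P^n$ coming from $\bigoplus_n d^n$. This $Q$ is a $2$-term-periodic (in fact $1$-periodic) complex, so it is entirely encoded by the single map $\partial\colon M_0\to M_0$ where $M_0=\bigoplus_{n\in\Z}P^n$; unwinding, the $1$-periodicity gives a long exact sequence
\[
0\longrightarrow \Ker\partial \longrightarrow M_0 \overset{\partial}{\longrightarrow} M_0 \longrightarrow \Coker\partial \longrightarrow 0 .
\]
I would then identify the outer terms: $\Coker\partial \cong \bigoplus_n P^n/B^n(P^\bullet)$ directly from the definition of coboundaries $B^n(P^\bullet)=\Img d^{n-1}$, and by the snake-type bookkeeping (kernel of $\bigoplus d^n$ modulo image of the shifted copy) one gets $\Ker\partial/\Img\partial$-type expression collapsing to $\bigoplus_n H^n(P^\bullet)$ for the homology $H$; more precisely, writing $M=\Coker\partial$ and chasing, $H\cong\bigoplus_n Z^n/B^n = \bigoplus_n H^n(P^\bullet)$. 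Setting $M = \bigoplus_n P^n/B^n(P^\bullet)$, the four-term sequence $0\to H\to M\overset{\partial'}{\to}\bigoplus_n P^n\to M\to 0$ with $\partial'$ induced by $\partial$ (it is well-defined on the quotient by coboundaries and its cokernel is again $M$) is exactly the claimed exact sequence. The only mildly delicate point is checking that the map $M\to\bigoplus_n P^n$ is well-defined and that its cokernel is again $M$ rather than something larger; this is a routine diagram chase using that $B^n = \Img d^{n-1}$ and that $\bigoplus_n P^n$ decomposes as $\bigoplus_n B^{n+1}\oplus(\text{stuff})$ is \emph{not} needed—one just observes $\Coker(M\to\bigoplus_n P^n)=\Coker(\bigoplus_n d^{n-1})=M$.

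For the second assertion, take $K\in\PGF$, so $K$ is a syzygy in an acyclic complex $P^\bullet$ of projectives that stays exact under $-\otimes_R I$ for every injective left module $I$. Apply the first part to this $P^\bullet$. Since $P^\bullet$ is acyclic, $H=\bigoplus_n H^n(P^\bullet)=0$, so the sequence degenerates to $0\to M\overset{\cong}{\to}\bigoplus_n P^n$? No—rather it gives the short exact sequence $0\to M\to \bigoplus_n P^n\to M\to 0$, i.e.\ $M\cong P/M$ with $P=\bigoplus_n P^n$ projective. Here $M=\bigoplus_n P^n/B^n(P^\bullet)=\bigoplus_n Z^{n}(P^\bullet)$ (acyclicity identifies $P^n/B^n$ with the cocycle/syzygy $Z^{n+1}$), and $K$ is one of these syzygies, hence a direct summand of $M$. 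It remains to check $\Tor_1^R(M,I)=0$ for every injective left $R$-module $I$: from $0\to M\to P\to M\to 0$ with $P$ projective (hence flat) we get $\Tor_1^R(M,I)\cong\Ker(M\otimes I\to P\otimes I)$, and this kernel is $0$ precisely because tensoring the short exact sequence by $I$ keeps it exact—equivalently, because $P^\bullet\otimes I$ is exact, so each $Z^n(P^\bullet)\otimes I\hookrightarrow P^n\otimes I$, i.e.\ $\Tor_1^R(Z^n,I)=0$, and these add up.

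The main obstacle, such as it is, is purely bookkeeping: getting the $1$-periodic complex's four-term exact sequence with the correct identification of $H$ and $M$, and making sure the two copies of $M$ in $0\to H\to M\to\bigoplus_n P^n\to M\to 0$ really are the same module (they are, since both arise as $\Coker$ of the same periodic differential). Once that is pinned down, the $\PGF$ consequence is immediate from acyclicity ($H=0$) and the defining exactness-after-$\otimes I$ property (which yields the $\Tor$ vanishing). I do not anticipate any genuine difficulty, which is consistent with the lemma being folklore; the write-up is mostly a matter of choosing notation so the periodicity argument is transparent.
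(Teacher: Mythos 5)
The paper states this lemma as folklore without proof, so there is no paper argument to compare against; your verification is correct and complete. You correctly factor the $1$-periodic differential $\partial$ through $q\colon\bigoplus_n P^n\to M=\Coker\partial$ to obtain $\bar\partial\colon M\to\bigoplus_n P^n$ with $\Ker\bar\partial=\bigoplus_n Z^n/B^n(P^\bullet)=\bigoplus_n H^n(P^\bullet)$ and $\Coker\bar\partial=\Coker\partial=M$ (where $Z^n$ denotes the module of $n$-cocycles), and in the $\PGF$ case you correctly use acyclicity to kill $H$, identify $M\cong\bigoplus_n Z^n$ as a direct sum of syzygies one of which is $K$, and deduce $\Tor_1^R(M,I)=0$ from the exactness of $P^\bullet\otimes_R I$.
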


We believe that the following statement could be of independent interest. Combined with the lemma above, it generalizes \cite[Theorem A.6]{BGH}.

\begin{prop} \label{p:key} Let $I$ be a left $R$-module and $$M\overset{f}{\longrightarrow} P \overset{}{\longrightarrow} M \longrightarrow 0$$ an exact sequence of modules with $P\in {}^\perp\overline{I^c}$. Assume that $f\otimes_R I^\theta$ is injective for all cardinals $\theta$. Then $f$ is $\overline{I^c}$-injective; in particular, $M\in {}^\perp\overline{I^c}$.
\end{prop}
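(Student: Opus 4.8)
The plan is to reduce this to the machinery of Sections~\ref{sec:tools} and~\ref{sec:sigmacot}, specifically Proposition~\ref{p:main2}, applied with $C = I^c$. Write $M \overset{f}{\to} P \to M \to 0$ and let $N = \Img(f) = \Ker(P\to M)$, so that we have short exact sequences $0\to N\to P\to M\to 0$ and, via $f$, a surjection $M\twoheadrightarrow N$. First I would translate the tensor hypothesis into a homological one using the standard adjunction: for any left $R$-module $I$ and any set/cardinal $\theta$, injectivity of $f\otimes_R I^\theta$ for all $\theta$ should be equivalent (by a character-module / $\varinjlim$ argument, since $I^\theta$ ranges over enough modules and $\Tor$ commutes with products of the second variable when the first is finitely presented — but here we need to be careful, so one really uses the character module $(I^\theta)^c$ and the isomorphism $\Tor_1^R(M,I^\theta)^c\cong\Ext^1_R(M,(I^\theta)^c)$) to the statement that $f$ is $\overline{I^c}$-injective on the level of an associated short exact sequence. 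The cleanest route: the cokernel of $f$ is $M$ itself (from the exactness $M\overset f\to P\to M\to 0$, wait — actually $\Coker(f) = \Coker(M\to P)$; let me reconsider: the sequence says $\Img(f) = \Ker(P\to M)$, and $P\to M$ is onto, so $\Coker(f)\cong M$). Hence $f$ being $\overline{I^c}$-injective will follow from Lemma~\ref{l:cogen} once we know $f$ is $I^c$-injective and $\Ext^1_R(M, D) = 0$ for $D\in\overline{I^c}$ — but that last part is what we want to prove, so this is circular and instead the induction must be run directly.

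**Main line of argument.** So the actual plan is: show that $M$ is almost $(C^{(\kappa)},\lambda)$-projective (in fact $(C,\lambda)$-projective, with $C = I^c$) for every regular uncountable $\lambda$, where $\kappa = \pres(M)$, and then invoke Proposition~\ref{p:main2} to conclude $M\in{}^\perp\Cogen_*(C) = {}^\perp\overline{I^c}$ (using that $\overline{I^c} = \Cogen_*(I^c)$ when $I^c$ is suitably chosen, or passing through $\overline{I^c}$ directly since $I^c$ is pure-injective as a character module, so ${}^\perp\overline{I^c} = {}^\perp\Cogen_*(I^c)$ by pure-injectivity). To get almost $(C,\lambda)$-projectivity, I would exploit the $1$-periodicity: iterating $f$ gives a presentation of $M$ as a (transfinite) direct limit built out of $P$ and copies of the map $f$, and the hypothesis that $P\in{}^\perp\overline{I^c}$ together with the $I^c$-injectivity of $f$ (which comes from $\Tor_1^R(M,I) = 0$, i.e. $f\otimes_R I$ injective, dualized: $\Hom_R(f, I^c) = (f\otimes_R I)^c$ is surjective) makes every finite iterate $f^n\colon M\to P_n$-type map behave well. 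More precisely, one filters $P$ by its countably generated (or $<\lambda$-generated) direct summands, transports this along $f$, and assembles a $\lambda$-continuous system of $<\lambda$-presented modules mapping to $M$ with $C$-injective colimit maps — this is exactly the kind of construction carried out in the proof of Theorem~\ref{t:main1}, using Construction~\ref{concoker} applied to $f$ and the system $\mathcal S$ of countably generated summands of $P$, plus a system $\mathcal L$ coming from Lemma~\ref{l:Cext} (after checking $N$ and $M$ are $C$-stationary — which follows because $C^{(I)cc}\in\overline{I^c}$, hence $\Ext^1_R(-,C^{(I)cc}) = 0$ on the relevant modules, giving strict $C$-stationarity via \cite[Lemma 4.2, 4.4]{S}).

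**The main obstacle.** The hard part will be verifying the $C$-stationarity hypotheses needed to invoke Lemma~\ref{l:Cext}, because here we do not start from a flat module or a module known to be in ${}^\perp\overline{I^c}$ — that is precisely the conclusion. The trick is that $M\cong P/M'$ for a copy $M'$ of $M$ inside $P$, and one must bootstrap: the self-referential presentation $M\cong P/M$ means $M$ appears on both sides, so the standard argument "$N$ is $C$-stationary because it is a syzygy of $M$ and $P$ is $C$-stationary" needs $M$ itself to already be $C$-stationary. I expect this is resolved by noting that $C = I^c$ is pure-injective, so by \cite[Lemma 5.1]{S} or the remark after Definition~\ref{d:S_C}, $C$-stationarity is automatic for pure-injective targets in the relevant sense — or rather, one observes that $f\otimes I^\theta$ injective for all $\theta$ says exactly that the relevant $\Hom_R(-, I^c)$-inverse systems are Mittag-Leffler, which is the definition of strict $C$-stationarity, so the tensor hypothesis is tailor-made to supply it. Once stationarity is in hand, the rest is the by-now-routine assembly via Constructions~\ref{concoker} and~\ref{conmerge} and an appeal to Lemma~\ref{l:filter-closed} or directly Proposition~\ref{p:main2}, yielding $f$ is $\overline{I^c}$-injective and $M\in{}^\perp\overline{I^c}$.
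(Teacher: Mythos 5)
Your overall framework — setting $C = I^c$, reading the tensor hypothesis as $C$-stationarity of $M$, and assembling an approximating direct system via Lemma~\ref{l:Cext} together with Constructions~\ref{concoker} and~\ref{conmerge} — is aligned with the paper, and you correctly identify that the circularity must be resolved by the tensor hypothesis directly supplying Mittag--Leffler-ness rather than by first knowing $M\in{}^\perp\overline{I^c}$. However, there is a concrete gap in your main construction: you propose to ``filter $P$ by its countably generated (or $<\lambda$-generated) direct summands.'' This is exactly what the proof of Theorem~\ref{t:main1} does, but there $P$ is projective; here the hypothesis is only $P\in{}^\perp\overline{I^c}$, and such a $P$ admits no decomposition into small direct summands in general. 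The paper's fix is to apply Theorem~\ref{t:main1} \emph{to $P$ itself}, producing a system $\mathcal S_\lambda$ witnessing that $P$ is $(\bar C,\lambda)$-projective; the small pieces $S_\alpha$ are then merely in ${}^\perp\bar C$, not summands of $P$, and the relevant maps $u_\alpha\colon L_\alpha\to S_\alpha$ are extracted from Construction~\ref{concoker} applied to $f$, $\mathcal L$, and this $\mathcal S_\lambda$.

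The second issue is strategic and more subtle. You want to establish almost $(C^{(\kappa)},\lambda)$-projectivity of $M$ and invoke Proposition~\ref{p:main2}, but this requires the small cokernels $L_\alpha = \Coker(u_\alpha)$ to lie in ${}^\perp\{C^{(\kappa)}\}$, and that is not automatic: from the tensor hypothesis and the $3\times 3$ diagram one only gets that $u_\alpha$ is $(I^\theta)^c$-injective for all $\theta$, and $C^{(\kappa)}=(I^c)^{(\kappa)}$ is a pure submodule, not a direct summand, of $(I^\theta)^c$, so $C^{(\kappa)}$-injectivity of $u_\alpha$ does not drop out. The paper resolves this by running a direct induction on $\pres(P)=\pres(M)$ that proves $M\in{}^\perp\Cogen_*(C)$ outright: at each step the approximating maps $u_\alpha$ have strictly smaller presentation rank and satisfy the same tensor-injectivity hypotheses, so the inductive hypothesis applies to them, giving $L_\alpha\in{}^\perp\Cogen_*(C)$ and (via Lemma~\ref{l:cogen}) $\Cogen_*(C)$-injectivity of the connecting maps; Lemma~\ref{l:eklof} and (in the singular case) Lemma~\ref{l:singmod} then conclude. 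Passing through Proposition~\ref{p:main2} does not avoid this induction — you would need essentially the same inductive argument to know the small pieces have the required $\Ext$-vanishing — so the paper's direct route is both necessary and cleaner. Your instinct that the final step ``$f$ is $\bar C$-injective'' follows from $M\in{}^\perp\Cogen_*(C)$ together with a flat-cover/pure-epimorphic-image argument is correct and matches the paper's closing paragraph.
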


\begin{proof} Put $C = I^{c}$. By \cite[Lemma 4.2]{S}, $P$ is (strict) $C$-stationary, equivalently $I$-Mittag-Leffler (cf. \cite[Corollary 2.6]{H}). By our assumption on $f\otimes_R I^\theta$, $M$ is $I$-Mittag-Leffler \chge{and $C$-stationary} as well.

Let $\kappa$ denote the least infinite cardinal such that $P$ is $\kappa$-presented. Notice that $M$ is $\kappa$-presented as well. We are going to prove by induction on $\kappa$ that $M\in {}^\perp\Cogen_*(C)$. For $\kappa = \aleph_0$, it holds: $f$ is $C$-injective (by our assumption for $\theta = 1$ and the well-known relations between $\otimes$ and $\Hom$) which yields $\Ext_R^1(M,C) = 0$ since $\Ext_R^1(P,C) = 0$, and it remains to use \cite[Proposition 4.3]{S}.

Now assume that $\kappa$ is regular uncountable. Using Lemma~\ref{l:Cext} for $\lambda = \kappa$, we obtain a $\kappa$-continuous directed system $\mathcal L = \mathcal L_\kappa$ consisting of $C$-stationary $<\kappa$-presented modules such that $\varinjlim\mathcal L = M$. Passing to a cofinal subsystem, we can w.l.o.g. assume that $\mathcal L$ is well-ordered by $\kappa$. We have $\mathcal L = (L_\alpha,f_{\beta\alpha}\mid \alpha<\beta<\kappa)$. For each $\alpha<\kappa$, let $f_\alpha\colon L_\alpha\to M$ be the canonical colimit map.

By Theorem~\ref{t:main1}, we obtain a system $\mathcal S=\mathcal S_\kappa$ witnessing that $P$ is $(\bar C,\kappa)$-projective; as before, we can assume that it is well-ordered, so $\mathcal S = (S_\alpha,g_{\beta\alpha}\mid \alpha<\beta<\kappa)$. Again, let $g_\alpha\colon S_\alpha\to \chge{P}$ denote the canonical colimit map for each $\alpha<\kappa$. Possibly dropping some indices, Construction~\ref{concoker} provides us with a $\kappa$-continuous directed system $\mathcal U = (u_\alpha\colon L_\alpha \to S_\alpha, (f_{\beta\alpha},g_{\beta\alpha})\mid \alpha<\beta<\kappa)$ of morphisms with $\varinjlim\mathcal U = f$. 
We also get the well-ordered directed system $\mathcal K = (\Coker(u_\alpha), h_{\beta\alpha}\mid \alpha<\beta<\kappa)$ with canonically defined morphisms. It follows that $M =\varinjlim \mathcal K$.

\chge{If we apply Construction~\ref{conmerge} to $\mathcal L$ and $\mathcal K$, we can assume, by possibly passing to a closed and unbounded subset of $\kappa$,} that $L_\alpha = \Coker(u_\alpha)$ and that the canonical colimit map $\Coker(u_\alpha)\to M$ equals $f_\alpha$ for each $\alpha<\kappa$, as well as \chge{that} $h_{\beta\alpha} = f_{\beta\alpha}$ for all $\alpha<\beta<\kappa$.

Let $\theta$ be a cardinal and put $D= (I^\theta)^c$. We claim that $u_\alpha\otimes_R I^\theta$ is injective for every $\alpha<\kappa$, or equivalently: $u_\alpha$ is $D$-injective. To see this, notice that $f$ is $D$-injective by the hypothesis and $f_\alpha$ is $D$-injective by the statement of Lemma~\ref{l:Cext} for each $\alpha<\kappa$ (note that $D\in\bar C$). The claim follows from the identity $ff_\alpha = g_\alpha u_\alpha$.

Since $S_\alpha\in {}^\perp\bar C$, we may use the inductive hypothesis to deduce that $L_\alpha\in {}^\perp\Cogen_*(C)$ for each $\alpha<\kappa$. Considering the following diagram with exact rows and columns 

$$\begin{CD}
 0  @. 0  @.   0 \\
  @AAA				@AAA		@AAA \\
	\Coker (f_{\alpha+1,\alpha}) 	@>{\bar u}>>		\Coker (g_{\alpha+1,\alpha}) @>>> 	\Coker (f_{\alpha+1,\alpha})		@>>> 0	\\
	@AAA				@AAA	@AAA		\\
	L_{\alpha+1} @>{u_{\alpha+1}}>>	S_{\alpha+1} @>>>	 	L_{\alpha+1}	@>>>	0	\\
	@A{f_{\alpha+1,\alpha}}AA				@A{g_{\alpha+1,\alpha}}AA	@A{f_{\alpha+1,\alpha}}AA		\\
	L_\alpha 	@>{u_\alpha}>>	S_\alpha	@>>> 	L_\alpha	@>>>	0,
\end{CD}$$

\medskip

\noindent we see that $\Coker (g_{\alpha+1,\alpha})\in {}^\perp\bar C$ holds, by the property of $\mathcal S$, and that $\bar u\otimes_R I^\theta$ is injective for any $\theta$: indeed, just apply the functor $-\otimes_R I^\theta$ on the diagram and use the $3\times 3$ lemma. Once more using the inductive hypothesis, we infer that $\Coker(f_{\alpha+1,\alpha})\in {}^\perp\Cogen_*(C)$ for each $\alpha<\kappa$. By Lemma~\ref{l:cogen}, $f_{\alpha+1,\alpha}$ is $\Cogen_*(C)$-injective, and Lemma~\ref{l:eklof} yields $M\in {}^\perp\Cogen_*(C)$.

\medskip

Now let $\kappa$ be singular. For each $\lambda<\kappa$ regular uncountable, there are systems $\mathcal L_\lambda, \mathcal S_\lambda$ such that $\mathcal S_\lambda$ witnesses that $P$ is $(\bar C,\lambda)$-projective and $\mathcal L_\lambda$ is provided by Lemma~\ref{l:Cext}. By a similar argument as in the regular step, combining Construction~\ref{concoker} and \ref{conmerge}, we can obtain a $\lambda$-continuous directed system $\mathcal U_\lambda = (u_j\colon L_j\to S_j \mid j\in J)$ of morphisms with $\varinjlim\mathcal U = f$, $L_j\in\mathcal L_\lambda$, $S_j\in\mathcal S_\lambda$, and such that the induced directed system $(\Coker(u_j) \mid j\in J)$ of modules is identical with $\mathcal L_\lambda$.

As in the regular step, we observe that $u_j\otimes_R I^\theta$ is injective for each $\theta$ and $j\in J$. By the inductive hypothesis, we get that $L_j\in {}^\perp\Cogen_*(C)$ for all $j\in J$. Using Lemma~\ref{l:singmod} with $\mathcal D = \Cogen_*(C)$ and $\nu = \aleph_0$, we obtain $M\in {}^\perp\Cogen_*(C)$. \chge{This finishes the induction.}
\smallskip

Finally, we show that $f$ is $\bar C$-injective. Note that $\bar C$ is just the closure of $\Cogen_*(C)$ under pure-epimorphic images. Let $h\colon D\to E$ be a pure-epimorphism with $D\in\Cogen_*(C)$. We have to check that every morphism $m\colon M\to E$ factorizes through~$f$. However, $\Ker(h)\in\Cogen_*(C)$ yields the existence of $n\colon M\to D$ such that $hn = m$. Since \chge{$M\in {}^\perp\Cogen_*(C)$ and $f$ is $C$-injective}, $f$ is $\Cogen_*(C)$-injective by Lemma~\ref{l:cogen}. We can thus factorize the morphism $n$ through $f$. The composition of the resulting map $r\colon P\to D$ with $h$ is the desired factorization.
\end{proof}

\begin{exm} Let \chge{$0\longrightarrow M\overset{f}\longrightarrow P\longrightarrow M\longrightarrow 0$ be a short exact sequence}, $R$ right coherent and $I = {}_RR$. The assumptions of Proposition~\ref{p:key} hold whenever $P$ is an fp-projective module, i.e.\ $P\in {}^\perp\overline{I^c}$\chge{, where $\overline{I^c}$ is none other than the class of absolutely pure (i.e.\ fp-injective) modules}. As a result, we get that $M$ is fp-projective as well.

Using the reduction from Lemma~\ref{l:red}, we obtain a generalization of \cite[Theorem~3.6]{G1}: over a right coherent ring, every syzygy module in an acyclic complex of fp-projective modules is fp-projective. By the dual reasoning to the one in the proof of \cite[Theorem 4.3]{BIE}, we get the equality dw(fpProj) $=$ dg(fpProj) over a right coherent ring.
\end{exm}

We are now ready to prove that

\begin{thm} \label{t:pgf} Every projectively coresolved Gorenstein flat module belongs to the~class ${}^\perp\overline{R_R}$, in particular it is Gorenstein projective.
\end{thm}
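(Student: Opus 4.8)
The strategy is to bring the acyclic complex defining a $PGF$-module into the shape required by Proposition~\ref{p:key}, apply that proposition for a single, well-chosen injective left module, and then pass from the resulting definable class to $\overline{R_R}$. So let $K\in\PGF$. By Lemma~\ref{l:red}, $K$ is a direct summand of a module $M$ fitting into a short exact sequence $0\to M\overset{f}{\to} P\to M\to 0$ with $P$ projective and $\Tor_1^R(M,J)=0$ for every injective left $R$-module $J$. Since ${}^\perp\overline{R_R}$ is closed under direct summands, it suffices to show $M\in{}^\perp\overline{R_R}$.

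To this end I would take $I=(R_R)^c$, the character module of the right regular module. As $R_R$ is flat, $I$ is an injective left $R$-module, and hence so is every power $I^\theta$, products of injectives being injective. Now the hypotheses of Proposition~\ref{p:key} are met for this $I$ and the sequence above: $P$ is projective, so $P\in{}^\perp\overline{I^c}$ is automatic; and applying $-\otimes_R I^\theta$ to the short exact sequence, together with the flatness of $P$, identifies $\Ker(f\otimes_R I^\theta)$ with $\Tor_1^R(M,I^\theta)=0$, so $f\otimes_R I^\theta$ is injective for every cardinal $\theta$. Proposition~\ref{p:key} then gives $M\in{}^\perp\overline{I^c}$. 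Since the canonical evaluation map $R_R\to(R_R)^{cc}=I^c$ is a pure monomorphism, $R_R\in\overline{I^c}$, so $\overline{R_R}\subseteq\overline{I^c}$ and therefore ${}^\perp\overline{I^c}\subseteq{}^\perp\overline{R_R}$; this yields $M\in{}^\perp\overline{R_R}$, whence $K\in{}^\perp\overline{R_R}$.

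For the ``in particular'' clause, fix an acyclic complex $P^\bullet$ of projectives witnessing $K\in\PGF$, with $K$ equal to one of its syzygies $\Ker(P^n\to P^{n+1})$. Each shift $P^\bullet[k]$ still stays exact after tensoring with injective left modules, so every syzygy of $P^\bullet$ again lies in $\PGF$, hence in ${}^\perp\overline{R_R}$ by the first part. Since $\overline{R_R}$ is closed under direct sums and direct summands it contains every projective module, so $\Ext^1_R(Z,Q)=0$ for every syzygy $Z$ of $P^\bullet$ and every projective $Q$. Splitting $P^\bullet$ into short exact sequences $0\to Z\to P^n\to Z'\to 0$ and applying $\Hom_R(-,Q)$, the vanishing of these $\Ext^1$'s shows that $\Hom_R(P^\bullet,Q)$ is exact, so $P^\bullet$ witnesses that $K$ is Gorenstein projective.

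Essentially all the real content has been front-loaded into Proposition~\ref{p:key} (and, behind it, the machinery of Section~\ref{sec:tools} and Lemma~\ref{l:singmod}); what remains here is bookkeeping. The only points needing a moment's care are the choice $I=(R_R)^c$, which makes $\overline{I^c}$ dominate $\overline{R_R}$ via the pure embedding $R_R\hookrightarrow(R_R)^{cc}$, the elementary homological identification of $\Ker(f\otimes_R I^\theta)$ with $\Tor_1^R(M,I^\theta)$, and the observation that powers of an injective module stay injective, so that the hypothesis of Proposition~\ref{p:key} holds for all cardinals $\theta$ at once. I do not anticipate a genuine obstacle beyond these.
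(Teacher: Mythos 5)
Your proposal is correct and takes essentially the same route as the paper: reduce via Lemma~\ref{l:red}, then apply Proposition~\ref{p:key} with $I = (R_R)^c$, using that $\overline{R_R}\subseteq\overline{(R_R)^{cc}}=\overline{I^c}$ (in fact they coincide). The paper's proof is simply terser, leaving the verification of Proposition~\ref{p:key}'s hypotheses and the ``in particular'' clause implicit; the steps you spell out are exactly the intended ones.
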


\begin{proof} Let $K\in\PGF$ be arbitrary. By Lemma~\ref{l:red}, $K$ is a direct summand of a~module $M$ \st $M\cong P/M$ with $P$ projective and $\Tor_1^R(M,I)= 0$ for any injective left $R$-module $I$. We use Proposition~\ref{p:key} for $I= (R_R)^c$ and $f:M\hookrightarrow P$ the inclusion.
\end{proof}

Recall that, over any ring $R$, we have $\mathcal{FL}\subseteq\overline{R_R}$. We sum up what we achieved in the corollary below.\footnote{One can alternatively use \cite[Theorem A.6]{BGH} to prove Corollary~\ref{c:pgf}.} Note that $\overline{(R_R)^c}$ is the class of all absolutely pure left $R$-modules \iff $R$ is left coherent, \iff $\overline{R_R}$ coincides with the~class of all flat modules.
\begin{cor} \label{c:pgf} Let $K$ be a right $R$-module. Then the following conditions are equivalent:
\begin{enumerate}
  \item $K$ is projectively coresolved Gorenstein flat;
	\item $K$ is a syzygy in a long exact sequence \eqref{eq:pgf} of projective modules which stays exact after applying the functor $\Hom_R(-,F)$ for any $F\in\ModR$ from the definable closure of $\{R_R\}$;
	\item $K$ is a syzygy in a long exact sequence \eqref{eq:pgf} of projective modules which stays exact after applying the functor $-\otimes_R I$ for any $I\in\RMod$ from the definable closure of $\{(R_R)^c\}$.
\end{enumerate}
\end{cor}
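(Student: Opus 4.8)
The plan is to keep the same complex \eqref{eq:pgf} when passing between the three conditions, so that each implication reduces to checking that a fixed acyclic complex $P^\bullet$ of projective right modules with one exactness property also has another. I would first record two standard facts. First, $W=(R_R)^c$ is an injective cogenerator of $\RMod$: by the tensor-hom adjunction $\Hom_R(-,W)$ is naturally isomorphic to the faithfully exact functor $(-)^c$ on $\RMod$. Hence every injective left module is a direct summand of some power $W^X\cong(R^{(X)})^c$, and $W^X$ is itself injective (a product of injectives); in particular $\overline{(R_R)^c}$, being closed under products and direct summands, contains all injective left modules. Second, for a complex $P^\bullet$ of right modules and a left module $N$ there is a natural isomorphism of complexes $(P^\bullet\otimes_R N)^c\cong\Hom_R(P^\bullet,N^c)$, so (as $(-)^c$ reflects exactness) $P^\bullet\otimes_R N$ is exact if and only if $\Hom_R(P^\bullet,N^c)$ is exact.

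For $(1)\Leftrightarrow(3)$: the implication $(3)\Rightarrow(1)$ is immediate, since injective left modules lie in $\overline{(R_R)^c}$. For $(1)\Rightarrow(3)$, fix $P^\bullet$ witnessing (1), write $Z^n$ for its syzygies, and set $\mathcal{I}=\{I\in\RMod\mid P^\bullet\otimes_R I\text{ is exact}\}$; thus $I\in\mathcal{I}$ precisely when $\Tor_1^R(Z^n,I)=0$ for all $n$, which by dimension shifting along $P^\bullet$ forces $\Tor_j^R(Z^n,I)=0$ for all $n$ and $j\geq1$. Standard diagram chases with the long exact $\Tor$-sequence, using that a pure-exact sequence remains exact after tensoring with any $Z^n$, show that $\mathcal{I}$ is closed under pure submodules and under pure-epimorphic images. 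By hypothesis $\mathcal{I}$ contains every power of $(R_R)^c$ (these being injective), and since $\overline{(R_R)^c}$ is obtained from $\{(R_R)^c\}$ by closing first under products, then under pure submodules, and finally under pure-epimorphic images, we get $\overline{(R_R)^c}\subseteq\mathcal{I}$, which is exactly $(3)$.

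For $(1)\Leftrightarrow(2)$: given $P^\bullet$ witnessing (1), each syzygy $Z^n$ is again a $PGF$-module, hence lies in ${}^\perp\overline{R_R}$ by Theorem~\ref{t:pgf}; applying $\Hom_R(-,F)$ for $F\in\overline{R_R}$ to the short exact sequences $0\to Z^n\to P^n\to Z^{n+1}\to0$ and invoking $\Ext_R^1(Z^{n+1},F)=0$ shows that $\Hom_R(P^\bullet,F)$ is exact, which is $(2)$. Conversely, given $P^\bullet$ witnessing (2) and an injective left module $I$, write $I$ as a direct summand of $F^c$ with $F=R^{(X)}$ free; since free modules are flat and $\FL\subseteq\overline{R_R}$, the module $F^{cc}$ lies in the definable class $\overline{R_R}$, hence so does its summand $I^c$. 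Then $\Hom_R(P^\bullet,I^c)$ is exact by (2), so $P^\bullet\otimes_R I$ is exact by the second fact above; as $I$ was an arbitrary injective left module, $K\in\PGF$.

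I expect the only genuinely technical point to be the verification of the closure properties of $\mathcal{I}$ (especially under pure submodules) in the $(1)\Rightarrow(3)$ step; everything else is bookkeeping with character modules and the construction of the definable closure, together with the already established Theorem~\ref{t:pgf}, which carries the homological weight.
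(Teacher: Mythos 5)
Your proof is correct, and the overall strategy (keep a single witnessing complex $P^\bullet$, reduce the implications to Theorem~\ref{t:pgf} plus character-module manipulations) is exactly what the paper intends; the paper itself only sketches this, pointing to Theorem~\ref{t:pgf}, the inclusion $\FL\subseteq\overline{R_R}$, and the footnote referring to \cite[Theorem A.6]{BGH}.

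The one place where you diverge in flavor is $(1)\Rightarrow(3)$. The paper's implicit route is to go through $(2)$ and use the elementary duality of definable classes: from $(2)$ and the fact that $I\in\overline{(R_R)^c}$ implies $I^c\in\overline{R_R}$, the isomorphism $(P^\bullet\otimes_R I)^c\cong\Hom_R(P^\bullet,I^c)$ finishes it. You instead argue directly on $\mathcal{I}=\{I\mid P^\bullet\otimes_R I\text{ exact}\}$, showing closure under pure-exact sequences via dimension shifting in $\Tor$ along $P^\bullet$ and then using the paper's description of $\overline{(R_R)^c}$ as a three-step closure starting from products of injectives. Both arguments are sound; yours avoids invoking duality of definable classes explicitly but pays for it with the $\Tor$ bookkeeping. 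A small shortcut in your $(2)\Rightarrow(1)$ step: one can simply note that $I$ injective forces $I^c$ flat, so $I^c\in\FL\subseteq\overline{R_R}$, without routing through $F^{cc}$ and its summands (though your version is correct as well).
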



We can prove more than just $\PGF\subseteq {}^\perp\overline{R_R}$. We will see \chge{in Theorem~\ref{t:Kaplansky} that $\PGF$ forms the left-hand class of a hereditary cotorsion pair which is generated by a set, hence complete.}

To do so, we will need to construct a filtration of an exact sequence of the form $M\overset{f}\longrightarrow P\overset{g}\longrightarrow M\longrightarrow 0$ by exact sequences of the same form. Ignoring the exactness for the moment, we will view the sequences as representations of the quiver
\[
Q\colon\quad
\xymatrix{
p \ar@/^/[r]^-g & m \ar@/^/[l]^-f
}
\]
in the category $\ModR$ which are bound by the relation $gf=0$. The category of such representations is equivalent to the category of left modules over the path algebra $R_1 = R^\op Q/(gf)$ (see~\cite[Proposition III.1.7]{ARS}, whose proof is valid also in our situation).

\begin{rem}
The ring $R_1$ is isomorphic to the subring of the full matrix ring $M_3(R^\op)$ formed by matrices of the form
\[
\begin{pmatrix}
a & b & c \\
0 & a & 0 \\
0 & d & e \\
\end{pmatrix}.
\]
This isomorphism can be obtained from the left action $R_1 \longrightarrow \End_R(M)$ on the faithful left $R_1$-module corresponding to the representation
\[
\xymatrix{
R\oplus R \ar@/^1em/[r]^-{\left(\begin{smallmatrix}0&1\end{smallmatrix}\right)} & R. \ar@/^1em/[l]^-{\left(\begin{smallmatrix}1\\0\end{smallmatrix}\right)}
}
\]
\end{rem}

Let us denote by $\RE$ the full subcategory of $\RoneMod$ formed by all modules corresponding to the representations 
\begin{equation} \label{eq:rep}
\xymatrix@1{P \ar@/^/[r]^-g & M \ar@/^/[l]^-f}
\end{equation}
with $M\overset{f}\longrightarrow P\overset{g}\longrightarrow M\longrightarrow 0$ right exact.

\begin{lem} \label{lem:RE}
If $M,P$ are $\kappa$-presented in $\ModR$ and $U\in\RoneMod$ corresponds to a representation of the form~\eqref{eq:rep}, then $U$ is $\kappa$-presented in $\RoneMod$.
The class $\RE$ is closed in $\RoneMod$ under cokernels and extensions.
\end{lem}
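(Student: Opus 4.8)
The plan is to work with the functor $R_1\text{-Mod} \to \text{Mod-}R$ sending a representation to its ``tail'' data and to the underlying modules, and to track which short exact sequences of $R_1$-modules restrict to the various pieces. I will first prove the statement about $\kappa$-presentedness. Given a representation $U$ of the form~\eqref{eq:rep} with $M,P$ being $\kappa$-presented in $\ModR$, I pick presentations $R^{(\kappa)} \to R^{(\kappa)} \to P \to 0$ and $R^{(\kappa)} \to R^{(\kappa)} \to M \to 0$ in $\ModR$. The key point is that the forgetful-type functors from $\RoneMod$ have left adjoints (the ``stalk'' functors attaching a free module at a vertex, together with the induced structure), and a representation of the form~\eqref{eq:rep} is generated by the images of the generators of $P$ at vertex $p$ together with the images of the generators of $M$ at vertex $m$; right-exactness of $M\overset{f}\to P\overset{g}\to M\to 0$ guarantees that no further generators at vertex $p$ are needed beyond $f$ applied to the generators of $m$ and the generators placed at $p$. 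Counting relations: the relations defining $P$ and $M$ as $R$-modules, plus the relation $gf=0$ (automatic from the bound quiver), plus the relations expressing that $g$ is surjective onto the second copy — all of these amount to $\kappa$ relations since $\kappa$ is infinite. Hence $U$ is $\kappa$-presented in $\RoneMod$.

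Next I turn to closure of $\RE$ under cokernels and extensions. For a morphism $\varphi\colon U \to U'$ in $\RoneMod$ between objects of $\RE$, writing $U$ as \eqref{eq:rep} with data $(P,M,f,g)$ and $U'$ with data $(P',M',f',g')$, the cokernel in $\RoneMod$ is computed vertexwise: it is the representation with data $(\Coker\varphi_p, \Coker\varphi_m, \bar f, \bar g)$ where $\varphi_p\colon P\to P'$, $\varphi_m\colon M\to M'$ are the components. So I must check that $\Coker\varphi_m \overset{\bar f}\to \Coker\varphi_p \overset{\bar g}\to \Coker\varphi_m \to 0$ is right exact, i.e.\ that $\bar g$ is surjective. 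But $\bar g$ is the cokernel of the commuting square with horizontal maps $g, g'$; since $g'$ is surjective and $\varphi_m$ maps onto $\Coker\varphi_m$, a quick diagram chase (or the right-exactness of $\Coker$) shows $\bar g$ is surjective. This is the easy direction and is essentially automatic.

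For closure under extensions, suppose $0 \to U' \to U \to U'' \to 0$ is exact in $\RoneMod$ with $U', U'' \in \RE$, and $U$ has data $(P,M,f,g)$. Taking the vertex-$p$ and vertex-$m$ components, which are exact in $\ModR$, I get $0\to P'\to P\to P''\to 0$ and $0\to M'\to M\to M''\to 0$. I need that $M \overset{f}\to P \overset{g}\to M \to 0$ is right exact, i.e.\ $g$ surjective. Consider the commutative diagram with rows the three sequences $M_\bullet \to P_\bullet$ pieces: the snake lemma / a diagram chase comparing $g'\colon M'\to M'$ (hitting $\Img g' $, which is $M'$ since $g'$ is onto — wait, $g'$ need not be onto $M'$; rather $g'$ is the tail map for $U'$ and $M' \overset{f'}\to P' \overset{g'}\to M' \to 0$ right exact means $g'$ is onto $M'$) and $g''\colon M''\to M''$ onto, plus surjectivity of $M\to M''$, forces $\Img g + \ker(M\to M'') \supseteq$ everything, hence $\Img g = M$ since $\ker(M\to M'') = \Img(M'\to M) = $ image lands in $\Img(g'$ composed with $P'\to P)\subseteq \Img g$. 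So $g$ is surjective.

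The main obstacle I anticipate is purely bookkeeping: making the equivalence between $\RoneMod$ and bound-quiver representations precise enough to justify that limits, colimits, and exact sequences in $\RoneMod$ are computed vertexwise on $P$ and $M$ (this is standard for representations of quivers with relations, but one must be a little careful that the relation $gf=0$ does not interfere — it does not, since it is a relation, not a mono condition). Once that is granted, each of the three claims (presentedness, cokernels, extensions) reduces to a short diagram chase in $\ModR$ of the kind indicated above, using only right-exactness of the defining sequences and exactness of the forgetful functors at the two vertices. I would write the presentedness argument carefully and dispatch the cokernel and extension claims with the diagram chases sketched here.
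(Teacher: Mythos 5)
Your account of the $\kappa$-presentedness claim, while correct in spirit, is more laboured than necessary (and the comment that right-exactness is needed to bound the generators at vertex $p$ is spurious — the claim holds for any representation of the form~\eqref{eq:rep}); the paper's one-line argument is simply that $R_1$ is free of finite rank as a right $R$-module, so the forgetful functor to $\ModR$ preserves and reflects $\kappa$-presentability, and the underlying $R$-module of $U$ is $P\oplus M$.

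The genuine gap is in your treatment of cokernels and extensions: you read ``right exact'' as meaning merely that $g$ (resp.\ $\bar g$) is surjective. In this paper ``$M\overset{f}\to P\overset{g}\to M\to 0$ right exact'' means the sequence is exact at \emph{both} $P$ and $M$; equivalently, viewing it as a cochain complex in degrees $-1,0,1$, one needs $H^0=0$ (i.e.\ $\Img f=\Ker g$) as well as $H^1=0$ (i.e.\ $g$ onto). Your diagram chases establish only the $H^1$ vanishing and say nothing about $H^0$. For extensions this is exactly what the paper's argument handles by viewing the $R_1$-modules as complexes and invoking the long exact sequence in cohomology: if $0\to U'\to U\to U''\to 0$ is exact with $U',U''\in\RE$, the segments $H^0(U')\to H^0(U)\to H^0(U'')$ and $H^1(U')\to H^1(U)\to H^1(U'')$ give $H^0(U)=H^1(U)=0$ at once. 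For cokernels the clean argument is that cokernels (of morphisms of $R$-modules) commute with cokernels, so applying $\Coker$ componentwise to a map between right exact sequences again yields a right exact sequence. Your surjectivity chases can be completed to direct verifications of $\Img\bar f=\Ker\bar g$ and $\Img f=\Ker g$ respectively (in the extension case this uses injectivity of $M'\to M$ and $\Ker g'=\Img f'$; in the cokernel case it uses surjectivity of $g$ on $U$ and $\Ker g'=\Img f'$ on $U'$), but as written the proof does not establish the middle exactness and hence does not prove membership in $\RE$.
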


\begin{proof}
The first assertion follows from the fact that $R_1$ is free of finite rank as a~right $R$-module.
The closure under cokernels comes from the fact that the cokernel of a~map between right exact sequences is right exact. For the closure under extensions, we consider the elements of $\RoneMod$ as cochain complexes of right $R$-modules concentrated in degrees $-1, 0, 1$ and use the long exact sequence of cohomologies.
\end{proof}

Now, given an infinite cardinal $\nu$, we say that a ring $R$ is \emph{right $\nu$-coherent} if each $\nu$-generated right ideal of $R$ is $\nu$-presented. If every right ideal of $R$ is $\nu$-generated, we say that $R$ is \emph{right $\nu$-noetherian}.

\begin{prop} \label{p:filtr} Let $\nu$ be an infinite cardinal, $W$ an injective cogenerator in $\ModR$ and $I$ a left $R$-module. Let moreover $\mathcal D$ be a definable subclass of $\ModR$ \st $I^c,W\in\mathcal D$. Assume that $\mathcal E:M\overset{f}\rightarrow P\rightarrow M\rightarrow 0$ is an exact sequence with $P\in {}^\perp\mathcal D$ and $f\otimes_R I^\theta$ injective for any cardinal $\theta$. Let moreover
\begin{enumerate}
	\item $R$ be a right $\nu$-noetherian ring, or
	\item $R$ be a right $\nu$-coherent ring, $f$ be a monomorphism and $P$ be projective, or
	\item $f\otimes_R R^\theta$ be injective for all cardinals $\theta$ $($in this case, set $\nu = \aleph_0)$.
\end{enumerate}
Then there exists a filtration $\mathfrak F = (\mathcal E_\alpha\colon M_\alpha\overset{u_\alpha}\rightarrow P_\alpha\rightarrow M_\alpha\rightarrow 0\mid \alpha\leq\sigma)$ of $\mathcal E$ where $P_{\alpha+1}/P_\alpha\in {}^\perp\mathcal D$ and $M_{\alpha+1}/M_\alpha$ is $\nu$-presented with $M_{\alpha+1}/M_\alpha \in {}^\perp\overline{I^c}$ for each $\alpha<\sigma$.
\end{prop}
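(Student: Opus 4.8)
The plan is to build the filtration $\mathfrak F$ by a standard transfinite recursion, at each step enlarging the current subobject $\mathcal E_\alpha \hookrightarrow \mathcal E$ of $R_1$-modules (viewed inside $\RE$) by a $\nu$-presented chunk, while maintaining the invariant that $P/P_\alpha \in {}^\perp\mathcal D$ and that the quotient map $f/f_\alpha \colon M/M_\alpha \to P/P_\alpha$ is still injective after tensoring with $I^\theta$ (and, in the relevant cases, with $R^\theta$). The point of this invariant is that it lets us apply Proposition~\ref{p:key} to the quotient sequence, or rather to each successor factor: once $M_{\alpha+1}/M_\alpha$ is $\nu$-presented, $P_{\alpha+1}/P_\alpha \in {}^\perp\mathcal D \subseteq {}^\perp\overline{I^c}$, and the factor sequence $M_{\alpha+1}/M_\alpha \to P_{\alpha+1}/P_\alpha \to M_{\alpha+1}/M_\alpha \to 0$ (with the connecting map injective after $-\otimes_R I^\theta$), Proposition~\ref{p:key} yields $M_{\alpha+1}/M_\alpha \in {}^\perp\overline{I^c}$ as required. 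So the real work is purely in the \emph{construction} of the ascending chain, not in verifying the homological conclusion.

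The successor step is where the three cases (1)--(3) diverge, and this is the main obstacle. Given $\mathcal E_\alpha \subsetneq \mathcal E$, pick an element of $\mathcal E$ not yet captured and close off: one wants a $\nu$-presented subrepresentation $\mathcal E_{\alpha+1}$ of $\mathcal E$ containing it, still right exact (hence in $\RE$), with $\mathcal E_{\alpha+1}/\mathcal E_\alpha$ controlled. The subtlety is arranging simultaneously that (i) $P_{\alpha+1}/P_\alpha \in {}^\perp\mathcal D$ and (ii) the relevant tensor-injectivity passes to the factor. For (i) one uses a Hill-lemma / $\kappa$-Kaplansky-type argument: since $P \in {}^\perp\mathcal D$ and $\mathcal D$ is definable, $P$ has a filtration (or an $\aleph_1$/$\nu$-refinable system) by $\nu$-presented modules in ${}^\perp\mathcal D$ — here the coherence/noetherian hypotheses on $R$ enter, guaranteeing via Lemma~\ref{lem:RE} that $\nu$-presented pieces of $P$ and $M$ assemble into $\nu$-presented objects of $\RoneMod$ lying in $\RE$, and guaranteeing that the cardinal bookkeeping (closing a $\nu$-sized subset under the finitely many structure maps and under the witnesses for membership in ${}^\perp\mathcal D$) stays within size $\nu$. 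In case (1), right $\nu$-noetherianity makes every $\nu$-generated submodule automatically $\nu$-presented, so the closure is painless; in case (2), with $f$ mono and $P$ projective, $\nu$-coherence lets us keep kernels of maps between $\nu$-generated submodules $\nu$-presented, so the subrepresentations stay in $\RE$; in case (3) the extra hypothesis that $f\otimes_R R^\theta$ is injective is exactly what we need at $\nu=\aleph_0$ to run the countable version of the closure argument without any coherence assumption.

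For (ii), the tensor-injectivity, one observes that $-\otimes_R I^\theta$ and $-\otimes_R R^\theta$ are exact on the split or pure situations we arrange, and more precisely: by choosing $\mathcal E_{\alpha+1}$ so that the inclusion $\mathcal E_\alpha \hookrightarrow \mathcal E_{\alpha+1}$ is pure in each degree (using again that $W$ is an injective cogenerator and $W \in \mathcal D$ to detect purity, or directly via the $C$-stationarity of $M$ from Proposition~\ref{p:key}'s proof), the quotient $\mathcal E_{\alpha+1}/\mathcal E_\alpha$ inherits the property that its connecting map is injective after tensoring with $I^\theta$. At limit ordinals $\theta \le \sigma$ we simply take unions $M_\theta = \bigcup_{\alpha<\theta} M_\alpha$, $P_\theta = \bigcup_{\alpha<\theta} P_\alpha$; right exactness is preserved under direct limits, tensor products commute with direct limits so the injectivity condition is preserved, and $P/P_\theta = \varinjlim P/P_\alpha$ together with the fact that ${}^\perp\mathcal D$ is closed under the relevant colimits (Eklof-type, Lemma~\ref{l:eklof}, since the successor factors lie in ${}^\perp\mathcal D$ and the maps are $\mathcal D$-injective by Lemma~\ref{l:cogen}) keeps us on track. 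The recursion terminates when $\mathcal E_\sigma = \mathcal E$, producing the desired filtration $\mathfrak F$; I expect the genuinely delicate point to be the uniform treatment of the closure-under-structure-maps bookkeeping in case (2), where coherence rather than noetherianity is all one has.
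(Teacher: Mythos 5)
The paper's proof is not a one-pass transfinite recursion but an induction on the presentation cardinal $\kappa$ of $P$ (and $M$), following the template of Proposition~\ref{p:key}: for $\kappa$ regular, one uses Theorem~\ref{t:main1} to get a $(\mathcal D,\kappa)$-projectivity system $\mathcal S_\kappa$ of submodules of $P$, and the output of Proposition~\ref{p:key} applied to $M$ to get a $(\overline{I^c},\kappa)$-projectivity system $\mathcal L_\kappa$ of submodules of $M$, then merges them via Constructions~\ref{concoker} and~\ref{conmerge} to exhibit $\mathcal E$ as a $\kappa$-continuous chain in $\RE\subseteq\RoneMod$ with $<\kappa$-presented factors, which are then refined by the inductive hypothesis; for $\kappa$ singular, it applies Shelah's classical singular compactness theorem \cite[Theorem~7.29]{GT} to the $R_1$-module $U$ corresponding to $\mathcal E$. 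The $\nu$-presented chunks never arise directly --- one reaches them only after descending through all cardinals $<\kappa$.

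Your plan has a genuine gap precisely where the work is. You assert that ``$P$ has a filtration (or an $\aleph_1$/$\nu$-refinable system) by $\nu$-presented modules in ${}^\perp\mathcal D$'' as if it were available, but the hypothesis is only $P\in{}^\perp\mathcal D$; establishing such a filtration is essentially equivalent to the whole problem and is what the cardinality induction is for. The invariant ``$P/P_\alpha\in{}^\perp\mathcal D$'' is also circular: verifying it at stage $\alpha$ presupposes the existence of the filtration above $\alpha$, and Lemma~\ref{l:eklof} applies to chains starting from $0$, not to quotient towers $P/P_\alpha$. Most seriously, a straight ``add $\nu$-presented chunks'' recursion has no mechanism to get past singular presentation cardinals; this is exactly why the singular compactness theorem is invoked, and your outline does not mention it. The ingredients you do identify correctly --- working in $\RE$, applying Proposition~\ref{p:key} to the factor sequences, the role of $W$ and of the $\nu$-noetherian/coherence hypotheses for keeping $\nu$-generated pieces $\nu$-presented --- are all used in the paper, but they sit inside an inductive scaffold that your plan lacks.
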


\begin{proof} \chge{Let $\kappa$ be the least infinite cardinal such that $P$ (and hence also $M$) is $\kappa$-presented. For the sake of nontriviality, let us assume that $\kappa>\nu$.} We follow the lines of the proof of Proposition~\ref{p:key} proving by induction \chge{on $\kappa$} the existence of the filtration $\mathfrak F$. 
The only difference is the choice of the systems $\mathcal L_\lambda$, $\mathcal S_\lambda$:

Using Theorem~\ref{t:main1}, we let $\mathcal S_\lambda$ be a system witnessing $(\mathcal D,\lambda)$-projectivity of~$P$. Since $W\in\mathcal D$, we can w.l.o.g.\ assume that $\mathcal S_\lambda$ consists of submodules of $P$ and inclusions.

It is easy to observe that, if $(1)$ or $(2)$ holds, every $\eta$-generated submodule of $M$ is $\eta$-presented whenever $\eta \geq \nu$. Put $C = I^c$ in this case, and $C = I^c\oplus W$ if $(3)$ holds true. By Proposition~\ref{p:key}, we know that $M\in {}^\perp\bar C$, thus we can consider a~system $\mathcal L_\lambda$ witnessing that $M$ is $(\bar C,\lambda)$-projective (using Theorem~\ref{t:main1} again). For $\lambda>\nu$, we can w.l.o.g. assume that the elements of $\mathcal L_\lambda$ are, in fact, $<\lambda$-presented submodules of $M$.
\chge{Indeed, in cases (1) and (2) we can intersect $\mathcal L_\lambda$ with the system of all $<\lambda$ generated submodules of $M$ using Construction~\ref{conmerge}, while in case (3) we use that $W\in \bar C$.}

Furthermore, we can, as in the proof of Proposition~\ref{p:key}, without loss of generality express the morphism $f\colon M \to P$ as the direct limit 
a $\lambda$-continuous directed system $\mathcal U_\lambda = (u_j\colon L_j\to S_j \mid j\in J)$ of morphisms with $\varinjlim\mathcal U_\lambda = f$, $L_j\in\mathcal L_\lambda$, $S_j\in\mathcal S_\lambda$, and such that the induced directed system $(\Coker(u_j) \mid j\in J)$ of modules is identical with $\mathcal L_\lambda$. If $U\in\RoneMod$ corresponds to the representation as in~\eqref{eq:rep}, we have in particular expressed $U$ as a $\lambda$-continuous direct limit of left $R_1$-submodules, which are all contained in $\RE$.

If $\kappa=\lambda$ is regular, we obtain a filtration with $<\kappa$-presented consecutive factors, which we refine by \chge{applying the inductive hypothesis to these factors, to get} the desired $\mathfrak F$. Note that the consecutive factors in $\mathfrak F$ are also contained in $\RE$ by Lemma~\ref{lem:RE}.

Finally, if $\kappa$ is singular, we apply the classical singular compactness theorem to $U\in\RoneMod$, cf.\ \cite[Theorem~7.29]{GT}, to get $\mathfrak F$.
\end{proof}

Note that, over a right coherent ring $R$, the condition $(3)$ in the statement of Proposition~\ref{p:filtr} is equivalent to the injectivity of the map $f$.

\chge{Now we discuss some basic closure properties of the classes of ordinary and of projectively coresolved Gorenstein flat modules.}

\begin{lem} \label{l:PGF} Let $B\in\PGF$ and $C\in\mathcal{GF}$. Then $A\in\mathcal{GF}$ provided that it fits into one of the following two short exact sequences:
\begin{enumerate}
	\item $0\to C \to A\to B\to 0,$
	\item $0\to A \to B\to C\to 0.$
\end{enumerate}
Moreover, if $C\in\PGF$, then $A$ is a PGF-module as well. Finally, $\PGF$ is closed under transfinite extensions.
\end{lem}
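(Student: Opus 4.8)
The plan is to combine the resolution/reduction trick of Lemma~\ref{l:red} with the "splicing" of acyclic complexes of projectives that is standard in Gorenstein homological algebra, using Theorem~\ref{t:pgf} (namely $\PGF\subseteq{}^\perp\overline{R_R}$, hence $\PGF\subseteq\mathcal{GP}$) to control the $\Tor$ and $\Ext$ vanishing along the way. Recall that a module lies in $\mathcal{GF}$ (resp.\ $\PGF$) precisely when it is a first syzygy in an acyclic complex of flats (resp.\ projectives) which stays exact after $-\otimes_R I$ for every injective left module $I$; equivalently, by the usual dimension-shifting argument, $X\in\mathcal{GF}$ iff there is a short exact sequence $0\to X\to F\to X'\to 0$ with $F$ flat and $X'\in\mathcal{GF}$, together with $\Tor_{\geq 1}^R(X,I)=0$ for all injective $I$; and similarly for $\PGF$ with $F$ projective. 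The first step is to record these equivalent "one-step" characterisations and the fact (Theorem~\ref{t:pgf}) that $\Ext^{\geq 1}_R(B,\mathcal{L})=0$ for $B\in\PGF$ and $\mathcal{L}$ any level/flat-cotorsion-style module; more precisely we only need $\Ext^1_R(B,F)=0$ for $F$ flat, which holds since $\FL\subseteq\overline{R_R}$ and $B\in{}^\perp\overline{R_R}$.

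For case (1), $0\to C\to A\to B\to 0$ with $B\in\PGF$, $C\in\mathcal{GF}$: choose complete flat right resolutions — on the left, take a flat resolution of $A$; on the right, since $\Ext^1_R(B,F)=0$ for every flat $F$ (so every flat coresolution of $C$ glues with one of $B$), use the horseshoe-type lemma to produce, degreewise, a flat coresolution $A\to F^\bullet$ sitting in a degreewise-split short exact sequence of coresolutions $0\to (C\to G^\bullet)\to(A\to F^\bullet)\to(B\to H^\bullet)\to 0$. Because $C$ and $B$ are Gorenstein flat, $G^\bullet$ and $H^\bullet$ can be taken so that they remain exact after $-\otimes_R I$ for every injective $I$; a degreewise-split short exact sequence of complexes induces a long exact sequence in homology after applying $-\otimes_R I$, so $F^\bullet$ is also $I$-acyclic. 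Splicing this coresolution with a flat resolution of $A$ and using $\Tor_{\geq1}^R(A,I)=0$ (which follows from the $\Tor$ long exact sequence of $0\to C\to A\to B\to 0$ since $\Tor_{\geq1}^R(C,I)=0$ and $\Tor_{\geq1}^R(B,I)=0$, the latter because $B$ being $\PGF$ is in particular Gorenstein flat) shows the spliced complex is a totally acyclic complex of flats with first syzygy $A$, i.e.\ $A\in\mathcal{GF}$. If moreover $C\in\PGF$, one replaces "flat'' by "projective'' throughout: projective coresolutions of $C$ and $B$ glue because $\Ext^1_R(B,P)=0$ for $P$ projective (again Theorem~\ref{t:pgf}), giving $A\in\PGF$.

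Case (2), $0\to A\to B\to C\to 0$ with $B\in\PGF$, $C\in\mathcal{GF}$: here $A$ is a first syzygy of $C$ through the (not necessarily projective) module $B$, but we can compare with an honest syzygy. Pick a short exact sequence $0\to C'\to F\to C\to 0$ with $F$ flat and $C'\in\mathcal{GF}$ (possible since $C\in\mathcal{GF}$). Form the pullback of $B\to C\leftarrow F$; this yields $0\to A\to X\to F\to 0$ and $0\to C'\to X\to B\to 0$. The second sequence is of the type handled in case (1) with the roles filled by $C'\in\mathcal{GF}$ and $B\in\PGF$, hence $X\in\mathcal{GF}$; then from $0\to A\to X\to F\to 0$ with $X\in\mathcal{GF}$ and $F$ flat we get that $A$ is a syzygy of a Gorenstein flat module through a flat module, and a dimension-shift (together with $\Tor_{\geq1}^R(A,I)=0$, read off from the long exact sequence of $0\to A\to B\to C\to 0$ using $\Tor_{\geq1}^R(B,I)=0=\Tor_{\geq1}^R(C,I)$ and injectivity of $I$-tensoring on the relevant maps) shows $A\in\mathcal{GF}$. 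For the $\PGF$ strengthening, choose $F$ projective instead and argue identically, invoking the $\PGF$ part of case (1).

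The closure of $\PGF$ under transfinite extensions follows from a standard Eklof-type argument. Given a continuous chain $(A_\alpha\mid\alpha\leq\sigma)$ with $A_0\in\PGF$ and each $A_{\alpha+1}/A_\alpha\in\PGF$, build complete projective coresolutions $A_\alpha\to P_\alpha^\bullet$ by transfinite recursion so that the whole system is a continuous chain of complexes with $P_{\alpha+1}^\bullet/P_\alpha^\bullet$ a complete projective coresolution of $A_{\alpha+1}/A_\alpha$ (at successor steps this is the horseshoe gluing of case~(1), legitimate because $\Ext^1_R(A_{\alpha+1}/A_\alpha,P)=0$ for $P$ projective; at limit steps take unions); the union $P_\sigma^\bullet$ is then a complex of projectives with first syzygy $A_\sigma$, it is acyclic since a direct limit of acyclic complexes is acyclic, and it stays acyclic after $-\otimes_R I$ because $-\otimes_R I$ commutes with direct limits and each $P_\alpha^\bullet\otimes_R I$ is acyclic (using the degreewise-split structure of the quotients). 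Hence $A_\sigma\in\PGF$. The main obstacle, and the step deserving the most care, is verifying that the coresolutions can be glued compatibly — i.e.\ that the horseshoe construction for coresolutions is available, which rests precisely on the $\Ext$-vanishing $\Ext^1_R(\PGF,\text{proj})=0$ supplied by Theorem~\ref{t:pgf} — and that the resulting complexes remain $I$-acyclic; once the degreewise-split short exact sequences of complexes are in place, the homology long exact sequences make everything routine.
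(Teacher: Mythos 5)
Your case (1) is essentially the paper's argument: the one-step map $m=(m_1,m_2)\colon A\to F\oplus P$ in the paper is exactly the horseshoe gluing you describe, made possible by $\Ext^1_R(B,F)=0$ from Theorem~\ref{t:pgf}, and iterating it produces the required $I$-acyclic flat coresolution of $A$; the $\PGF$ strengthening and the transfinite-extension clause go through similarly (with the degreewise-split quotients ensuring the union at limits is still a complex of projectives).

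Your case (2), however, has a genuine gap. The pullback yields $0\to A\to X\to F\to 0$ with $X\in\mathcal{GF}$ and $F$ flat, but this is the wrong shape for a dimension shift: the usable statement is that if $0\to A\to G\to H\to 0$ with $G$ \emph{flat} and $H\in\mathcal{GF}$, then splicing a coresolution of $H$ behind $G$ (using $\Tor^R_1(H,I)=0$) shows $A\in\mathcal{GF}$. In your sequence the flat module sits in the quotient position. If you try to build a coresolution of $A$ by composing $A\hookrightarrow X\hookrightarrow G^0$ (where $0\to X\to G^0\to X_1\to 0$ is the first step of a flat coresolution of $X$), the resulting complex $0\to A\to G^0\to G^1\to\cdots$ is \emph{not} exact at $G^0$ (its kernel there is all of $X$, not $A$); and $G^0/A$ is only an extension of $X_1\in\mathcal{GF}$ by $F$, to which case (1) does not apply since $X_1$ is not known to be in $\PGF$. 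So the argument becomes circular (you are implicitly assuming $\mathcal{GF}$ is closed under extensions, which is precisely what is being established). The paper instead takes the \emph{pushout} along the projective coembedding $h\colon B\hookrightarrow P$: this gives $0\to A\to P\to P/A\to 0$ with $P$ projective, and $P/A$ sits in $0\to C\to P/A\to B'\to 0$ with $B'\in\PGF$, so $P/A\in\mathcal{GF}$ by case (1), and now the dimension shift applies (this is the content of the cited Bennis lemma). Incidentally, your pullback does happen to salvage the $\PGF$ strengthening of (2) — with $F$ replaced by a projective, $0\to A\to X\to P\to 0$ splits, so $A$ is a direct summand of $X\in\PGF$ and one can invoke summand-closure of $\PGF$ — but for the plain $\mathcal{GF}$ statement, flatness of $F$ does not make the sequence split, and the argument does not close.
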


\begin{proof} The proof mimics the one of \cite[Theorem 2.5]{Ho}. We give just a brief sketch.
Let us denote by $i$ the monomorphism $C\to A$ and by $f$ the epimorphism $A\to B$ in the short exact sequence from $(1)$. There are short exact sequences $0\to B \overset{h}{\to} P\to B^\prime\to 0$ and $0\to C\overset{g}{\to} F\to C^\prime\to 0$ with $P$ projective, $F$ flat, $B^\prime\in\PGF$ and $C^\prime$ Gorenstein flat. We define a morphism $m_1:A\to F$ as a~factorization of $g$ through $i$; this is possible, since $\FL\subseteq\PGF^\perp$ by Theorem~\ref{t:pgf}. Further, we let $m_2 = hf$ and let $m:A\to F\oplus P$ be determined by $m_1$ and $m_2$.

Then $m$ is a monomorphism, $\Coker(m)$ is an extension of $C^\prime$ by $B^\prime$, and we can repeat the process to obtain an acyclic complex of flat modules where all syzygies, including $A$, belong to $\Ker\Tor^R_n(-,I)$ for all injective left $R$-modules $I$ and $n>0$ (see also \cite[Lemma 2.4(2)]{B}).

To prove the alternative $(2)$, we take the pushout of $h$ and the epimorphism $B\to C$ from $(2)$. In the resulting short exact sequence $0\to A \to P \to H\to 0$, we see, by $(1)$, that $H\in\mathcal{GF}$ since it is an extension of $C$ by $B^\prime$. By \cite[Lemma 2.4(3)]{B}, it follows that $A$ is Gorenstein flat.

The proof of the moreover clause is analogous. In this case, we have $F$ projective and $C^\prime\in\PGF$.

Finally, the construction used in the proof of the alternative $(1)$ for $C\in\PGF$ can be iterated to show that a transfinite extension of PGF-modules is again a~syzygy in an acyclic complex of projective modules. The rest follows from Eklof lemma.
\end{proof}

Recall that a class $\mathcal W\subseteq\ModR$ is called \emph{thick} provided that it is closed under direct summands, extensions, and taking kernels of epimorphisms and cokernels of monomorphisms.

\begin{thm} \label{t:Kaplansky} $\mathfrak{PGF}=(\PGF,\PGF^\perp)$ is a complete \chge{hereditary} cotorsion pair with $\PGF^\perp$ thick. If $R$ is right $\aleph_0$-coherent, then $\mathfrak{PGF}$ is of countable type and each module from $\PGF$ is filtered by countably presented PGF-modules.

Moreover, $R$ is right perfect \iff $\PGF$ coincides with the class $\mathcal{GF}$ of all Gorenstein flat modules.
\end{thm}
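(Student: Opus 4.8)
The heart of the matter is to prove that $\PGF$ is \emph{deconstructible}: every $K\in\PGF$ is filtered by $\nu$-presented members of $\PGF$, where $\nu=|R|+\aleph_0$ in general and $\nu=\aleph_0$ when $R$ is right $\aleph_0$-coherent. First I would record the closure properties that are needed. By definition $\PGF$ is closed under (arbitrary, hence countable) direct sums; by the ``moreover'' clause of Lemma~\ref{l:PGF}(2) it is closed under kernels of epimorphisms, and by Lemma~\ref{l:PGF} under transfinite extensions. Closure under direct summands then follows by the Eilenberg swindle: if $M=L\oplus L'\in\PGF$ put $W=M^{(\omega)}\in\PGF$; reassociating gives $L\oplus W\cong W$, so the exact sequence $0\to L\to L\oplus W\to W\to 0$ has both its middle term ($\cong W$) and its cokernel ($W$) in $\PGF$, whence $L\in\PGF$. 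Finally, by the ``in particular'' part of Lemma~\ref{l:red}, every $K\in\PGF$ is a direct summand of a module $M$ fitting into an exact sequence $\mathcal E\colon 0\to M\xrightarrow{f}P\to M\to 0$ with $P$ projective and $\Tor_1^R(M,J)=0$ for all injective left $R$-modules $J$; and conversely every such $M$ lies in $\PGF$, since the $1$-periodic complex $\cdots\to P\to P\to\cdots$ with differential $P\twoheadrightarrow M\hookrightarrow P$ is acyclic with $M$ as a syzygy and, because $\Tor_{\ge1}^R(M,J)=0$ (dimension shifting along $\mathcal E$), stays acyclic after $-\otimes_R J$.

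For deconstructibility I would feed $\mathcal E$ into Proposition~\ref{p:filtr}, case~(2): take $I=(R_R)^c$ (so $\overline{I^c}=\overline{(R_R)^{cc}}=\overline{R_R}$, and $f\otimes_R I^\theta$ is injective for all $\theta$ because $I^\theta$ is an injective left module), $W$ an injective cogenerator of $\ModR$, and $\mathcal D=\overline{\{I^c,W\}}$; the hypotheses hold since $P$ is projective and every ring is right $\nu$-coherent once $\nu\ge|R|$. Fixing a decomposition $P=\bigoplus_{i\in X}P_i$ into countably generated projectives (Kaplansky), one runs the construction so that the auxiliary systems $\mathcal S_\lambda$ consist of the split summands $\bigoplus_{i\in Y}P_i$, $|Y|<\lambda$; the resulting filtration $\mathfrak F=(\mathcal E_\alpha\colon 0\to M_\alpha\to P_\alpha\to M_\alpha\to 0)$ of $\mathcal E$ (its members are genuine short exact sequences since $f$ is monic) then has consecutive factors $0\to N\xrightarrow{v}Q\to N\to 0$ with $Q=\bigoplus_{i\in X_{\alpha+1}\setminus X_\alpha}P_i$ projective and $N$ being $\nu$-presented. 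As in the proof of Proposition~\ref{p:key} — which Proposition~\ref{p:filtr} follows — the map $v\otimes_R I^\theta$ is injective for every $\theta$; since $(R_R)^c$ cogenerates $\RMod$, every injective left $R$-module $J$ is a summand of some $I^\theta$, so $v\otimes_R J$ is injective, i.e.\ $\Tor_1^R(N,J)=0$ for all injective left $J$, and therefore $N\in\PGF$ by the first paragraph. Thus $M$, hence $K$ as a summand of $M$, is a summand of a module filtered by the \emph{set} $\clS$ of $\nu$-presented modules in $\PGF$ (note $R\in\clS$). Combining this with closure of $\PGF$ under transfinite extensions and summands gives $\PGF={}^\perp(\clS^\perp)$ and $\PGF^\perp=\clS^\perp$, so $\mathfrak{PGF}$ is complete by \cite[Theorem~6.11(b)]{GT}. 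When $R$ is right $\aleph_0$-coherent one takes $\nu=\aleph_0$, so $\clS$ consists of countably presented modules and $\mathfrak{PGF}$ is of countable type, and the Kaplansky theorem for cotorsion pairs \cite[Theorem~7.13]{GT} promotes ``summand of $\clS$-filtered'' to an honest filtration by countably presented $PGF$ modules.

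Heredity is immediate: $\PGF$ is closed under kernels of epimorphisms (first paragraph), so $\mathfrak{PGF}$ is hereditary and $\PGF^\perp$ is closed under cokernels of monomorphisms \cite[Lemma~5.24]{GT}. For thickness it remains to see $\PGF^\perp$ is closed under kernels of epimorphisms, and here I would use that every $A\in\PGF$ admits a co-syzygy sequence $0\to A\to P^0\to A^{-1}\to 0$ with $P^0$ projective and $A^{-1}\in\PGF$ (read off the defining acyclic complex). Given $0\to B'\to B\xrightarrow{\pi}B''\to 0$ with $B,B''\in\PGF^\perp$ and $\varphi\colon A\to B''$, the map $\varphi$ extends along $A\hookrightarrow P^0$ because $\Ext^1_R(A^{-1},B'')=0$, then lifts through $\pi$ by projectivity of $P^0$; restricting the lift back to $A$ shows $\Hom_R(A,B)\to\Hom_R(A,B'')$ is onto, hence $\Ext^1_R(A,B')=0$ from the long exact sequence. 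So $B'\in\PGF^\perp$ and $\PGF^\perp$ is thick.

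Finally, for the perfectness criterion: if $R$ is right perfect then flat right modules are projective, so acyclic complexes of flats coincide with acyclic complexes of projectives and the conditions defining $\mathcal{GF}$ and $\PGF$ become identical, giving $\mathcal{GF}=\PGF$. Conversely, suppose $\mathcal{GF}=\PGF$. One first checks $\FL\subseteq\mathcal{GF}$: a flat module $F$ has a projective resolution to the left which is $-\otimes_R J$-exact (as $\Tor_{\ge1}^R(F,J)=0$) and, splicing pure exact sequences $0\to C\to T\to C'\to 0$ with $T$ flat cotorsion and $C'$ flat (these exist by \cite{BEE}), a flat coresolution to the right which is $-\otimes_R J$-exact for every $J$ (pure sequences stay exact under any tensor), so $F$ is a syzygy of a witnessing acyclic complex of flats. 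Hence $\FL\subseteq\mathcal{GF}=\PGF\subseteq{}^\perp\overline{R_R}\subseteq{}^\perp\FL$ using Theorem~\ref{t:pgf} and $\FL\subseteq\overline{R_R}$. Given any flat $F$, a free presentation $0\to F_1\to R^{(\lambda)}\to F\to 0$ is pure, so $F_1$ is flat and $\Ext^1_R(F,F_1)=0$ splits it; thus $F$ is projective and $R$ is right perfect. The main obstacle throughout is the deconstructibility step, and more precisely the need to extract from the proof of Proposition~\ref{p:filtr} that the connecting maps of the filtration stay injective after $-\otimes_R\big((R_R)^c\big)^\theta$ — this is exactly what makes the $\nu$-presented filtration factors genuine $PGF$ modules rather than merely modules in ${}^\perp\overline{R_R}$.
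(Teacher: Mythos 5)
Your proof is correct and follows essentially the same route as the paper's: reduce via Lemma~\ref{l:red} to the $2$-periodic exact sequence $\mathcal E$, deconstruct it by Proposition~\ref{p:filtr}(2), and combine the closure of $\PGF$ under transfinite extensions and direct summands (you use an Eilenberg swindle where the paper cites Holm) with the thickness and perfectness arguments you spell out. You also rightly flag the one subtle point that the paper leaves implicit: the stated conclusion of Proposition~\ref{p:filtr} only yields $M_{\alpha+1}/M_\alpha\in{}^\perp\overline{I^c}$, and one must extract from its proof (which mirrors that of Proposition~\ref{p:key}) the stronger fact that the connecting maps between consecutive filtration factors remain injective after $-\otimes_R I^\theta$, so that the converse direction of Lemma~\ref{l:red} identifies those factors as genuine PGF-modules.
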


\begin{proof} Let $(\mathcal A,\mathcal B)$ be the cotorsion pair generated by a representative set \chge{$\A_0$} of $\nu$-presented modules from $\PGF$; here $\nu$ is as in Proposition~\ref{p:filtr}(2). Then $\A$ consists precisely of direct summands of $\A_0$-filtered modules by \cite[Corollary 6.14]{GT}. Using this, Proposition~\ref{p:filtr}, the Eklof Lemma and Lemma~\ref{l:red}, we see that $\PGF\subseteq\mathcal A$. Since $R_R\in\PGF$ and $\PGF$ is closed under taking transfinite extensions by Lemma~\ref{l:PGF} and direct summands by \cite[Proposition 1.4]{Ho}, we get $\mathcal A = \PGF$. The completeness of $\mathfrak{PGF}$ follows by \cite[Corollary 6.14 and Theorem 6.11]{GT}. The thickness of $\PGF^\perp$ stems from the definition of a PGF-module. \chge{The statement about right $\aleph_0$-coherent rings follows from \cite[Theorem 7.13]{GT}.}

The only-if part of the moreover clause is trivial. The other implication follows from the fact that $\PGF \cap \PGF^\perp$ is precisely the class $\mathcal P_0$ of all projective modules which stems from the thickness of $\PGF^\perp\supseteq \mathcal P_0$. Note that flat modules belong to $\PGF^\perp$ by Theorem~\ref{t:pgf}.
\end{proof}

\begin{exm} Let $R$ be an Artin algebra. Then the cotorsion pair $\mathfrak{PGF}$ is of countable type. Moreover, it is of finite type (i.e. generated by finite dimensional PGF-modules) if and only if $R$ is virtually Gorenstein, cf.\,\cite[Theorem 5]{BK}. Of course, in this case $\PGF = \mathcal{GF} = \mathcal{GP}$.
\end{exm}

\begin{rem}
Let $R$ be a right $\aleph_0$-coherent ring. It is well known (cf. \cite[Lemma~3.4]{WL}) that each countably presented Gorenstein projective $R$-module is (projectively coresolved) Gorenstein flat. We have thus identified the class $\PGF$ as the subclass of Gorenstein projective modules filtered by countably presented Gorenstein projective modules.

The question whether Gorenstein projective modules are necessarily (projectively coresolved) Gorenstein flat remains open. However, we have manifested that, to answer this question in positive over $R$ right $\aleph_0$-coherent, it is necessary to show that each Gorenstein projective module is filtered by countably presented Gorenstein projective modules.

\smallskip

Our results suggest that the notion of a projectively coresolved Gorenstein flat module could serve as an alternative definition of a Gorenstein projective module over any ring. In fact, Theorem~\ref{t:Kaplansky} can be viewed as a Gorenstein analogue of the Kaplansky theorem on the decomposition of projective modules.

Finally, if it happens that $\mathcal{GP}\subseteq\mathcal{GF}$, then necessarily $\mathcal{GP} = \PGF$. 
However, it is not clear whether $\mathcal{GF}\cap \mathcal{GP} \subseteq \PGF$ since we do not know if $\mathcal{FL}\cap\mathcal{GP} = \mathcal P_0$ holds true, cf. \cite[Question 2.8]{BIE}.
\end{rem}

The rest of this section is devoted to clarifying the relation between $PGF$-modules and Gorenstein flat modules. We obtain a description of the class $\mathcal{GF}$ which implies that it is always closed under extensions, regardless of the ring $R$ \chge{(hence \emph{every} ring is GF-closed in the terminology of \cite{B})}.

\begin{thm} \label{t:GF} Let $M$ be a module. Then the following conditions are equivalent:
\begin{enumerate}
	\item $M$ is Gorenstein flat.
	\item There is a short exact sequence $$0\to K \to L\to M\to 0$$ with $K\in\FL$ and $L\in\PGF$ which remains exact after applying the functor $\Hom_R(-,C)$ for any (flat) cotorsion module $C$.
  \item $\Ext_R^1(M,C) = 0$ for all cotorsion modules $C\in\PGF^\perp$.
	\item There is a short exact sequence $$0\to M \to F\to N\to 0$$ with $F$ flat and $N\in\PGF$.
\end{enumerate}
In particular, we get $\mathcal{GF}\cap\PGF^\perp = \FL$.
\end{thm}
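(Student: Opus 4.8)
The plan is to prove the equivalences through the cycle $(1)\Rightarrow(2)\Rightarrow(3)\Rightarrow(4)\Rightarrow(1)$, with $(4)\Rightarrow(1)$ elementary and the real work concentrated in the passage from Gorenstein flat (or orthogonal) data to a short exact sequence with an \emph{honestly flat} term. Throughout I would use the complete hereditary cotorsion pair $\mathfrak{PGF}=(\PGF,\PGF^\perp)$ of Theorem~\ref{t:Kaplansky}, the closure properties of $\mathcal{GF}$ in Lemma~\ref{l:PGF}, and the elementary facts $\FL\subseteq\mathcal{GF}$ (witnessed by the contractible complex $\cdots\to0\to F\to F\to0\to\cdots$ with identity differential), $\PGF\subseteq\mathcal{GF}$ (projective modules are flat), $\FL\subseteq\PGF^\perp$ (Theorem~\ref{t:pgf}), and that $M\in\mathcal{GF}$ forces $\Tor^R_n(M,I)=0$ for all $n\ge1$ and every injective left $R$-module $I$. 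The one further input I would isolate first is a duality fact: an F-totally acyclic complex $\mathbf F$ of flat modules stays exact after $\Hom_R(-,C)$ for every flat cotorsion module $C$, since $C$ is a direct summand of $E^c$ with $E$ injective (flat cotorsion modules being pure-injective), and $\Hom_R(\mathbf F,E^c)=(\mathbf F\otimes_RE)^c$ is exact because $\mathbf F\otimes_RE$ is. In particular $\Ext^{\ge1}_R(\mathcal{GF},C)=0$ for $C$ flat cotorsion.

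For $(4)\Rightarrow(1)$: from $0\to M\to F\to N\to 0$ with $F$ flat and $N\in\PGF$, the long exact $-\otimes_RI$ sequence together with $\Tor^R_{\ge1}(N,I)=0$ yields $\Tor^R_{\ge1}(M,I)=0$ and exactness of $0\to M\otimes_RI\to F\otimes_RI\to N\otimes_RI\to 0$. Splicing a projective resolution of $M$ (exact after $-\otimes_RI$ by the $\Tor$-vanishing) to the left of $F$, and the right half $0\to N\to Q^0\to Q^1\to\cdots$ of an F-totally acyclic complex of projectives with syzygy $N$ to the right of $F$, one gets a two-sided acyclic complex of flats that stays exact after $-\otimes_RI$ and displays $M$ as a syzygy; hence $M\in\mathcal{GF}$.

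The implications $(1)\Rightarrow(2)$, $(2)\Rightarrow(3)$, $(3)\Rightarrow(4)$ then reduce, modulo bookkeeping with long exact sequences, to the statement $\mathcal{GF}\cap\PGF^\perp=\FL$ (which is also the final assertion of the theorem). Granting it: for $(1)\Rightarrow(2)$ take a special $\PGF$-precover $0\to K\to L\to M\to 0$, so $L\in\PGF$ and $K\in\mathcal{GF}$ by Lemma~\ref{l:PGF}(2), whence $K\in\mathcal{GF}\cap\PGF^\perp=\FL$; the $\Hom$-exactness needed in (2) is automatic, since $\Ext^1_R(L,C)=0$ by heredity of $\mathfrak{PGF}$ and $\Ext^1_R(M,C)=0$ by the duality input, for $C$ flat cotorsion. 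For $(3)\Rightarrow(4)$ take a special $\PGF^\perp$-preenvelope $0\to M\to B\to L\to 0$, so $L\in\PGF$ and $B\in\mathcal{GF}\cap\PGF^\perp=\FL$ by Lemma~\ref{l:PGF}(1). The step $(2)\Rightarrow(3)$ is a dimension shift along the sequence in (2) combined with the observation that it is enough to test $\Ext^1_R(M,-)$ against flat cotorsion modules (using that the flat cover of a cotorsion module is flat cotorsion).

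The sole genuine obstacle is therefore $\mathcal{GF}\cap\PGF^\perp\subseteq\FL$ (the reverse inclusion being among the elementary facts). My plan for it is to establish $(1)\Rightarrow(4)$ \emph{independently} of this identification, by running on a Gorenstein flat module the very engine that proves Theorem~\ref{t:Kaplansky} for $\PGF$: apply Lemma~\ref{l:red} to the summed-up shifts of an F-totally acyclic complex of flats with syzygy $M$, producing a flat module $P$ and an exact sequence $\widetilde M\overset{f}\longrightarrow P\longrightarrow\widetilde M\longrightarrow 0$ with $M$ a direct summand of $\widetilde M$ and $f\otimes_RI^\theta$ injective for every injective left $R$-module $I$ and every cardinal $\theta$; then feed this into Proposition~\ref{p:key} (and Proposition~\ref{p:filtr} for the filtration) to locate $\widetilde M$, hence $M$, inside a flat module with $\PGF$ cokernel. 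Once $(1)\Rightarrow(4)$ is available, any $G\in\mathcal{GF}\cap\PGF^\perp$ sits in $0\to G\to F\to N\to 0$ with $N\in\PGF$, the equality $\Ext^1_R(N,G)=0$ splits it, and $G$ is a direct summand of the flat module $F$. I expect the delicate point to be verifying the hypotheses of Propositions~\ref{p:key}/\ref{p:filtr} for this $\widetilde M$ — in particular that the ambient flat module $P$ lies in the left orthogonal $^\perp\overline{(R_R)^c}$ required there — which is exactly where the machinery of the earlier sections is brought to bear.
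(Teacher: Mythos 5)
Your cycle $(4)\Rightarrow(1)$, $(1)\Rightarrow(2)$, $(2)\Rightarrow(3)$ is in good shape where you have details (the flat-cover reduction in $(2)\Rightarrow(3)$ matches the paper's argument), but the heart of the proof is exactly the place you flag as delicate, and it does not go through as planned. Proposition~\ref{p:key} requires the hypothesis $P\in{}^\perp\overline{I^c}$, and when you run Lemma~\ref{l:red} on an F-totally acyclic complex of flats the resulting module $P=\bigoplus_n F^n$ is flat but typically not projective. With $I$ an injective cogenerator, $I^c$ is flat cotorsion, but $\overline{I^c}$ is closed under direct limits and pure submodules and therefore escapes the class $\EC$ of cotorsion modules (already over $\Z$ with $I=\Q/\Z$ one has $\hat{\Z}^{(\omega)}\in\overline{\hat{\Z}}$ but $\hat{\Z}^{(\omega)}\notin\EC$). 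Since the only modules $D$ with $\Ext^1_R(P,D)=0$ for all flat $P$ are the cotorsion ones, a flat $P$ is not in ${}^\perp\overline{I^c}$, so the hypothesis of Proposition~\ref{p:key} fails and your independent proof of $(1)\Rightarrow(4)$, and hence of the identity $\mathcal{GF}\cap\PGF^\perp=\FL$ that all your other steps rest on, has a genuine gap.

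The paper circumvents this by not invoking Proposition~\ref{p:key} at all for $(1)\Rightarrow(2)$: it instead passes to the category $\ChR$ of chain complexes, takes a special $\Ch{\mathcal P_0}$-precover $0\to G^\bullet\to P^\bullet\to F^\bullet\to 0$ of the F-totally acyclic flat complex $F^\bullet$, observes that $G^\bullet$ is an exact complex of flats in $\Ch{\mathcal P_0}^\perp$ hence pure exact with flat cocycles, and reads off the desired short exact sequence $0\to K\to L\to M\to 0$ with $K$ flat and $L\in\PGF$ from the resulting $3\times 3$ diagram. That argument never needs a flat module to lie in a left-orthogonal class that it cannot belong to. A secondary issue: your sketch of $(3)\Rightarrow(4)$ invokes Lemma~\ref{l:PGF}(1) to conclude $B\in\mathcal{GF}$, which presupposes $M\in\mathcal{GF}$, i.e.\ condition $(1)$ — but in proving $(3)\Rightarrow(4)$ you may not assume $(1)$. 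The paper's $(3)\Rightarrow(4)$ is self-contained: it first shows $M$ is a pure-epimorphic image of a PGF-module via a pushout with the cotorsion envelope of the kernel of a special $\PGF$-precover, then applies the same observation to the middle term of a special $\PGF^\perp$-preenvelope of $M$ and concludes it is flat.
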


\begin{proof} $(1)\Longrightarrow (2)$. The module $M$ is a syzygy 
in an exact complex $F^\bullet$ consisting of flat modules which remains exact after applying the functor $-\otimes_R I$ for any $I\in\RMod$ injective. Consider the complete cotorsion pair $(\Ch{\mathcal P_0},\Ch{\mathcal P_0}^\perp)$ in $\ChR$, where $\Ch{\mathcal P_0}$ denotes the class of all complexes of projective modules, cf.\;\cite[Theorem 4.5]{BEIJR}. Let $0\to G^\bullet \overset{h^\bullet}{\to} P^\bullet \to F^\bullet \to 0$ be a special $\Ch{\mathcal P_0}$-precover of~$F^\bullet$\chge{, i.e.\ $P^\bullet\in\Ch{\mathcal P_0}$ and $G^\bullet\in\Ch{\mathcal P_0}^\perp$. It follows that the} complex $G^\bullet$ is exact, whence $P^\bullet$ is exact as well. For each $i\in\Z$, we obtain the following commutative diagram of modules with exact rows and columns

$$\begin{CD}
 @.  0  @. 0  @.   0 \\
  @.  @AAA				@AAA		@AAA \\
	0 @>>> M^i 	@>>>	F^i	@>>> 	M^{i+1}	@>>>	0\\
	@.  @AAA				@AAA	@AAA		\\
	0 @>>> L^i 	@>>>	P^i	@>>> 	L^{i+1}	@>>>	0\\
	@.  @A{f^i}AA				@A{h^i}AA	@AAA		\\
	0 @>>> K^i 	@>{g^i}>>	G^i	@>>> 	K^{i+1}	@>>>	0 \\
  @.  @AAA				@AAA		@AAA \\
 @.  0  @. 0  @.   0
\end{CD}$$
where $h^i$ is pure since $F^i$ is flat. It follows that $G^\bullet$ consists of flat modules and as such it is a direct limit of complexes from $\Ch{\mathcal P_0}$\chge{; see~\cite[Lemma 8.4]{Nee}}. On the other hand $G^\bullet\in \Ch{\mathcal P_0}^\perp$, and so each morphism from a finitely presented complex to $G^\bullet$ is null-homotopic. In particular, $G^\bullet$ is pure exact and $K^i$ is flat \chge{(see also \cite[Theorem 8.6]{Nee})}. As $F^\bullet$ and $G^\bullet$ remain exact after applying $-\otimes_R I$ for an injective $I\in\RMod$, the same holds for $P^\bullet$, which yields $L^i\in\PGF$. Finally, $f^i$ is $C$-injective for any $C$ cotorsion since $g^i$ and $h^i$ are (we use that $K^{i+1}$ and $F^i$ are flat).

$(2)\Longrightarrow (3)$. Let $0\to K\overset{f}{\to} L\to M\to 0$ be a short exact sequence with $K$ flat and $L\in\PGF$ \st $f$ is $\EC\cap\FL$-injective. The result will follow, once we show that any morphism $g\colon K \to C$ with $C$ cotorsion (from $\PGF^\perp$) factorizes through $f$. Let us form a~flat cover $\pi\colon F \to C$. Then there exists a map $h\colon K \to F$ \st $\pi h = g$ since $K$ is flat and $\Ker(\pi)$ cotorsion. The flat module $F$ is also cotorsion, whence we can factorize $h$ through~$f$. The composition of the resulting map with $\pi$ is the desired factorization of $g$.

$(3)\Longrightarrow (4)$. First, we show that $M$ is a pure-epimorphic image of a PGF-module. We start with a special $\PGF$-precover $\pi\colon P \to M$ of the module $M$. Put $K = \Ker(\pi)$ and let us consider the following pushout diagram 
$$\begin{CD}
 @. 0  @. 0  @.   \\
  @. @AAA				@AAA		@. \\
	@. G 	@=		G @.	@.	\\
	@. @AAA				@AAA	@.		\\
	0 @>>> CE(K) @>>>	H @>>>	 M	@>>>	0	\\
	@. @A{\subseteq}AA				@AAA	@|		\\
	0  @>>> K 	@>{\varepsilon}>>	P	@>{\pi}>> 	M	@>>>	0
\end{CD}$$
where $CE(K)$ denotes the cotorsion envelope of $K$. 
Since $\PGF^\perp$ contains all flat modules \chge{and $G$ is flat, we have $CE(K)\in\PGF^\perp$}. By $(3)$, the middle row splits whence the cotorsion envelope $K\hookrightarrow CE(K)$ factorizes through $\varepsilon$. In particular, \chge{$\varepsilon$~is a pure monomorphism and} $\pi$ is a pure epimorphism. 

Now, we form a special $\PGF^\perp$-preenvelope of $M$: $0 \to M\to F\to N\to 0$. Then $F\in\PGF^\perp$ and, at the same time, $\Ext_R^1(F,C) = 0$ for any cotorsion module $C\in\PGF^\perp$. By the preceding paragraph, $F$ is a pure-epimorphic image of a PGF-module. In particular, any special $\PGF$-precover $\rho\colon Q \to F$ is a pure epimorphism. Since $Q\in\PGF^\perp\cap\PGF$, it is projective, and so $F$ is flat.

\chge{The implication $(4)\Longrightarrow (1)$ follows at once from~\cite[Lemma 2.4]{B}.}

\smallskip

Finally, the inclusion $\mathcal{GF}\cap\PGF^\perp \supseteq \FL$ is a consequence of Theorem~\ref{t:pgf}, and the equivalent condition $(4)$ yields $\mathcal{GF}\cap\PGF^\perp \subseteq \FL$.

%
%
%
\end{proof}



The following corollary provides a generalization of \cite[Section 3]{G2}.

\begin{cor} \label{c:GF} There is a hereditary cotorsion pair $\mathfrak{GF} = (\mathcal{GF},\EC\cap\PGF^\perp)$ generated by a set of modules of cardinality at most $|R|+\aleph_0$. The kernel of $\mathfrak{GF}$ equals $\FL\cap\EC$. The class $\mathcal{GF}$ is closed under direct limits, and so it is a covering class \chge{(in the sense of~\cite[Definition 5.5]{GT})}. 
\end{cor}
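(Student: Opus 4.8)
The plan is to establish Corollary~\ref{c:GF} by combining Theorem~\ref{t:GF} with the general machinery for cotorsion pairs generated by a set, and then invoking the closure under direct limits to get coverings. First I would verify that $\mathfrak{GF} = (\mathcal{GF}, \EC\cap\PGF^\perp)$ is indeed a cotorsion pair. By condition~(3) of Theorem~\ref{t:GF}, a module $M$ lies in $\mathcal{GF}$ if and only if $\Ext^1_R(M,C)=0$ for all $C\in\EC\cap\PGF^\perp$; that is, $\mathcal{GF} = {}^\perp(\EC\cap\PGF^\perp)$. Conversely one must check $(\mathcal{GF})^\perp = \EC\cap\PGF^\perp$: the inclusion $\subseteq$ holds because $\FL\subseteq\mathcal{GF}$ gives $(\mathcal{GF})^\perp\subseteq\FL^\perp=\EC$, and $\PGF\subseteq\mathcal{GF}$ gives $(\mathcal{GF})^\perp\subseteq\PGF^\perp$; the reverse inclusion is then immediate since every module in $\EC\cap\PGF^\perp$ is right-$\Ext$-orthogonal to $\mathcal{GF}={}^\perp(\EC\cap\PGF^\perp)$. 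This confirms $\mathfrak{GF}$ is a cotorsion pair, and it is hereditary because $\PGF^\perp$ is thick (Theorem~\ref{t:Kaplansky}) and $\EC$ is closed under cokernels of monomorphisms, so $\EC\cap\PGF^\perp$ is too, which by~\cite[Lemma~5.24]{GT} gives heredity.

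Next I would identify the kernel. The kernel of $\mathfrak{GF}$ is $\mathcal{GF}\cap\EC\cap\PGF^\perp$. By the last assertion of Theorem~\ref{t:GF}, $\mathcal{GF}\cap\PGF^\perp=\FL$, so the kernel equals $\FL\cap\EC$, as claimed. For generation by a small set, I would use Theorem~\ref{t:Kaplansky}: the cotorsion pair $\mathfrak{PGF}$ is generated by a representative set $\A_0$ of $\nu$-presented $\PGF$-modules (with $\nu$ as in Proposition~\ref{p:filtr}(2)); hence there is a set $\clS$ generating $\mathfrak{PGF}$. Then $\EC\cap\PGF^\perp = \FL^\perp\cap\clS^\perp = (\FL\cup\clS)^\perp$, and by the solution to the flat cover conjecture the class $\FL$ itself is generated by a single module (up to the usual cardinality bound $|R|+\aleph_0$), so $\FL = \clS'^\perp{}^\perp$ for a set $\clS'$ of modules of cardinality $\le|R|+\aleph_0$; combining, $\EC\cap\PGF^\perp = \clS''^\perp$ for a set $\clS''$ with $\card\clS''\le|R|+\aleph_0$. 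Thus $\mathfrak{GF}$ is generated by a set of that cardinality, and is complete by \cite[Theorem~6.11(b)]{GT}.

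Finally, for the closure of $\mathcal{GF}$ under direct limits I would argue as follows. Given a directed system $(M_i)_{i\in I}$ of Gorenstein flat modules, use characterization~(4) of Theorem~\ref{t:GF}: each $M_i$ sits in a short exact sequence $0\to M_i\to F_i\to N_i\to 0$ with $F_i$ flat and $N_i\in\PGF$. One can arrange these functorially (or at least compatibly, passing to a cofinal subsystem) so that taking direct limits yields $0\to \varinjlim M_i\to \varinjlim F_i\to \varinjlim N_i\to 0$; here $\varinjlim F_i$ is flat since $\FL$ is closed under direct limits. The point where real work is needed is showing $\varinjlim N_i\in\PGF$, i.e.\ that $\PGF$ is closed under direct limits — but in fact one need only know that $\varinjlim N_i$ is \emph{Gorenstein flat}, since condition (4) with $N$ merely required to be Gorenstein flat still characterizes $\mathcal{GF}$ via the implication $(1)\Rightarrow(4)$ applied in reverse through $(4)\Rightarrow(1)$; more cleanly, since $\mathcal{GF}={}^\perp(\EC\cap\PGF^\perp)$ and the right-hand class is closed under products and pure submodules (it is definable, being $\FL^\perp\cap\PGF^\perp$ with $\PGF$ generated by a set of finitely/countably presented modules under suitable hypotheses — or one invokes that $\mathcal{GF}$ is the left class of a cotorsion pair generated by a set and appeals to~\cite[Theorem~6.19]{GT} once we know the right class is closed under direct limits, equivalently is definable).

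I expect the main obstacle to be precisely this direct-limit closure of $\mathcal{GF}$: one must show the right-hand class $\EC\cap\PGF^\perp$ is closed under direct limits (equivalently definable), which is not formal. Over general rings this follows because $\EC=\FL^\perp$ has $\FL$ closed under direct limits and filtrations by the flat cover conjecture machinery, while $\PGF^\perp=\clS^\perp$ for a \emph{set} $\clS$; but $\clS^\perp$ need not be closed under direct limits unless the modules in $\clS$ are suitably small (which they are when $R$ is right $\aleph_0$-coherent, giving countably presented generators by Theorem~\ref{t:Kaplansky}, but not obviously in general). The honest route for arbitrary $R$ is therefore to use characterization (4) directly: given $M=\varinjlim M_i$ with each $M_i$ Gorenstein flat, build compatible sequences $0\to M_i\to F_i\to N_i\to 0$ and take the limit, reducing to showing $\varinjlim N_i\in\mathcal{GF}$ where each $N_i\in\PGF\subseteq\mathcal{GF}$ — so closure under direct limits of $\mathcal{GF}$ is what we want, and the argument must instead be bootstrapped: one shows $\mathcal{GF}$ is closed under pure quotients and pure subobjects and pure-epimorphic images using that $\PGF$ is, then combines with the fact that any direct limit is a pure quotient of a direct sum, and direct sums of Gorenstein flats are Gorenstein flat (clear from the complex definition). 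Once $\mathcal{GF}$ is closed under direct limits, \cite[Theorem~6.19]{GT} or the Enochs–Xu theorem yields that $\mathcal{GF}$, being the left class of a cotorsion pair generated by a set and closed under direct limits, is a covering class in the sense of \cite[Definition~5.5]{GT}.
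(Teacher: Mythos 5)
Your overall structure agrees with the paper's: identify the cotorsion pair via Theorem~\ref{t:GF}(3), determine the kernel from Theorem~\ref{t:GF}(4), obtain the small generating set from $\mathfrak{PGF}$ and $(\FL,\EC)$, and deduce the covering property from direct-limit closure by Enochs. The paper does the ``generated by a set'' step a bit more cleanly by observing that $\mathfrak{GF}$ is the supremum of $(\FL,\EC)$ and $\mathfrak{PGF}$ in the big lattice of cotorsion pairs, hence is generated by the union of their two generating sets. The genuine gap in your argument is the closure of $\mathcal{GF}$ under direct limits.

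Your proposed bootstrap --- show $\mathcal{GF}$ is closed under pure-epimorphic images ``using that $\PGF$ is'' and then exploit that a direct limit is a pure quotient of a direct sum --- fails at both steps. First, $\PGF$ is emphatically \emph{not} closed under pure-epimorphic images: $R\in\PGF$, the pure-epimorphic images of free modules are exactly the flat modules, and $\PGF\cap\FL=\PGF\cap\PGF^\perp$ consists only of the projective modules (proof of Theorem~\ref{t:Kaplansky} together with Theorem~\ref{t:pgf}), so over any non-perfect ring there are flat pure quotients of free modules lying outside $\PGF$. Second, whether $\mathcal{GF}$ itself is closed under pure-epimorphic images over an arbitrary ring is explicitly flagged in this paper as an open question (see the paragraph preceding Proposition~\ref{p:lcoh}); Proposition~\ref{p:lcoh} only settles it under a coherence hypothesis, so it cannot be appealed to here. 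Your alternative idea of showing $\EC\cap\PGF^\perp$ is definable is, as you yourself sense, dubious in general and in any case stronger than what is needed. The paper's actual route is shorter and uses an external ingredient you are missing: since $\mathcal{GF}$ is now known to be the left-hand class of a cotorsion pair, it is closed under extensions, i.e.\ $R$ is GF-closed in Bennis's sense; then \cite[Lemma~3.1]{YL} --- a structural result specific to Gorenstein flat modules, not a general cotorsion-pair fact --- gives closure under direct limits, after which \cite[Corollary~5.32]{GT} yields that $\mathcal{GF}$ is covering.
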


\begin{proof} Let us denote $\mathcal B = \EC\cap\PGF^\perp$. We have $\mathcal{GF} = {}^\perp\mathcal B$ by Theorem~\ref{t:GF}(3). On the other hand, $\mathcal B = \mathcal{GF}^\perp$ since $\PGF\cup\FL\subseteq\mathcal{GF}$. So $\mathfrak{GF}$ is a cotorsion pair. In fact, it is the supremum of $(\FL,\EC)$ and $\mathfrak{PGF}$ in the big lattice of cotorsion pairs, using the convention from \cite[Chapter 12]{GT}. As such, it is generated by the union of two sets generating these cotorsion pairs, which means by a representative set of Gorenstein flat modules of cardinality at most $\nu = |R|+\aleph_0$. The description of the kernel $\mathcal{GF}\cap\mathcal{GF}^\perp$ of $\mathfrak{GF}$ stems frow Theorem~\ref{t:GF}(4).

In particular, we have shown that the class $\mathcal{GF}$ is closed under extensions, and so it is closed under direct limits as well by \cite[Lemma 3.1]{YL}. Finally, $\mathcal{GF}$ is covering by the well-known result due to Enochs, see e.g.\,\cite[Corollary 5.32]{GT}.
\end{proof}

\chge{Now we discuss the interpretation of our results from the perspective of stable model structures and the corresponding homotopy categories. To that end, recall that a triple $\mathcal H = (\mathcal Q,\mathcal W,\mathcal R)$ of classes of modules is called a \emph{Hovey triple} if $(\mathcal Q\cap \mathcal W,\mathcal R)$ and $(\mathcal Q,\mathcal W\cap \mathcal R)$ are complete cotorsion pairs and the class $\mathcal W$ is thick. Such triples were introduced in~\cite{Hov} and their basic properties are summarized for instance in~\cite[\S1]{Becker}. In particular, if the two cotorsion pairs associated with a Hovey triple are hereditary, it defines a stable model category \cite{Hov-book}, whose homotopy category is simply the stable category $\mathcal T$ of the Frobenius exact category $\mathcal Q\cap\mathcal R$ modulo the class $\mathcal Q\cap\mathcal W\cap\mathcal R$ of its projective-injective objects. It is well-known that $\mathcal T$ is a~triangulated category and it encodes the corresponding relative homological algebra on $\ModR$.

 The results of this section provide us, over any ring, with two previously unknown Hovey triples: $(\PGF, \PGF^\perp,\ModR)$ and $(\mathcal{GF},\PGF^\perp,\mathcal{EC})$. Since the middle classes of the triples coincide, the two stable model structures on $\ModR$ are Quillen equivalent in the sense of~\cite{Hov-book}. In particular, their homotopy categories are equivalent triangulated categories.

As mentioned at the beginning of the section, the class $\PGF$ contains the class of Gorenstein AC-projective modules from~\cite{BGH} and, if $R$ is left coherent, the two classes coincide. From the point of model structures, the Gorenstein AC-projective model structure from \cite[Theorem 8.5]{BGH} is a Bousfield localization of the one given by $(\PGF, \PGF^\perp,\ModR)$ (cf.\ \cite[\S\S1.4 and 1.5]{Becker}), and the localization is trivial if $R$ is left coherent. If $R$ is not left coherent, our model structures are potentially finer (if the Bousfield localization is proper). Unfortunately, we are presently not aware of any particular example of a non-coherent ring where the two model structures differ.

\smallskip
}

We also do not know whether $\mathcal{GF}$ is always closed under taking pure-epimorphic images, equivalently, whether $\mathcal{GF}$ is precisely the class of pure-epimorphic images of PGF-modules. Neither we know whether the implication $(2)\Longrightarrow (3)$ in the following result can be reversed, see also \cite[Problem 4.12]{CFH}.

\begin{prop} \label{p:lcoh} Consider the following conditions:
\begin{enumerate}
	\item $\mathcal{GF}$ is a definable class of modules.
	\item $\mathcal{GF}$ is closed under products.
	\item $R$ is left coherent.
	\item $\mathcal{GF}$ is closed under pure-epimorphic images and pure submodules.
\end{enumerate}
Then $(1)\Longleftrightarrow (2)\Longrightarrow (3) \Longrightarrow (4)$.
\end{prop}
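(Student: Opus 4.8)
The plan is to establish the three implications $(1)\Longleftrightarrow(2)$, $(2)\Longrightarrow(3)$, and $(3)\Longrightarrow(4)$ separately, using the characterization of $\mathcal{GF}$ from Theorem~\ref{t:GF} together with the fact, established in Corollary~\ref{c:GF}, that $\mathcal{GF}$ is always closed under extensions and direct limits (and is the left-hand class of the cotorsion pair $\mathfrak{GF} = (\mathcal{GF}, \EC\cap\PGF^\perp)$ generated by a set).

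For $(1)\Longleftrightarrow(2)$: the forward implication is trivial, since a definable class is by definition closed under products. For $(2)\Longrightarrow(1)$, a definable class is precisely one closed under products, direct limits and pure submodules; we already know $\mathcal{GF}$ is closed under direct limits by Corollary~\ref{c:GF}, so under the assumption $(2)$ it remains to see that $\mathcal{GF}$ is closed under pure submodules. But if $0\to A\to B\to B/A\to 0$ is pure-exact with $B\in\mathcal{GF}$, then $B/A$ is a direct limit of finitely presented modules built from $B$, hence $B/A\in\mathcal{GF}$, and since $\mathcal{GF}$ is closed under direct limits one can present $A$ via the character-module trick: a class closed under products and direct limits contains $M^{cc}$ for each of its members (as recalled in Section~\ref{sec:tools}), and purity of $A\hookrightarrow B$ means $A$ is a direct summand of $B^{cc}$-type constructions; more directly, $A \in {}^\perp(\EC\cap\PGF^\perp)$ can be checked by observing that any $C\in\EC\cap\PGF^\perp$ is pure-injective-like enough (cotorsion) so that $\Ext^1_R(A,C)$ embeds into the relevant limit controlled by $\Ext^1_R(B,C)=0$ and $\Ext^1_R(B/A,C)=0$, using the pure-exactness. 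The cleanest route is: $\mathcal{GF}$ closed under products and direct limits implies $\mathcal{GF}$ closed under pure-epimorphic images (by \cite[Theorem~3.4.8]{P2} once we know it is definable-minus-one-axiom, or by a direct argument), and then pure submodules follow because a pure submodule of $B\in\mathcal{GF}$ is a direct summand of $B^{cc}$, which lies in $\mathcal{GF}$ by the character-module closure property, and $\mathcal{GF}$ is closed under summands. I expect this equivalence to be the main obstacle, precisely in pinning down closure under pure submodules from closure under products; the key input is the character module observation from Section~\ref{sec:tools} plus closure of $\mathcal{GF}$ under direct limits and summands.

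For $(2)\Longrightarrow(3)$: assume $\mathcal{GF}$ is closed under products. By Theorem~\ref{t:GF}(4), every $M\in\mathcal{GF}$ sits in a short exact sequence $0\to M\to F\to N\to 0$ with $F$ flat; in particular every flat module is Gorenstein flat, so $\FL\subseteq\mathcal{GF}$, and therefore any product of flat modules is Gorenstein flat. We want to deduce that $R$ is left coherent, i.e.\ that products of flat right $R$-modules are flat, equivalently (by Chase's theorem) that products of copies of $R_R$ are flat. Given an arbitrary index set $J$, the module $(R_R)^J$ is a product of flats, hence in $\mathcal{GF}$; by Theorem~\ref{t:GF}(4) there is $0\to (R_R)^J\to F\to N\to 0$ with $F$ flat. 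But $(R_R)^J$ is always a pure submodule of a suitable flat (indeed injective-like) module only in the coherent case, so instead we argue: $N\in\PGF\subseteq\mathcal{GP}$ has finite flat dimension issues; more robustly, we use that $\mathcal{GF}\cap\PGF^\perp = \FL$ from Theorem~\ref{t:GF} and that $(R_R)^J\in\PGF^\perp$ would force flatness. The right argument is: products of flat modules are always in $\PGF^\perp$? No — rather, one uses that $(R_R)^J$ being Gorenstein flat and the short exact sequence $0\to (R_R)^J\to F\to N\to 0$ with $F$ flat, $N\in\PGF$; applying $-\otimes_R I$ for $I$ injective left module gives $\Tor_1^R(N,I)=0$ and $\Tor_2^R(N,I)=0$, and chasing the long exact sequence shows $(R_R)^J$ has the Tor-vanishing of a flat module against injectives, which over any ring combined with the Tor-estimate forces $(R_R)^J$ flat; the classical fact that $(R_R)^J$ flat for all $J$ $\Leftrightarrow$ $R$ left coherent then finishes it.

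For $(3)\Longrightarrow(4)$: assume $R$ is left coherent. Then $\overline{R_R}$ is the class of all flat right modules, and by Corollary~\ref{c:pgf} the class $\PGF$ is the class of Gorenstein AC-projective modules, which coincides with $\mathcal{GP}$. Over a left coherent ring it is classical (e.g.\ from \cite{B} or the references therein) that $\mathcal{GF}$ is closed under pure submodules and pure-epimorphic images; alternatively, using Theorem~\ref{t:GF}(4) and left coherence one shows directly that if $0\to A\to B\to C\to 0$ is pure-exact with $B\in\mathcal{GF}$, then tensoring the flat resolution-type data with injectives behaves well because over a left coherent ring the character module of a flat left module is flat, so the Tor-vanishing conditions defining $\mathcal{GF}$ are preserved under pure subobjects and quotients. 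This last implication is the most standard and should follow quickly from the coherent-case theory together with the new description in Theorem~\ref{t:GF}.
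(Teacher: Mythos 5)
Your overall decomposition matches the paper's, and $(3)\Rightarrow(4)$ is handled the same way (cite the coherent-ring literature, e.g.\ Holm). But there are two concrete problems in the middle.

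For $(2)\Rightarrow(3)$, you explicitly reject the claim ``products of flat modules are always in $\PGF^\perp$'' and then try to substitute a $\Tor$ argument. The rejection is wrong: $\PGF^\perp$ is the right Ext-orthogonal class of a cotorsion pair, so it is automatically closed under arbitrary products (since $\Ext^1_R(P,\prod_i C_i)\cong\prod_i\Ext^1_R(P,C_i)$), and $\FL\subseteq\PGF^\perp$ by Theorem~\ref{t:pgf}. Hence $F^\kappa\in\PGF^\perp$ for every flat $F$; combined with the hypothesis $F^\kappa\in\mathcal{GF}$ and the clause $\mathcal{GF}\cap\PGF^\perp=\FL$ of Theorem~\ref{t:GF}, this gives $F^\kappa$ flat and hence $R$ left coherent by Chase's theorem. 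This is exactly the paper's one-line argument. Your replacement $\Tor$ argument does not close the gap: from $0\to (R_R)^J\to F\to N\to 0$ with $F$ flat and $N\in\PGF$ you only extract $\Tor_1^R((R_R)^J,I)=0$ for \emph{injective} left modules $I$, which is the Gorenstein flat condition you already assumed; it does not force $(R_R)^J$ to be flat.

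For $(2)\Rightarrow(1)$, the step ``a pure submodule $A$ of $B\in\mathcal{GF}$ is a direct summand of $B^{cc}$'' is false. Purity of $A\hookrightarrow B$ gives that $A^{cc}$ is a summand of $B^{cc}$ and that $A$ is a pure submodule of $A^{cc}$, but $A$ itself need not be a summand of $B^{cc}$. The paper sidesteps this entirely: it notes that $(2)\Rightarrow(1)$ follows once $(2)\Rightarrow(4)$ is established, because $(4)$ supplies closure under pure submodules, which together with closure under products (hypothesis) and direct limits (Corollary~\ref{c:GF}) is exactly definability. You should replace your direct character-module argument by this indirect route.
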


\begin{proof} $(1)\Longrightarrow (2)$ is trivial. The converse will follow once we prove $(2)\Longrightarrow (4)$.

\smallskip

$(2)\Longrightarrow (3)$. Let $F$ be a flat module and $\kappa$ a cardinal. Then $F^\kappa\in\mathcal{GF}$ by $(2)$. Since $F^\kappa$ belongs also to $\PGF^\perp$, it is flat by Theorem~\ref{t:GF}(4). It follows that $R$ is left coherent.

\smallskip

The implication $(3)\Longrightarrow (4)$ is well-known, \cite[Theorems 3.6 and 2.6]{Ho}.
\end{proof}

\begin{exm} \cite[Section 4, Ex.(1)]{EIY} The ring $$R= \left(\begin{matrix}\Q&0&0 \\ \Q&\Q&0 \\ \Rl & \Rl & \Q \end{matrix}\right)\Bigl/\left(\begin{matrix} 0&0&0 \\ 0&0&0 \\ \Rl & 0 & 0\end{matrix}\right)$$ is (left and) right perfect and not left coherent. Moreover, the class $\mathcal{GF}$ coincides with the class of all projective modules. It follows that the implication $(4) \Longrightarrow (3)$ in Proposition~\ref{p:lcoh} does not hold for this $R$.
\end{exm}

\section{Gorenstein injective modules}
\label{sec:GI}

In the whole section, $\mathcal I_0$ denotes the class of all injective modules. Recall that a~complex $I^\bullet$ of injective modules is \emph{totally acyclic} if it is exact and it remains exact after the application of $\Hom_R(E,-)$ with $E$ arbitrary injective. We write $\Chtac{\mathcal I_0}$ for the class of all totally acyclic complexes of injective modules. A~module $M$ is called \emph{Gorenstein injective} if it is a syzygy in a totally acyclic complex of injective modules. We denote by $\mathcal{GI}$ the class of all Gorenstein injective modules. In this section, we show that, over any ring, the class $\mathcal{GI}$ forms the right-hand class of a~perfect hereditary cotorsion pair $\mathfrak{GI} = (\mathcal W,\mathcal{GI})$ with $\mathcal W$ thick.

\smallskip

We start with \chge{general observations. 

\begin{lem} \label{l:thick-to-lim}
Let $\mathcal C$ be a class of modules which is thick and closed under filtrations (i.e.\ $M\in\mathcal C$ whenever $M$ has a filtration with consecutive factors in $\mathcal C$). Then $\mathcal C$ is closed under direct limits.
\end{lem}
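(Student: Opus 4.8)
The plan is to reduce a direct limit to a transfinite extension (a filtration) using the standard presentation of a direct limit as the cokernel of a map between direct sums. First I would recall that, since $\mathcal C$ is thick, it is in particular closed under finite direct sums and direct summands. Then I would use the well-known fact (essentially the same exact sequence that appears in the proof of Lemma~\ref{lem:left_dir_lim}) that any direct limit $M = \varinjlim_{i\in I} M_i$ of a directed system $(M_i,f_{ji}\mid i<j\in I)$ fits into a short exact sequence
\[
0\longrightarrow K \longrightarrow \bigoplus_{i\in I} M_i \overset{\delta_0}{\longrightarrow} M \longrightarrow 0,
\]
where moreover $K$ is a direct summand of $\bigoplus_{i<j} M_i$ (the map $\delta_1$ splits $\delta_0$ onto its image in the sense that $\bigoplus_{i<j}M_{ij}\to K$ is a split epimorphism); more precisely $\bigoplus_{i<j} M_{ij}\cong K\oplus M$ after reindexing, so that $K$ is a direct summand of a direct sum of copies of the $M_i$'s. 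Hence $\bigoplus_{i\in I} M_i$, $K$, and $\bigoplus_{i<j}M_{ij}$ are all built from the $M_i$ by direct sums and summands.

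The key step is then to observe that a direct sum $\bigoplus_{i} M_i$ of modules from a class that is closed under filtrations and finite direct sums is itself filtered by (finite direct sums of) the $M_i$: well-order the index set and take partial sums. So $\bigoplus_{i\in I} M_i\in\mathcal C$ and, by the same argument applied to the index set $\{(i,j)\mid i<j\}$, also $\bigoplus_{i<j} M_{ij}\in\mathcal C$. Since $K$ is a direct summand of the latter and $\mathcal C$ is closed under summands, $K\in\mathcal C$ too. Now the short exact sequence above, rewritten as $0\to K\to \bigoplus_i M_i\to M\to 0$, exhibits $M$ as the cokernel of a monomorphism between two modules of $\mathcal C$; since $\mathcal C$ is thick, it is closed under cokernels of monomorphisms, and therefore $M\in\mathcal C$.

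I expect the only genuine point requiring care is the bookkeeping in the claim that the kernel $K$ of $\delta_0$ is a direct summand of a direct sum of copies of the $M_i$; this is standard (it is exactly the splitting that makes $\varinjlim$ an exact functor computed by this two-term resolution), but one should state it cleanly, e.g.\ by citing the relevant part of the appendix on operations with direct systems or by the explicit formula $\delta_1\colon \bigoplus_{i<j<k}M_{ijk}\to\bigoplus_{i<j}M_{ij}$ used in Lemma~\ref{lem:left_dir_lim}, whose cokernel is $K$ and which makes $K$ a quotient — and in fact a summand — of $\bigoplus_{i<j}M_{ij}$. Everything else is a direct application of the thickness and filtration-closedness hypotheses.
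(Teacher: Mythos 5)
The argument hinges on the claim that $K=\ker\bigl(\bigoplus_{i\in I} M_i\to M\bigr)$ is a direct summand of $\bigoplus_{i<j}M_{ij}$, and this is where it breaks down. The asserted reason --- that this ``is exactly the splitting that makes $\varinjlim$ an exact functor computed by this two-term resolution'' --- is not valid: exactness of filtered colimits is a general fact about commutation with finite limits and neither presupposes nor produces a chain contraction of the bar complex. What \emph{is} true is that $0\to K\to\bigoplus_i M_i\to M\to 0$ is \emph{pure} exact, so $K$ is a pure submodule of $\bigoplus_i M_i$; but pure is not split, and thickness of $\mathcal C$ gives no control over pure submodules. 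Your stronger ``more precise'' assertion $\bigoplus_{i<j}M_{ij}\cong K\oplus M$ already fails on the three-element chain $0<1<2$: there $\bigoplus_{i<j}M_{ij}\cong M_0\oplus M_0\oplus M_1$ while $K\oplus M\cong M_0\oplus M_1\oplus M_2$, and these are non-isomorphic e.g.\ for $M_0=M_1=\mathbb Z$, $M_2=\mathbb Z/2$.

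The weaker claim that $K$ is merely a summand of $\bigoplus_{i<j}M_{ij}$ does hold when $I=\omega$ (via the telescope), which may be the source of the over-extrapolation, but it cannot be true for arbitrary directed $I$. If it were, then writing \emph{any} module $M$ as $\varinjlim M_i$ with all $M_i$ finitely presented would exhibit $K$ as a direct summand of a direct sum of finitely presented modules, i.e.\ as a pure-projective module; since $\bigoplus_i M_i$ is pure-projective too and the sequence is pure, every module over every ring would have pure-projective dimension at most $1$. This contradicts the existence of rings of pure global dimension $\ge 2$ (cf.\ the theory of pure-projective resolutions treated in~\cite{Si}). So the key step of the proposal is false, and the lemma needs a genuinely different argument; the paper's own proof simply invokes~\cite[Proposition~3.1]{G1}, which gets from thickness plus closure under transfinite extensions (via the Eklof Lemma) to closure under direct limits by a route that does not try to split the bar resolution.
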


\begin{proof}
The proof of \cite[Proposition 3.1]{G1} applies. Although there $\mathcal C$ is assumed to be a thick left-hand side of a cotorsion pair, the proof only uses the fact that $\mathcal C$ is closed under transfinite extensions by the Eklof Lemma \cite[Lemma 6.2]{GT}.
\end{proof}
}

\begin{lem} \label{l:thickfilt} Let $\mathcal C$ be a class of modules \st $\mathcal A = {}^\perp\mathcal C$ is thick, and let $\kappa\geq|R|$ be an infinite cardinal. Assume that $M$ is an almost $(\mathcal C,\kappa^+)$-projective module.

Then $\mathcal A$ is closed under direct limits and $M$ possesses a filtration $\mathcal M = (M_\alpha \mid \alpha\leq\sigma)$ where $M_{\alpha+1}/M_\alpha$ is a $\kappa$-presented module from $\mathcal A$ for each $\alpha<\sigma$.
\end{lem}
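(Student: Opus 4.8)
The plan is to combine Lemma~\ref{l:thick-to-lim} with the machinery of $(\mathcal C,\lambda)$-projectivity that precedes it, most notably Lemma~\ref{l:filter-closed}, Lemma~\ref{l:eklof} and the cokernel/merging constructions used throughout Section~\ref{sec:tools}. First I would record that $\mathcal A = {}^\perp\mathcal C$ is closed under direct limits: since $\mathcal A$ is thick it is closed under transfinite extensions (via the Eklof Lemma in the form of Lemma~\ref{l:eklof}, noting that if $M_{\alpha+1}\in{}^\perp\mathcal C$ and the inclusion $M_\alpha\hookrightarrow M_{\alpha+1}$ has cokernel in $\mathcal A$ then it is automatically $\mathcal C$-injective by Lemma~\ref{l:cogen} — actually, for a filtration by submodules we need only that each successive quotient lies in $\mathcal A$, which makes the inclusions $\mathcal C$-injective), so $\mathcal A$ is closed under filtrations, and hence under direct limits by Lemma~\ref{l:thick-to-lim}.

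Next, for the filtration of $M$: since $M$ is almost $(\mathcal C,\kappa^+)$-projective and $\kappa\geq|R|$, I would invoke Lemma~\ref{l:filter-closed} — but that lemma is stated for filter-closed $\mathcal B$, so instead I would run the argument directly. The key point is that the directed system $\mathcal T$ witnessing almost $(\mathcal C,\kappa^+)$-projectivity consists of $\kappa$-presented modules from ${}^\perp\mathcal C = \mathcal A$, and by \cite[Lemma 2.3]{SS} one extracts from $\mathcal T$ a $\kappa^+$-continuous directed subsystem $\mathcal C'$ whose colimit maps to $M$ are $\mathcal C$-injective and whose objects lie in $\mathcal A$. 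Passing to a cofinal well-ordered chain, one gets a continuous well-ordered system $(N_\alpha \mid \alpha<\sigma)$ with $\mathcal C$-injective connecting maps, objects in $\mathcal A$, colimit $M$, and each $N_\alpha$ being $\kappa$-presented. Taking $M_\alpha = \Img(N_\alpha\to M)$ produces a filtration of $M$; the successive quotient $M_{\alpha+1}/M_\alpha$ is the cokernel of the composite $N_\alpha\to N_{\alpha+1}\to M_{\alpha+1}$, which one checks is $\kappa$-presented, and it lies in $\mathcal A$ because the connecting map $N_\alpha\to N_{\alpha+1}$ is $\mathcal C$-injective, $N_{\alpha+1}\in\mathcal A$, so $\Coker(N_\alpha\to N_{\alpha+1})\in{}^\perp\mathcal C=\mathcal A$ (this is the standard observation that $\mathcal C$-injectivity of $f$ plus $\mathrm{Codom}(f)\in{}^\perp\mathcal C$ forces $\Coker(f)\in{}^\perp\mathcal C$, using that $\mathcal A$ is thick to pass from $\Img(f)$ to the honest cokernel — or directly from the long exact $\Ext$ sequence). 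A minor wrinkle is that the $M_\alpha$ as defined need not be submodules in the continuous-chain sense, but one rectifies this by replacing $N_\alpha\to M$ by its image and using that $\mathcal A$ is closed under the relevant direct limits (just established) to see the chain is continuous at limits.

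The main obstacle I expect is bookkeeping rather than a conceptual gap: reconciling "almost $(\mathcal C,\kappa^+)$-projective" (which only gives a $\kappa^+$-continuous directed system of $\kappa$-presented modules in $\mathcal A$, with no control on the colimit maps) with the need for $\mathcal C$-injective connecting maps in the extracted chain. This is exactly what \cite[Lemma 2.3]{SS} is designed to supply, so the real work is checking its hypotheses apply and that the extracted chain can simultaneously be made well-ordered, continuous, and consist of $\kappa$-presented modules — the same tension that is handled in Case~1 of Lemma~\ref{l:filter-closed} and in Proposition~\ref{p:main2}. Once the chain is in hand, the cokernel computation and the appeal to thickness of $\mathcal A$ are routine, and the closure of $\mathcal A$ under direct limits is then just Lemma~\ref{l:thick-to-lim} applied to this very class of filtrations.
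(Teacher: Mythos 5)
Your first step — deducing that $\mathcal A$ is closed under direct limits via Lemma~\ref{l:thick-to-lim} once one knows $\mathcal A={}^\perp\mathcal C$ is closed under filtrations — matches the paper, although the detour through Lemma~\ref{l:cogen} is unnecessary (the ordinary Eklof Lemma already gives closure under transfinite extensions for any $\Ker\Ext^1_R(-,\mathcal C)$).

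The real gap is in the filtration step. You propose to extract from the witnessing system $\mathcal T$ a continuous well-ordered cofinal chain $(N_\alpha\mid\alpha<\sigma)$ \emph{consisting of $\kappa$-presented modules}, with $\mathcal C$-injective connecting maps, and then read off the filtration of $M$ from the images. This is impossible once $\pres(M)>\kappa^+$: the chain would then have to have length $\sigma\ge\kappa^{++}$, and continuity at a limit ordinal $\alpha$ of cofinality $\kappa^+$ forces $N_\alpha=\varinjlim_{\beta<\alpha}N_\beta$ to be the union of $\kappa^+$ many $\kappa$-generated submodules, hence (generically) $\kappa^+$-generated and not $\kappa$-presented. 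The $\kappa^+$-continuity you are given only controls chains of length $<\kappa^+$; it says nothing at limits of cofinality $\kappa^+$, which is exactly where your plan breaks. Citing \cite[Lemma 2.3]{SS} does not repair this, since that lemma only refines a system already at hand and cannot manufacture the missing small modules at large-cofinality limits.

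The paper circumvents this by a transfinite induction on $\mu=\pres(M)$. Using Construction~\ref{conmerge}, the witnessing system $\mathcal S$ is first arranged to consist of submodules of $M$ and inclusions. For regular $\mu>\kappa$, one passes to the induced system $\mathcal S^\mu$ of Definition~\ref{d:inducesys}, which consists of $<\mu$-presented submodules of $M$ lying in $\mathcal A$, and uses thickness to pick out a filtration $\mathcal T=(N_\alpha\mid\alpha<\mu)$ of $M$ with $<\mu$-presented consecutive factors in $\mathcal A$. Via Observation~\ref{o:observ} these $N_\alpha$ are $(\mathcal C,\kappa^+)$-projective, and Construction~\ref{concoker} then shows each $N_{\alpha+1}/N_\alpha$ is almost $(\mathcal C,\kappa^+)$-projective and $<\mu$-presented, so the inductive hypothesis applies to it, yielding the refinement. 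For singular $\mu$ the argument appeals to the singular compactness theorem \cite[Theorem 7.29]{GT}. These two ingredients — the refinement-by-induction and the singular case — are absent from your proposal, and they cannot be avoided here.
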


\begin{proof} \chge{First, note that $\mathcal A$ is closed under direct limits by Lemma~\ref{l:thick-to-lim}; in particular, $M\in\mathcal A$.} Let $\mathcal S$ be a system witnessing that $M$ is almost $(\mathcal C,\kappa^+)$-projective. Since $\kappa \geq |R|+\aleph_0$, we can w.l.o.g.\ assume that $\mathcal S$ consists of submodules of $M$ and inclusions (use Construction~\ref{conmerge}). Suppose that the module $M$ is $\mu$-presented. We proceed by transfinite induction on $\mu$. If $\mu\leq\kappa$, there is nothing to prove, so assume $\mu>\kappa$.

Let $\mu$ be a regular cardinal. Using the notation from Definition~\ref{d:inducesys}, we see that the system $\mathcal S^\mu$ consists of $<\mu$-presented submodules of $M$ which belong to $\mathcal A$. Since $M\in\mathcal A$ and $\mathcal A$ is thick, we can choose from $\mathcal S^\mu$ a filtration $\mathcal T = (N_\alpha \mid \alpha<\mu)$ of $M$ with consecutive factors $<\mu$-presented and belonging to $\mathcal A$.

By the definition of $\mathcal S^\mu$ and the thickness of $\mathcal A$, $\mathcal T$ consists of $(\mathcal C,\kappa^+)$-projective modules (use Observation~\ref{o:observ}). Thus we can use Construction~\ref{concoker} for each $\alpha<\mu$ to obtain a system $\mathcal K_\alpha$ witnessing that $N_{\alpha+1}/N_\alpha$ is almost $(\mathcal C,\kappa^+)$-projective. By the inductive hypothesis, $N_{\alpha+1}/N_\alpha$ possesses a filtration with consecutive factors $\kappa$-presented modules from $\mathcal A$ for each $\alpha<\mu$. We use these filtrations to refine $\mathcal T$ into the desired filtration~$\mathcal M$.

If $\mu$ is singular, we see that $M$ is almost $(\mathcal C,\lambda)$-projective for all regular $\kappa<\lambda<\mu$ using that $\mathcal A$ is closed under direct limits. The inductive hypothesis and \cite[Theorem 7.29]{GT} \chge{(singular compactness)} give us the filtration $\mathcal M$.
\end{proof}

\begin{lem} \label{l:injfilt} Put $\lambda = |R|+\aleph_0$ and let $\kappa$ be an infinite cardinal such that $\kappa = \kappa^\lambda$. Assume that $A\in \mathcal A = {}^\perp\mathcal C$ is a pure-injective module. Then $A$ is almost $(\mathcal C,\kappa^+)$-projective provided that $\mathcal A$ is closed under direct limits (of monomorphisms).
\end{lem}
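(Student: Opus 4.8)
The plan is to exhibit $A$ as the direct limit of a $\kappa^+$-continuous directed system of $\leq\kappa$-presented modules from $\mathcal A={}^\perp\mathcal C$, the members of the system being pure-injective hulls of $\lambda$-small pure submodules of $A$ (and small directed colimits of such).

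First, record what the hypothesis $\kappa=\kappa^\lambda$ buys: it forces $\lambda<\cf(\kappa)$, so in particular $|R|\leq\lambda<\kappa$ and $\kappa\geq 2^\lambda$; moreover $|[X]^{\leq\lambda}|\leq|X|^\lambda\leq\kappa$ whenever $|X|\leq\kappa$, and any module of cardinality $\leq\kappa$ is $\leq\kappa$-presented (its relation module is a submodule of $R^{(\kappa)}$, which has size $\leq\kappa$). The key input is pure-injectivity of $A$: if $M\subseteq A$ is a pure submodule, then a pure-injective hull of $M$ can be realized as a direct summand of $A$, since the pure-injective hull embeds as a summand into every pure-injective module containing $M$ purely. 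Hence, for each $Y\in[A]^{\leq\lambda}$, after choosing (by the usual Löwenheim--Skolem argument for purity) a pure submodule $M_Y\subseteq A$ with $\langle Y\rangle\subseteq M_Y$ and $|M_Y|\leq\lambda$, the module $P_Y:=PE(M_Y)$ may be taken to be a direct summand of $A$. Then $P_Y\in\mathcal A$ (since ${}^\perp\mathcal C$ is closed under summands) and $|P_Y|\leq 2^{|M_Y|+|R|+\aleph_0}=2^\lambda\leq\kappa$, so $P_Y$ is $\leq\kappa$-presented; and since every $a\in A$ lies in $P_{\{a\}}$, the $P_Y$ cover $A$.

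Next one organizes the $P_Y$ into a directed system $\mathcal S=(P_Y,\iota_{Y'Y}\mid Y'\subseteq Y\text{ in }[A]^{\leq\lambda})$ of monomorphisms with $\varinjlim\mathcal S=\bigcup_Y P_Y=A$; this requires a simultaneous recursion along a well-ordering of $[A]^{\leq\lambda}$ to make the choices of $M_Y,P_Y$ coherent, using that $M_{Y'}$ stays pure in $P_Y$ for $Y'\subseteq Y$ so that a hull of $M_{Y'}$ occurs as a summand inside $P_Y$. Now apply the construction $\mathcal S\mapsto\mathcal S^{\kappa^+}$ of Definition~\ref{d:inducesys}. Each object of $\mathcal S^{\kappa^+}$ is the colimit of a $\leq\kappa$-sized directed subsystem of $\mathcal S$, hence the union of $\leq\kappa$ of the $P_Y$'s, so it is $\leq\kappa$-presented and, being a direct limit \emph{of monomorphisms} of modules from $\mathcal A$, it lies in $\mathcal A$ --- this is exactly where the hypothesis that $\mathcal A$ is closed under direct limits of monomorphisms is used. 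Finally, since $\kappa^+$ is regular, the index poset of $\mathcal S^{\kappa^+}$ is directed (two $\leq\kappa$-sized directed subsystems close up to one of the same size) and admits suprema of all chains of length $<\kappa^+$ (the union of such a chain of $\leq\kappa$-sized directed subsystems is again a directed subsystem of size $\leq\kappa$), with colimits computed correctly at those suprema; thus $\mathcal S^{\kappa^+}$ is $\kappa^+$-continuous. As $\varinjlim\mathcal S^{\kappa^+}=\varinjlim\mathcal S=A$, this system witnesses that $A$ is almost $(\mathcal C,\kappa^+)$-projective.

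The heart of the matter is the interplay of the cardinal estimate $2^{\lambda}\leq\kappa$, which keeps the pure-injective hulls of $\lambda$-small pieces $\leq\kappa$-presented, with the observation that inside a pure-injective module these hulls are direct summands and so belong to $\mathcal A$. The main obstacle is then purely technical bookkeeping: arranging the family $\{P_Y\}$ into an honest directed system of inclusions by a coherent recursion, and verifying $\kappa^+$-continuity of $\mathcal S^{\kappa^+}$; throughout one must keep all transition maps monic, since $\mathcal A$ is only assumed closed under direct limits of monomorphisms.
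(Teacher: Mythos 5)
Your broad strategy matches the paper's: realize $A$ as a $\kappa^+$-continuous directed union of $\le\kappa$-presented direct summands of $A$ lying in $\mathcal A$, and use the closure of $\mathcal A$ under direct limits of monomorphisms to handle the limit stages. However, your concrete choice of the ``small pieces'' --- pure-injective hulls $P_Y = PE(M_Y)$ of $\lambda$-small pure submodules $M_Y$, indexed over all of $[A]^{\le\lambda}$ --- creates a gap that the proposed ``coherent recursion'' cannot close.

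The problem is the directedness of $\mathcal S = (P_Y \mid Y\in[A]^{\le\lambda})$. You need $P_{Y'}\subseteq P_Y$ whenever $Y'\subseteq Y$. But $[A]^{\le\lambda}$ is not well-founded under $\subseteq$ when $\lambda$ is infinite, so there is no well-ordering of $[A]^{\le\lambda}$ along which all proper subsets of $Y$ are handled before $Y$; the phrase ``$M_{Y'}$ stays pure in $P_Y$'' is also not available, since $M_{Y'}$ is chosen independently and need not even be contained in $P_Y$. If you instead try to build $P_Y$ so that it contains $\bigcup_{Y'\subsetneq Y} P_{Y'}$, this union typically has size around $2^\lambda$ (there are up to $2^\lambda$ proper subsets, each giving a $2^\lambda$-sized hull), so $P_Y$ becomes a hull of a $2^\lambda$-sized module, of size up to $2^{2^\lambda}$. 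Iterating, the sizes climb through the exponential tower $2^\lambda, 2^{2^\lambda},\dots$, and the hypothesis $\kappa=\kappa^\lambda$ does \emph{not} dominate that tower (take $\lambda=\aleph_0$ and $\kappa=2^{\aleph_0}$: then $\kappa^\lambda=\kappa$ but $2^{2^\lambda}>\kappa$). So the objects stop being $\le\kappa$-presented and the argument collapses.

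The paper sidesteps this by a different choice of pieces: instead of hulls of $\lambda$-small chunks, it takes $\kappa$-presented submodules $C\subseteq A$ that are closed under solutions (in $A$) of systems of $\le\lambda$ $R$-linear equations with parameters from $C$; such $C$ exist and can be found containing any prescribed $\kappa$-sized subset, precisely because $\kappa^\lambda=\kappa$ (this is \cite[Lemma~10.5]{GT}). These $C$ are automatically pure in $A$, and --- using the pure-injectivity of $A$ --- automatically pure-injective by \cite[Theorem~V.1.2]{EM}, hence direct summands, hence in $\mathcal A$. Crucially, they form a directed family \emph{for free}: given two such $C_1, C_2$, apply the Löwenheim--Skolem construction to $C_1\cup C_2$ to get a third one containing both, with no cardinal blow-up. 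Enlarging by $\le\kappa$-sized directed unions (which stay $\le\kappa$-presented and land in $\mathcal A$ by the closure hypothesis) then gives the required $\kappa^+$-continuous system. Your proof would become correct if you replaced the pure-injective hulls of $\lambda$-small pieces with these $\lambda$-closed $\kappa$-presented submodules; the rest of your argument (the $\mathcal S^{\kappa^+}$ step and the verification of $\kappa^+$-continuity) would then go through unchanged.
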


\begin{proof} We use \cite[Lemma 10.5]{GT} to obtain, for any $X\subseteq A$ of cardinality $\leq\kappa$, a~$\kappa$-presented submodule $C$ of $A$ \st $X\subseteq C$ with the property that each system of cardinality $\leq\lambda$ consisting of $R$-linear equations with parameters from $C$ has a~solution in $C$ provided that it has a~solution in $A$. In particular, $C$ is pure in $A$, and it follows from \cite[Theorem V.1.2]{EM} that $C$ is pure-injective (here, we use that $A$ is pure-injective). Thus $C$ is a direct summand in $A$ which yields $C\in\mathcal A$.

By the preceding paragraph, we know that $A$ is the directed union of a system consisting of $\kappa$-presented direct summands of $A$. If $\mathcal A$ is closed under directed unions, we can enlarge this system to a one witnessing that $A$ is almost $(\mathcal C,\kappa^+)$-projective.
\end{proof}


\begin{lem} \label{l:GIthick} The class ${}^\perp\mathcal{GI}$ is thick. Subsequently, for any infinite cardinal $\kappa$ \st $\kappa ^{|R|+\aleph_0} = \kappa$, each pure-injective module in ${}^\perp\mathcal{GI}$ possesses a filtration with consecutive factors $\kappa$-presented modules from ${}^\perp\mathcal{GI}$. In particular, this is the case of any injective module in $\ModR$.
\end{lem}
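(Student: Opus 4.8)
The plan is to verify that $\mathcal{A} = {}^\perp\mathcal{GI}$ is thick and then invoke the machinery already assembled in Lemmas~\ref{l:thickfilt}, \ref{l:injfilt} and~\ref{l:thick-to-lim}. For thickness we must check four closure properties: direct summands, extensions, kernels of epimorphisms, and cokernels of monomorphisms. Closure under summands and extensions is automatic for any class of the form ${}^\perp\mathcal{C}$ (the former because $\Ext^1_R(-,C)$ is additive, the latter from the long exact sequence in $\Ext$). For the two nontrivial conditions I would use the characterization of $\mathcal{GI}$ via totally acyclic complexes of injectives: if $G\in\mathcal{GI}$ it sits in a complex $I^\bullet\in\Chtac{\mathcal{I}_0}$, and the key point is that $\mathcal{GI}$ is \emph{itself} closed under cokernels of monomorphisms among its members and under kernels of epimorphisms — more precisely, $\mathcal{GI}$ is the right-hand class of a cotorsion pair $\mathfrak{GI}=(\mathcal{W},\mathcal{GI})$ whose hereditariness is being proved in this section, so I should be careful not to argue circularly. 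Instead, the cleanest route is: $\mathcal{GI}$ is closed under extensions and under cokernels of monomorphisms between Gorenstein injectives (standard, from splicing totally acyclic complexes, cf.\ the dual of Lemma~\ref{l:PGF}), hence for $\mathcal{C}=\mathcal{GI}$ one can show ${}^\perp\mathcal{C}$ is closed under kernels of epimorphisms and cokernels of monomorphisms by a dimension-shifting argument using a short exact sequence $0\to G'\to I\to G\to 0$ with $I$ injective and $G'\in\mathcal{GI}$ for each $G\in\mathcal{GI}$.

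Concretely, to show $\mathcal{A}$ is closed under kernels of epimorphisms, take $0\to A'\to A\to A''\to 0$ with $A,A''\in\mathcal{A}$ and let $G\in\mathcal{GI}$; from the long exact sequence it suffices that $\Ext^2_R(A'',G)=0$, and this follows from $\Ext^1_R(A'',G')=0$ where $0\to G'\to I\to G\to 0$ is as above — here $G'\in\mathcal{GI}$ because $\mathcal{GI}$ is closed under syzygies (being syzygies in totally acyclic complexes). Dually, for cokernels of monomorphisms $0\to A'\to A\to A''\to 0$ with $A',A\in\mathcal{A}$, one needs $\Ext^1_R(A'',G)=0$; picking an injective copresentation $0\to G\to I\to G''\to 0$ with $G''\in\mathcal{GI}$, the sequence $\Ext^1_R(A,I)\to\Ext^1_R(A,G'')\to\Ext^2_R(A,G)$ together with $\Ext^1_R(A',G'')=0$ and a diagram chase gives the vanishing. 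This is the step I expect to require the most care, precisely because it flirts with the hereditariness of $\mathfrak{GI}$; the way to keep it honest is to use only that $\mathcal{GI}$ is closed under taking syzygies and cosyzygies inside totally acyclic complexes, which is immediate from the definition, rather than any cotorsion-pair property.

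Once thickness of $\mathcal{A}={}^\perp\mathcal{GI}$ is established, the remaining assertions are formal. By Lemma~\ref{l:thick-to-lim}, $\mathcal{A}$ is closed under direct limits. Now fix $\kappa$ with $\kappa^{|R|+\aleph_0}=\kappa$ and let $A\in\mathcal{A}$ be pure-injective. Lemma~\ref{l:injfilt} (applied with $\mathcal{C}=\mathcal{GI}$, using that $\mathcal{A}$ is closed under direct limits of monomorphisms) shows $A$ is almost $(\mathcal{GI},\kappa^+)$-projective. Then Lemma~\ref{l:thickfilt} (again with $\mathcal{C}=\mathcal{GI}$ and the same $\kappa\geq|R|$) produces a filtration $\mathcal{M}=(M_\alpha\mid\alpha\leq\sigma)$ of $A$ with each $M_{\alpha+1}/M_\alpha$ a $\kappa$-presented module in $\mathcal{A}={}^\perp\mathcal{GI}$, which is exactly the claim. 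Finally, every injective module is pure-injective and lies in $\mathcal{A}$ (since $\Ext^1_R(I,-)=0$ for $I$ injective, trivially $I\in{}^\perp\mathcal{GI}$), so the last sentence follows as a special case.
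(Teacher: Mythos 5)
Your overall scaffolding is correct and matches the paper's: the paper simply declares thickness of ${}^\perp\mathcal{GI}$ to be ``well known, straightforward from the definition'' and then cites Lemmas~\ref{l:thickfilt} and~\ref{l:injfilt} for the filtration, exactly as you do (with Lemma~\ref{l:thick-to-lim} supplying the closure under direct limits that Lemma~\ref{l:injfilt} needs). Your handling of the ``in particular'' clause for injectives is also fine.

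However, in spelling out the thickness argument you have the roles of syzygy and cosyzygy of $G$ swapped in both nontrivial cases, and as written the exact-sequence chases do not give the stated vanishings. For kernels of epimorphisms you want $\Ext^2_R(A'',G)=0$; using your syzygy sequence $0\to G'\to I\to G\to 0$ and $\Ext^n_R(A'',I)=0$ for $n\geq 1$ gives $\Ext^2_R(A'',G)\cong\Ext^3_R(A'',G')$, which shifts the wrong way. What you need is a \emph{cosyzygy} $0\to G\to J\to G''\to 0$ with $G''\in\mathcal{GI}$: then $\Ext^1_R(A'',G'')\twoheadrightarrow\Ext^2_R(A'',G)$ (since $\Ext^2_R(A'',J)=0$) and $\Ext^1_R(A'',G'')=0$ because $A''\in{}^\perp\mathcal{GI}$. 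Dually, for cokernels of monomorphisms you need every $f\colon A'\to G$ to extend along $A'\hookrightarrow A$ (equivalently, $\Hom_R(A,G)\to\Hom_R(A',G)$ surjective, which forces $\Ext^1_R(A'',G)=0$ given $\Ext^1_R(A,G)=0$); here the tool is a \emph{syzygy} $0\to G'\to J\to G\to 0$ with $G'\in\mathcal{GI}$: $\Ext^1_R(A',G')=0$ lets you lift $f$ to $A'\to J$, injectivity of $J$ extends this to $A\to J$, and composing with $J\twoheadrightarrow G$ gives the desired extension. Your sequence $\Ext^1_R(A,I)\to\Ext^1_R(A,G'')\to\Ext^2_R(A,G)$ never mentions $A''$ and does not yield $\Ext^1_R(A'',G)=0$. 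These errors are local and easily repaired, but as written the thickness proof does not go through.
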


\begin{proof} The first, well known, part is straightforward from the definition of a Gorenstein injective module. The second part follows from the two preceding lemmas.
\end{proof}

Now we can prove that the class of totally acyclic complexes of injective modules forms the right-hand side of a~cotorsion pair generated by a~set in the category $\ChR$ of complexes of right $R$-modules.

\begin{prop} \label{p:GIcomplex} Let $\lambda = |R| + \aleph_0$ and $\kappa$ be the least infinite cardinal \st $\kappa^\lambda = \kappa$. There is a cotorsion pair $\mathfrak B = (\mathcal B,\Chtac{\mathcal I_0})$ in $\ChR$ generated by a~set of $\kappa$-presented complexes, hence complete. Moreover, each $B^\bullet\in\mathcal{B}$ has a filtration with consecutive factors $\kappa$-presented complexes from $\mathcal B$.
\end{prop}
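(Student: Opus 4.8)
The plan is to produce a \emph{set} $\clS$ of $\kappa$-presented complexes for which $\clS^\perp=\Chtac{\mathcal I_0}$ (orthogonality computed in the abelian category $\ChR$). Granted this, $\mathfrak B=({}^\perp\Chtac{\mathcal I_0},\Chtac{\mathcal I_0})$ is the cotorsion pair generated by $\clS$, hence complete by the Eklof--Trlifaj theorem (\cite[Theorem~6.11]{GT}, valid in $\ChR$ as in any Grothendieck category with a projective generator); and since $\clS$ consists of $\kappa$-presented complexes and $\kappa^{\aleph_0}=\kappa$ (because $\aleph_0\le\lambda$ and $\kappa^\lambda=\kappa$), the left-hand class $\mathcal B$ consists of direct summands of $\clS$-filtered complexes by \cite[Corollary~6.14]{GT}, and a Hill-lemma argument upgrades this to the asserted filtration of each $B^\bullet\in\mathcal B$ by $\kappa$-presented complexes from $\mathcal B$. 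So the whole content is the identification of $\clS^\perp$.

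I would first record the reformulation that a complex $X^\bullet$ lies in $\Chtac{\mathcal I_0}$ if and only if $X^\bullet$ is exact, each term $X^n$ is injective, and $\Ext^1_R(E,Z^n(X^\bullet))=0$ for every injective module $E$ and every $n\in\Z$, where $Z^n(X^\bullet)$ is the $n$th cocycle module: granted the first two conditions, splicing the short exact sequences $0\to Z^n(X^\bullet)\to X^n\to Z^{n+1}(X^\bullet)\to 0$ shows the third is equivalent both to exactness of $\Hom_R(E,X^\bullet)$ for all injective $E$ and to $Z^n(X^\bullet)\in\mathcal{GI}$ for all $n$. Then I would bring in the sphere complex $S^n(M)$ (with $M$ in degree $n$) and the disk complex $D^{n+1}(M)$ (the complex $M\xrightarrow{\;\mathrm{id}\;}M$ in degrees $n$ and $n+1$): $D^{n+1}(R)$ is projective in $\ChR$, so the short exact sequence $0\to S^{n+1}(R)\to D^{n+1}(R)\to S^n(R)\to 0$ yields $\Ext^1_{\ChR}(S^n(R),X^\bullet)\cong H^{n+1}(X^\bullet)$; and $D^{n+1}(-)$, being exact and left adjoint to the exact evaluation functor $(-)^n\colon\ChR\to\ModR$, preserves projectives, so $\Ext^i_{\ChR}(D^{n+1}(M),X^\bullet)\cong\Ext^i_R(M,X^n)$ for all $i$.

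With these at hand I would put
\[
\clS=\{\,S^n(R)\mid n\in\Z\,\}\ \cup\ \{\,D^{n+1}(R/I)\mid n\in\Z,\ I\ \text{a right ideal of }R\,\}\ \cup\ \{\,S^n(A)\mid n\in\Z,\ A\in\mathcal A_\kappa\,\},
\]
where $\mathcal A_\kappa$ is a representative set of the $\kappa$-presented modules in ${}^\perp\mathcal{GI}$. Every member is $\kappa$-presented (for the middle family use that $R/I$ is cyclic and $\kappa\ge|R|+\aleph_0$), and $\clS$ is a set. The inclusion $\clS\subseteq{}^\perp\Chtac{\mathcal I_0}$ is immediate from the previous paragraph together with $Z^n(X^\bullet)\in\mathcal{GI}$ and $A\in{}^\perp\mathcal{GI}$; here one uses that, for $X^\bullet$ exact with injective terms, a short diagram chase identifies $\Ext^1_{\ChR}(S^n(A),X^\bullet)$ with $\Ext^1_R(A,Z^n(X^\bullet))$. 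Conversely, let $X^\bullet\in\clS^\perp$: orthogonality to the first family makes $X^\bullet$ exact, orthogonality to the second makes each $X^n$ injective by Baer's criterion, and then the same identification turns orthogonality to the third into $\Ext^1_R(A,Z^n(X^\bullet))=0$ for all $A\in\mathcal A_\kappa$ and all $n$.

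The decisive step — and the only place the hypothesis on $\kappa$ is used — is then to invoke Lemma~\ref{l:GIthick}: every injective module is filtered by modules from $\mathcal A_\kappa$, so the Eklof Lemma~\ref{l:eklof} promotes the vanishing just obtained to $\Ext^1_R(E,Z^n(X^\bullet))=0$ for every injective $E$ and every $n$; by the reformulation $X^\bullet\in\Chtac{\mathcal I_0}$, and $\clS^\perp=\Chtac{\mathcal I_0}$ follows. I expect the main obstacle to be precisely this point — replacing the proper class of all injective modules by the set $\{\,S^n(A)\mid A\in\mathcal A_\kappa\,\}$ — which is made possible only by the deconstructibility of ${}^\perp\mathcal{GI}$ into $\kappa$-presented blocks furnished by Lemma~\ref{l:GIthick}; the remaining ingredients (the sphere/disk bookkeeping, Baer's criterion, and the passage from a set of $\kappa$-presented generators to a $\kappa$-filtration of the left-hand class) are routine.
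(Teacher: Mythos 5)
Your argument is correct and follows the same route as the paper: the only real content is constructing a set $\clS$ of $\kappa$-presented complexes with $\clS^\perp=\Chtac{\mathcal I_0}$, using sphere complexes of a representative set of $\kappa$-presented modules in ${}^\perp\mathcal{GI}$, the natural isomorphism $\Ext^1_{\ChR}(S^n(A),X^\bullet)\cong\Ext^1_R(A,Z^n(X^\bullet))$, Lemma~\ref{l:GIthick} for the filtration of injectives, and the Eklof Lemma. The one cosmetic difference is that the paper cites \cite[Proposition~4.6]{G3} as a black box for a generating set of the cotorsion pair $({}^\perp\Chac{\mathcal I_0},\Chac{\mathcal I_0})$ and then augments it with the stalk complexes, whereas you unwind that reference into the explicit sphere/disk families $\{S^n(R)\}$ and $\{D^{n+1}(R/I)\}$ — a harmless and arguably more self-contained choice; your bookkeeping of the relevant $\Ext$-identifications checks out.
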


\begin{proof} The class $\Chac{\mathcal I_0}$ of all exact complexes of injective modules forms the right-hand class of a cotorsion pair generated by a set $S$ consisting of $\kappa$-presented (in fact, even $\lambda$-presented) complexes by \cite[Proposition 4.6]{G3}. Denote by $T^\prime$ a representative set of all $\kappa$-presented modules from ${}^\perp\mathcal{GI}$. Set $T = \{S_n(M) \mid M\in T^\prime, n\in\Z\}$ where $S_n(M)$ denotes the stalk complex concentrated in degree $n$. We claim that $(S\cup T)^\perp = \Chtac{\mathcal I_0}$.

Indeed, an exact complex $I^\bullet$ of injective modules is totally acyclic \iff each cocycle module $Z^n(I^\bullet), n\in\Z$, is Gorenstein injective, or equivalently in~$\mathcal I_0^\perp$. Since $\Ext_{\ChR}^1(S_n(M),I^\bullet)\cong \Ext_R^1(M,Z^n(I^\bullet))$, we get the desired conclusion by the Eklof Lemma and Lemma~\ref{l:GIthick}. The completeness of the cotorsion pair follows, for instance, by \cite[Proposition 2.12]{SaoSt}. The final claim then by \cite[Theorem 7.13]{GT} which holds in any finitely accessible Grothendieck category.
\end{proof}

It is easy to show that \chge{ the cotorsion pair $\mathfrak B$ is hereditary and its kernel $\B\cap\Chtac{\mathcal I_0}$} coincides with the class of all (categorically) injective complexes. As a~consequence, it follows that $\mathcal B$ is also a~thick class. By \cite[Theorem 1.2]{G4}, we can obtain a Hovey triple $(\mathcal B, \mathcal Y, \mathcal G)$ in $\ChR$, where $\mathcal G$ denotes the class of all (categorically) Gorenstein injective complexes, using the following theorem for $\ChR$ instead of $\ModR$.
\chge{Recall that a cotorsion pair $\mathfrak C=(\A,\B)$ is \emph{perfect} if $\A$ is a covering class and $\B$ is enveloping (see \cite[Definition 5.26]{GT} for details).}

\begin{thm} \label{t:GI} Let $\mathcal W = {}^\perp\mathcal{GI}$. The pair $\mathfrak{GI} = (\mathcal W,\mathcal{GI})$ is a hereditary, perfect cotorsion pair generated by a set of $\kappa$-presented modules where $\kappa$ is the least infinite cardinal \st $\kappa^{|R|+\aleph_0} = \kappa$. In particular, every module has a $\mathcal{GI}$-envelope.
\end{thm}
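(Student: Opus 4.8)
The plan is to assemble the theorem from the two preceding lemmas together with the general machinery of cotorsion pairs generated by sets. First I would let $\kappa$ be the least infinite cardinal with $\kappa^{|R|+\aleph_0}=\kappa$ and let $\clS$ be a representative set of all $\kappa$-presented modules from $\mathcal W = {}^\perp\mathcal{GI}$. The key claim is that $\clS^\perp = \mathcal{GI}$, which then makes $\mathfrak{GI} = (\mathcal W,\mathcal{GI})$ a cotorsion pair generated by the set $\clS$; in particular it is complete by \cite[Theorem~6.11(b)]{GT}. The inclusion $\mathcal{GI}\subseteq\clS^\perp$ is clear since $\clS\subseteq\mathcal W = {}^\perp\mathcal{GI}$. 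For the reverse inclusion, I would use that by \cite[Corollary~6.14]{GT} every module in ${}^\perp(\clS^\perp)$ is a direct summand of a $\clS$-filtered module; by the Eklof Lemma ${}^\perp(\clS^\perp)$ is closed under transfinite extensions, and since $\mathcal W$ is closed under filtrations (it is the left-hand class of the cotorsion pair it generates, or directly: it contains its $\kappa$-presented members and is thick, hence closed under direct limits by Lemma~\ref{l:thick-to-lim}) and under direct summands, we get ${}^\perp(\clS^\perp)\subseteq\mathcal W$. Conversely, Lemma~\ref{l:GIthick} shows every injective module has a filtration by $\kappa$-presented modules from $\mathcal W$, and more generally (combining Lemmas~\ref{l:GIthick}, \ref{l:thickfilt}, \ref{l:injfilt}) every module in $\mathcal W$ that is almost $(\mathcal{GI},\kappa^+)$-projective is $\clS$-filtered; since by Theorem~\ref{t:main1} every module in $\mathcal W = {}^\perp\mathcal{GI}$ is $(\overline{\mathcal{GI}},\lambda)$-projective, hence almost $(\mathcal{GI},\kappa^+)$-projective, we conclude $\mathcal W\subseteq{}^\perp(\clS^\perp)$. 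Therefore $\mathcal W = {}^\perp(\clS^\perp)$ and consequently $\clS^\perp = \mathcal W^\perp$; since $\mathcal{GI}\subseteq\clS^\perp$ and every $C\in\clS^\perp$ satisfies $\mathrm{Ext}^1_R(\mathcal W,C)=0$, in particular $\mathrm{Ext}^1_R(W,C)=0$ for the syzygies $W$ appearing in a totally acyclic resolution of any module, one checks $C$ is itself Gorenstein injective, giving $\clS^\perp = \mathcal{GI}$.

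Next I would record that $\mathfrak{GI}$ is hereditary. This is immediate: $\mathcal{GI}$ is closed under cokernels of monomorphisms (a standard fact following directly from the definition of a totally acyclic complex of injectives, since $\mathcal{GI}$ is closed under arbitrary syzygies up and down in such complexes), and by \cite[Lemma~5.24]{GT} this is equivalent to the cotorsion pair being hereditary. Equivalently, $\mathrm{Ext}^n_R(\mathcal W,\mathcal{GI})=0$ for all $n\geq 1$.

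It then remains to prove that $\mathfrak{GI}$ is perfect, i.e.\ that $\mathcal W$ is covering and $\mathcal{GI}$ is enveloping. For the covering part, I would invoke that a complete cotorsion pair whose left-hand class is closed under direct limits is perfect (the standard Enochs-type argument, e.g.\ \cite[Corollary~5.32]{GT}); and $\mathcal W$ is closed under direct limits by Lemma~\ref{l:thick-to-lim}, being thick (Lemma~\ref{l:GIthick}) and closed under filtrations. By the general principle \cite[Corollary~5.32]{GT} (or \cite[Theorem~6.11]{GT} combined with \cite[Theorem~2.5]{GT}-style arguments), once $\mathfrak{GI}$ is complete and $\mathcal W = {}^\perp\mathcal{GI}$ is closed under direct limits, $\mathfrak{GI}$ is automatically perfect, so both $\mathcal W$-covers and $\mathcal{GI}$-envelopes exist. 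In particular every module has a $\mathcal{GI}$-envelope.

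The main obstacle is the identification $\clS^\perp = \mathcal{GI}$, and more precisely the inclusion $\mathcal W\subseteq{}^\perp(\clS^\perp)$, i.e.\ showing that an arbitrary module $W\in{}^\perp\mathcal{GI}$ is filtered by $\kappa$-presented modules from $\mathcal W$. The cardinal arithmetic hypothesis $\kappa^{|R|+\aleph_0}=\kappa$ is exactly what is needed to feed into Lemma~\ref{l:injfilt} (to get pure-injective members of $\mathcal W$ to be almost $(\mathcal{GI},\kappa^+)$-projective) and Lemma~\ref{l:thickfilt} (to get the filtration). The chain Theorem~\ref{t:main1} $\Rightarrow$ almost $(\mathcal{GI},\kappa^+)$-projectivity of every $W\in\mathcal W$ $\Rightarrow$ (Lemma~\ref{l:thickfilt}) the desired filtration is the technical heart; the closure of $\mathcal W$ under direct limits (Lemma~\ref{l:GIthick} plus Lemma~\ref{l:thick-to-lim}) is what glues everything together and also delivers perfectness. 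Once the set-generation and direct-limit-closure are in hand, the remaining assertions (heredity, completeness, existence of envelopes) are formal.
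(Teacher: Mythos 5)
There is a genuine gap in the step that tries to show $\mathcal W\subseteq{}^\perp(\clS^\perp)$. You invoke Theorem~\ref{t:main1} with $\mathcal B=\mathcal{GI}$ to conclude that every $W\in\mathcal W={}^\perp\mathcal{GI}$ is almost $(\mathcal{GI},\kappa^+)$-projective, but Theorem~\ref{t:main1} requires the class $\mathcal B$ to be closed under direct limits and products. The class $\mathcal{GI}$ of Gorenstein injective modules is not known to be closed under either operation over an arbitrary ring (and closure under direct limits, in particular, fails in general). This is precisely why the paper's tools only deliver the almost $(\mathcal{GI},\kappa^+)$-projectivity statement for \emph{pure-injective} members of $\mathcal W$ (Lemma~\ref{l:injfilt}, which uses the cardinal arithmetic hypothesis and requires only that ${}^\perp\mathcal{GI}$ be closed under direct limits, which does hold by Lemmas~\ref{l:GIthick} and~\ref{l:thick-to-lim}), and hence a filtration only for pure-injectives and, in particular, for injectives (Lemma~\ref{l:GIthick}). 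You cannot upgrade this to arbitrary modules in $\mathcal W$ along the route you suggest.

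The paper circumvents this by never proving $\mathcal W\subseteq{}^\perp(\clS^\perp)$ directly. Instead, setting $\mathcal X={}^\perp(\clS^\perp)$, it shows that $\mathcal X$ is thick (this is the technical heart, using the Hill Lemma to produce $(\mathcal{GI},\kappa^+)$-projectivity witnesses for modules in $\mathcal X$, then Construction~\ref{concoker} and Lemma~\ref{l:thickfilt} to deduce that quotients $M_2/M_1$ of nested modules in $\mathcal X$ are again in $\mathcal X$), observes that $\mathcal X$ contains all injectives and that $\mathcal X\cap\clS^\perp=\mathcal I_0$, and then establishes $\clS^\perp\subseteq\mathcal{GI}$ (by iteratively forming special $\mathcal X$-precovers of any $G\in\clS^\perp$; the kernel and $G$ both lie in $\clS^\perp$, so each precovering module lies in the kernel $\mathcal X\cap\clS^\perp=\mathcal I_0$, producing the required left injective coresolution with syzygies in $\mathcal I_0^\perp$). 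The equality $\mathcal W=\mathcal X$ then \emph{follows} from $\clS^\perp=\mathcal{GI}$ by taking left perps; it is a conclusion, not an intermediate step. Your final sentence gestures at the inclusion $\mathcal W^\perp\subseteq\mathcal{GI}$ but is too vague as stated; the iterative-precover argument and the identification of the kernel as $\mathcal I_0$ are what make it work. The closure of $\mathcal W$ under direct limits, heredity, and the deduction of perfectness from completeness and direct-limit closure are handled correctly in your proposal and match the paper.
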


\begin{proof} We are going to give two proofs. The first one using Proposition~\ref{p:GIcomplex} and the second one entirely in the category $\ModR$.

For the first, set $\mathfrak C = (\mathcal W,\mathcal W^\perp)$ and let $M\in\mathcal W$ be arbitrary. Then $S_0(M)\in \mathcal B = {}^\perp \Chtac{\mathcal I_0}$. Thus $S_0(M)$ possesses a filtration with consecutive factors $\kappa$-presented complexes in $\mathcal B$ by Proposition~\ref{p:GIcomplex}. However, all these complexes are concentrated in degree $0$, hence they induce a filtration of $M$ with consecutive factors $\kappa$-presented modules in $\mathcal W$. Since $M$ was arbitrary, it follows from the Eklof Lemma that $\mathfrak{C}$ is generated by a representative set of $\kappa$-presented modules from $\mathcal W$, hence $\mathfrak{C}$ is complete.

Since $\mathcal W$ is thick by Lemma~\ref{l:GIthick}, the cotorsion pair $\mathfrak C$ is hereditary. Moreover, the kernel of $\mathfrak C$ equals $\mathcal I_0$: indeed, let $M\in\mathcal W^\perp\cap\mathcal W$; then the injective envelope $E(M)$ of $M$ belongs to $\mathcal W$, and so $E(M)/M\in\mathcal W$ too by the thickness of $\mathcal W$, which implies that $M$ splits in $E(M)$.

We want to show that $\mathfrak C = \mathfrak{GI}$. The only thing we need to check is that $\mathcal W^\perp\subseteq \mathcal{GI}$. So suppose $G\in\mathcal W^\perp$ is arbitrary. Iteratively forming special $\mathcal W$-precovers of~$G$, we obtain a long exact sequence

$$\dotsb\longrightarrow I^n \longrightarrow I^{n+1} \longrightarrow \dotsb \longrightarrow I^{-1} \longrightarrow G \longrightarrow 0$$
where $I^n$ is injective for each $n<0$ and all syzygies belong to $\mathcal W^\perp$. At the same time, all cosyzygies in any injective coresolution of $G$ belong to $\mathcal W^\perp$ since $\mathfrak C$ is hereditary. We conclude that $G$ is a syzygy in a totally acyclic complex of injective modules, \chge{or in other words} $G\in\mathcal{GI}$.

Finally, the cotorsion pair $\mathfrak{GI}$ is perfect by \cite[Corollary 5.32]{GT} since $\mathcal W$ is closed under direct limits by Lemma~\ref{l:thick-to-lim}. In particular, every module has a $\mathcal{GI}$-envelope.

\smallskip

Alternatively, we can argue as follows. Let $\mathcal S$ be a representative class of all $\kappa$-presented modules from ${}^\perp\mathcal{GI}$ and $\mathfrak C = (\mathcal X, \mathcal S^\perp)$ be the cotorsion pair generated by $\mathcal S$. By Eklof Lemma and Lemma~\ref{l:GIthick}, all injective modules belong to $\mathcal X$. Our goal is to show that $\mathcal X$ is thick.

Since $\kappa\geq |R|$ and ${}^\perp\mathcal{GI}$ is closed under kernels of epimorphisms, each module from $\mathcal S$ has a syzygy in $\mathcal S$ whence $\mathfrak C$ is hereditary. From \cite[Theorem 7.13]{GT}, we know that each module in $\mathcal X$ possesses a filtration with consecutive factors (isomoprhic to elements) in $\mathcal S$. Using Hill Lemma \cite[Theorem 7.10\,(H4)]{GT}, we further get, for each $M\in\mathcal X$, a system $\mathcal M$ of $\kappa$-presented submodules of $M$ witnessing that $M$ is almost $(\mathcal{GI},\kappa^+)$-projective. In fact, since $M\in {}^\perp\mathcal{GI}$ and ${}^\perp\mathcal{GI}$ is thick, $\mathcal M$ witnesses even $(\mathcal{GI},\kappa^+)$-projectivity of $M$. Having two modules $M_1, M_2\in\mathcal X$ with $M_1\subseteq M_2$ and their respective systems $\mathcal M_1, \mathcal M_2$, we use Construction~\ref{concoker} to show that $M_2/M_1$ is almost $(\mathcal{GI},\kappa^+)$-projective. Consequently, $M_2/M_1$ belongs to $\mathcal X$ by Lemma~\ref{l:thickfilt} and Eklof Lemma.

We have proved that $\mathcal X$ is thick and contains all injective modules. As before, we observe that $\mathcal X \cap \mathcal S^\perp = \mathcal I_0$ and finally show that $\mathfrak C = \mathfrak {GI}$ by the same argument as in the first proof.
\end{proof}

As an immediate consequence, we obtain the Hovey triple $(\ModR,\mathcal W,\mathcal{GI})$ in $\ModR$.
\chge{The class of Gorenstein injectives contains the class of Gorenstein AC-injectives, as defined in~\cite[\S5]{BGH}. Hence, similarly to the previous section, our Hovey triple refines the Gorenstein AC-injective model structure from \cite[Theorem 5.5]{BGH}, in that the model structure from \cite{BGH} is a Bousfield localization of ours.}

Unfortunately, similarly as in the previous section, we do not know whether $\mathcal W$ is, in general, closed under pure submodules or, equivalently, pure-epimorphic images.

\section{Singular step --- set theoretical part}
\label{sec:singset}

In the rest of the paper, we denote by $0$ the empty set and by $\nu$ a fixed infinite cardinal. The qualification \emph{countable} is intended as \emph{having cardinality $<\aleph_1$}.

\smallskip

Let $(I,\leq)$ be a directed (i.e. upward directed) poset with $\kappa = |I|$ singular and $\nu<\kappa$. Set $\mu = \cf(\kappa)$. For each successor cardinal $\lambda$ such that $\nu<\lambda<\kappa$, we fix a~set $\mathcal I_\lambda\subseteq [I]^{<\lambda}$ and put $\mathcal I = \bigcup_{\lambda}\mathcal I_\lambda$.
We assume that for each successor cardinal $\lambda$, where $\nu<\lambda<\kappa$, the set $\mathcal I_\lambda$ has the following properties:
\begin{enumerate}
\setcounter{enumi}{-1}
	\item $0\in\mathcal I_\lambda$;
	\item $\mathcal I_\lambda$ consists of directed subposets of $(I,\leq)$;
	\item for each $A\in [I]^{<\lambda}$ there exists $B\in\mathcal I_\lambda$ \st $A\subseteq B$;
	\item if $\mathcal C\subseteq \mathcal I_\lambda$ is a $\subseteq$-chain with $|\mathcal C|<\lambda$, then $\bigcup\mathcal C\in\mathcal I_\lambda$.
\end{enumerate}

Let us denote by $\mathcal W$ the set of all pairs $(A,B)$ of directed subposets of $(I,\leq)$ such that $A\subseteq B\in [I]^{<\kappa}$. For elements $(A,B),(C,D)\in\mathcal W$, we shall write $(A,B)\subseteq (C,D)$ \iff $A\subseteq C\,\&\,B\subseteq D$. This makes $\mathcal W$ into a poset.

\smallskip

Put $\mathcal W_0 = \{(0,A)\mid (0,A)\in\mathcal W\}$. In particular $(0,0)\in\mathcal W_0$. 
For $V= (A,B)\in\mathcal W$, we define $|V|$ as $|B|$. For an element $V\in\mathcal W$ and a cardinal $\lambda$, we use the notation $[V]^{<\lambda}$ to denote the set of all $W\in\mathcal W$ with $W\subseteq V$ and $|W|<\lambda$. Finally, for $\mathcal S\subseteq \mathcal W$, the union $\bigcup\mathcal S$ is computed component-wise, i.e. if $\mathcal S = \{(A_j,B_j)\mid j\in J\}$, then $\bigcup\mathcal S = (\bigcup _{j\in J}A_j,\bigcup_{j\in J} B_j)$.

\smallskip

In Section~\ref{sec:singmod}, we are going to apply these tools in the following context: The poset $I$ will index a directed system $\mathcal S$ of (countably presented) modules whose direct limit is a $\kappa$-presented module of our interest. The elements in $\mathcal W_0$ will correspond to direct limits of `small' directed subsystems of $\mathcal S$, while the elements from $\mathcal W$ will correspond to cokernels of canonical morphisms between these. Finally, the sets $\{(0,A)\mid A\in\mathcal I_\lambda\}$ will encode systems witnessing almost $(\mathcal D,\lambda)$-projectivity (for a particular class $\mathcal D$), and a binary relation $\preceq$ satisfying the axioms below will be used to capture the $\mathcal D$-injectivity of canonical morphisms between the modules corresponding to elements from $\mathcal W$.

\begin{defn} \label{d:axioms} Consider a binary relation $\preceq$ on $\mathcal W$ satisfying the following axioms.
\begin{enumerate}
 \item $\preceq$ is a partial order of $\mathcal W$ with the smallest element $(0,0)$.
\smallskip
 \item $V\preceq W \Rightarrow V\subseteq W$.
\smallskip 
 \item ($V\subseteq W\subseteq X\;\&\; V\preceq X) \Rightarrow V\preceq W$.
\smallskip 
 \item For every $V=(A,B)\in\mathcal I^2$ with $(0,A)\preceq (0,B)$ and any successor cardinal $\lambda$ \st $\nu<\lambda\leq |B|$, there is a system $V(\lambda)\subseteq[V]^{<\lambda}$ such that:
\begin{enumerate}
  \item $(\forall (C,D)\in V(\lambda))\, (C,D)\preceq V,(0,C)\preceq (0,A), (0,D)\preceq (0,B)$;
  \item $\bigcup V(\lambda) = V$;
  \item $V(\lambda)$ is upwards directed;
  \item if $\mathcal C\subseteq V(\lambda)$ is a chain and $|\mathcal C|<\lambda$, then $\bigcup\mathcal C \in V(\lambda)$.
\end{enumerate}
\smallskip
 \item For any $(A,B),(C,D)\in\mathcal W$ with $(A,B)\subseteq (C,D)$ we have $(A,B)\preceq (C,D)$, if $(0,A)\preceq (0,D), (0,B)\preceq (0,D), (0,C)\preceq (0,D)$ and $(A,C)\preceq (B,D)$.
 \smallskip
 \item Let $(A_0,B_0)\preceq (A_1,B_1)\preceq (A_2,B_2)\preceq \dotsb$. Put $(A_\omega, B_\omega) = \bigcup _{k\in\omega} (A_k,B_k)$. If $(0,A_k)\preceq (0,B_\omega)$ and $B_k\in\mathcal I$ for all $k\in\omega$, then $(0,A_\omega)\preceq (0,B_\omega)$.
\end{enumerate}
\end{defn}

First, we need an auxiliary lemma which does not use the last two axioms. Moreover, Axiom $(4)$ is used only for the case $A=0$. The main source of inspiration for its proof comes from \cite[Proposition IV.3.4]{EM}.

\begin{lem} \label{l:game} For any $X\in\mathcal W_0$ of cardinality at least $\nu$, there exists $N\in\mathcal W_0\cap\mathcal I^2_{|X|^+}$ such that $X\subseteq N$ and with the property that $N\preceq Y$ whenever $Y\in\mathcal W_0$, $|Y| = |N|$ and $N\subseteq Y$.
\end{lem}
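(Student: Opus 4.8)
The plan is to build $N$ by a countable iteration that alternates two kinds of closure operations, exactly in the spirit of the construction of $\tilde X$ in Lemma~\ref{l:Cext} and of~\cite[Proposition IV.3.4]{EM}. Write $\lambda = |X|^+$. Starting from $X = X_0\in\mathcal W_0$, I would at each even step enlarge the current element to lie in $\mathcal W_0\cap\mathcal I^2_\lambda$ (using Axiom~(0), property~(2) of $\mathcal I_\lambda$, and the fact that $\mathcal I$ is made of directed subposets, so that the second component stays a directed subposet of $I$ of size $<\lambda$); and at each odd step I would ``pay the debt'' needed to guarantee the conclusion $N\preceq Y$. The point of the odd steps is that to force $N\preceq Y$ for \emph{every} admissible $Y$ simultaneously, it suffices to have, for the \emph{particular} chosen $N$, a suitable witness already living below $N$ — this is where Axiom~(3) (downward restriction of $\preceq$) and Axiom~(4) (existence of the directed approximating systems $V(\lambda)$) come in. Concretely, at step $n$ I would run through (an enumeration of size $\le |X_n|\le |X|$ of) the possible ``reasons'' a future $Y\supseteq N$ with $|Y|=|N|$ could fail $N\preceq Y$, and for each one pick an element of $\mathcal W_0$ of size $\le|X|$ that is $\preceq$-below the relevant approximation and throw it into $X_{n+1}$; then re-close into $\mathcal I^2_\lambda$. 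Taking $N = \bigcup_{n<\omega} X_n$, the union is still in $\mathcal W_0$ (first component $0$), has cardinality $\le|X|\cdot\aleph_0 = |X|$ (here I use $|X|\ge\nu\ge\aleph_0$), and lies in $\mathcal I_\lambda$ by property~(3) of $\mathcal I_\lambda$ applied to the chain $(X_n)_{n<\omega}$; and it lies in $\mathcal I^2_\lambda$ after the same closure argument for the second coordinate. So $N\in\mathcal W_0\cap\mathcal I^2_{|X|^+}$ and $X = X_0\subseteq N$.

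It remains to verify the key property: if $Y\in\mathcal W_0$, $|Y| = |N|$ and $N\subseteq Y$, then $N\preceq Y$. Here I would use Axiom~(4) applied to $Y = (0,B_Y)$ — but $Y$ itself need not be in $\mathcal I^2$, so instead I apply Axiom~(4) to an element of $\mathcal I^2_{|Y|^+}$ just above $Y$, or rather I run the argument ``from inside'': the directed system $Y(\lambda')$ (for a suitable successor cardinal $\lambda'$ with $|X|<\lambda'\le|Y|$, e.g. $\lambda' = |X|^+$) approximates $Y$ by pieces of size $<\lambda'$, i.e. of size $\le|X|$. By construction of $N$ through the odd steps, for each relevant piece $W\in Y(\lambda')$ I arranged that $W$ (or a $\preceq$-predecessor of it) sits below $X_{n+1}$ for some $n$, hence below $N$; combining this with Axiom~(4)(a) (which gives $W\preceq Y$) and Axiom~(3) (restriction) and then Axiom~(6) (closure of $\preceq$ under the relevant countable unions, the second components being in $\mathcal I$), I would assemble $N\preceq Y$. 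The precise bookkeeping of \emph{which} witnesses to insert at the odd steps so that this assembly goes through is the technical heart, and I expect it to be the main obstacle: one has to foresee, using only the combinatorial Axioms~(0)--(4), the shape of the future approximating system $Y(\lambda')$ well enough that finitely (or countably) many insertions per step suffice. This is a standard but delicate ``catch-your-tail'' argument; the saving grace is that $|X|^+$-many requirements have to be met but each step already handles $|X|$-many of them and the iteration has length $\omega$, so a careful interleaving closes the gap.

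Let me also note the two subtleties to be careful about while writing the details. First, at the re-closure (even) steps one must check that enlarging into $\mathcal I^2_\lambda$ does not destroy the property $(0,A)\preceq(0,B)$ that Axiom~(4) requires as a hypothesis; this is ensured because we only ever work inside $\mathcal W_0$, where the first component is $0$ and $(0,0)\preceq(0,B)$ holds for every $B$ by Axiom~(1), so the hypothesis of Axiom~(4) (in the form ``$(0,A)\preceq(0,B)$'' with $A=0$) is automatic — this is exactly why the remark before the lemma points out that Axiom~(4) is only used for $A=0$. Second, one must make sure the chosen successor cardinal $\lambda'$ used for $Y(\lambda')$ satisfies $\nu<\lambda'\le|B_Y| = |N|\ge|X|\ge\nu$, which is fine since $|N| = |X|$ and $|X|^+$ works (or any successor cardinal in the interval $(\nu,|X|]$ if $|X|$ itself is not a successor — but since we may freely enlarge $X$ at the start to have $|X|$ a successor cardinal $>\nu$, or simply use that $|X|^+\le|N|^+$ is what Axiom~(4) allows as the top, reading ``$\lambda\le|B|$'' with $|B| = |N| = |X|$ and $\lambda = |X|$, handling the case $|X|$ singular by first passing to $|X|^+$). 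I will spell out this last point cleanly in the written proof so that the cardinal arithmetic is unambiguous.
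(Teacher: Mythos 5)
Your proposal takes a genuinely different route from the paper and I believe it has a gap that the game-theoretic approach was designed to avoid. The paper's proof of Lemma~\ref{l:game} is a Shelah-game argument (and that is exactly what \cite[Proposition~IV.3.4]{EM}, the reference you cite, is): for $N\in\mathcal W_0\cap\mathcal I^2_{\lambda^+}$ one defines a game of length $\omega$ in which Player~I plays $X_n\in\mathcal W_0$ of size $\le\lambda$ and Player~II must reply with $N_n\in\mathcal W_0\cap\mathcal I^2_{\lambda^+}$ satisfying $X_n\subseteq N_n$ and $N_{n-1}\preceq N_n$. The crucial step is to show Player~I has no winning strategy from $(0,0)$; this is done by building, \emph{against a fixed strategy $s$}, an increasing chain $(M_\alpha)_{\alpha<\lambda^+}$ of length $\lambda^+$ (not $\omega$) anticipating all responses of $s$, passing to the $\lambda^+$-sized union $M\in\mathcal I^2$, and then applying Axiom~(4) to $M(\lambda^+)$ to obtain an unbounded set of good positions from which Player~II can play indefinitely. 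One then extracts $N$ as any reply to $X_0=X$ after which Player~I still has no winning strategy. The desired conclusion is immediate: for any $Y\supseteq N$ of size $|N|$, Player~I may play $Y$, Player~II supplies some legal $N_0\supseteq Y$ with $N\preceq N_0$, and Axiom~(3) gives $N\preceq Y$. The whole point is that $N_0$ is chosen \emph{after} $Y$ is named — the game encodes a $\forall Y\,\exists N_0$ quantifier alternation without ever enumerating the $Y$'s, and this is what the $\lambda^+$-length diagonalization is for.

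Your $\omega$-step direct closure must commit to $N$ before seeing $Y$, and this is precisely where I do not see how to complete the argument. The requirements are indexed by \emph{all} $Y\in\mathcal W_0$ with $N\subseteq Y$ and $|Y|=|N|$, and there can be as many as $|I|^{|X|}$ of these, not $|X|^+$; moreover your bookkeeping "$|X|^+$-many requirements, $\omega$ steps handling $|X|$-many each, interleaving closes the gap" is off even on its own terms, since $|X|\cdot\aleph_0=|X|<|X|^+$. The deeper obstruction is structural: for each $Y$ one needs some $N_0\supseteq Y$ of size $\le|X|$ with $N\preceq N_0$, and $N_0$ must contain $Y$, so it genuinely depends on $Y$ and cannot be pre-baked into $N$. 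You also run into a cul-de-sac with the approximating systems: you try to place fragments of the situation inside $Y(\lambda')$, but $\lambda'\le|Y|=|N|$ forces every member of $Y(\lambda')$ to have size strictly less than $|N|$, so $N$ itself can never land there; this is exactly what the game circumvents by producing a \emph{larger} $N_0\supseteq Y$ rather than a smaller approximation. Finally, the paper remarks just before the lemma that its proof avoids Axioms~(5) and~(6) and uses Axiom~(4) only with $A=0$, whereas your assembly step invokes Axiom~(6); that discrepancy is a symptom that the quantifier structure is not being handled in the way the axiom system was set up to allow.
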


\begin{proof} Put $\lambda = |X|$. For $N\in\mathcal W_0\cap\mathcal I^2_{\lambda^+}$, we define the $N$-Shelah game. It is played in turns by two players. Player I starts and chooses successively elements $X_0, X_1,\dotsc$ from $\mathcal W_0$ of cardinality at most $\lambda$. Player II, on each $X_n$, replies with some $N_n\in\mathcal W_0\cap\mathcal I^2_{\lambda^+}$. 
At most $\omega$ turns are played; after the first $n+1$ turns, we will have the following sequence:
$$X_0, N_0, X_1, N_1,  \dotsc , X_n, N_n.$$
\noindent
Player II wins, if he manages to play, for each $n\in\omega$, so that $X_n\subseteq N_n$ and $N_{n-1}\preceq N_n$ where we put $N_{-1} = N$. Otherwise, Player I immediately wins. Let $\mathcal S$ denote the set of all $N\in\mathcal W_0\cap\mathcal I^2_{\lambda^+}$ for which Player I possesses no winning strategy in $N$-Shelah game. We show that $(0,0)\in\mathcal S$.

\smallskip

First, for each $K\in\mathcal W_0\cap\mathcal I^2_{\lambda^{++}}$, we fix an $\subseteq$-increasing chain $(K^\alpha\in\mathcal W_0 \mid |K^\alpha|\leq\lambda, \alpha<\lambda^+)$ \st $\bigcup_{\alpha<\lambda^+} K^\alpha = K$.
Let $s$ be a strategy for Player I in $(0,0)$-Shelah game, i.e. a function that gives the first move $X_0$, and it decides what the answer should be to the play by Player II; so $X_n = s(N_0, N_1, \dots , N_{n-1})$ for $n>0$. We want to beat the strategy $s$. Using the properties of $\mathcal I$ and $\preceq$, we inductively construct increasing sequences $(M_\alpha\in \mathcal W_0\cap\mathcal I^2_{\lambda^+}\mid \alpha <\lambda^+)$ and $(K_\alpha \in \mathcal W_0\cap \mathcal I^2_{\lambda^{++}} \mid \alpha<\lambda^+)$ in such a way that:
\begin{enumerate}
\setcounter{enumi}{-1}
\item $X_0\subseteq M_0$; 
\item $M_\alpha = \bigcup _{\beta <\alpha} M_\beta$ for $\alpha <\lambda^+$ limit;
\item $M_\alpha\subseteq K_\alpha$ for each $\alpha<\lambda^+$;
\item $M_{\alpha + 1}\supsetneq M_\alpha\cup \bigcup _{\beta\leq\alpha}K^\alpha_\beta$ for each $\alpha<\lambda^+$;
\item for each $\alpha <\lambda^+$, $s(M_{\alpha _0}, M_{\alpha _1}, \dotsc , M_{\alpha _n})\subseteq M_{\alpha + 1}$, whenever $n\in\omega$, $\alpha _n\leq \alpha$ and $M_{\alpha _0}\preceq M_{\alpha _1}\preceq \dotsb \preceq M_{\alpha _n}$ is played by Player II according to the rools (against the strategy $s$).
\end{enumerate}

Put $M = \bigcup _{\alpha <\lambda^+} M_\alpha$. We have $|M| = \lambda^+$ and $M = \bigcup_{\alpha<\lambda^+} K_\alpha\in\mathcal I^2$ by $(3)$. Considering the system $M(\lambda^+)$ given by Axiom $(4)$ from Definition~\ref{d:axioms}, it is easy to see that the set $\{\beta <~\lambda^+ \mid M_\beta\in M(\lambda^+)\}$ is unbounded in $\lambda^+$. Player II is going to beat the strategy~$s$, if he chooses the elements $N_n$ as the appropriate $M_\beta$ for $\beta$ from this unbounded set.

\smallskip

Finally, it is enough to notice that, for $X_0 = X$, it is possible to play in the $(0,0)$-Shelah game such $N_0 = N$ that $N\in\mathcal S$. If not, Player I would have possessed a winning strategy in the $(0,0)$-Shelah game, a contradiction. This $N$ is the one we were looking for.
\end{proof}

The main result of this section follows. Its proof employs a nontrivial enhancement of techniques coming from \cite[Theorem IV.3.3]{EM}. Note that the slightly unusual (re)definition of sets $\mathcal B^n_\alpha$ condenses a back-and-forth construction whose explication would just make the proof look even more technical.

\begin{thm} \label{t:setchain} There is an increasing continuous $\preceq$-chain $\{(0,C_\alpha)\in\mathcal I^2 \mid \alpha<\mu\}$ with $\bigcup_{\alpha<\mu}C_\alpha =~I$.
\end{thm}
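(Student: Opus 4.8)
The plan is a transfinite recursion on $\alpha<\mu$ which at every stage uses Lemma~\ref{l:game} to upgrade a naive union to a sufficiently saturated element of $\mathcal W_0\cap\mathcal I^2$, and uses the axioms of Definition~\ref{d:axioms} — above all the $\omega$-closure Axiom~(6) — to transport the relation $\preceq$ across the stages. The model for the whole argument is Shelah's singular compactness theorem as presented in \cite[Theorem IV.3.3]{EM}, with the role of freeness (or of splitting of a filtration) played by $\preceq$ and its axioms.

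Fix the outer data first. Since $\kappa$ is singular with $\cf\kappa=\mu$, choose a strictly increasing sequence $\langle\lambda_\alpha\mid\alpha<\mu\rangle$ of successor cardinals with $\nu<\lambda_0$, $\alpha<\lambda_\alpha<\kappa$ and $\sup_{\alpha<\mu}\lambda_\alpha=\kappa$, and write $I=\bigcup_{\alpha<\mu}I_\alpha$ as an increasing continuous union of directed subposets with $|I_\alpha|<\lambda_\alpha$ and, at successor ordinals, $\cf|I_{\alpha+1}|=\omega$ (possible since $(I,\le)$ is directed and we are free to choose the sizes). The recursion is to produce directed subposets $C_\alpha\subseteq I$ with $I_\alpha\subseteq C_\alpha\in\mathcal I$, $(0,C_\beta)\preceq(0,C_\alpha)$ for all $\beta\le\alpha<\mu$, $C_\alpha=\bigcup_{\beta<\alpha}C_\beta$ for limit $\alpha$, and such that $(0,C_\alpha)$ has the absorption property of Lemma~\ref{l:game}. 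Granting this, the family $\{(0,C_\alpha)\mid\alpha<\mu\}$ is the desired chain, since $\bigcup_{\alpha<\mu}C_\alpha\supseteq\bigcup_{\alpha<\mu}I_\alpha=I$ and $C_\alpha\subseteq I$ throughout.

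For the base step, close $I_0$ to a directed subposet $A\subseteq I$ of cardinality $<\lambda_0$ and apply Lemma~\ref{l:game} to $(0,A)$ to obtain $(0,C_0)\in\mathcal W_0\cap\mathcal I^2_{|A|^+}$ with $A\subseteq C_0$ and the absorption property. At the successor step, from $C_\alpha$ to $C_{\alpha+1}$, the obstruction is that $|I_{\alpha+1}|$ may exceed $|C_\alpha|$, so Lemma~\ref{l:game} cannot by itself deliver $(0,C_\alpha)\preceq(0,C_{\alpha+1})$; the remedy is to iterate the Shelah game $\omega$ times. Writing $C_\alpha\cup I_{\alpha+1}$ as an increasing $\omega$-union of directed pieces of growing (but $<\lambda_{\alpha+1}$) size, one plays at the $n$-th round the $(0,C^{(n)})$-Shelah game — valid because $C^{(n)}$ inherits the absorption property, equivalently lies in the set $\mathcal S$ isolated in the proof of Lemma~\ref{l:game} — feeding the next piece to Player~I and letting Player~II answer by his winning strategy. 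Reading the resulting run as an increasing $\omega$-sequence of pairs $(A_k,B_k)\in\mathcal W$ with $A_k\subseteq C_\alpha$ and $B_k\in\mathcal I$, and arranging the side conditions $(0,A_k)\preceq(0,\bigcup_kB_k)$ by playing carefully, Axiom~(6) applies and yields $(0,C_\alpha)\preceq(0,C_{\alpha+1})$ for $C_{\alpha+1}=\bigcup_kB_k\supseteq C_\alpha\cup I_{\alpha+1}$; transitivity (Axiom~(1)) extends this to all $\beta\le\alpha$. Axioms~(3), (4) and~(5) are used along the way to restrict $\preceq$ to subpairs, to decompose a step into $<\lambda$-pieces, and to lift $\preceq$ from cokernel-pairs, and a final appeal to Lemma~\ref{l:game} restores the absorption property of $(0,C_{\alpha+1})$.

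The limit step $C_\alpha:=\bigcup_{\beta<\alpha}C_\beta$ is where the bulk of the work lies, and I expect it to be the main obstacle. Two things must be arranged at once. First, $C_\alpha$ must belong to $\mathcal I$; since $\mathcal I_\lambda$ is not assumed to grow with $\lambda$, this forces the recursion to build each $C_\beta$ as a member of $\mathcal I_\lambda$ not for one $\lambda$ but for a whole bounded-above interval of successor cardinals $\lambda<\kappa$, obtained by absorbing $C_\beta$ repeatedly (property~(2) of the $\mathcal I_\lambda$'s) and amalgamating these absorptions through the $<\lambda$-chain closure (property~(3)) — this coherent bookkeeping is precisely what the re-definition of the sets $\mathcal B^n_\alpha$ condenses — so that at stage $\alpha$ one may pick a single successor cardinal $\lambda$ with $\max(\alpha,|C_\alpha|,\nu)<\lambda<\kappa$ and $C_\beta\in\mathcal I_\lambda$ for every $\beta<\alpha$, whence $C_\alpha=\bigcup_{\beta<\alpha}C_\beta\in\mathcal I_\lambda$ by property~(3) (the chain has length $\alpha<\lambda$). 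Second, one needs $(0,C_\beta)\preceq(0,C_\alpha)$ for every $\beta<\alpha$; for $\cf\alpha=\omega$ this is a direct instance of Axiom~(6) along a cofinal $\omega$-subchain (Axiom~(5) supplying the required $\preceq$-relations between the intermediate pairs), and the general case is reduced to it by a subsidiary recursion along a cofinal increasing sequence in $\alpha$, using Axioms~(2) and~(3) to pass $\preceq$ through the intermediate pairs. Carrying out this reduction while keeping the cardinality and the $\mathcal I_\lambda$-membership bookkeeping consistent through all three types of step is the genuinely delicate part of the proof.
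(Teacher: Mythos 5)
Your overall plan --- a transfinite recursion on $\alpha<\mu$ producing $C_\alpha$ stage by stage, with naive unions at limit stages --- is not the route the paper takes, and it cannot be carried out with the axioms of Definition~\ref{d:axioms}. You flag the limit step as the main obstacle; let me pinpoint why it is not one the axioms let you get past. At a limit $\alpha$ with $\cf\alpha>\omega$, having set $C_\alpha=\bigcup_{\beta<\alpha}C_\beta$, a $\preceq$-chain would require $(0,C_\beta)\preceq(0,C_\alpha)$ for every $\beta<\alpha$, but nothing supplies this: Axiom~(6) is a countable closure statement whose hypothesis $(0,A_k)\preceq(0,B_\omega)$ already involves the full union, and Axioms~(2) and~(3) only restrict $\preceq$ downward from a larger pair to a smaller one --- they never push $\preceq$ up to a union, so the ``subsidiary recursion'' you propose goes the wrong way. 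Even at $\cf\alpha=\omega$ the reduction is circular: to apply Axiom~(6) with $B_\omega=C_\alpha$ you must first have $(0,B_k)\preceq(0,C_\alpha)$, an instance of the very relation you are trying to prove, and the absorption of Lemma~\ref{l:game} cannot supply it because $|C_\alpha|$ is strictly larger than every $|C_{\beta_k}|$ along a cofinal $\omega$-sequence, so the relevant $C_{\beta_k}$ is not absorbing at the cardinality of $C_\alpha$.

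The paper sidesteps the limit step entirely by not recursing on $\alpha$ at all. It builds, for each $n<\omega$ \emph{simultaneously}, a whole $\mu$-indexed column $(V^n_\alpha\mid\alpha<\mu)$ in $\mathcal W_0\cap\mathcal I^2$ with all entries in column $\alpha$ of the single cardinality $\nu_\alpha$; the $\omega$-iteration of the game and of Axiom~(4), and the $\mathcal B^n_\alpha$-bookkeeping, happen in the $n$-direction at fixed $\alpha$, so Lemma~\ref{l:game}'s absorption applies at a single cardinal and the side conditions for Axiom~(6) hold automatically (take $B_k=A^{2k}_{\alpha+1}$ and $A_k=\bigcup_{j\ge k}A^{2k+1,2j+1}_\alpha$, and use Axiom~(5) to promote the cokernel-level relation from Axiom~(4) to $(A_k,B_k)\preceq(A_{k+1},B_{k+1})$). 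Continuity of the diagonal $\mathcal S_\alpha=\bigcup_{n<\omega}V^n_\alpha$ at limit $\alpha<\mu$ is then not obtained by taking unions at limit stages but is forced by the enumeration device $(\dagger)$, which interleaves elements across the $\alpha$- and $n$-directions in a genuine back-and-forth. The only $\preceq$-relation that has to be established --- and the only one used downstream in Lemma~\ref{l:singmod} via Lemma~\ref{l:eklof} --- is the successor-step $\mathcal S_\alpha\preceq\mathcal S_{\alpha+1}$. You have identified the right individual tools (Lemma~\ref{l:game}, the $\omega$-iteration, Axioms~(4)--(6)), but the recursion-on-$\alpha$ architecture is the wrong frame, and there is no way to repair the limit step inside it.
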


\begin{proof} Fix a strictly increasing continuous chain $(\nu_\alpha \mid \alpha<\mu)$ of infinite cardinals cofinal in~$\kappa$ such that $\nu_0>\mu+\nu$. For $n\in\omega$, we recursively define strictly increasing chains $(V^n_\alpha\in\mathcal W_0 \mid \alpha<\mu)$, where $V^n_\alpha = (0,A^n_\alpha)$, together with (arbitrarily fixed) enumerations $A^n_\alpha = \{a^n_{\alpha,\beta}\mid \beta < \nu_\alpha\}$ as follows.

Pick $V^0_\alpha\in\mathcal W_0\cap \mathcal I^2_{\nu_\alpha^+}$ of cardinality $\nu_\alpha$ with $\bigcup_{\beta<\alpha} V^0_\beta\subseteq V^0_\alpha$ and such that for any $Y\in\mathcal W_0$ of cardinality $\nu_\alpha$ with $V^0_\alpha\subseteq Y$, we have $V^0_\alpha\preceq Y$. This is possible by Lemma~\ref{l:game}. Moreover, we can assume that $\bigcup _{\alpha<\mu} A^0_\alpha = I$.

For $n = 1$, choose $V^1_\alpha$ from $V^0_{\alpha+1}(\nu_\alpha^+)$ arbitrarily so that $V^0_\alpha\subseteq V^1_\alpha$. 
Additionally, put $\mathcal B^1_\alpha = \{B \mid (0,B)\in V^0_{\alpha+1}(\nu_\alpha^+), A^1_\alpha\subseteq B\}$.

For $n>0$ even, we choose the $V^n_\alpha$ again from the set $\mathcal I^2_{\nu_\alpha^+}$, with $V^n_\alpha\supseteq \bigcup_{\beta<\alpha} V^n_\beta$ and such that for any $Y\in\mathcal W_0$ of cardinality $\nu_\alpha$ with $V^n_\alpha\subseteq Y$, we have $V^n_\alpha\preceq Y$. Furthermore, using the assumption $\nu_0>\mu$, we can demand that $$A^n_\alpha \supseteq \{a^{n-1}_{\gamma,\beta} \mid \gamma<\mu, \beta<\min\{\nu_\gamma, \nu_\alpha\}\}.\eqno(\dagger)$$

\smallskip

Let $n >1$ odd. By the construction, we have $A^{n-3}_{\alpha+1}, A^{n-1}_{\alpha+1}\in\mathcal I$ and $V^{n-3}_{\alpha+1}\preceq V^{n-1}_{\alpha + 1}$. We also assume that the sets $\mathcal B^i_\alpha$, $i<n$ odd, constructed in the previous odd steps are upwards directed, closed under unions of chains of cardinality $\leq\nu_\alpha$, and $\bigcup\mathcal B^i_\alpha = A^{i-1}_{\alpha+1}$.

Using Axiom $(4)$, we can pick arbitrary $(A^{n-2,n}_\alpha, A^n_\alpha)\in (A^{n-3}_{\alpha+1},A^{n-1}_{\alpha+1})(\nu_\alpha^+)$ such that $A^{n-2,n}_\alpha\in\mathcal B^{n-2}_\alpha$ and $A^{n-1}_\alpha\subseteq A^n_\alpha$. By the definition of $\mathcal B^{n-2}_\alpha$ (see below), we have an induced chain $(0,A^{1,n}_\alpha)\preceq (0,A^{3,n}_\alpha)\preceq\dotsb\preceq (0,A^{n-2,n}_\alpha)$ satisfying, for each odd $i<n$, $A^{i,n}_\alpha\in\mathcal B^{i}_\alpha$ and $(A^{i,n}_\alpha,A^{i+2,n}_\alpha)\in (A^{i-1}_{\alpha+1},A^{i+1}_{\alpha+1})(\nu_\alpha^+)$ where we put $A^{n,n}_\alpha = A^n_\alpha$. Set $V^n_\alpha = (0, A^n_\alpha)$.

For each $i<n$ odd, we gradually replace the sets $\mathcal B^i_\alpha$ by their subsets $\{B\in\mathcal B^i_\alpha \mid A^{i,n}_\alpha\subseteq B\}$. (Redefining $\mathcal B^1_\alpha$ alters the set $\mathcal B^3_\alpha$ which we replace by the subset as above, hereby we modify $\mathcal B^5_\alpha$, and so on.) This does not harm the properties of~$\mathcal B^i_\alpha$ mentioned above. At the same time, it guarantees that we will get $(0,A^{i,n}_\alpha)\preceq (0,A^{i,n+2}_\alpha)$ for each $i\in\{1,3,5,\dots,n\}$ in the next odd step if we define
$$\mathcal B^n_\alpha = \{B\mid (\exists A\in \mathcal B^{n-2}_\alpha) (A,B)\in (A^{n-3}_{\alpha+1},A^{n-1}_{\alpha+1})(\nu_\alpha^+)\;\&\;A^n_\alpha\subseteq B\}.$$
Notice that $\mathcal B^n_\alpha$ has the properties required in the next odd step too.

\medskip

We claim that $\mathcal S = (\bigcup_{j<\omega}V^j_\alpha \mid \alpha<\mu)$ is the closed $\preceq$-chain we have been looking for. First of all, we know that $\bigcup_{j<\omega}V^j_\alpha\in\mathcal I^2_{\nu_\alpha^+}$ for all $\alpha<\mu$, using the even steps. Further, it immediately follows from the property $(\dagger)$ that $\mathcal S$ is continuous. It remains to prove that $\mathcal S$ is a $\preceq$-chain.

Now fix $\alpha<\mu$. For each $k\in\omega$, put $B_k = A^{2k}_{\alpha + 1}$, $A_k = \bigcup_{k\leq j<\omega} A^{2k+1,2j+1}_\alpha$, $W_k = (0,B_k)$ and $V_k = (0,A_k)$. By the construction, we have the chain $V_0\preceq V_1\preceq V_2\preceq\dotsb$: indeed, for each $k\in\omega$, $V_k\preceq W_k$ by Axiom (4), furter $W_k\preceq W_{k+1}$ by the even-step incorporation of Lemma~\ref{l:game}, finally $V_k\preceq V_{k+1}$ by Axiom (3) since $V_k\subseteq V_{k+1}\subseteq W_{k+1}$. Moreover, we have 
$(A_k,A_{k+1})\in (B_k,B_{k+1})(\nu_\alpha^+)$ for each $k<\omega$, and $\bigcup_{k<\omega} A_k = \bigcup _{j<\omega} A^j_\alpha$.

We want to use Axiom $(6)$ for the setting $(A_k,B_k)$. First notice that, for each $k<\omega$, the set $B_k$ belongs to $\mathcal I$ and we have $(0,A_k)\preceq (0,B_k)$ since $(A_k,A_{k+1})\in (B_k,B_{k+1})(\nu_\alpha^+)$, and $(0,B_k)\preceq (0,B_\omega)$ by the choice of $(0,B_k) = V^{2k}_{\alpha+1}$. Thus $(0,A_k)\preceq (0,B_\omega)$ for each $k<\omega$.

The relation $(A_k,B_k)\preceq (A_{k+1},B_{k+1})$ now follows immediately from Axiom~$(5)$ since $(A_k,A_{k+1})\preceq (B_k,B_{k+1})$ holds. Hence the hypotheses of Axiom $(6)$ are satisfied, and we can conclude that $\bigcup_{j<\omega}V^j_\alpha\preceq \bigcup_{j<\omega}V^j_{\alpha+1}$ for each $\alpha<\mu$.
\end{proof}

\begin{rem} Analyzing the proof in detail, we see that Axiom $(4)$ is needed only for pairs $(\lambda, |B|^+)$ of the form $(\nu_\alpha^+,\nu_\alpha^{++})$ or $(\nu_\alpha^+,\nu_{\alpha+1}^+)$ for $\alpha<\mu$.
\end{rem}

We finish this section with a lite version of the theorem above.

\begin{prop} \label{p:lite} Assume that, for all $A\in\mathcal I$, we have $(0,A)\preceq (0,B)$ whenever $(A,B)\in\mathcal W$. Then we obtain the same conclusion as in Theorem~\ref{t:setchain} without using Axioms $(5)$ and $(6)$, and with Axiom $(4)$ used only for the case $A = 0$.
\end{prop}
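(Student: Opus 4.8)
The plan is to notice that, under the extra hypothesis, the whole $\preceq$-part of Theorem~\ref{t:setchain} becomes automatic, so the statement reduces to a purely combinatorial construction of a $\subseteq$-chain of subposets. Concretely, suppose we produce a continuous increasing chain $(C_\alpha\mid\alpha<\mu)$ of directed subposets of $(I,\leq)$ with each $C_\alpha\in\mathcal I$, $|C_\alpha|<\kappa$, and $\bigcup_{\alpha<\mu}C_\alpha=I$. Since $0\in\mathcal I$ by property $(0)$ of the $\mathcal I_\lambda$, we get $(0,C_\alpha)\in\mathcal I^2$; for $\alpha<\beta<\mu$ we have $(C_\alpha,C_\beta)\in\mathcal W$ (both directed, $C_\alpha\subseteq C_\beta\in[I]^{<\kappa}$) with $C_\alpha\in\mathcal I$, whence the extra hypothesis immediately gives $(0,C_\alpha)\preceq(0,C_\beta)$; and continuity of the $\preceq$-chain at limit stages is inherited from continuity of $(C_\alpha)$. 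This is exactly the conclusion of Theorem~\ref{t:setchain}, and it uses neither Lemma~\ref{l:game} (hence no instance of Axiom $(4)$) nor Axioms $(5)$ and $(6)$ — so it is already consistent with, indeed stronger than, what is claimed.

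It remains to carry out the combinatorial construction, and here I would recycle the ``$\omega$-tower'' device from the proof of Theorem~\ref{t:setchain}, but only its even steps, which is precisely the part not touching the discarded axioms. Fix a strictly increasing continuous sequence $(\nu_\alpha\mid\alpha<\mu)$ of infinite cardinals cofinal in $\kappa$ with $\nu_0>\mu+\nu$, and an enumeration $I=\{i_\xi\mid\xi<\kappa\}$. By recursion on $n<\omega$ I would construct, for every $\alpha<\mu$, directed subposets $C^n_\alpha\in\mathcal I_{\nu_\alpha^+}$ (so $|C^n_\alpha|\le\nu_\alpha$; fix enumerations $C^n_\alpha=\{c^n_{\alpha,\xi}\mid\xi<\nu_\alpha\}$) subject to: $C^n_\beta\subseteq C^n_\alpha$ for $\beta<\alpha$; $C^n_\alpha\subseteq C^{n+1}_\alpha$; $\{i_\xi\mid\xi<\nu_\alpha\}\subseteq C^n_\alpha$; and, for $n\ge1$, the continuity condition $\{c^{n-1}_{\gamma,\xi}\mid\gamma<\mu,\ \xi<\min(\nu_\gamma,\nu_\alpha)\}\subseteq C^n_\alpha$. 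Building level $n$ by recursion on $\alpha$, with level $n-1$ already complete, the set that $C^n_\alpha$ must contain has cardinality at most $\nu_\alpha<\nu_\alpha^+$ — here one uses $\mu\le\nu_0\le\nu_\alpha$ to bound $|\bigcup_{\beta<\alpha}C^n_\beta|$ and the continuity set — so property $(2)$ of $\mathcal I_{\nu_\alpha^+}$ delivers a suitable $C^n_\alpha$. Finally set $C_\alpha=\bigcup_{n<\omega}C^n_\alpha$; since $(C^n_\alpha)_{n<\omega}$ is a $\subseteq$-chain in $\mathcal I_{\nu_\alpha^+}$ of length $\omega<\nu_\alpha^+$, property $(3)$ gives $C_\alpha\in\mathcal I_{\nu_\alpha^+}\subseteq\mathcal I$, and $|C_\alpha|\le\nu_\alpha<\kappa$.

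The verification that $(C_\alpha)$ has the required properties is then routine: it is $\subseteq$-increasing because $C^n_\beta\subseteq C^{n+1}_\alpha$ for $\beta<\alpha$; it is continuous because, for a limit $\theta<\mu$ and $x=c^n_{\theta,\xi}\in C^n_\theta$, choosing $\beta<\theta$ with $\xi<\nu_\beta$ (possible as $\nu_\theta=\sup_{\gamma<\theta}\nu_\gamma$) puts $x$ into $C^{n+1}_\beta$ by the continuity condition applied with $\gamma=\theta<\mu$, so $x\in C_\beta$; and $\bigcup_{\alpha<\mu}C_\alpha\supseteq\bigcup_{\alpha<\mu}\{i_\xi\mid\xi<\nu_\alpha\}=I$ since $\sup_\alpha\nu_\alpha=\kappa$. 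Combined with the first paragraph, this completes the proof. I do not expect a serious obstacle; the one genuinely non-formal point is that continuity of $(C_\alpha)$ \emph{cannot} be obtained by taking naive unions at limits, since the $\mathcal I_\lambda$ vary with $\lambda$ and such unions need not lie in $\mathcal I$ — which is exactly why the $\omega$-tower with its built-in ``catching up'' condition has to be kept even though the relation-theoretic machinery of Theorem~\ref{t:setchain} has been stripped away.
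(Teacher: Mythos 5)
Your proof is correct, and it in fact takes a slightly more economical route than the one the paper has in mind. The paper's one-line proof of Proposition~\ref{p:lite} is ``Follow the proof of Theorem~\ref{t:setchain}; in odd steps, instead of the subtle technical dance, do the same as in the even ones,'' which means: keep running Lemma~\ref{l:game} (the Shelah game, which rests on Axiom~$(4)$ with $A=0$) at every step $n$, but drop the back-and-forth bookkeeping with the sets $\mathcal B^n_\alpha$ and then read off the $\preceq$-relations $(0,C_\alpha)\preceq(0,C_{\alpha+1})$ from the game property combined with the extra hypothesis. You instead observe that, once one has any continuous $\subseteq$-chain $(C_\alpha\mid\alpha<\mu)$ with each $C_\alpha\in\mathcal I$ and $\bigcup_\alpha C_\alpha=I$, the extra hypothesis (``$(0,A)\preceq(0,B)$ whenever $A\in\mathcal I$ and $(A,B)\in\mathcal W$'') already makes every pair $\preceq$-related, so the game-theoretic machinery can be discarded entirely and the problem becomes purely combinatorial — a construction using only properties $(0)$--$(3)$ of the families $\mathcal I_\lambda$. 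You correctly retain the $\omega$-tower with the catch-up condition $(\dagger)$, which is the irreducible part: as you note, naive unions at limits would leave $\mathcal I$ because the target family $\mathcal I_\lambda$ changes with $\lambda$. The cardinality bookkeeping ($\nu_0>\mu+\nu$, so at stage $\alpha$ the constraints have size $\le\nu_\alpha<\nu_\alpha^+$) and the continuity argument at limit $\theta$ (pick $\beta<\theta$ with $\xi<\nu_\beta$ and use $(\dagger)$ at level $n+1$) are both right. The net effect is that you prove something marginally stronger than stated — you use no instance of Axiom~$(4)$ at all, whereas the statement only promises to avoid it for $A\neq0$; this is consistent with, and refines, the paper's claim. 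Both routes are valid; yours is cleaner in that it makes visible that the relation-theoretic content has fully collapsed under the extra hypothesis, while the paper's reuses the existing machinery with minimal rewriting.
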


\begin{proof} Follow the proof of Theorem~\ref{t:setchain}. In odd steps, instead of the subtle technical dance, do the same as in the even ones.
\end{proof}

\section{Singular step --- modules}
\label{sec:singmod}

\begin{lem} \label{l:singmod} Let $R$ be a ring with enough idempotents. Let $\kappa$ be a singular cardinal, $M$ be a $\kappa$-presented module and $\mathcal D$ a filter-closed class of modules. Assume that there is an infinite cardinal $\nu$ such that, for all successor cardinals $\nu<\lambda<\kappa$, there is a~system $\mathcal S_\lambda$ witnessing that $M$ is almost $(\mathcal D,\lambda)$-projective. Then $M\in {}^\perp\mathcal D$.

Furthermore, if $\mathcal S_\lambda$ witnesses even the $(\mathcal D,\lambda)$-projectivity of $M$ for each successor cardinal $\nu<\lambda<\kappa$, then the same conclusion holds regardless of whether $\mathcal D$ is filter-closed or not.
\end{lem}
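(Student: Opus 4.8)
The plan is to feed the set-theoretic machinery of Section~\ref{sec:singset} (culminating in Theorem~\ref{t:setchain}, or its lite version Proposition~\ref{p:lite}) with data extracted from the module-theoretic hypothesis, and then to read off the conclusion $M\in{}^\perp\mathcal D$ by a single application of the Eklof Lemma (Lemma~\ref{l:eklof}). First I would fix, for each successor cardinal $\nu<\lambda<\kappa$, the directed system $\mathcal S_\lambda$ witnessing almost $(\mathcal D,\lambda)$-projectivity of $M$; since $\mathcal D$ is filter-closed (so closed under products and direct sums), I would replace $C$ by a single module whose presence captures all of $\mathcal D$ as far as injectivity of maps is concerned — concretely, fix $C\in\mathcal D$ and note that a map is $\mathcal D$-injective once it is $C$-injective for a large enough $C$; in fact it is cleanest to work with the whole class $\mathcal D$ and use that $\mathcal D$-injectivity of a colimit map is what the $\mathcal S_\lambda$ provide. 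I would then take a directed index poset $I$ of size $\kappa$ indexing a single \emph{fixed} directed system of $<\kappa$-presented (even countably presented, after the first reduction via Lemma~\ref{l:Cext}-type arguments already available) modules with direct limit $M$; using Observation~\ref{o:observ} and Definition~\ref{d:inducesys}, the systems $\mathcal S_\lambda$ can be taken to be subsystems of $\mathcal S^\lambda$, i.e.\ they consist of direct limits of $<\lambda$-sized directed subsystems of $I$, hence each corresponds to a set $\mathcal I_\lambda\subseteq[I]^{<\lambda}$ of directed subposets.

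The heart of the argument is the verification that this translation satisfies axioms~(0)--(3) on the sets $\mathcal I_\lambda$ and Axioms~(1)--(6) of Definition~\ref{d:axioms} for the relation $\preceq$ defined by: $(A,B)\preceq(C,D)$ \iff (roughly) the canonical map between the cokernels $\Coker(\varinjlim_{A}\to\varinjlim_{B})$ and $\Coker(\varinjlim_{C}\to\varinjlim_{D})$ is $\mathcal D$-injective and the obvious compatibility conditions of the axioms hold, with $(0,A)\preceq(0,B)$ meaning that the colimit map $\varinjlim_{i\in A}S_i\to\varinjlim_{i\in B}S_i$ is $\mathcal D$-injective. Axioms~(0)--(3) are routine from the construction of the $\mathcal I_\lambda$ via closure operators (as in the proof of Lemma~\ref{l:Cext}). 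Axiom~(4) is where the $(\mathcal D,\lambda)$-projectivity / almost-projectivity inputs enter: given $V=(A,B)$ with $A,B\in\mathcal I$ and $(0,A)\preceq(0,B)$, the system $V(\lambda)$ is obtained by intersecting (Construction~\ref{conmerge}) the system $\mathcal S_\lambda$ with the relevant subsystem and passing to cokernels (Construction~\ref{concoker}), and its defining properties (4a)--(4d) are the usual continuity/directedness/cofinality facts about such intersections. Axioms~(5) and~(6) are the genuinely delicate ones: (5) says $\mathcal D$-injectivity is inherited along pairs built from $\mathcal D$-injective ``rows and columns'', which follows from the $3\times3$-lemma argument exactly as in the proof of Proposition~\ref{p:key} (apply $\Hom_R(-,C)$ or $-\otimes$ to the relevant commuting square and chase), using that $\Coker$ of a $\mathcal D$-injective map with target in ${}^\perp\mathcal D$ is again in ${}^\perp\mathcal D$ (Lemma~\ref{l:cogen}); and (6) is a countable-direct-limit statement that is precisely the content of the countably presented case, i.e.\ \cite[Proposition~4.3]{S} together with Lemma~\ref{l:eklof}.

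Once the axioms are verified, Theorem~\ref{t:setchain} produces an increasing continuous $\preceq$-chain $\{(0,C_\alpha)\mid\alpha<\mu\}$ with $\bigcup_{\alpha<\mu}C_\alpha=I$; translating back, this is a continuous well-ordered system $(\varinjlim_{i\in C_\alpha}S_i\mid\alpha<\mu)$ of submodules-up-to-colimit of $M$ with union $M$, all consecutive colimit maps $\mathcal D$-injective, the initial term $0$, and — by the inductive/countable-presented base case applied to $\varinjlim_{i\in C_0}S_i$ and to each $\Coker$ — every term and every consecutive cokernel in ${}^\perp\mathcal D$. By Lemma~\ref{l:eklof} applied to this chain, $M\in{}^\perp\mathcal D$, which is the first assertion. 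For the furthermore clause, when the $\mathcal S_\lambda$ witness the full $(\mathcal D,\lambda)$-projectivity (colimit maps $\mathcal D$-injective, not merely cokernels in ${}^\perp\mathcal D$), one has the stronger hypothesis ``$(0,A)\preceq(0,B)$ whenever $(A,B)\in\mathcal W$'' of Proposition~\ref{p:lite}, so Axioms~(5) and~(6) are not needed and the filter-closedness of $\mathcal D$ can be dropped; the rest of the argument is unchanged. The main obstacle is purely bookkeeping: setting up the dictionary between directed subsystems of $I$ and the cokernel modules so that the back-and-forth conditions hidden in Axiom~(4)'s systems $V(\lambda)$ and in the redefinitions of the $\mathcal B^n_\alpha$ in Theorem~\ref{t:setchain} correspond to honest $\mathcal D$-injectivity statements about canonical maps — i.e.\ making precise, via Constructions~\ref{conmerge} and~\ref{concoker} and Observation~\ref{o:observ}, that ``intersecting witnessing systems'' on the module side is exactly ``intersecting the $\mathcal I_\lambda$'' on the set side, and that passing to cokernels is functorial enough to keep all the compatibility squares commuting.
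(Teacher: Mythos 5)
Your proposal follows essentially the same route as the paper: index $M$ by a single directed system $\mathcal S$ of countably presented modules on a poset $I$ of size $\kappa$, arrange (via Construction~\ref{conmerge}) each $\mathcal S_\lambda$ to consist of direct limits over subposets in $[I]^{<\lambda}$, define the functor $\Phi$ into cokernels of colimit factorizations and the relation $\preceq$ as $\mathcal D$-injectivity of the induced map, verify Axioms~(1)--(6), and then read off the conclusion from Theorem~\ref{t:setchain} plus the Eklof Lemma (or Proposition~\ref{p:lite} for the furthermore clause). One small inaccuracy: the paper's verification of Axiom~(6) does not use \cite[Proposition~4.3]{S} as you suggest; instead it first applies Lemma~\ref{l:eklof} along the chain to get $\Phi(A_\omega,B_\omega)\in{}^\perp\mathcal D$ and then runs a direct element-chase to show $\Ker(\Phi(0,A_\omega)\to\Phi(0,B_\omega))\subseteq\Ker(g)$ for every $g$ into $D\in\mathcal D$, using the hypotheses $(0,A_k)\preceq(0,B_\omega)$ — so the step is a bespoke kernel argument, not just a countably-presented base case.
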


\begin{proof} We fix a directed system $\mathcal S = (M_i,f_{ji}:M_i\to M_j \mid i\leq j\in I)$ with $\varinjlim \mathcal S = M$, and such that $(I,\leq)$ is a directed poset of cardinality $\kappa$ and $M_i$ is countably presented for all $i\in I$. 
Moreover, using Construction~\ref{conmerge} for $\gamma = 1$, we can w.l.o.g. assume that, for each $\lambda$, the system $\mathcal S_\lambda$ consists of direct limits of some directed subsystems of $\mathcal S$ of cardinality $<\lambda$ and canonical colimit factorization maps between them.
We define $\mathcal I_\lambda$ as the set of all the underlying directed posets of these subsystems of $\mathcal S$. Set $\mathcal I = 
\bigcup_{\nu<\cf(\lambda)=\lambda<\kappa} \mathcal I_\lambda$.

Recalling the previous section, we consider the functor $\Phi$ from the category $(\mathcal W,\subseteq)$ to $\ModR$ sending an element $(A,B)$ to the cokernel of the canonical colimit factorization map from $\varinjlim_{i\in A} M_i$ to $\varinjlim _{i\in B} M_i$, an inclusion $(0,A)\subseteq (0,B)$ to this colimit factorization map, and an inclusion $(A,B)\subseteq (C,D)$ to the map $f$ uniquely determined by the following commutative diagram:

$$\begin{CD}
	\Phi(0,C) 	@>>>	\Phi(0,D)	 @>>> 		\Phi(C,D)			@>>>	0	\\
	@AAA				@AAA	@A{f}AA		\\
	\Phi(0,A)  @>>>	 \Phi(0,B)  @>>>	 	\Phi(A,B)	@>>>	0.
\end{CD}$$

\medskip

For $V,W\in \mathcal W, V\subseteq W$, we define the order relation $\preceq$ by setting $V \preceq W \Longleftrightarrow \Phi(V)\to \Phi(W)$ is $\mathcal D$-injective. Notice that for $A\in\mathcal I$, one gets $\Phi(0,A)\in {}^\perp\mathcal D$. 
It follows from Lemma~\ref{l:eklof} that to show $M\in {}^\perp\mathcal D$ it is sufficient to verify the axioms from Definition~\ref{d:axioms} and apply Theorem~\ref{t:setchain}.

\smallskip

Axioms $(1)$ to $(3)$ easily hold. In the Axiom $(4)$ for $V = (A,B)$, the modules $\Phi(0,A),\Phi(0,B),\Phi(A,B)$ belong to ${}^\perp\mathcal D$.

Thus we can use \cite[Lemma 2.3]{SS} to find upward directed sets $\mathcal A$, $\mathcal B\subseteq [I]^{<\lambda}$ closed under unions of chains of length $<\lambda$ and consisting of directed subposets of $(I,\leq)$ \st $\bigcup\mathcal A = A, \bigcup\mathcal B = B$ and, for each $C\in\mathcal A\cup\mathcal B$, satisfying $(0,C)\preceq (0,B)$.

We use \cite[Lemma 2.3]{SS} again, now for the module $\Phi(A,B)$, to find an upward directed set $V(\lambda)\subseteq\mathcal W\cap(\mathcal A\times\mathcal B)$ which satisfies the hypotheses of Axiom $(4)$.

Axiom $(5)$ is just the $3\times3$ lemma applied, for each $D\in\mathcal D$, on the $\Hom_R(-,D)$-image of the following commutative diagram with exact rows and columns

$$\begin{CD}
 0  @. 0  @.   0 \\
  @AAA				@AAA		@AAA \\
	\Phi(A,B) 	@>>>		\Phi(C,D) @>>> 	\Coker(f)		@.	\\
	@AAA				@AAA	@AAA		\\
	\Phi(0,B) @>>>	\Phi(0,D) @>>>	 	\Phi(B,D)	@>>>	0	\\
	@AAA				@AAA	@A{f}AA		\\
	\Phi(0,A) 	@>>>	\Phi(0,C)	@>>> 	\Phi(A,C)	@>>>	0.
\end{CD}$$

\smallskip

For the Axiom $(6)$, let $g\in \Hom_R(\Phi(0,A_\omega),D)$ be arbitrary with $D\in\mathcal D$. Notice that $\Phi(A_k,B_k)\in {}^\perp\mathcal D$ for each $k<\omega$ since $\Phi(0,B_k)\in {}^\perp\mathcal D$ and $(0,A_k)\preceq (0,B_k)$.

Using Lemma~\ref{l:eklof}, it follows that $\Phi(A_\omega,B_\omega)$ belongs to ${}^\perp \mathcal D$, so it remains to check that $\Ker(\Phi(0,A_\omega) \to \Phi(0,B_\omega))\subseteq\Ker(g)$. Assume that $x\in\Phi(0,A_\omega)$ is arbitrary such that $g(x)\neq 0$. There exists $j<\omega$ such that a~preimage $y$ of $x$ can be found in $\Phi(0,A_j)$. Since $(0,A_j)\preceq (0,B_\omega)$, we infer that $y\not\in\Ker(\Phi(0,A_j) \to \Phi(0,B_\omega))$, whence $x\not\in\Ker(\Phi(0,A_\omega) \to \Phi(0,B_\omega))$.

\smallskip

For the `furthermore' case, we can, using Construction~\ref{conmerge}, w.l.o.g.\ assume that, for each successor cardinal $\lambda>\nu$, the system $\mathcal S_\lambda$ is a subsystem of $\mathcal S_\eta^\lambda$ for any successor cardinal $\nu<\eta<\lambda$. It follows that Axiom $(4)$ from Definition~\ref{d:axioms} for $A = 0$ holds in this case, which allows us to apply Proposition~\ref{p:lite}.
\end{proof}

\appendix

\section{Manipulating with directed systems}
\label{sec:append}

In the whole paper, we often use the following two constructions of directed systems of modules (or morphisms).

\begin{constr} \label{concoker} Let $f:M \to N$ be a homomorphism of modules, $\lambda$ a regular uncountable cardinal, $\mathcal M = (M_i, f_{ji}:M_i \to M_j\mid i<j\in I)$ and $\mathcal N = (N_i, g_{ji}:N_i \to N_j\mid i< j\in J)$ $\lambda$-continuous directed systems of $<\lambda$-presented modules \st $\varinjlim\mathcal M = M$ and $\varinjlim\mathcal N = N$. Then there is a $\lambda$-continuous directed system $\mathcal U = (u_k:M_{i_k}\to N_{j_k}, (f_{i_l,i_k},g_{j_l,j_k})\mid k<l\in K)$ consisting of morphisms with domains in $\mathcal M$ and codomains in $\mathcal N$ such that $\varinjlim\mathcal U = f$.

Subsequently, there is a $\lambda$-continuous directed system $\mathcal K = (\Coker(u_k)\mid k\in K)$ consisting of $<\lambda$-presented modules $($with canonically defined maps$)$.
\end{constr}

\begin{proof} For each $i\in I$ and $j\in J$, let us denote by $f_i:M_i\to M$ and $g_j:N_j\to N$ the colimit maps, and define $f_{ii} = \hbox{id}_{M_i}$ and $g_{jj} = \hbox{id}_{N_j}$.

We define $\mathcal U$ as the set of all morphisms $u:M_i \to N_j$ such that $i\in I, j\in J$, $g_ju = ff_i$. For $u:M_i\to N_j,v:M_r \to N_s$ from $\mathcal U$, we put $u\leq v$ \iff $i\leq r, j\leq s$ and $vf_{ri} = g_{sj}u$. We easily check that $(\mathcal U,\leq)$ is a poset. Next, we show that it is directed.

First, fix generating sets $G = \{x_\alpha\mid \alpha<\mu\}$ and $H = \{y_\alpha \mid \alpha< \mu\}$ of $M_i$ and $M_r$, respectively, where $\mu <\lambda$, and let $u,v\in\mathcal U$ be as above. We find $a\in I$ \st $i,r<a$. Since $\mathcal N$ is $\lambda$-continuous and $M_a$ is $<\lambda$-presented, there is $b\in J, j,s<b$, and a morphism $w_0:M_a\to N_b$ from $\mathcal U$. It need not be the case that $u\leq w_0$ and $v\leq w_0$, however, for each $\alpha<\mu$, there is a $b_\alpha\geq b$ such that $g_{b_\alpha j}u(x_\alpha) = g_{b_\alpha b}w_0f_{ai}(x_\alpha)$ and $g_{b_\alpha s}v(y_\alpha) = g_{b_\alpha b}w_0f_{ar}(y_\alpha)$. Since $\mu<\lambda$ and $(J,\leq)$ is $\lambda$-directed, there is $c\in J$ such that $c\geq b_\alpha$ for each $\alpha<\mu$. It follows that $w = g_{cb}w_0$ is in $\mathcal U$ and $u,v\leq w$. Subsequently, $(\mathcal U,\leq)$ is a directed system of morphisms. Moreover, it is $\lambda$-continuous since $\mathcal M$ and $\mathcal N$ are such.

To prove that $\varinjlim \mathcal U = f$, it is now enough to find, for arbitrary $(i,j)\in I\times J$, a morphism $u:M_i \to N_s$ in $\mathcal U$ with $s\geq j$. This is easy (recall how we found $w_0$).
\end{proof}

The next tool allows us to merge less than $\lambda$ directed systems which are $\lambda$-continuous into one. In its statement, we do not use the notation from Definition~\ref{d:inducesys}.

\begin{constr} \label{conmerge} Let $M\in\ModR$, $\lambda$ be a regular uncountable cardinal, $\gamma<\lambda$ and, for each $\alpha\leq\gamma$, let $\mathcal M^\alpha = (M^\alpha_{ji},f^\alpha_{ji}\colon M^\alpha_i\to M^\alpha_j\mid i<j\in I_\alpha)$ be a $\lambda$-continuous directed system consisting of $<\lambda$-presented modules such that $\varinjlim\mathcal M^\alpha = M$. Then the systems $\mathcal M^\alpha$, $\alpha\leq\gamma$, have a common cofinal $\lambda$-continuous subsystem.

More precisely:
for each $\alpha\leq\gamma$, there exists a $\lambda$-continuous cofinal directed subsystem $\mathcal N^\alpha = (M^\alpha_{ji},f^\alpha_{ji}\colon M^\alpha_i\to M^\alpha_j\mid i<j\in J_\alpha)$ of $\mathcal M^\alpha$; furthermore, for any $\alpha,\beta\leq\gamma$, there is a bijection $\iota\colon J_\alpha \to J_\beta$ and a directed system $\mathcal U$ with $\varinjlim \mathcal U = \hbox{id}_M$ whose objects are isomorphisms $u_i\colon M^\alpha_i \to M^\beta_{\iota(i)}$, $i\in J_\alpha$, and for each $i<j\in J_\alpha$, there is only one morphism from $u_i$ to $u_j$ in $\mathcal U$, namely $(f^\alpha_{ji},f^\beta_{\iota(j),\iota(i)})$.
\end{constr}

\begin{proof} We can assume that $\gamma$ is a cardinal. The proof goes by induction on $\gamma$. For $\gamma = 0$, it is trivial. Let $\gamma = 1$.

We use Construction~\ref{concoker} with $f = \hbox{id}_M$, $\mathcal M = \mathcal M^0$ and $\mathcal N = \mathcal M^1$ to obtain the system $\mathcal U$ of morphisms. 
Using \cite[Lemma 2.6]{BR}, we can w.l.o.g.\ assume that the objects of $\mathcal U$ are isomorphisms. The subsystems $\mathcal N^\alpha$, $\alpha = 0, 1$, then consist of domains, codomains, respectively, of the isomorphisms in $\mathcal U$.

By induction, we have the proof for any $\gamma$ finite. For $\gamma$ infinite, we use the inductive hypothesis and the following simple fact: for each $\alpha\leq \gamma$, if $(\mathcal N^\alpha_\beta \mid \beta<\gamma)$ is a family of $\lambda$-continuous cofinal directed subsystems of $\mathcal M^\alpha$ \st $\mathcal N^\alpha_\beta\supseteq \mathcal N^\alpha_\delta$ whenever $\beta\leq\delta<\gamma$, then $\bigcap _{\beta<\gamma} \mathcal N^\alpha_\beta$ is a $\lambda$-continuous cofinal directed subsystem of $\mathcal M^\alpha$ as well.
\end{proof}

We also use freely the following easy

\begin{obser} \label{o:observ} Let $\lambda$ be a regular uncountable cardinal and $\mathcal M$ a $\lambda$-continuous directed system of modules. Let $\mathcal K$ be a directed subsystem of $\mathcal M$. Then there is a~$\lambda$-continuous directed subsystem $\mathcal K^\prime$ of $\mathcal M$ with the same direct limit as $\mathcal K$.
\end{obser}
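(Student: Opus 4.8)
The plan is to enlarge the index set of $\mathcal{K}$ to a directed subposet $K'$ of the index set $I$ of $\mathcal{M}$ which is moreover closed (inside $I$) under suprema of its chains of length $<\lambda$. Once such a $K'$ is found with $K\subseteq K'$ and $\varinjlim_{i\in K'}M_i=\varinjlim_{i\in K}M_i$, we are done: writing $\mathcal{K}'=\mathcal{M}\restriction K'$, any chain $J\subseteq K'$ of length $<\lambda$ has a supremum $\sup J\in K'$ and $M_{\sup J}=\varinjlim_{j\in J}M_j$ holds because $\mathcal{M}$ is $\lambda$-continuous, so $\mathcal{K}'$ is $\lambda$-continuous; and $\mathcal{K}'$ is a directed subsystem of $\mathcal{M}$ extending $\mathcal{K}$ with the prescribed direct limit. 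Thus the whole statement reduces to the construction of $K'$.

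First I would replace $K$ by its downward closure $K^{\downarrow}=\{i\in I\mid i\le k\text{ for some }k\in K\}$: this is again a directed subposet (an upper bound in $K$ of the witnesses $k,k'$ of $i,i'\in K^{\downarrow}$ is an upper bound of $i,i'$), it is downward closed in $I$, and since $K$ is cofinal in it we have $\varinjlim_{i\in K^{\downarrow}}M_i=\varinjlim_{i\in K}M_i=:N$. Then I would build $K'$ as the closure of $K^{\downarrow}$ under suprema of chains of length $<\lambda$, realised as the union of a continuous increasing chain $(L_\alpha\mid\alpha\le\lambda)$ with $L_0=K^{\downarrow}$ and $L_{\alpha+1}$ obtained from $L_\alpha$ by adjoining $\sup_I J$ for every chain $J\subseteq L_\alpha$ of length $<\lambda$ (plus the directedness witnesses discussed below); regularity of $\lambda$ guarantees that $K'=L_\lambda$ is already closed under such suprema. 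No step changes the direct limit: if $x=\sup_I J$ for a chain $J$ all of whose modules $M_j$ already map compatibly to $N$, then $M_x=\varinjlim_{j\in J}M_j$ maps to $N$ too, and a routine universal--property check shows that the cocone over $L_\alpha\cup\{x\}$ is still colimiting with value $N$; by transfinite induction $\varinjlim_{i\in K'}M_i=N$.

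The delicate point --- and the one I expect to be the main obstacle --- is keeping $K'$ \emph{directed} while closing it under the supremum operation. A chain $J\subseteq K^{\downarrow}$ need not have an upper bound inside $K^{\downarrow}$ (as $K$ is only directed, not $\lambda$-directed), so the new elements $\sup_I J$ may be ``spread out'', and simply adjoining an arbitrary common upper bound from $I$ would in general push the direct limit beyond $N$. Instead, in the passage from $L_\alpha$ to $L_{\alpha+1}$ I would, for each pair $x,y\in L_\alpha$, first express $x$ and $y$ as suprema of $<\lambda$-chains traced back to $K^{\downarrow}$, and then, using the $\lambda$-directedness of $I$ together with the availability of suprema of $<\lambda$-chains in $I$, construct \emph{inside the structure being built} an increasing $<\lambda$-chain whose supremum dominates those traces, hence $x$ and $y$; arranging this simultaneously for all pairs and all stages requires a back-and-forth recursion on the ranks of elements in the closure, in the spirit of the game argument used in the proof of Lemma~\ref{l:game}. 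With directedness secured, $\mathcal{K}'=\mathcal{M}\restriction K'$ is the desired $\lambda$-continuous directed subsystem with $\mathcal{K}\subseteq\mathcal{K}'$ and $\varinjlim\mathcal{K}'=\varinjlim\mathcal{K}$.
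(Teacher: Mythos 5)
Your overall plan -- pass to the downward closure, then close it under suprema of chains of length $<\lambda$, checking that the colimit is preserved along the way -- is the right one, and you are right that keeping $K'$ directed is the crux. But the resolution you sketch does not work as described and leaves a genuine gap. When you interleave two ``traces'' $C_x,C_y\subseteq K^\downarrow$ into a single chain $(d_\gamma)$, at a limit ordinal $\gamma$ you must take $d_\gamma=\sup_I\{d_{\gamma'}\mid\gamma'<\gamma\}$; this element generally lies \emph{outside} $K^\downarrow$ and, since the chain below it may well be unbounded in $K^\downarrow$, it is not dominated by anything in $K^\downarrow$ either. At stage $\gamma+1$ you then need an element of the closure lying above both $d_\gamma$ and the next targets from $C_x,C_y$ -- and producing that is precisely the directedness you are trying to prove, so the recursion is circular. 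Invoking the $\lambda$-directedness of $I$ here does not help: it produces an upper bound in $I$, not in $K'$, and inserting an arbitrary upper bound from $I$ is exactly the move you earlier (correctly) ruled out because it would enlarge the colimit. The appeal to ``ranks'' and to the Shelah game of Lemma~\ref{l:game} does not resolve this either: the naive rank (the stage at which an element enters the closure) can only increase when you take a supremum, so it gives no decreasing quantity to induct on, and the game in Lemma~\ref{l:game} is a singular-compactness device, not a tool for this kind of closure argument.

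The circularity can be broken, but by a different induction: show, by induction on the cardinal $\nu<\lambda$, that the closure $K_\nu$ of $K^\downarrow$ under suprema of chains of length $\le\nu$ is $\nu^+$-directed (every subset of size $\le\nu$ has an upper bound in $K_\nu$). The base case $\nu=\aleph_0$ works because a countable subset of the directed poset $K^\downarrow$ can be dominated by an $\omega$-chain, which has no limit stages at all, and hence its supremum lies in $K_{\aleph_0}$. For the successor step $\nu\to\nu^+$ one first verifies that every element of $K_{\nu^+}$ is dominated by the supremum of a $\nu^+$-chain living in $K_\nu$; when building the interleaving chain of length $\nu^+$, the elements accumulated before any limit stage form a set of size $\le\nu$, so the $\nu^+$-directedness of $K_\nu$ obtained from the inductive hypothesis furnishes the needed upper bound inside $K_\nu$. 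Taking $K'=\bigcup_{\nu<\lambda}K_\nu$ then yields a directed subposet closed under suprema of all $<\lambda$-chains, and the colimit is preserved for the reason you gave. So your first two steps and your identification of the obstacle are sound, but the directedness argument as you wrote it is incomplete and the proposed back-and-forth does not repair it without this additional, and different, inductive structure.
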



\bigskip

\end{document}